\documentclass{amsart}[12pt, article]
\usepackage{amsmath}
\usepackage{mathrsfs}
\usepackage{amsfonts}
\usepackage{txfonts}
\usepackage{amssymb}
\usepackage{color}

\newcommand\NN{{\mathbb N}}
\newcommand\RR{{{\mathbb R}}}

\def\SS {\mathbb{S}}

\newcommand\cB{{\mathcal B}}
\newcommand\cD{{\mathcal D}}
\newcommand\cH{{\mathcal H}}

\newcommand\cL{{\mathcal L}}
\newcommand\cS{{\mathcal S}}
\newcommand\cT{{\mathcal T}}
\newcommand\cM{{\mathcal M}}

\def\vk{\mbox{\boldmath $k$}}

\newcommand\pP{{\bf P}}
\newcommand\iI{{\bf I}}
\newcommand\p{{\partial}}
\newcommand\pa{{\partial}}
\newcommand\ra{{\rangle}}
\newcommand\la{{\langle}}

\newtheorem{theo}{Theorem}[section]
\newtheorem{lemm}[theo]{Lemma}

\newtheorem{prop}[theo]{Proposition}
\newtheorem{rema}[theo]{Remark}

\renewcommand{\theequation}{\thesection.\arabic{equation}}

\begin{document}

\title[The Boltzmann equation without angular cutoff]
{Global existence and full regularity\\
of the Boltzmann equation without angular cutoff}
\author{R. Alexandre}
\address{R. Alexandre,
IRENAV Research Institute, French Naval Academy
Brest-Lanv\'eoc 29290, France}
\email{radjesvarane.alexandre@ecole-navale.fr}
\author{Y. Morimoto }
\address{Y. Morimoto, Graduate School of Human and Environmental Studies,
Kyoto University
\newline\indent
Kyoto, 606-8501, Japan} \email{morimoto@math.h.kyoto-u.ac.jp}
\author{S. Ukai}
\address{S. Ukai, 17-26 Iwasaki-cho, Hodogaya-ku, Yokohama 240-0015, Japan}
\email{ukai@kurims.kyoto-u.ac.jp}
\author{C.-J. Xu}
\address{C.-J. Xu, School of Mathematics, Wuhan University 430072,
Wuhan, P. R. China
\newline\indent
and \newline\indent
Universit\'e de Rouen, UMR 6085-CNRS,
Math\'ematiques
\newline\indent
Avenue de l'Universit\'e,\,\, BP.12, 76801 Saint
Etienne du Rouvray, France } \email{Chao-Jiang.Xu@univ-rouen.fr}
\author{T. Yang}
\address{T. Yang, Department of mathematics, City University of Hong Kong,
Hong Kong, P. R. China} \email{matyang@cityu.edu.hk}

\subjclass[2000]{35A05, 35B65, 35D10, 35H20, 76P05, 84C40}


\keywords{Boltzmann equation, non-cutoff cross-sections,
global existence, full regularity,
 pseudo-differential calculus.}

\begin{abstract}

We prove the global existence and uniqueness of classical solutions around an equilibrium to the Boltzmann
equation without angular cutoff in some Sobolev spaces. In addition, the solutions thus
obtained are shown to be non-negative and $C^\infty$ in all variables for any positive time. In this paper, we study  the Maxwellian molecule type collision operator with
mild singularity. One of the
key observations is the introduction of a new important norm related to
the singular behavior of the cross section in the collision operator. This norm captures the essential properties of the singularity and
yields precisely the dissipation 
of the linearized collision operator
through the celebrated H-theorem.
\end{abstract}

\maketitle

\tableofcontents

\section{Introduction}\label{s1}

We consider the Cauchy problem for the inhomogeneous Boltzmann equation
\begin{equation}\label{1.1}
f_t+v\cdot\nabla_x f=Q(f, f)\,,\,\,\,\,\,\,f|_{t=0}=f_0,
\end{equation}
where $f= f(t,x,v)$ is the density distribution function
of particles, having position
$x\in \RR^3$ and velocity $v\in \RR^3$ at time $t$.
Here, the right hand side of (\ref{1.1}) is given by the
Boltzmann bilinear collision operator, which is given in the classical $\sigma -$representation by
\[
Q(g, f)=\int_{\RR^3}\int_{\mathbb S^{2}}B\left({v-v_*},\sigma
\right)
 \left\{g'_* f'-g_*f\right\}d\sigma dv_*\,,
\]
where $f'_*=f(t,x,v'_*), f'=f(t,x,v'), f_*=f(t,x,v_*), f=f(t,x,v)$, and for
$\sigma\in \mathbb S^{2}$,
$$
v'=\frac{v+v_*}{2}+\frac{|v-v_*|}{2}\sigma,\,\,\, v'_*
=\frac{v+v_*}{2}-\frac{|v-v_*|}{2}\sigma,\,
$$
which gives the relation between the post and pre collisional
velocities. Recall that  we have conservation of momentum and kinetic energy, that is, $v+v_\ast = v' +v'_\ast$ and $| v|^2+|v_\ast|^2 = | v'|^2 + | v'_\ast|^2$.
The kernel $B$ is the cross-section which can be computed in different physical
settings.

In particular,  the non-negative cross section
 $B(z, \sigma)$ depends only on $|z|$ and the scalar product
$\langle\frac{z}{|z|}, \sigma\rangle$. In most cases, the
kernel $B$ cannot be expressed explicitly, but
to capture its main properties, 
one may assume
that it takes  the form
\begin{equation*}
B(|v-v_*|, \cos \theta)=\Phi (|v-v_*|) b(\cos \theta),\,\,\,\,\,
\cos \theta=\Big\langle\frac{v-v_*}{|v-v_*|} \, ,\,\sigma\Big\rangle,\,\,\,
0\leq\theta\leq\frac{\pi}{2}.
\end{equation*}
An important example is 
the inverse power law potential $\rho^{-r}$ with $r>1$, $\rho$
being the distance between two  particles, in which the
cross section 
has a  kinetic factor 
given by
\begin{equation*}
\Phi(|v-v_*|)\approx |v-v_*|^{\gamma},\qquad \gamma=1-\frac{4}{r},
\end{equation*}
and a factor related to the collision angle containing a singularity,
\begin{equation*}
\begin{array}{l}
 b(\cos \theta)\approx K\theta^{-2-2s} \
\
 \mbox{when} \ \ \theta\rightarrow 0+,
\end{array}
\end{equation*}
for some constants $K>0$ and $0<s=\frac{1}{r}<1$.

The cases with
$1<r<4$, $r=4$ and $r>4$ correspond to so-called soft, Maxwellian molecule
and hard
 potentials respectively. In the following discussion, this type of cross sections, with the parameters $\gamma$ and
$s$ given above, will be kept in mind.

As a fundamental equation in kinetic theory and
a key stone in statistical physics, the Boltzmann equation
has attracted, and is still attracting, a lot of research investigations since its derivation in 1872.

A large number of mathematical works have been performed under the Grad's cutoff
assumption, avoiding the non-integrable angular singularity
of the cross-sections, see for example \cite{Ce88,Cercignani-Illner-Pulvirenti,D-L,Grad,grad1,
guo-1,Lu,K-S,Lions94,ukai-2} to cite only a few, further references being given in the review \cite{villani2}.

However, except for the hard sphere model, for most of the other molecule
interaction potentials such as the inverse
power laws recalled above,
 the cross section  $B(v-v_*,\sigma)$ is a non-integral function in
angular variable
and the collision operator $Q(f,f)$ is a nonlinear singular integral operator
in 
velocity
variable.

By no means to be complete, let us now review some previous works related to the Boltzmann equation in the context of such singular (or non-cutoff) cross-sections. For other
 references and comments,  readers are referred to \cite{alex-review,villani2}.

The mathematical study for the Boltzmann equation, without assuming Grad's cutoff assumption, can be traced back at least to the work by Pao in
1970s \cite{Pao} which is about the spectrum of the linearized operator.
In 1980s, the existence of weak solutions to the spatially
homogeneous case was  proved by Arkeryd in \cite{Arkeryd} for the mild
singular case, that is, when $0<s<\frac 12$, and by using an abstract
Cauchy-Kovalevskaya theorem, Ukai in \cite{ukai} proved the local
existence of solutions to the inhomogeneous equation, in the space of functions which are analytic
in $x$ and Gevrey in $v$.

For a long time, the mathematical study of  singular cross-sections was limited to these results and a few others, most of them related to the spatially  homogeneous case
concerning only the existence. An important step was initiated by the
 works of Desvillettes and his collaborators in 1990s, showing partial regularization results for some
simplified kinetic models, cf. \cite{D95,desv-ajout1,
desv-ajout2,desv-F,desv-golse,desv-vil,Vi99}.

After the well known result of DiPerna and Lions \cite{D-L} for the cutoff case, Lions was able to show the gain of regularity of solutions in the Landau case \cite{lions-landau},
which is a model arising as a grazing limit of the Boltzmann equation. It was then expected that this kind of singular cross sections should lead to smoothing
effect on solutions, that is, the solutions have higher regularity
than the initial data. For example, it should be similar to
the case when one replaces the
collision operator in the Boltzmann equation
by a fractional Laplacian in velocity variable, that is, a fractional Kolmogorov-type equation
\cite{M-X}.

Certainly, the results of Lions \cite{Lions98} and Desvillettes  have influenced
the  research in this direction. It is therefore not surprising that a systematic approach, using the entropy
dissipation and/or the smoothing property of the gain part of the
collision operator, was initiated and has
been developed to an almost optimal stage through the efforts of many researchers, such as Alexandre, Bouchut, Desvillettes,
Golse, Lions,
Villani and Wennberg.
The underlying tools have proven to be very useful for the
study on the mathematical theory
 regarding the regularizing effect
for the spatially homogeneous problems for which the theory can now be considered as quite
satisfactory, cf.
\cite{al-1,al-saf,al-3,chen-li-xu1,
desv-F,Des-Fu-Ter,desv-wen1,HMUY,MU,MUXY-DCDS,villani},
and the references therein, see also for a much more detailed discussion \cite{alex-review}.

Compared to the spatially homogeneous problems, the original
spatially inhomogeneous Boltzmann equation is of course physically
more interesting and mathematically more challenging. For existence
of weak solutions, we mention two
results regarding the Cauchy problem. One is about
the local existence between two moving Maxwellians  proved in
\cite{alex-two-maxwellian} by constructing the upper and lower
solutions, another is   the global existence of renormalized solutions
with defect measures shown in \cite{al-3} where
the solutions become weak solutions if the defect measures vanish.
On the other hand, the local existence
of classical solutions was proved in \cite{amuxy-nonlinear-3} in some
weighted Sobolev spaces.

However, in view of the above available results, the mathematical
theory for non-cutoff
cross-sections is so far not satisfactory. This is in sharp contrast to the cutoff case, for which the theories have been well
developed, see \cite{BD,bouchut-1,desv,D-L,dly,guo-1,liu-1,liu-2,ukai-1a,ukai-1b, ukai-2} and the references therein.

For the study of the regularizing effect,
one of the main difficulties  comes {}from
the coupling of the
transport operator with the collision operator,
which is similar to the Landau equation studied in \cite{desv-landau}. To overcome this difficulty, a
generalized uncertainty principle \`a la Fefferman
\cite{Fefferman} (see also \cite{morimo873, morimo921,
morimoto-mori}) was introduced in \cite{amuxy-nonlinear,amuxy-nonlinear-b} for the study
of smoothing effects of the linearized and spatially inhomogeneous
Boltzmann equation  with non-cutoff cross sections.

In order to complete the full regularization process, recently, in \cite{amuxy-nonlinear-3}, by using suitable pseudo-differential operators and harmonic analysis, we have developed sharp coercivity and upper bounds of the collision
operators in Sobolev space, together with the estimation on the commutators
with these pseudo-differential operators.
 More precisely, in \cite{amuxy2,amuxy2b,amuxy-nonlinear-3},
for classical solutions, we established the hypo-ellipticity of the
Boltzmann operator, using the generalized version of the uncertainty
principle.

The present work is a continuation of our collaborative program since 2006 
\cite{amuxy-nonlinear-b,amuxy2, amuxy2b, amuxy-nonlinear-3}. Comparing to the cutoff case, we aim to settle a mathematical framework similar to the studies first proved by Ukai, see \cite{ukai-1a, ukai-1b}, and fitted into an energy method by Liu and collaborators \cite{liu-1,liu-2}
and Guo \cite{guo-1} which has led to a clean theory for the Cauchy problem in the cutoff case, for solutions close to a global equilibrium.

In this paper, we will
establish the global existence of  non-negative solutions in some Sobolev space
for the Boltzmann equation near a global equilibrium
and prove the full
regularity  in all variables for any positive time.

As mentioned
in the abstract, one of the main ingredients in the proof is the
introduction of a new non-isotropic norm which captures the main feature
of the singularity in the cross-section. This new norm is in fact the counterpart of the coercive norm which was introduced by Guo \cite{guo} as an essential step for Landau equation. 

It is not known if there is any equivalence of this norm to some Sobolev norm, in contrast 
to  the case of the Landau equation. 
However, since it is designed to be 
equivalent to, and to have much simpler expression than,  
 the Dirichlet form of the linearized collision operator, 
this norm not only works
extremely well for the description of the dissipative effect of the linearized
collision operator through the H-theorem, but also well fits for the
upper bound estimation on the nonlinear collision operator. Here,
we would like to mention the work by Mouhot and Strain \cite{mouhot,mouhot-strain} about the gain of moment
in a linearized context due to the singularity in the cross-section. Such
a gain of moment which is well described by the new non-isotropic norm
is in fact  crucial for the proof of global
existence.

We now come back to the problem
considered in this paper.
To make the presentation as simple as possible, and  to concentrate  on the
singularity of the grazing effect, we shall study the Maxwellian molecule type cross-sections with mild singularity, that is, the case when
\begin{equation*}\label{singularity-1}
B(|v-v_*|, \cos \theta)=b(\cos \theta),\,\,\,\,\, \cos
\theta=\Big\langle\frac{v-v_*}{|v-v_*|} \, ,\,\sigma\Big\rangle,\,\,\,
0\leq\theta\leq\frac{\pi}{2},
\end{equation*}
and
\begin{equation}\label{singularity}
b(\cos \theta)\approx  K \theta^{-2-2s},\,\,\,\,\,
\theta\,\rightarrow\, 0^+ ,
\end{equation}
with $0<s<\frac12$. The general case will be left to our future work.

In order to prove the global existence, we need to use
the complete dissipative effect of the collision operator. Similar to angular cutoff case, such dissipative effect
can be fully represented by the dissipation of the linearized
collision operator on the microscopic component of the solution
through the  H-theorem.

Thus, as usual, we consider the Boltzmann equation around a
normalized Maxwellian distribution
\[
\mu (v)=(2\pi )^{-\frac{3}{2}}e^{-\frac{|v|^2}{2\,\,}}.
\]
Since $\mu$ is the global equilibrium state satisfying $Q(\mu,
\mu)=0$, by setting $f=\mu+\sqrt{\mu}  g$, we have
\[
Q(\mu+\sqrt{\mu}  g,\, \mu+\sqrt{\mu}\,  g)=Q(\mu,\, \sqrt{\mu}\,g)+
Q(\sqrt{\mu}\,g,\, \mu)+Q(\sqrt{\mu}\,g,\, \sqrt{\mu}\,g).
\]
Denote
\begin{equation*}\label{1.2}
\Gamma(g,\, h)=\mu^{-1/2}Q(\sqrt{\mu}\,g,\, \sqrt{\mu}\,h).
\end{equation*}
Then the linearized Boltzmann operator takes the form
\begin{equation*}\label{1.3}
\cL g=\cL_1\, g+\cL_2\, g=-\Gamma(\sqrt{\mu}\,,\, g)- \Gamma(g,\,
\sqrt{\mu}\,).
\end{equation*}
And the original problem (\ref{1.1}) is now reduced to the Cauchy problem
for the perturbation
$g$
\begin{equation}\label{cauchy-problem}
\left\{\begin{array}{l} g_t + v\cdot\nabla_x g + \cL g=\Gamma(g,\,
g), \,\,\, t>0\,;\\
g|_{t=0}=g_0.\end{array}\right.
\end{equation}

This problem will be considered in the following
weighted Sobolev spaces.
For
$k,\,\,\ell\in\RR$, set
$$
H^k_\ell(\RR^6_{x, v})=\left\{f\in\cS'(\RR^6_{x, v})\, ;\,\, W^\ell
f\in H^k(\RR^6_{x, v})\,\right\}\, ,
$$
where $\RR^6_{x, v}=\RR^3_{x}\times \RR^3_{v}$ and $ W^\ell
(v)=\langle v\rangle^\ell=(1+|v|^2)^{\ell/2}$ is the weight with respect to the
velocity variable $v\in\RR^3_v$.

The main theorem can be
stated as follows.

\begin{theo}\label{theo1}
Assume that the cross-section satisfies (\ref{singularity}) with
$0<s<1/2$. Let $ g_0 \in H^{k}_\ell(\RR^6)$ for some $
k\geq 3, \ell\geq 3$ and
$$
f_0(x, v)=\mu+\sqrt{\mu} \,g_0(x, v)\geq 0.
$$
Then there exists $\varepsilon_0 >0$, such that if ~
$\|g_0\|_{H^{k}_\ell(\RR^6)}\leq \varepsilon_0$,  the Cauchy problem
(\ref{cauchy-problem}) admits a unique global solution $$g\in
L^\infty([0, +\infty[\,;\,\,H^{k}_\ell(\RR^6)).$$ Moreover,
$f(t, x,
v)=\mu+\sqrt{\mu}\,\, g(t, x, v)\geq 0$ and
$$
g\in C^\infty(]0, +\infty[\,\times\RR^6).
$$
\end{theo}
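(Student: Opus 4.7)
The plan is the standard near-equilibrium program for Boltzmann: local existence in $H^k_\ell$, a closed a priori energy estimate driven by the dissipation of $\cL$, continuation to a global solution, and separate arguments for positivity and smoothing. Local existence of classical solutions in $H^k_\ell$ should already be available from \cite{amuxy-nonlinear-3}, so the main effort goes into the global a priori estimate and the regularization for $t>0$.

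The critical new tool is the non-isotropic norm $|\!|\!|\cdot|\!|\!|$ announced in the introduction, which the H-theorem realises as $\la \cL g,\, g\ra_{L^2_v} \gtrsim |\!|\!|(\iI-\pP)g|\!|\!|^2$, with $\pP$ the $L^2_v$-projection onto the hydrodynamic kernel $\mathrm{span}\{\sqrt\mu,\, v_j\sqrt\mu,\, |v|^2\sqrt\mu\}$. For the energy estimate I would apply $\pa_x^\alpha \pa_v^\beta$ with $|\alpha|+|\beta|\leq k$ to (\ref{cauchy-problem}), multiply by $W^{2\ell}\pa_x^\alpha\pa_v^\beta g$ and integrate over $\RR^6$. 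The transport term vanishes by integration by parts; $\cL$ yields the coercive term above, modulo commutators with $W^\ell$ and $\pa_v^\beta$ that are absorbed by the sharp upper and commutator bounds from our previous work. The nonlinear piece is handled via a trilinear estimate of the form $|\la \Gamma(f,g),\, h\ra| \leq C\, \|f\|_{H^k_\ell}\, |\!|\!|g|\!|\!|\, |\!|\!|h|\!|\!|$, producing a cubic contribution $C\,\|g\|_{H^k_\ell}\, |\!|\!|g|\!|\!|_{H^k_\ell}^2$ that is absorbed by the dissipation as long as $\|g\|_{H^k_\ell}$ stays small.

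The main obstacle is that $\cL$ annihilates the macroscopic part $\pP g$, so the above dissipation controls only $(\iI-\pP)g$. I plan to follow the macro-micro decomposition \`a la Guo (and Liu--Yang--Yu in the cutoff case): writing $\pP g = \{a(t,x) + b(t,x)\cdot v + c(t,x)|v|^2\}\sqrt\mu$, substitute the ansatz into (\ref{cauchy-problem}), and project onto a carefully chosen family of polynomial--Gaussian test functions to derive local conservation laws together with a degenerate first-order system for $(a,b,c)$ whose source terms are controlled by the microscopic dissipation. This promotes the microscopic bound into control of $\|\nabla_x(a,b,c)\|_{H^{k-1}_x}$ and yields a full dissipation functional $\cD(t)$ and a Lyapunov inequality $\frac{d}{dt}\cM(t)+\lambda\cD(t) \leq C\sqrt{\cM(t)}\,\cD(t)$. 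Smallness of $\cM(0) \sim \|g_0\|_{H^k_\ell}^2$ then forces $\cM(t)\leq \cM(0)$ for all $t\geq 0$, giving global existence; a parallel estimate on the difference of two solutions delivers uniqueness.

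Positivity of $f = \mu+\sqrt\mu\, g$ is inherited from $f_0 \geq 0$ by an approximation argument: construct non-negative approximate solutions using truncated (cutoff) cross-sections, to which an Arkeryd-type maximum principle applies, and identify the limit with our solution via uniqueness. Finally, the $C^\infty$ smoothing for $t>0$ rests on the hypoelliptic structure. Since the singular angular kernel makes $\cL$ behave on the microscopic part like $(-\Delta_v)^s$, its combination with the velocity-transport $v\cdot\nabla_x$ falls into the scope of the generalized uncertainty principle developed in \cite{amuxy-nonlinear,amuxy-nonlinear-b}. A bootstrap that converts a fractional gain of regularity in $v$ into $(t,x)$-regularity at each step, starting from $g\in L^\infty_t H^k_\ell$, produces arbitrarily high Sobolev regularity in all variables for any positive time, and hence $g\in C^\infty(]0,+\infty[\times \RR^6)$.
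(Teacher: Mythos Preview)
Your overall architecture matches the paper: non-isotropic norm for coercivity, trilinear bound for $\Gamma$, Guo-type macro--micro decomposition to recover $\nabla_x(a,b,c)$, and a hypoelliptic bootstrap for $C^\infty$ smoothing. Two points deserve comment.

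\textbf{Positivity.} This is where you diverge from the paper, and your route has a real gap. You propose approximating by cutoff cross-sections, invoking a maximum principle for the cutoff problem, and then passing to the limit using uniqueness. The missing step is the convergence of the cutoff solutions to the non-cutoff solution in a topology strong enough for the uniqueness theorem to apply; this requires estimates uniform in the cutoff parameter, which you have not provided and which are not immediate. The paper avoids this entirely: it works directly with the non-cutoff iteration $f^{n+1}_t+v\cdot\nabla_x f^{n+1}=Q(f^n,f^{n+1})$ and proves $f^n\ge 0\Rightarrow f^{n+1}\ge 0$ by testing against $\beta_s(f^{n+1})\phi$ with the convex function $\beta(s)=\tfrac12(s^-)^2$ and a polynomial weight $\phi(x,v)=(1+|x|^2+|v|^2)^{-2}$. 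Convexity gives $(f^{n+1\prime}-f^{n+1})\beta_s(f^{n+1})\le \beta(f^{n+1\prime})-\beta(f^{n+1})$, which combined with $f^{n\prime}_*\ge 0$ and the cancellation lemma yields a Gronwall inequality for $\int\beta(f^{n+1})\phi$. This argument never leaves the non-cutoff setting and is one of the novelties of the paper.

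\textbf{Smoothing.} Your bootstrap sketch is correct in spirit, but note that the regularity theorem from \cite{amuxy-nonlinear-3} you intend to invoke requires $f\in\mathcal H^5_\ell$ locally, whereas the solution is only in $H^3_\ell$. The paper therefore inserts an intermediate step (its Theorem~\ref{theo5.3.1}) lifting $H^3_\ell$ to $H^5_\ell$ by alternating the coercivity gain $\Lambda_v^s$ with the transport hypoelliptic gain $\Lambda_x^{s_0}$, $s_0=s(1-s)/(s+1)$, and a fractional Leibniz formula in $x$; only after reaching $H^5_\ell$ does the earlier result apply. Your description does not make this two-stage structure explicit.
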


Actually, for the uniqueness, we can prove the following stronger result, which might be  of independent interest. Note that here we do not need to assume that $f$ is a small
perturbation of $\mu$.

\begin{theo}\label{theo2}
Under the same condition on the cross-section, for $ 0<T\leq +\infty$ and $
 l>2s+7/2$, let $f_0\geq 0, \, f_0\in
L^\infty(\RR_x^3; H^{2s}_{l+2}(\RR^3_v))$. Suppose that $f_1,\, f_2\in
 L^\infty(]0, T[ \times \RR_x^3;\,\, H^{2s}_{l+2}(\RR_v^3))$
are two solutions to the Cauchy problem
(\ref{1.1}).
If one solution is non-negative, then  $f_1\,\equiv\, f_2$.
\end{theo}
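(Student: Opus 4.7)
The natural strategy is to set $w = f_1 - f_2$ and derive a Gronwall inequality for $\|w(t)\|_{L^\infty_x L^2_l(\RR^3_v)}$. The difference satisfies
$$\partial_t w + v\cdot\nabla_x w = Q(f_1, w) + Q(w, f_2), \qquad w|_{t=0}=0,$$
and, without loss of generality, I assume $f_1 \ge 0$: the first collision term will supply dissipation via a coercivity estimate stemming from this non-negativity, while the second is treated as a perturbation controlled by $\|f_2\|_{L^\infty_{t,x} H^{2s}_{l+2}}$.

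Since only $L^\infty_x$ control on the solutions is available, a global $L^2_x L^2_v$ energy estimate is not allowed. Instead, I would integrate the transport identity
$$(\partial_t + v\cdot\nabla_x)\bigl(|w|^2\langle v\rangle^{2l}\bigr) = 2\langle v\rangle^{2l}\,w\,\bigl[Q(f_1,w)+Q(w,f_2)\bigr]$$
along the free-streaming characteristic $s\mapsto(s, x-(t-s)v, v)$. Using $w|_{t=0}=0$ and integrating in $v$ then gives
$$\|w(t,x,\cdot)\|_{L^2_l}^2 = 2\int_0^t\!\int_{\RR^3_v}\langle v\rangle^{2l}\bigl(w\,[Q(f_1,w)+Q(w,f_2)]\bigr)(s, x-(t-s)v, v)\,dv\,ds.$$

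The crux is to bound the right-hand side by pointwise-in-$x$ bilinear estimates on $Q$. Because $Q$ is local in the spatial variable, at each fixed $v$ every factor of the integrand lives at the single point $y=x-(t-s)v$, so one can apply a coercivity estimate coming from $f_1 \ge 0$,
$$\int \langle v\rangle^{2l}\, w\, Q(f_1, w)(y, v)\, dv \le -c_0\|w(y,\cdot)\|_{\cD_l}^2 + C\|f_1(y,\cdot)\|_{L^2_{l+2}}\|w(y,\cdot)\|_{L^2_l}^2,$$
together with a trilinear upper bound
$$\Bigl|\int \langle v\rangle^{2l}\, w\, Q(w, f_2)(y, v)\, dv\Bigr| \le C\|f_2(y,\cdot)\|_{H^{2s}_{l+2}}\|w(y,\cdot)\|_{\cD_l}\|w(y,\cdot)\|_{L^2_l},$$
where $\cD_l$ denotes the non-isotropic dissipation norm introduced in the paper; Young's inequality then absorbs the $\|w\|_{\cD_l}$ factors. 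Taking $\sup_x$ over the resulting pointwise estimate produces
$$\|w(t)\|_{L^\infty_x L^2_l}^2 \le C(f_1,f_2)\int_0^t \|w(s)\|_{L^\infty_x L^2_l}^2\, ds,$$
and the zero initial data forces $w \equiv 0$ by Gronwall.

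I expect the main technical obstacle to be establishing the coercivity estimate for $\int Q(f_1, w)\, w\, \langle v\rangle^{2l}\, dv$ for a general non-negative $f_1$ (rather than the specific equilibrium $\mu$): this requires an H-theorem type computation combined with careful handling of the commutator between the non-cutoff operator $Q$ and the heavy weight $\langle v\rangle^{2l}$. The interplay between this weight commutator and the moment loss intrinsic to the non-cutoff trilinear estimates is exactly what pins down the threshold $l > 2s + 7/2$.
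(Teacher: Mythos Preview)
Your characteristic argument has a genuine gap at the step where you claim the $v$--integral can be bounded by ``pointwise--in--$x$'' bilinear estimates. After integrating along characteristics you obtain
\[
\|w(t,x,\cdot)\|_{L^2_l}^2 = 2\int_0^t\!\int_{\RR^3_v}\langle v\rangle^{2l}\bigl(w\,[Q(f_1,w)+Q(w,f_2)]\bigr)\bigl(s,\, x-(t-s)v,\, v\bigr)\,dv\,ds,
\]
and you then want to recognise the inner $dv$--integral as $\int \langle v\rangle^{2l} w(y,v)\,Q(f_1,w)(y,v)\,dv$ at a single spatial point $y$. But the spatial argument $y=x-(t-s)v$ depends on the \emph{outer} integration variable $v$: as $v$ runs over $\RR^3$, the spatial point sweeps through all of $\RR^3$. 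The collision operator, although local in $(t,x)$, is an integral over $v_*,\sigma$ that couples many velocities at the \emph{same} $x$; the coercivity inequality and the trilinear upper bound you invoke are both statements about a $v$--integral at a fixed spatial point, and they simply do not apply to the twisted integral above. Free transport characteristics mix $x$ and $v$, which is exactly why an $L^\infty_x$ energy method via characteristics is not available here.

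The paper resolves this by a different device: it introduces the weight
\[
W_{\varphi,l}(x,v)=\frac{\langle v\rangle^l}{1+|v|^2+|x|^2},
\]
which decays in $x$, so that $W_{\varphi,l}F\in L^2(\RR^6_{x,v})$ even though $F$ is only in $L^\infty_x$. One can then run a genuine $L^2(\RR^6)$ energy estimate: the transport term is harmless because $|v\cdot\nabla_x(\varphi^{-1})|\le C\varphi^{-1}$, the term $\bigl(Q(f_1,W_{\varphi,l}F),W_{\varphi,l}F\bigr)$ is bounded above using non--negativity of $f_1$ (only the sign of the coercivity is used, the dissipation is discarded), and the remaining terms are controlled by commutator estimates for $[W_{\varphi,l},Q(\cdot,\cdot)]$ proved in Lemmas~5.1--5.2. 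The threshold $l>2s+7/2$ arises from a singular change of variables $v_*\to v'$ in those commutator bounds, not from a dissipation--versus--weight balance as you anticipated.
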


Throughout this paper, we assume that the cross-section satisfies
the condition (1.6) with $0<s<1/2$ except otherwise stated.

The rest of the paper will be organized as follows. In the next section,
we will introduce a new non-isotropic norm and prove some essential
coercivity and upper bound estimates on the collision operators
with respect to this new norm.
In order to study the gain of regularity of the solution, we need to apply
some  pseudo-differential operators
on the Boltzmann equation. For this purpose, in Section 3,
 we  study
the commutators of the collision operators with the
pseudo-differential operators.
 In Section 4, we will apply
the energy method for the Boltzmann equation and obtain
the local existence theorem. In Section 5, we will study the
uniqueness and the non-negativity of the solutions. 
This new method for proving non-negativity 
can be applied to the case with angular cutoff. For more detail
discussion on the non-negativity problem,  refer to \cite{amuxy4-4}.
In Section 6,
the full regularity is proved along the approach of
\cite{amuxy-nonlinear-3}. Finally,  the global existence of the solution
will be given in the last section. For this, the macro-micro decomposition introduced by Guo \cite{guo}
will be used for the estimation on the macroscopic component.\\

{\bf Note:} After finishing this paper,  we were informed by R. Strain of his recent paper in collaboration with P. Gressmann
\cite{gr-st}, showing also the existence of global solutions to the Cauchy problem by using different approach. Notice that their solution is in different
function space which does not lead to full regularity because of the weak regularity in
the velocity variable.

{\bf Note added in September, 2010:} Several new results have been announced along the same
line of development since the submission of the current paper. For the reader's references
we mention \cite{gr-st2,gr-st3,gr-st4, amuxy4-1,  amuxy4-3, amuxy4-4}.
The main difference of the results is the range of admissible values of $\gamma$:
$\gamma>-1-2s$ in the first 3 papers and $\gamma>\max(-3, -3/2-2s)$ in the latter 4 paper.

\renewcommand{\theequation}{\thesubsection.\arabic{equation}}
\section{Non-isotropic norms}\label{section2}
\smallskip

In this section, we study the bilinear collision operator given by
\[
Q(g, f)=\int_{\RR^3}\int_{\mathbb S^{2}}b(\cos\theta)
 \left\{g'_* f'-g_*f\right\}d\sigma dv_*\,,
\]
through harmonic analysis.
Since the collision operator acts only with respect to the velocity variable
$v\in\RR^3$, $(t, x)$ is regarded as a parameter in this section.

\subsection{Coercivity and upper bound estimates }\label{section2.1}
\setcounter{equation}{0}

Let $g\geq 0, \, g \equiv \hspace{-3mm} / \: 0, \, g\in L^1_2\bigcap
L\log L (\RR^3_v)$. It was shown in \cite{al-1} that there exists a constant $c_g>0$ depending
only on the values of $\|g\|_{L^1_2}$ and $\,\|g\|_{L\log L}$ such
that for any smooth function $f\in H^{s}(\RR^3_v)$, we have
\begin{equation}\label{2.1.1}
c_{g}\| f\|^2_{H^s(\RR^3_v)} \leq (-Q(g, f),
 f)_{L^2(\RR^3_v)}+C \| g\|_{L^1(\RR^3_v)}\|f\|^2_{L^2(\RR^3_v)}.
\end{equation}

Besides this, we still need some functional estimates on the Boltzmann collision
operators. The first one, given below, is
 about the boundedness of the collision operator in
weighted Sobolev spaces, see  \cite{alex-lin,alex-2,alex-sing,alex-review,amuxy-nonlinear-3,HMUY} .

\begin{theo}\label{theo2.1}
Assume that the cross-section satisfies (\ref{singularity}) with
$0<s<1$ . Then for any $m\in\RR$ and any $\alpha \in \RR$, there
exists $C>0$ such that
\begin{equation}\label{2.1.2}
\|Q(f,\, g)\|_{H^{m}_{\alpha}(\RR^3_v)}\leq C\|f\|_{L^{1}_{{\alpha^+
+}2s}(\RR^3_v)} \| g\|_{H^{m+2s}_{({\alpha +} 2s)^+}(\RR^3_v)}\,
\end{equation}
for all $f\in L^{1}_{{\alpha^+ +}2s}(\RR^3_v)$ and $ g\in
H^{m+2s}_{({\alpha +} 2s)^+}(\RR^3_v)$ .
\end{theo}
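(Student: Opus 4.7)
The plan is to reduce the desired bound to an $L^2_\xi$ estimate of $\langle\xi\rangle^m\widehat{Q(f,g)}(\xi)$ through Bobylev's identity, exploiting the cancellation structure of $Q$ to tame the angular singularity of $b$. Since $\Phi\equiv 1$ here, Bobylev's formula reads
\[
\widehat{Q(f,g)}(\xi)=\int_{\mathbb S^{2}} b\!\left(\tfrac{\xi}{|\xi|}\!\cdot\!\sigma\right)\Bigl[\hat f(\xi^{-})\hat g(\xi^{+})-\hat f(0)\hat g(\xi)\Bigr]\,d\sigma,
\]
where $\xi^{\pm}=(\xi\pm|\xi|\sigma)/2$, $|\xi^{-}|=|\xi|\sin(\theta/2)$, $|\xi^{+}|=|\xi|\cos(\theta/2)$, and $|\xi^{+}|^{2}+|\xi^{-}|^{2}=|\xi|^{2}$. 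The grazing region $\theta\to 0$ is precisely where $\xi^{-}\to 0$ and $\xi^{+}\to\xi$, so the bracket has a built-in vanishing that should absorb the $\theta^{-2-2s}$ singularity.

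I would next split the bracket symmetrically,
\[
\hat f(\xi^{-})\hat g(\xi^{+})-\hat f(0)\hat g(\xi)=\bigl[\hat f(\xi^{-})-\hat f(0)\bigr]\hat g(\xi^{+})+\hat f(0)\bigl[\hat g(\xi^{+})-\hat g(\xi)\bigr],
\]
and Taylor-expand each difference. After integrating in $\sigma\in\mathbb S^{2}$, the first-order terms vanish by the rotational symmetry of the measure $b(\xi/|\xi|\cdot\sigma)\,d\sigma$ (more precisely, by symmetrization around the axis $\xi/|\xi|$), leaving a factor $|\xi^{-}|^{2}\sim|\xi|^{2}\sin^{2}(\theta/2)$ that combines with $b(\cos\theta)$ to yield an integrable $\theta^{-2s}$ on $\mathbb S^{2}$. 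This cancellation argument produces pointwise bounds of the form
\[
|\widehat{Q(f,g)}(\xi)|\lesssim \|f\|_{L^{1}_{2s}}\,|\xi|^{2s}\,\sup_{|\eta|\leq|\xi|}|\hat g(\eta)|+\text{(remainder)},
\]
after which $\langle\xi\rangle^{m}|\widehat{Q(f,g)}(\xi)|\leq C\|f\|_{L^{1}_{2s}}\langle\xi\rangle^{m+2s}|\hat g(\eta)|$ on some corresponding frequency piece. A further splitting of the $\sigma$-integral into $\{|\xi^{-}|\leq 1\}$ (where Taylor cancellation is used) and its complement (where $b$ is effectively integrable against the raw bracket) then puts both pieces into $L^{2}_{\xi}$ via Plancherel applied to $g$, using the change of variable $\xi\mapsto\xi^{+}$ with Jacobian $\cos^{-4}(\theta/2)$, which stays bounded on $0\leq\theta\leq\pi/2$.

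To install the weight $W^{\alpha}$, I would commute it inside $Q$ using the elementary bounds $\langle v\rangle\leq\sqrt{2}\langle v'\rangle\langle v'_{*}\rangle$ and $\langle v'\rangle^{\alpha^{+}}\leq C\langle v\rangle^{\alpha^{+}}\langle v_{*}\rangle^{\alpha^{+}}$. These distribute the weight between the two factors and, after Leibniz-type manipulations, produce an expression of the form $Q(W^{\alpha_{1}}f, W^{\alpha_{2}}g)$ with $\alpha_{1}+\alpha_{2}\geq\alpha$, modulo commutator terms supported on a bounded angular region that are estimated trivially. The shift to $\alpha^{+}+2s$ on $f$ and $(\alpha+2s)^{+}$ on $g$ reflects the optimal redistribution, chosen so that the weighted Plancherel estimate from the previous paragraph applies.

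The main obstacle is the simultaneous control of the angular cancellation and the weight redistribution in $v$: the Taylor expansion producing the gain $|\xi^{-}|^{2}$ has to be taken with a remainder involving $\nabla^{2}\hat g$, which corresponds in physical space to $|v|^{2}g$, explaining the $2s$-shift in the weight on $g$ even after only a fractional gain. Verifying that the change of variables $\xi\mapsto\xi^{+}$ together with the integrability of $\sin^{-2s}(\theta/2)\cos^{-4}(\theta/2)\sin\theta$ on $(0,\pi/2)$ gives a finite constant (which it does precisely because $s<1$) is the final quantitative check, and the whole argument is essentially the one already developed in the references \cite{alex-lin,alex-2,alex-sing,alex-review,amuxy-nonlinear-3,HMUY}.
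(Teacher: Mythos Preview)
The paper does not actually prove Theorem~\ref{theo2.1}: it is stated as a known estimate and attributed to the references \cite{alex-lin,alex-2,alex-sing,alex-review,amuxy-nonlinear-3,HMUY}, with the text moving on immediately to the linearized operator. So there is no ``paper's own proof'' to compare against here.

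Your sketch is in the spirit of those references: Bobylev's identity, the splitting $\hat f(\xi^-)\hat g(\xi^+)-\hat f(0)\hat g(\xi)=[\hat f(\xi^-)-\hat f(0)]\hat g(\xi^+)+\hat f(0)[\hat g(\xi^+)-\hat g(\xi)]$, Taylor expansion with azimuthal symmetrization to tame the $\theta^{-2-2s}$ singularity, the change of variables $\xi\mapsto\xi^+$, and a commutation argument for the weight $W^\alpha$. Two points deserve care if you turn this into a full proof. First, the symmetrization around $\xi/|\xi|$ does not annihilate the entire first-order Taylor term: the component of $\xi^-$ along $\xi$ survives, but it equals $|\xi|\sin^2(\theta/2)$, which is already $O(\theta^2)$ and hence integrable against $b(\cos\theta)\sin\theta$ for all $0<s<1$; your phrasing ``leaving a factor $|\xi^-|^2$'' is slightly off in this respect. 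Second, the weight commutation is not quite a Leibniz formula for $W^\alpha Q(f,g)$; in the cited works one rather writes $W^\alpha Q(f,g)=Q(f,W^\alpha g)+[W^\alpha,Q(f,\cdot)]g$ and estimates the commutator directly using $|W^\alpha(v)-W^\alpha(v')|\lesssim\sin(\theta/2)\langle v_*\rangle^{\alpha^+}\langle v'\rangle^{\alpha^+}$, which is where the $\alpha^++2s$ on $f$ and $(\alpha+2s)^+$ on $g$ really come from. With these adjustments your outline matches the literature.
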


We now turn to the linearized operator. First of all, by using the conservation of energy
$$
|v'_*|^2+|v'|^2=|v_*|^2+|v|^2,
$$
we have $\mu(v_*)=\mu^{-1}(v)\,\mu(v'_*)\,\mu(v')$. Thus,
\begin{eqnarray}\label{Gamma}
&\Gamma(f,\, g)(v)=\mu^{-1/2}\iint b(\cos\theta) \Big(\sqrt{\mu'_*}
f'_* \sqrt{\mu'} g'- \sqrt{\mu_*} f_*
\sqrt{\mu}\,\, g\,\Big) d v_* d \sigma \nonumber\\
&=\iint b(\cos\theta)\sqrt{\mu_*}\, \Big( f'_* g'- f_* g\Big) d v_*
d \sigma.
\end{eqnarray}
It is well-known that $\cL$ (acting with respect to the velocity variable) is an
unbounded symmetric operator on $L^2(\RR^3_v)$. Moreover,  its
Dirichlet form satisfies
\begin{align}\label{linear-op}
&\Big(\cL g,\, g\Big)_{L^2(\RR^3_v)} =-\Big(\Gamma(\sqrt\mu\,,
g)+\Gamma(g,\,\sqrt\mu\,),\, g\Big)_{L^2(\RR^3_v)}\nonumber\\
&= \iiint b(\cos\theta) \Big((\mu_*)^{1/2} g- (\mu'_\ast)^{1/2}
g'+g_*(\mu)^{1/2}-
g'_*(\mu')^{1/2} \Big)(\mu_*)^{1/2}\, g  dv_*d\sigma dv\nonumber
\\
&= \iiint b(\cos\theta) \Big((\mu'_*)^{1/2} g'- (\mu_\ast)^{1/2}
g+g'_*(\mu')^{1/2}- g_*(\mu)^{1/2} \Big)(\mu'_*)^{1/2}\, g'
dv_*d\sigma dv\nonumber
\\
&= \iiint b(\cos\theta) \Big((\mu)^{1/2} g_*- (\mu')^{1/2}
g'_*+g(\mu_*)^{1/2}- g'(\mu'_*)^{1/2} \Big)(\mu)^{1/2}\, g_*
dv_*d\sigma dv
\\
&= \iiint b(\cos\theta) \Big((\mu')^{1/2} g'_*- (\mu)^{1/2}
g_*+g'(\mu'_*)^{1/2}- g(\mu_*)^{1/2} \Big)(\mu')^{1/2}\, g'_*
dv_*d\sigma dv\nonumber\\
&=\frac{1}{4} \iiint b(\cos\theta)\left( \Big((\mu_\ast)^{1/2} g
-(\mu'_*)^{1/2} g'\Big)+ \Big((\mu)^{1/2} g_* -(\mu')^{1/2}
g'_*\Big)\right)^2 dv_*d\sigma dv\nonumber\\&\ge 0\nonumber.
\end{align}
 The third line in the above equation
  is obtained  by using the change of variables  $(v,v_\ast ) \rightarrow (v', v'_\ast )$. The fourth line follows {}from  the change of variables $(v,v_\ast ) \rightarrow (v_\ast ,v)$ and then the fifth line follows {}from the fourth one by using the change of variables $(v,v_\ast ) \rightarrow (v',v'_\ast )$.
  And the second last line is just the summation of the previous
  four lines. Note that the Jacobians of the above coordinate
  transformations are equal to $1$.

Moreover, it follows {}from the above formula that $\Big(\cL g,\, g\Big)_{L^2(\RR^3_v)}=0$ if and only if $\pP
g=g$ where
$$
\pP g=\Big(a+b\,\cdot\, v+c|v|^2\Big)\sqrt \mu,
$$
with $a, c\in\RR, b\in\RR^3$. Here, $\pP$ is the $L^2$-orthogonal
projection onto the null space
\begin{equation*}\label{2.1.3}
\mathcal{N}=\mbox{Span}\left\{\sqrt \mu\,,\,
v_1\sqrt\mu\,,\,v_2\sqrt\mu\,,\,v_3\sqrt\mu\,,\,
|v|^2\sqrt\mu\,\,\right\}.
\end{equation*}

The following result on the gain of
 moment of order $s$ in the linearized framework is essential in the sequent analysis.
\begin{theo}\label{theo2.2}(Theorem 1.1 of \cite{mouhot-strain})

Assume that the cross-section satisfies (\ref{singularity}) with
$0<s<1$. Then there exists a constant $C>0$ such that
\begin{equation*}
\Big(\cL g,\, g\Big)_{L^2(\RR^3_v)}\geq C \left\|(\iI-\pP)
g\right\|^2_{L^2_s(\RR^3_v)}\, .
\end{equation*}
\end{theo}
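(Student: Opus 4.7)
The target inequality strengthens the standard coercivity (\ref{2.1.1}) in two ways: the right-hand side is a weighted $L^2$-norm with weight $\langle v\rangle^s$ rather than the homogeneous $H^s$ seminorm, and the lower-order error term is absent. The plan is to exploit the symmetry of the Dirichlet form to reduce to the microscopic component $h=(\iI-\pP)g$, and then to show that the non-integrable angular singularity $b(\cos\theta)\approx K\theta^{-2-2s}$, once combined with the Gaussian factors hidden in $\Gamma(\sqrt\mu,\,\cdot\,)$ and $\Gamma(\,\cdot\,,\sqrt\mu)$, produces a collision-frequency multiplier of order $\langle v\rangle^{2s}$.

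First I would reduce to the microscopic case. Since $\mathcal{N}=\ker \cL$ and $\cL$ is self-adjoint on $L^2(\RR^3_v)$, the orthogonal decomposition $g=\pP g+(\iI-\pP)g$ yields $(\cL g,\,g)_{L^2}=(\cL\,(\iI-\pP)g,\,(\iI-\pP)g)_{L^2}$, so it suffices to prove
$$
(\cL h,\,h)_{L^2(\RR^3_v)}\ \geq\ C\,\|h\|^2_{L^2_s(\RR^3_v)}\qquad\text{for every }h\in\mathcal{N}^\perp.
$$

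Next I would decompose $\cL$ in the classical Hilbert-type form $\cL h=\nu(v)\,h-\mathcal{K}h$. The multiplier $\nu(v)$ can be read off the symmetric expression (\ref{linear-op}): after using the symmetrization $\sigma\mapsto -\sigma$ (equivalently $\theta\mapsto \pi-\theta$) already present there to cancel the leading $\theta^{-2-2s}$ singularity, and Taylor-expanding $(\mu'_*)^{1/2}$ around $(\mu_*)^{1/2}$ with small parameter $|v-v_*|\sin(\theta/2)$, the diagonal part of the quadratic form becomes $\int \nu(v)|h(v)|^2\,dv$, where
$$
\nu(v)\ \approx\ \iint b(\cos\theta)\,\sqrt{\mu_*}\,|v-v_*|^{2s}\,\sin^{2}(\theta/2)\,d\sigma\,dv_*\ \approx\ \langle v\rangle^{2s},
$$
the last equivalence holding for $|v|\to\infty$ because on the effective support of $\sqrt{\mu_*}$ one has $|v-v_*|\approx\langle v\rangle$. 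This is the mechanism by which the angular singularity is converted into a polynomial weight in velocity.

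The main obstacle will be to absorb the remainder $\mathcal{K}$. Because of the Gaussian factors $(\mu'_*\mu_*)^{1/4}$ in its integral kernel, $\mathcal{K}:L^2_s\to L^2$ is smoothing and compact. I would close the proof by the classical contradiction/compactness scheme: if the claim failed, there would exist a normalized sequence $h_n\in\mathcal{N}^\perp$ with $\|h_n\|_{L^2_s}=1$ and $(\cL h_n,\,h_n)_{L^2}\to 0$; the standard coercivity (\ref{2.1.1}) would provide $H^s_{\mathrm{loc}}$ control and hence a weak limit $h_\infty\in\mathcal{N}^\perp$ with $\cL h_\infty=0$, forcing $h_\infty\in\mathcal{N}\cap\mathcal{N}^\perp=\{0\}$; meanwhile the pointwise lower bound $\nu(v)\gtrsim\langle v\rangle^{2s}$ outside a large ball, together with the compactness of $\mathcal{K}$, prevents $\{h_n\}$ from losing mass at infinity. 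The resulting contradiction yields the desired weighted coercivity.
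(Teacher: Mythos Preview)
The paper does not prove this statement at all: Theorem~\ref{theo2.2} is quoted as Theorem~1.1 of Mouhot--Strain \cite{mouhot-strain} and used thereafter as a black box. There is therefore no proof in the paper to compare your proposal against.

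As for the proposal itself, the overall architecture---reduce to $h\in\mathcal{N}^\perp$, extract from the Dirichlet form a multiplication operator behaving like $\langle v\rangle^{2s}$, treat the off-diagonal remainder as compact, and close by a contradiction/compactness argument---is indeed the strategy of \cite{mouhot-strain}. Two points in your sketch would need correction before it could be made rigorous. First, the displayed formula for $\nu(v)$ is internally inconsistent: it contains both the Taylor factor $\sin^2(\theta/2)$ \emph{and} the end result $|v-v_*|^{2s}$. The weight $|v-v_*|^{2s}$ is not produced by a single Taylor expansion; it comes from splitting the $\theta$-integral of $b(\cos\theta)\big((\mu'_*)^{1/2}-(\mu_*)^{1/2}\big)^2$ at $\theta_0\sim |v-v_*|^{-1}$, using the Taylor bound for $\theta<\theta_0$ and the trivial $L^\infty$ bound for $\theta>\theta_0$---precisely the mechanism behind Lemma~\ref{lemm2.2.2} in the paper. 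Second, the claim that the reflection $\sigma\mapsto-\sigma$ ``cancels the leading $\theta^{-2-2s}$ singularity'' is not right: that reflection only folds the range to $\theta\in[0,\pi/2]$; the integrability of the diagonal term comes instead from the squared difference structure $\big((\mu'_*)^{1/2}-(\mu_*)^{1/2}\big)^2$ already visible in (\ref{linear-op}). With these two fixes the outline matches the original argument.
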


\smallbreak
For the bilinear operator $\Gamma(\,\cdot,\,\cdot\,)$, we  need
the following  two formulas.
For suitable functions $f, g$, the first formula coming {}from
(\ref{Gamma}) is
\begin{align}\label{1formula}
&\Gamma(f,\, g)(v)=Q(\sqrt{\mu}\, f,\, g)+ \iint b(\cos\theta) \Big(\sqrt{\mu_*}\,
-\sqrt{\mu'_*}\,\,\Big)f'_* g' d v_* d \sigma\,.
\end{align}

On the other hand, applying the change of variables $(v, v_*) \rightarrow (v', v'_*)$ in (\ref{Gamma}) gives
\begin{align*}
\Big(\Gamma(f,\, g),\,  h\Big)_{L^2(\RR^3_v)} &= \iiint
b(\cos\theta)
\sqrt{\mu_\ast}\, \big( f'_\ast g' - f_\ast g\big) h\\
&= \iiint b(\cos\theta) \sqrt{\mu'_\ast}\, \big(f_\ast g - f'_\ast
g'\big) h'\,.
\end{align*}
By adding these two lines, the second formula is
\begin{equation}\label{2.2.5}
\Big(\Gamma (f,\, g),\, h\Big)_{L^2(\RR^3_v)} = \frac{1}{2} \iiint
b(\cos\theta) \Big(f'_\ast g' - f_\ast g \Big) \Big(
\sqrt{\mu_\ast}\, h - \sqrt{\mu'_\ast}\, h' \Big)\,.
\end{equation}

The following lemma shows that $\cL_1$ controls  $\cL$.
\begin{lemm}\label{lemm2.1.1}
Under the condition (1.2) on the cross-section with
$0<s<1$, we have
\begin{equation}\label{2.1.5}
\Big(\cL_1 g,\, g\Big)_{L^2(\RR^3_v)}\geq \frac{1}{2} \Big(\cL g,\,
g\Big)_{L^2(\RR^3_v)}\, .
\end{equation}
\end{lemm}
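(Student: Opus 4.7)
The plan is to express $(\cL_1 g, g)_{L^2}$ as a manifestly nonnegative quadratic form analogous to the fourfold symmetrized expression for $(\cL g, g)_{L^2}$ displayed in (\ref{linear-op}), and then compare them termwise.

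First I would compute $(\cL_1 g, g)_{L^2}$ directly from the definition $\cL_1 g = -\Gamma(\sqrt\mu, g)$ using formula (\ref{Gamma}):
\[
(\cL_1 g, g)_{L^2(\RR^3_v)} = \iiint b(\cos\theta)\,\sqrt{\mu_*}\bigl(\sqrt{\mu_*}\,g - \sqrt{\mu'_*}\,g'\bigr)\,g\, dv\,dv_*\,d\sigma.
\]
Apply the involutive change of variables $(v,v_*)\mapsto(v',v'_*)$ (with Jacobian one, and under which $b(\cos\theta)$ is invariant and primed/unprimed swap). The integral becomes
\[
\iiint b(\cos\theta)\,\sqrt{\mu'_*}\bigl(\sqrt{\mu'_*}\,g' - \sqrt{\mu_*}\,g\bigr)\,g'\, dv\,dv_*\,d\sigma.
\]
Adding the two expressions and dividing by $2$, the cross terms combine into a perfect square and I obtain
\[
(\cL_1 g, g)_{L^2} = \tfrac{1}{2}\iiint b(\cos\theta)\bigl(\sqrt{\mu_*}\,g - \sqrt{\mu'_*}\,g'\bigr)^2 dv\,dv_*\,d\sigma.
\]

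Next I would denote $A = \sqrt{\mu_*}\,g - \sqrt{\mu'_*}\,g'$ and $B = \sqrt{\mu}\,g_* - \sqrt{\mu'}\,g'_*$. The last-but-one line of (\ref{linear-op}) says
\[
(\cL g, g)_{L^2} = \tfrac{1}{4}\iiint b(\cos\theta)\,(A+B)^2\, dv\,dv_*\,d\sigma,
\]
so the elementary inequality $(A+B)^2 \le 2(A^2+B^2)$ gives
\[
(\cL g, g)_{L^2} \le \tfrac{1}{2}\iiint b(\cos\theta)\,A^2\, dv\,dv_*\,d\sigma + \tfrac{1}{2}\iiint b(\cos\theta)\,B^2\, dv\,dv_*\,d\sigma.
\]
The first term on the right equals $(\cL_1 g, g)_{L^2}$ by the identity derived above. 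For the second term, the symmetry $(v,v_*,\sigma)\mapsto(v_*,v,-\sigma)$ (which preserves $b(\cos\theta)$, has Jacobian one, and sends $(v',v'_*)$ to $(v'_*,v')$) sends $B$ to $A$ up to sign, hence $\iiint b\,B^2 = \iiint b\,A^2$. Therefore the second term also equals $(\cL_1 g, g)_{L^2}$, giving $(\cL g, g)_{L^2} \le 2(\cL_1 g, g)_{L^2}$, which is (\ref{2.1.5}).

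I do not anticipate a serious obstacle: the computation rests entirely on the two standard changes of variables already invoked in deriving (\ref{linear-op}), together with the elementary inequality $(A+B)^2\le 2(A^2+B^2)$. The only point requiring care is verifying that the $\sigma$-integration is correctly handled under the swap $(v,v_*)\leftrightarrow(v',v'_*)$, i.e.\ that $b(\cos\theta)$ is indeed invariant, and that the subsequent symmetry $(v,v_*)\leftrightarrow(v_*,v)$ maps $B$ to $\pm A$ after the appropriate reflection in $\sigma$; both are standard features of the Boltzmann kernel that the authors have used already in the derivation of (\ref{linear-op}).
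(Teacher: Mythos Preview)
Your proof is correct and follows essentially the same approach as the paper: derive the identity $(\cL_1 g,g)_{L^2}=\tfrac12\iiint b\,A^2$ via the pre-post collisional change of variables, use the exchange symmetry $(v,v_*)\leftrightarrow(v_*,v)$ (with $\sigma\mapsto-\sigma$) to equate $\iiint b\,B^2=\iiint b\,A^2$, and then apply $(A+B)^2\le 2(A^2+B^2)$ to the symmetrized formula (\ref{linear-op}). The paper organizes the steps slightly differently (first writing $(\cL_1 g,g)=\tfrac14\iiint b(A^2+B^2)$ and then invoking the inequality), but the ingredients and logic are identical.
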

\begin{proof} {}From (\ref{Gamma}) and  similar
changes of variables, we have
\begin{align*}
&\Big(\cL_1 g,\, g\Big)_{L^2(\RR^3_v)} =-\Big(\Gamma(\sqrt\mu\,,
g),\, g\Big)_{L^2(\RR^3_v)}\\
&= \frac{1}{2} \iiint b(\cos\theta) \Big((\mu'_*)^{1/2} g'-
(\mu_\ast)^{1/2} g \Big)^2dv_*d\sigma dv\\
&=\frac{1}{2} \iiint b(\cos\theta) \Big((\mu')^{1/2} g'_*-
(\mu)^{1/2} g_* \Big)^2dv_*d\sigma dv\\
&= \frac{1}{4} \iiint b(\cos\theta)\left\{ \Big((\mu'_*)^{1/2} g'-
(\mu_\ast)^{1/2} g \Big)^2+ \Big((\mu')^{1/2} g'_*- (\mu)^{1/2} g_*
\Big)^2\right\} dv_*d\sigma dv\, .
\end{align*}

Therefore, (\ref{2.1.5}) follows {}from $ (A+B)^2 \leq 2 (A^2+B^2)$
and (\ref{linear-op}).
\end{proof}

\subsection{Definition and properties of the non-isotropic norm}\label{section2.2}
\setcounter{equation}{0}
\smallskip
The non-isotropic norm associated with the cross-section
$b(\cos\theta)$ is defined by
\begin{align}\label{2.2.1}
||| g|||^2=& \iiint b(\cos\theta) \mu_*\, \big(g'-g\,\big)^2\, +
\iiint b(\cos\theta) g^2_* \big(\sqrt{\mu'}\,\, - \sqrt{\mu}\,\,
\big)^2\,,
\end{align}
where the integration  is over
~~$\RR^3_v\times\RR^3_{v_\ast}\times\SS^2_\sigma$. Thus, it is a
norm with respect to the velocity variable $v\in\RR^3$ only.
As we will see later, the reason that this
 norm is  called non-isotropic is because  it combines both derivative and   weight of order $s$ due to the singularity
of cross-section $b(\cos\theta)$.

The following lemma gives
an upper bound of this non-isotropic norm
by some  weighted Sobolev norm.

\begin{lemm}\label{lemm2.2.1}
Assume that the cross-section
satisfies (\ref{singularity}) with  $0<s<1$. Then there exists $C>0$
such that
\begin{equation}\label{2.2.2}
||| g |||^2 \leq C || g||^2_{H^s_s}
\end{equation}
for any $g\in H^s_s(\RR^3_v)$.
\end{lemm}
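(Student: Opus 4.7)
Write $|||g|||^2 = I_1 + I_2$, where
\[
I_1 = \iiint b(\cos\theta)\,\mu_\ast\,(g'-g)^2\, dv\, dv_\ast\, d\sigma,\qquad
I_2 = \iiint b(\cos\theta)\,g_\ast^2\,(\sqrt{\mu'}-\sqrt\mu\,)^2\, dv\, dv_\ast\, d\sigma.
\]
These two contributions exploit quite different structure in the cross section and should be estimated by separate means.

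\textbf{Estimate of $I_1$.} The non-integrability $\int_{\SS^2}b(\cos\theta)\,d\sigma = +\infty$ rules out any bound based on $(g'-g)^2 \le 2g^2+2(g')^2$: the cancellation between $g'$ and $g$ is essential and must be converted into fractional regularity of $g$. I would integrate first in $(v,\sigma)$ at fixed $v_\ast$. After the shift $v\mapsto v+v_\ast$, the map $(v,\sigma)\mapsto v'$ is independent of $v_\ast$, and the Bobylev-type identity (Plancherel on the Fourier side) gives
\[
\iint b(\cos\theta)\,(g(v')-g(v))^2\,dv\, d\sigma \;=\; \int \Phi(\xi)\,|\hat g(\xi)|^2\,d\xi,
\]
where the angular multiplier $\Phi(\xi)$ is an integral of $|1-\text{(Fourier factor)}|^2\,b(\cos\theta)$ against $d\sigma$ and satisfies $\Phi(\xi)\le C\langle\xi\rangle^{2s}$ uniformly (this is the signature consequence of $b(\cos\theta)\approx K\theta^{-2-2s}$ for $0<s<1$). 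Integrating the resulting constant against $\mu_\ast$ yields $I_1 \le C\|g\|_{H^s}^2 \le C\|g\|_{H^s_s}^2$.

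\textbf{Estimate of $I_2$.} Here I would exploit the smoothness and Gaussian decay of $\sqrt\mu$. The mean value theorem together with $|v'-v|=|v-v_\ast|\sin(\theta/2)$ gives
\[
|\sqrt{\mu(v')}-\sqrt{\mu(v)}|\;\le\; |v-v_\ast|\,\sin(\theta/2)\,\sup_{w\in[v,v']}|\nabla\sqrt{\mu}(w)|,
\]
while the trivial bound $|\sqrt{\mu'}-\sqrt\mu|\le\sqrt{\mu'}+\sqrt\mu$ is also available. Interpolating, for any $\beta\in(s,1)$,
\[
(\sqrt{\mu'}-\sqrt\mu)^2 \;\le\; C\,|v-v_\ast|^{2\beta}\sin^{2\beta}(\theta/2)\,(\sqrt{\mu'}+\sqrt\mu)^{2(1-\beta)}.
\]
Since $\beta>s$, the angular integral $\int_{\SS^2}b(\cos\theta)\sin^{2\beta}(\theta/2)\,d\sigma$ is finite. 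Integration in $v$ (using the Gaussian decay of $\sqrt\mu$ along the segment $[v,v']$, together with the elementary relation $|v-v_\ast|\le\sqrt2\,|v'-v_\ast|$ valid for $\theta\le\pi/2$) then controls the inner $(v,\sigma)$-integral by $C\langle v_\ast\rangle^{2s}$. Consequently $I_2 \le C\|g\|_{L^2_s}^2 \le C\|g\|_{H^s_s}^2$.

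\textbf{Main obstacle.} The hardest step is $I_1$: any attempt to break the square forgets the cancellation and runs into the non-integrable singularity of $b$, so one must recognize the inner integral as genuinely computing a fractional derivative of $g$ of order $s$. The Bobylev/Plancherel approach is the cleanest realization of this principle. A secondary technical point in $I_2$ is that the weight on $g_\ast$ must come out as exactly $\langle v_\ast\rangle^{2s}$ rather than $\langle v_\ast\rangle^{2s+\varepsilon}$ (otherwise one would not be controlled by $\|g\|_{H^s_s}$); the Gaussian factor $(\sqrt{\mu'}+\sqrt\mu)^{2(1-\beta)}$ absorbs this loss and keeps the estimate sharp.
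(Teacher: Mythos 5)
Your decomposition $|||g|||^2 = I_1 + I_2$ and the instinct to treat the two halves by separate mechanisms is reasonable, but both halves have gaps, and the one in $I_1$ is serious.

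\emph{$I_1$ is not a Fourier multiplier.} The claimed identity $\iint b(\cos\theta)(g(v')-g(v))^2\,dv\,d\sigma = \int\Phi(\xi)|\hat g(\xi)|^2\,d\xi$, with $\Phi$ independent of $v_*$, is false. After the shift $u=v-v_*$ the inner integral becomes $\iint b\,(h(u')-h(u))^2\,du\,d\sigma$ with $h=g(v_*+\cdot)$ and $u'=\tfrac12(u+|u|\sigma)$; because $u\mapsto u'$ is homogeneous but not linear (it carries $|u|$), this quadratic form is not translation-invariant in $h$ and therefore cannot depend only on $|\hat h|^2=|\hat g|^2$. The genuine Bobylev identity for $\iiint b\,\mu_*(g'-g)^2$, recorded in the proof of Lemma~\ref{lemm2.2.4}, is
\[
\frac{1}{(2\pi)^3}\iint
b\Big(\tfrac{\xi}{|\xi|}\cdot\sigma\Big)
\Big\{\hat\mu(0)\big(|\hat g(\xi)|^2+|\hat g(\xi^+)|^2\big)
-2\,\mathrm{Re}\,\hat\mu(\xi^-)\,\hat g(\xi^+)\,\overline{\hat g(\xi)}\Big\}\,d\sigma\,d\xi,
\]
and the cross-frequency term $\hat g(\xi^+)\overline{\hat g(\xi)}$ is essential. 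An $H^s$ upper bound can still be extracted from this, but it requires the regular change of variables $\xi\to\xi^+$ and careful handling of the cancellation --- this is precisely the work of Lemma~\ref{lemm2.3.1}, not a one-line multiplier bound. The paper's proof of Lemma~\ref{lemm2.2.1} avoids Fourier altogether: it estimates $\big(Q(f^2,g),g\big)_{L^2}$ by the boundedness theorem~\eqref{2.1.2}, rewrites it with $b(a-b)=\tfrac12(a^2-b^2)-\tfrac12(a-b)^2$ and the cancellation lemma of~\cite{al-1}, and thereby isolates inequality~\eqref{2.2.3+},
\[
\iiint b\,f_*^2\,(g'-g)^2 \le C\,\|f\|_{L^2_s}^2\,\|g\|_{H^s_s}^2 ,
\]
which produces $I_1$ with the choice $f=\sqrt\mu$ and $I_2$ with the choice $f=g$, the other slot $\sqrt\mu$ --- one estimate, two specializations.

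\emph{$I_2$: the Gaussian does not absorb the loss.} A fixed exponent $\beta\in(s,1)$ yields the weight $|v-v_*|^{2\beta}$, and the excess $\beta-s$ sits in the $v_*$-variable, not in $v$. The factor $(\sqrt{\mu'}+\sqrt\mu)^{2(1-\beta)}$ decays in $v$, so after integrating $\mu(v)^{1-\beta}|v-v_*|^{2\beta}$ over $v$ the weight landing on $g_*^2$ is $\sim\langle v_*\rangle^{2\beta}$, giving $I_2\lesssim\|g\|_{L^2_\beta}^2$, which for $\beta>s$ is \emph{not} controlled by $\|g\|_{H^s_s}^2$. The sharp $|v-v_*|^{2s}$ requires splitting the angular integral at a threshold $\delta\sim|v-v_*|^{-1}$, using Taylor for $\theta<\delta$ and the trivial bound for $\theta\ge\delta$ (exactly the optimization behind Lemma~\ref{lemm2.2.2}), and cannot be obtained from a fixed interpolation exponent. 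With that correction your $I_2$ argument closes; $I_1$ is the real obstruction.
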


\begin{proof} Applying (\ref{2.1.2}) with $\alpha=-s$ and $m=-s$
gives
\begin{equation}\label{2.2.3}
\left|\Big(Q (f^2,\, g),\, g \Big)_{L^2(\RR^3_v)} \right| \leq C
||f^2||_{L^1_{2s}} || g ||_{H^{s}_{s}} || g ||_{H^{s}_{s}} \leq C
||f||^2_{L^2_{s}} || g ||^2_{H^{s}_{s}}.
\end{equation}
On the other hand,
\begin{align*}
&\Big(Q(f^2,\,g),\,g\Big)_{L^2(\RR^3_v)} = \iiint b(\cos\theta)
\Big(f^{2'}_* g' -
f^2_* g\Big ) g\\
&= \iiint b(\cos\theta)f^{2'}_* \Big(g'-g\Big) g + \int g^2\iint
b(\cos\theta) \Big(f^{2'}_* -f^2_*\Big ).
\end{align*}
For the first term in the last equation, using
$b(a-b) = \frac{1}{2} (a^2 -b^2) - \frac{1}{2}
(a-b)^2$ yields
\begin{align*}
&\Big(Q(f^2,\, g),\, g\Big)_{L^2(\RR^3_v)} = \frac{1}{2}\iiint
b(\cos\theta)
f^{2'}_*\Big(g^{2'} - g^2\Big)\\
& - \frac{1}{2}\iiint b(\cos\theta) f^{2'}_* \Big(g'- g\Big)^2 +
\int g^2 \iint b(\cos\theta) \Big(f^{2'}_* -f^2_* \Big).
\end{align*}
By the change of variables $(v'_\ast, v') \rightarrow (v_\ast, v)$,
the first term above is also $\frac 1 2 \iiint b g^2 (f^2_\ast -f^{2
'}_\ast )$. Thus, it follows that
\begin{equation*}\label{2.2.3+0}
\Big(Q (f^2,\, g) ,\, g\Big)_{L^2(\RR^3_v)}
 = -\frac 1 2 \iiint b f^{2 '}_\ast \big(g'-g\big)^2 + \frac 1 2 \int g^2
 \iint b \Big(f^{2 '}_\ast -f^2_\ast \Big),
\end{equation*}
and then
$$
\iiint b f^2_\ast \big(g'-g\big)^2 \leq 2 \big| \Big(Q (f^2,\, g),\,
g\Big) \big| + \big|\iiint b g^2 \Big(f^{2 '}_\ast -f^2_\ast \Big) \big|.
$$
By using (\ref{2.2.3}) and the cancellation lemma {}from
\cite{al-1}, we get
\begin{equation}\label{2.2.3+}
\iiint b f^2_\ast \big(g'-g\big)^2\leq C ||f ||^2_{L^2_{s}} ||
g||^2_{H^s_s} + C|| g||^2_{L^2} || f ||^2_{L^2}\leq C || f
||^2_{L^2_s} || g ||^2_{H^s_s}.
\end{equation}
Thus, choosing $f=\sqrt{\mu}$ gives
$$
||| g |||^2 \leq C ( \|\sqrt{\mu}\,\|^2_{L^2_s}|| g ||^2_{H^s_s} +
|| g||^2_{L^2_s}\|\sqrt{\mu}\,\|^2_{H^s_s} ) \leq C || g
||^2_{H^s_s}.
$$
This completes the proof of the lemma.
\end{proof}
In the context of usual weighted Sobolev spaces, this last result is likely to be optimal.
 Next we will show that this non-isotropic norm is controlled by the linearized operator.  First of all, we shall need the
 following preliminary computation.

\begin{lemm}\label{lemm2.2.2}
For any $\phi\in C^1_b$, we have
\begin{equation*}\label{2.2.6}
\int_\sigma b(\cos\theta) | \phi(v_\ast) - \phi(v'_\ast) |d\sigma
\leq C_\phi | v-v_\ast|^{2s}\leq C \langle v\rangle^{2s} \langle
v_*\rangle^{2s},
\end{equation*}
where $C_\phi$ depends on
$\|\phi\|_{C^1_b}=\|\phi\|_{L^\infty}+\|\bigtriangledown
\phi\|_{L^\infty}$.
\end{lemm}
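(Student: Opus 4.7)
The plan is to reduce the problem to an elementary estimate on the size of $|v_\ast - v'_\ast|$ combined with a dyadic/threshold splitting of the angular integral tailored to the singularity $b(\cos\theta)\approx K\theta^{-2-2s}$.

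First I would compute the geometry. From the formulas for the post-collisional velocities,
$$
v_\ast - v'_\ast = \frac{v_\ast - v}{2} + \frac{|v-v_\ast|}{2}\sigma,
$$
and a direct expansion using $\langle (v-v_\ast)/|v-v_\ast|,\sigma\rangle = \cos\theta$ gives
$$
|v_\ast - v'_\ast|^2 = \frac{|v-v_\ast|^2}{2}(1-\cos\theta) = |v-v_\ast|^2 \sin^2(\theta/2).
$$
Hence $|v_\ast - v'_\ast| = |v-v_\ast|\sin(\theta/2)$. Combined with the mean-value inequality and the trivial $L^\infty$ bound, this yields the two estimates
$$
|\phi(v_\ast)-\phi(v'_\ast)| \leq \min\!\Big( 2\|\phi\|_{L^\infty}\,,\ \|\nabla\phi\|_{L^\infty}\,|v-v_\ast|\sin(\theta/2)\Big).
$$

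Next I would exploit these two bounds by splitting the angular integral at a threshold depending on $|v-v_\ast|$. Using $d\sigma = \sin\theta\, d\theta\, d\varphi$ on $\SS^2$ and the singularity assumption \eqref{singularity}, set $\theta_0 := \min(\pi/2,\,1/|v-v_\ast|)$. On $\{\theta\leq\theta_0\}$ use the Lipschitz bound: the contribution is controlled by
$$
C\,|v-v_\ast|\int_0^{\theta_0}\theta^{-2-2s}\cdot \theta\cdot \theta\, d\theta \leq C\,|v-v_\ast|\,\theta_0^{1-2s},
$$
which is $\leq C|v-v_\ast|^{2s}$ when $|v-v_\ast|\geq 1$ and $\leq C|v-v_\ast|\leq C|v-v_\ast|^{2s}$ when $|v-v_\ast|\leq 1$ (here I use $2s<1$, though this works for any $s\in(0,1)$ with straightforward adjustments). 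On $\{\theta_0 <\theta\leq \pi/2\}$ use the $L^\infty$ bound: the contribution is controlled by
$$
C\int_{\theta_0}^{\pi/2}\theta^{-2-2s}\cdot\theta\, d\theta \leq C\,\theta_0^{-2s} \leq C\,|v-v_\ast|^{2s}.
$$
Adding the two pieces yields the first inequality $\int_\sigma b(\cos\theta)|\phi(v_\ast)-\phi(v'_\ast)|d\sigma \leq C_\phi|v-v_\ast|^{2s}$, with $C_\phi$ depending linearly on $\|\phi\|_{C^1_b}$.

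Finally, the second inequality is purely algebraic: from $|v-v_\ast|\leq |v|+|v_\ast|\leq \langle v\rangle + \langle v_\ast\rangle \leq 2\langle v\rangle\langle v_\ast\rangle$, one obtains $|v-v_\ast|^{2s}\leq 2^{2s}\langle v\rangle^{2s}\langle v_\ast\rangle^{2s}$.

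The only mildly delicate point is the threshold splitting, where one must verify that both pieces produce the same power $|v-v_\ast|^{2s}$; this is where the precise exponent $-2-2s$ in the cross-section enters, together with the constraint $0<s<1$ (needed so the two integrals are finite and balance at the threshold). Everything else is a routine consequence of the explicit form of $v'_\ast$ and the definition of $C^1_b$.
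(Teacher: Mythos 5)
Your proof is correct and follows essentially the same route as the paper's: Taylor plus $L^\infty$ bounds for $|\phi(v_\ast)-\phi(v'_\ast)|$, an angular split at the threshold $\theta_0\sim|v-v_\ast|^{-1}$, and then the algebraic bound $|v-v_\ast|\leq 2\langle v\rangle\langle v_\ast\rangle$. One parenthetical remark is wrong, though: the claim that the argument ``works for any $s\in(0,1)$'' fails, because with only a $C^1$ bound the Lipschitz-region integral $\int_0^{\theta_0} b(\cos\theta)\sin\theta\cdot\theta\,d\theta\sim\int_0^{\theta_0}\theta^{-2s}\,d\theta$ diverges as soon as $2s\geq1$ (one would need a second-order Taylor expansion together with the angular symmetry to handle the strong singularity, and the lemma only assumes $\phi\in C^1_b$); here $0<s<1/2$ is the standing hypothesis, so this does not affect the lemma itself.
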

\begin{proof}  It follows {}from Taylor's formula that
\begin{equation*}\label{2.2.2+0}
|\phi(v_\ast) - \phi (v'_\ast) | \leq C_\phi | v_\ast-v'_\ast | \leq
C_\phi \sin \left(\frac{\theta}{2}\right) | v-v_\ast |,
\end{equation*}
and $| \phi(v_\ast) -\phi (v'_\ast) | \leq C_\phi$. Then for any
$\delta \in (0, \pi /2)$,
\begin{align*}
\int_\sigma b(\cos\theta) | \phi(v_\ast) - \phi (v'_\ast) |
d\sigma&\leq C_\phi \left\{ | v-v_\ast| \int^\delta_0
\frac{\sin(\theta/2)}{\theta^{1+2s}} d\theta + \int^{\pi /2}_\delta
\frac{1}{\theta^{1+2s}} d\theta\right\}\\
& \leq C_\phi \left\{ | v-v_\ast| \delta^{-2s +1} +
\delta^{-2s}\right\}.
\end{align*}
If $| v-v_\ast|^{-1} \leq \frac\pi 2$, by choosing $\delta = |
v-v_\ast|^{-1}$, we get
$$
\int_\sigma b(\cos\theta) | \phi(v_\ast) - \phi (v'_\ast) |
d\sigma\leq C_\phi | v-v_\ast|^{2s}\leq C \langle v\rangle^{2s}
\langle v_*\rangle^{2s}.
$$
If $| v-v_\ast | \leq \frac{2}{\pi}$, we have
\begin{align*}
\int_\sigma b(\cos\theta) | \phi(v_\ast) - \phi (v'_\ast) |
d\sigma\leq C_\phi | v-v_\ast | \leq C_\phi\frac{2}{\pi} \leq C
\langle v\rangle^{2s} \langle v_*\rangle^{2s}.
\end{align*}
And this completes the proof of the lemma.
\end{proof}

 Up to the kernel of $\cL$, the following lemma
 gives the equivalence between the non-isotropic norm
 and the Dirichlet form of $\cL$.

\begin{lemm}\label{lemm2.2.3}
For
$g\in \mathcal{N}^{\perp}$, we have
\begin{equation}\label{2.2.2+11}
 \Big(\cL g,\, g\Big)_{L^2(\RR^3_v)} \sim ||| g|||^2.
\end{equation}
Here $A\sim B$ means that there exists two generic constants
 $C_1, C_2>0$ such that $C_1A\le B\le C_2A$.
\end{lemm}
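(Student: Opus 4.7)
The plan is to prove the two inequalities $(\cL g,g)_{L^2} \lesssim |||g|||^2$ and $|||g|||^2 \lesssim (\cL g,g)_{L^2}$ separately. The first holds without any orthogonality assumption. By Lemma \ref{lemm2.1.1}, it suffices to bound $2(\cL_1 g,g)_{L^2} = \iiint b(\cos\theta)\bigl((\mu_\ast')^{1/2}g'-(\mu_\ast)^{1/2}g\bigr)^2 dv\,dv_\ast\,d\sigma$ as computed in the proof of Lemma \ref{lemm2.1.1}. Writing
\begin{equation*}
(\mu_\ast)^{1/2}g-(\mu_\ast')^{1/2}g' = (\mu_\ast')^{1/2}(g-g')+g\bigl((\mu_\ast)^{1/2}-(\mu_\ast')^{1/2}\bigr),
\end{equation*}
using $(a+b)^2\le 2a^2+2b^2$, then applying the change of variables $(v,v_\ast)\mapsto(v',v_\ast')$ on the first resulting piece (which sends $\mu_\ast'\to\mu_\ast$) and the relabeling $v\leftrightarrow v_\ast$ on the second, yields $(\cL g,g)\le 2|||g|||^2$.

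For the reverse inequality, expanding the same square without absorbing cross terms yields the key identity
\begin{equation*}
2(\cL_1 g, g) = |||g|||^2 + 2I,\qquad I := \iiint b(\cos\theta)(\mu_\ast')^{1/2}(g-g')\,g\bigl((\mu_\ast)^{1/2}-(\mu_\ast')^{1/2}\bigr)dv\,dv_\ast\,d\sigma,
\end{equation*}
so the lower bound reduces to controlling the cross term $I$. A direct Cauchy--Schwarz between the two factors, followed by the change of variables used above and AM--GM, yields only the critical bound $|I|\le\tfrac12|||g|||^2$, which permits no absorption. This is where the main difficulty lies. To gain the missing smallness I split the $\sigma$-integration at a threshold $\theta_0\ll 1$: on $\{\theta\le\theta_0\}$, a Taylor expansion gives $|(\mu_\ast)^{1/2}-(\mu_\ast')^{1/2}|\lesssim \sin(\theta/2)|v-v_\ast|$ times a Maxwellian weight, so the extra $\sin(\theta/2)$ combined with the singular kernel $\theta^{-2-2s}$ produces a prefactor of order $\theta_0^{1-2s}$, arbitrarily small as $\theta_0\to 0$; on $\{\theta>\theta_0\}$, the angular kernel is integrable, and Lemma \ref{lemm2.2.2} together with the Gaussian decay of the Maxwellian factors yields a remainder bounded by $C_{\theta_0}\|g\|_{L^2_s(\RR^3_v)}^2$. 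This establishes $|I|\le\delta\,|||g|||^2 + C_\delta\|g\|_{L^2_s}^2$ for any $\delta>0$.

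Combining the above, $(1-2\delta)|||g|||^2 \le 2(\cL_1 g,g) + 2C_\delta\|g\|_{L^2_s}^2$. Since $(\cL_1 g,g) - (\cL g,g) = -(\cL_2 g,g) = (\Gamma(g,\sqrt{\mu}),g)$, formula (2.2.5) together with Theorem \ref{theo2.1} applied with appropriate indices yield $|(\cL_2 g,g)|\le C\|g\|_{L^2_s}^2$, hence $(\cL_1 g,g)\le (\cL g,g)+C\|g\|_{L^2_s}^2$. Taking, say, $\delta=1/4$ gives $|||g|||^2 \le C\bigl[(\cL g,g) + \|g\|_{L^2_s}^2\bigr]$. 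Finally, for $g\in\mathcal{N}^\perp$ one has $\pP g=0$, so Theorem \ref{theo2.2} yields $\|g\|_{L^2_s}^2\le C(\cL g,g)$, which absorbs the residual term and completes the proof of the equivalence.
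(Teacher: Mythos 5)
Your overall architecture --- the exact expansion $2(\cL_1 g,g)_{L^2}=|||g|||^2+2I$, bounding the cross term $I$, and closing via Theorem~\ref{theo2.2} --- is essentially the paper's, which reaches the same one-sided bound on $I$ through $(a+b)^2\ge\frac{1}{2}a^2-b^2$ plus an add-and-subtract. Your upper bound $(\cL g,g)\le 2|||g|||^2$ and the reduction $(\cL_1 g,g)\le(\cL g,g)+C\|g\|_{L^2_s}^2$ are both fine.

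The gap is the claim $|I_{\theta\le\theta_0}|\le\delta(\theta_0)\,|||g|||^2$ with $\delta(\theta_0)\to 0$. On $\{\theta\le\theta_0\}$ the Taylor bound $|(\mu_*)^{1/2}-(\mu_*')^{1/2}|\le C\sin(\theta/2)\,|v-v_*|\,W(v_*)$ carries the factor $|v-v_*|$, which for $|v|$ large behaves like $\langle v\rangle\langle v_*\rangle$; the Maxwellian $W(v_*)$ absorbs $\langle v_*\rangle$ but not $\langle v\rangle$. Tracking this through Cauchy--Schwarz, the small-angle contribution is bounded by $C\theta_0^{1-2s}\|g\|_{L^2}\|g\|_{L^2_1}$ (or variants with a weight on $g$ exceeding $s$), not by $\theta_0^{1-2s}|||g|||^2$. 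And $\|g\|_{L^2_1}$ is \emph{not} controlled by $|||g|||$: Lemma~\ref{lemm2.2.1} gives $|||g|||^2\le C\|g\|_{H^s_s}^2$, and for a fixed bump $\chi$ translated to $|v|=R$ the ratio $\|g\|_{L^2_1}/\|g\|_{H^s_s}$ grows like $R^{1-s}$ (recall $s<1/2$); nor is $\|g\|_{L^2_1}^2$ controlled by $(\cL g,g)$, since Theorem~\ref{theo2.2} only yields the $L^2_s$ moment. So the small-angle remainder cannot be absorbed. Optimizing the threshold at $\theta\sim 1/|v-v_*|$ as in Lemma~\ref{lemm2.2.2} restores the sharp weight $|v-v_*|^{2s}$ but then the smallness $\theta_0^{1-2s}$ is lost on $\{|v-v_*|>1/\theta_0\}$; a single angular cutoff cannot deliver both.

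In fact no smallness is needed. Your ``critical'' bound $|I|\le\frac{1}{2}|||g|||^2$ arose from estimating \emph{both} Cauchy--Schwarz factors by $|||g|||$. But the second factor, $\iiint b\,g^2\,\big((\mu_*)^{1/2}-(\mu_*')^{1/2}\big)^2$, is already bounded by $C\|g\|_{L^2_s}^2$: apply estimate (\ref{2.2.3+}) (the Bobylev/cancellation-lemma bound established just before this lemma) with $f=g$ and the other argument $\sqrt\mu$, after the relabeling $v\leftrightarrow v_*$. A single weighted Young's inequality then gives $I\ge-\frac{1}{4}|||g|||^2-C\|g\|_{L^2_s}^2$ with a fixed coefficient on $|||g|||^2$, which is exactly what the paper extracts, and Theorem~\ref{theo2.2} absorbs the residual. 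Replace the angular split with (\ref{2.2.3+}) and the proof closes.
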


\begin{proof} We first deal with the lower bound estimate
starting with the terms linked to $\cL_2$. Since
$$ -\Big(\cL_2 g,\, g\Big)_{L^2(\RR^3_v)} =\Big(\Gamma(g,\,
\sqrt\mu\,), g\Big)_{L^2(\RR^3)},$$
we get {}from (\ref{1formula}) that
\begin{equation}\label{radja-2}
-\Big(\cL_2 g,\, g\Big)_{L^2(\RR^3_v)} = \Big(Q (\sqrt{\mu} g,\, \sqrt{\mu} ),\, g\Big)_{L^2(\RR^3_v)} + \iiint
b(\cos\theta)\Big(\sqrt{\mu_\ast}\, - \sqrt{\mu'_\ast}\, \Big)
g'_\ast \sqrt{\mu'}\, g .
\end{equation}

Using (\ref{2.1.2}) with $\alpha=0, m=0$, the first term on the right hand side of \eqref{radja-2} can be estimated by
\begin{align*}
\left|(Q (\sqrt{\mu} g,\, \sqrt{\mu} ),\,
g\Big)_{L^2(\RR^3_v)}\right|&\leq || Q (\sqrt{\mu} g, \sqrt{\mu}
)||_{L^2}\|g\|_{L^2}\\
&\leq C || \sqrt{\mu} g||_{L^1_{2s}} || \sqrt{\mu}
||_{H^{2s}_{2s}}\|g\|_{L^2} \leq C || g ||^2_{L^2}.
\end{align*}
For the second term on the right hand side of \eqref{radja-2}, we have
\begin{align*}
&\iiint b(\cos\theta)\Big(\sqrt{\mu_\ast}\, - \sqrt{\mu'_\ast}\,
\Big)\, g'_\ast \sqrt{\mu'}\, g\,dv dv_* d\sigma\\
&= \iiint b(\cos\theta)\Big(\sqrt{\mu_\ast}\, - \sqrt{\mu'_\ast}\,
\Big)\, g'_\ast (\mu')^{1/4}\,\Big((\mu')^{1/4}-(\mu)^{1/4}\Big)
g\,\\
&+\iiint b(\cos\theta)\Big(\sqrt{\mu_\ast}\, - \sqrt{\mu'_\ast}\,
\Big)\, g'_\ast (\mu')^{1/4}\,(\mu)^{1/4} g.
\end{align*}
Thus,
\begin{align*}
&\left| \iiint b(\cos\theta)\Big(\sqrt{\mu_\ast}\, -
\sqrt{\mu'_\ast}\, \Big)\, g'_\ast \sqrt{\mu'}\, g\,\right| \\
&\leq \left( \iiint b(\cos\theta) \left( \sqrt{\mu_*} -
\sqrt{\mu'_*}\,\, \, \right)^2 |g |^2 (\mu')^{1/4}
\right)^{1/2}\\
&\qquad\qquad\times\left( \iiint b(\cos\theta) \left((\mu')^{1/4}\,
- (\mu)^{1/4}\, \right)^2\,\, |g'_*|^2 (\mu')^{1/4} \right)^{1/2}
\\
&+ \left(\iiint b(\cos\theta) \left| \sqrt{\mu_*} -
\sqrt{\mu'_*}\,\, \, \right| |g |^2 (\mu')^{1/4}(\mu)^{1/4}
\right)^{1/2}\\
&\qquad\qquad\times\left( \iiint b(\cos\theta) \left| \sqrt{\mu_*} -
\sqrt{\mu'_*}\,\, \, \right|\,\, |g'_*|^2 (\mu')^{1/4}(\mu)^{1/4}
\right)^{1/2}\\
&\leq I_1^{1/2}\times I_2^{1/2}+I_3^{1/2}\times I_4^{1/2} .
\end{align*}
Using Lemma \ref{lemm2.2.2} with $\phi=\mu^{1/4}$ gives
$$\int_\sigma b(\cos\theta) \Big| \big(\mu'_\ast\big)^{1/4}
- \big(\mu_\ast\big)^{1/4} \Big| d\sigma
\leq C | v-v_\ast |^{2s} \leq C < v>^{2s} < v_\ast >^{2s}.
$$
Since $\big(\mu'_\ast\big)^{1/4} (\mu')^{1/2} = (\mu'_*)^{1/4}
(\mu')^{1/4} (\mu')^{1/4} = (\mu_*)^{1/4} \mu^{1/4} (\mu')^{1/4}$,
we get
\begin{align*}
&I_1+I_3\leq C \iiint b(\cos\theta) | (\mu_*)^{1/2} - (\mu'_*)^{1/2}
|\,\, |g|^2
(\mu')^{1/2}dv dv_* d\sigma\\
&\leq C  \iiint b(\cos\theta)\Big| (\mu_*)^{1/4} - (\mu'_*)^{1/4}
\Big| \Big( (\mu_*)^{1/4} + (\mu'_*)^{1/4} \Big)
|g|^2 (\mu')^{1/2}\\
&\leq C \iiint b(\cos\theta) \Big|(\mu_*)^{1/4} -
(\mu'_*)^{1/4}\Big|
(\mu_*)^{1/4} |g|^2 \\
&+ C\iiint b(\cos\theta) \Big|(\mu_*)^{1/4} -(\mu'_*)^{1/4}
\Big|(\mu'_*)^{1/4} (\mu')^{1/2}|g|^2\\
&\leq C\iint \Big(\langle v_*\rangle^{2s}(\mu_*)^{1/4} \langle
v\rangle^{2s} |g|^2+\langle v_*\rangle^{2s}(\mu_*)^{1/4} \langle
v\rangle^{2s}\mu^{1/4} |g|^2\Big) dv dv_*\\
&\leq C (\|g\|^2_{L^2_s(\RR^3)}+\|g\|^2_{L^2(\RR^3)}).
\end{align*}
For $I_2$, by using the change of variables $(v,v_\ast )
\rightarrow (v_*,v)$ and then $(v', v'_\ast ) \rightarrow (v,v_\ast) $, one has
\begin{align*}
 &\iiint b(\cos\theta) \left((\mu')^{1/4}\,
- (\mu)^{1/4}\, \right)^2\,\, |g'_*|^2 (\mu')^{1/4}\\
&= \iiint b(\cos\theta) \left((\mu'_*)^{1/4}\,
- (\mu_*)^{1/4}\, \right)^2\,\, |g|^2 (\mu_*)^{1/4}\\
&\leq C\iint \Big(\langle v_*\rangle^{2s}(\mu_*)^{1/4} \langle
v\rangle^{2s} |g|^2\Big) dv dv_*\leq C \|g\|^2_{L^2_s(\RR^3)}.
\end{align*}
For  $I_4$, using the change of variables $(v,v_\ast )\rightarrow (v',v'_\ast )$ implies that
\begin{align*}
 &\iiint b(\cos\theta) \left| \sqrt{\mu_*} -
\sqrt{\mu'_*}\,\, \, \right|\,\, |g'_*|^2 (\mu')^{1/4}(\mu)^{1/4}\\
&= \iiint b(\cos\theta) \left| \sqrt{\mu_*} -
\sqrt{\mu'_*}\,\, \, \right|\,\, |g_*|^2 (\mu')^{1/4}(\mu)^{1/4}\\
&\leq C\iint \Big(\langle v\rangle^{2s}(\mu)^{1/4} \langle
v_*\rangle^{2s} |g_*|^2\Big) dv dv_*\leq C \|g\|^2_{L^2_s(\RR^3)}.
\end{align*}
In summary, we obtain
\begin{equation}\label{2.2.7}
|(\cL_2 g , g ) | \leq C \| g\|^2_{L^2_s}.
\end{equation}

For the term involving $\cL_1$, using (\ref{2.2.5}) yields
\begin{align*}
&\Big(\cL_1 g,\, g\Big)_{L^2(\RR^3_v)}= -\Big(\Gamma(\sqrt{\mu},\,
g),\, g\Big)_{L^2(\RR^3_v)} \\
&= \frac 1 2 \iiint b(\cos\theta) \left(
(\mu'_*)^{1/2} g' - (\mu_*)^{1/2}\, g \right)^2\\
&= \frac 12 \iiint b(\cos\theta)  \left( (\mu'_*)^{1/2} (g' -g)+
g((\mu'_*)^{1/2}\,-(\mu_*)^{1/2})\right)^2\\
&\geq \frac 1 4 \iiint b(\cos\theta) \mu '_\ast (g'-g)^2 - \frac 12
\iiint b(\cos\theta) g^2 \Big((\mu'_*)^{1/2} -(\mu_*)^{1/2} \Big)^2,
\end{align*}
where we used the inequality $(a+b)^2 \geq {\frac 1 2} a^2 -b^2$. Then
\begin{align*}
\Big(\cL_1 g,\, g\Big)_{L^2(\RR^3_v)}&\geq \frac 1 4 \left(\iiint
b(\cos\theta) \mu '_\ast (g'-g)^2 + \iiint b(\cos\theta) g^2
\Big((\mu'_*)^{1/2} -(\mu_*)^{1/2} \Big)^2\right)\\
&- \frac 34 \iiint b(\cos\theta) g^2 \Big((\mu'_*)^{1/2}
-(\mu_*)^{1/2} \Big)^2.
\end{align*}
We now apply (\ref{2.2.3+})  and
the change of variables $(v,v_\ast )\rightarrow (v_*,v)$ to get
$$\iiint b(\cos\theta) g^2 \Big((\mu'_*)^{1/2}
-(\mu_*)^{1/2} \Big)^2 \leq C || g||^2_{L^2_s} ||
\mu^{1/2}\,\,||^2_{H^s_s}\leq C || g||^2_{L^2_s}\,.
$$
Therefore,
\begin{equation*}\label{2.2.10}
\Big(\cL_1 g,\, g\Big)_{L^2} \geq \frac 1 4 ||| g|||^2 - C \|
g\|^2_{L^2_s}.
\end{equation*}
Thus, we have {}from (\ref{2.2.7})
\begin{align*}
\Big(\cL g,\, g\Big)_{L^2} &= \Big(\cL_1  g,  g\Big)_{L^2} + \Big(\cL_2
 g,  g\Big)_{L^2}\\
&\geq \frac 14 ||| g|||^2  -C ||  g||^2_{L^2_s}.
\end{align*}
By Theorem \ref{theo2.2}, we have {}from
the assumption  $g\in \mathcal{N}^{\perp}$ that
$$
|||  g |||^2 \leq 4\Big(\cL g,\, g\Big)_{L^2} + C ||
g||^2_{L^2_s} \leq \tilde C \Big(\cL g,\, g\Big)_{L^2}\,,
$$
which gives the lower bound estimation.

For the upper bound estimate, we have
\begin{align*}\label{2.2.11}
&\Big(\cL_1 g,\, g\Big)_{L^2(\RR^3_v)}= \frac 1 2 \iiint
b(\cos\theta) \left(
(\mu'_*)^{1/2} g' - (\mu_*)^{1/2}\, g \right)^2\nonumber\\
&= \frac 12 \iiint b(\cos\theta)  \left( (\mu'_*)^{1/2} (g' -g)+
g((\mu_*)^{1/2}\,-(\mu_*)^{1/2}) \right)^2\\
&\leq \iiint b(\cos\theta) \mu '_\ast (g'-g)^2+\iiint b(\cos\theta)
g^2 \Big((\mu'_*)^{1/2} -(\mu_*)^{1/2} \Big)^2\nonumber\\
&\leq ||| g|||^2\,\nonumber.
\end{align*}
By (\ref{2.1.5}), we have
\begin{equation*}\label{2.2.12}
\Big(\cL g,\, g\Big)_{L^2(\RR^3_v)}\leq 2||| g|||^2\,.
\end{equation*}
The proof of Lemma \ref{lemm2.2.3} is then completed.
\end{proof}
The next result shows that the non-isotropic norm controls the Sobolev norm of both derivative
 and  weight of order $s$.

\begin{lemm}\label{lemm2.2.4}
There exists $C>0$ such that
\begin{equation}\label{2.2.13}
||| g |||^2\ge C\big(|| g||^2_{H^s}+|| g||^2_{L^2_s}\big).
\end{equation}
\end{lemm}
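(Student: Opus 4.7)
The plan is to treat the two pieces of $|||g|||^2$ separately: the second piece will yield the weighted $L^2_s$ norm directly, while the first piece, combined with the ADVW coercivity (\ref{2.1.1}), will yield the $H^s$ norm up to a lower order $L^2$ term that is then absorbed by the second piece.

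First, apply the change of variables $(v,v_\ast)\leftrightarrow(v_\ast,v)$ (with the corresponding swap $(v',v'_\ast)\leftrightarrow(v'_\ast,v')$) in the second integral defining $|||g|||^2$, which rewrites it as $\int_{\RR^3} g(v)^2\,\Phi(v)\,dv$ with
\[
\Phi(v)=\iint_{\RR^3\times\SS^2} b(\cos\theta)\bigl(\sqrt{\mu'_\ast}-\sqrt{\mu_\ast}\bigr)^2 dv_\ast d\sigma.
\]
The key claim is the pointwise lower bound $\Phi(v)\gtrsim \langle v\rangle^{2s}$. To prove it, I would localize at $v_\ast$ near the origin, say $|v_\ast|\le \delta_0$. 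Using $v'_\ast=(v+v_\ast)/2-|v-v_\ast|\sigma/2$, a direct computation shows that when $v_\ast=0$ one has $|v'_\ast|^2=|v|^2\sin^2(\theta/2)$. Thus, whenever $|v|\sin(\theta/2)\ge 1$, the value of $\sqrt{\mu'_\ast}$ is strictly smaller than $\sqrt{\mu_\ast}$ by a fixed amount, so $(\sqrt{\mu'_\ast}-\sqrt{\mu_\ast})^2\ge c_1>0$. Integrating $b(\cos\theta)\sim K\theta^{-2-2s}$ over $\theta\ge 2/|v|$ yields
\[
\int_\sigma b(\cos\theta)\bigl(\sqrt{\mu'_\ast}-\sqrt{\mu_\ast}\bigr)^2 d\sigma\;\ge\; c_2\int_{2/|v|}^{\pi/2}\theta^{-1-2s}d\theta\;\ge\; c_3|v|^{2s}
\]
for $|v|$ large. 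By continuity of the integrand in $v_\ast$, this bound persists on a neighborhood $|v_\ast|\le\delta_0$, which upon integration gives $\Phi(v)\ge c|v|^{2s}$ for $|v|\ge R_0$. For $|v|\le R_0$, $\Phi$ is continuous (by dominated convergence, since $(\sqrt{\mu'_\ast}-\sqrt{\mu_\ast})^2\lesssim\theta^2|v-v_\ast|^2$ controls the angular singularity when $s<1/2$) and strictly positive, hence bounded below by a positive constant on this compact set. Combining, $\Phi(v)\ge c\langle v\rangle^{2s}$, so
\[
|||g|||^2\;\ge\;\int g(v)^2\Phi(v)\,dv\;\ge\;c\,\|g\|^2_{L^2_s}.
\]

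For the first piece of $|||g|||^2$, apply identity of the same type as used in the proof of Lemma \ref{lemm2.2.1} with $f=\sqrt{\mu}$: one has
\[
\iiint b\,\mu_\ast(g'-g)^2 \;=\; -2\bigl(Q(\mu,g),g\bigr)_{L^2}+\int g^2\iint b(\mu'_\ast-\mu_\ast)\,dv_\ast d\sigma\,dv,
\]
where we also used the invariance $\iiint b\mu'_\ast(g'-g)^2=\iiint b\mu_\ast(g'-g)^2$ under $(v,v_\ast)\leftrightarrow(v',v'_\ast)$. The coercivity estimate (\ref{2.1.1}) applied to $\mu$ gives $-(Q(\mu,g),g)\ge c_\mu\|g\|^2_{H^s}-C\|g\|^2_{L^2}$, while the cancellation lemma of \cite{al-1} shows that $\iint b(\mu'_\ast-\mu_\ast)\,dv_\ast d\sigma$ is a bounded function of $v$, so the remainder is controlled by $C'\|g\|^2_{L^2}$. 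Hence
\[
|||g|||^2\;\ge\;\iiint b\mu_\ast(g'-g)^2\;\ge\;2c_\mu\|g\|^2_{H^s}-C''\|g\|^2_{L^2}.
\]

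To finish, use $\|g\|^2_{L^2}\le\|g\|^2_{L^2_s}\le c^{-1}|||g|||^2$ from the first step to absorb the negative term: this yields $(1+C''/c)|||g|||^2\ge 2c_\mu\|g\|^2_{H^s}$, and averaging with the bound $|||g|||^2\ge c\|g\|^2_{L^2_s}$ produces the desired estimate (\ref{2.2.13}).

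The main difficulty is the pointwise estimate $\Phi(v)\gtrsim\langle v\rangle^{2s}$; once that is established, the remaining steps are standard consequences of the coercivity of $-Q(\mu,\cdot)$ and the cancellation lemma.
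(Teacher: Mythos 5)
Your proof is correct, and it takes a genuinely different route from the paper's. The paper proves \eqref{2.2.13} entirely on the Fourier side via Bobylev's formula: for $A=\iiint b\mu_*(g'-g)^2$ it uses the Plancherel representation together with Lemma~3 of \cite{al-1} to extract $\int_{|\xi|\ge1}|\xi|^{2s}|\hat g|^2$, and for $B=\iiint bg_*^2(\sqrt{\mu'}-\sqrt\mu)^2$ it again works in frequency, decomposes $B=B_1+B_2$, and exchanges the roles of $\xi/|\xi|$ and $v/|v|$ via the Bobylev identity to convert the angular singularity into the velocity-side weight $|v|^{2s}$. You instead stay in physical space. For $A$ you use the algebraic identity already derived in the proof of Lemma~\ref{lemm2.2.1} (the relation $(Q(f^2,g),g)=-\tfrac12\iiint bf_*^{2\prime}(g'-g)^2+\tfrac12\int g^2\iint b(f_*^{2\prime}-f_*^2)$ with $f^2=\mu$), which reduces $A$ to $-2(Q(\mu,g),g)$ plus a cancellation-lemma term, so the ADVW coercivity \eqref{2.1.1} does the work; this is a clean shortcut that avoids redoing the Fourier argument. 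For $B$ you symmetrize to $\int g^2\Phi(v)\,dv$ and prove the pointwise lower bound $\Phi(v)\gtrsim\langle v\rangle^{2s}$ directly: localizing $v_*$ near $0$, observing $|v'_*|\approx|v-v_*|\sin(\theta/2)$ so that for $\theta\gtrsim1/|v|$ the Maxwellian drops by a fixed amount, and integrating $b\sim\theta^{-2-2s}$ over $\theta\in[c/|v|,\pi/2]$ to produce $|v|^{2s}$. This is more elementary and gives a transparent geometric reason for the weight gain, which the paper's Fourier computation obscures; the paper's route has the advantage of fitting the Bobylev machinery used elsewhere (e.g.\ Lemma~\ref{lemm2.3.1}). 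One small technical remark: the domination you cite for continuity of $\Phi$, namely $(\sqrt{\mu'_*}-\sqrt{\mu_*})^2\lesssim\theta^2|v-v_*|^2$, is not integrable in $v_*$ by itself; you should instead use something like $(\sqrt{\mu'_*}-\sqrt{\mu_*})^2\lesssim\min(1,\theta^2|v-v_*|^2)\big((\mu_*)^{1/2}+(\mu'_*)^{1/2}\big)$, which also yields the matching upper bound $\Phi(v)\lesssim\langle v\rangle^{2s}$ consistent with Lemma~\ref{lemm2.2.1}. This is a refinement of a correct idea, not a gap.
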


\begin{proof} Write
\begin{align*}\label{radja-3}
||| g|||^2 &= \int_{\RR^{6}}\int_{\mathbb S^{2}}b(\cos\theta)
\mu_*\Big(g(v)-g(v')\Big)^2
d\sigma d v_* d v \\
&+ \int_{\RR^{6}}\int_{\mathbb S^{2}}b(\cos\theta)
g^2_*\Big(\mu^{1/2}(v)-\mu^{1/2}(v')\Big)^2
d\sigma d v_* d v \equiv A+B.\nonumber
\end{align*}

 According to the calculation of  Propositions 1 and  2 in \cite{al-1},
 we have
\begin{align*}
A
&=(2\pi)^{-3}\int_{\RR^{3}}\int_{\mathbb
S^{2}}b\Big(\frac{\xi}{|\xi|} \,\cdot\,\sigma\Big) \Big\{
\hat{\mu}(0)|\hat{g}(\xi)|^2+\hat{\mu}(0)|\hat{g}(\xi^+)|^2\\
&\qquad-2Re\,\hat{\mu}(\xi^-) \hat{g}(\xi^+)\bar{\hat{g}}(\xi)\Big\} d\sigma d \xi\\
&\geq \frac{1}{2(2\pi)^{3}} \int_{\RR^{3}} |\hat{g}(\xi)|^2
\left\{\int_{\mathbb S^{2}}b\Big(\frac{\xi}{|\xi|}
\,\cdot\,\sigma\Big) (\hat{\mu}(0)- |\hat{\mu}(\xi^-)|) d\sigma\right\}
d \xi
\\
&\geq C_1 \int_{|\xi|\geq 1}|\xi|^{2s} |\hat{g}(\xi)|^2d \xi
\geq C_1 2^{-2s} \int_{|\xi|\geq 1}(1+|\xi|^2)^{s} |\hat{g}(\xi)|^2 d \xi\\
&\geq C_1 2^{-2s}\|g\|^2_{H^s(\RR^3_v)}-C_1\|g\|^2_{L^2(\RR^3_v)},
\end{align*}
where we have used Lemma 3 in \cite{al-1}  that
\begin{equation}\label{lemma3}
\int_{\mathbb S^{2}}b\Big(\frac{\xi}{|\xi|}
\,\cdot\,\sigma\Big) (\hat{\mu}(0)- |\hat{\mu}(\xi^-)|) d\sigma
\geq C_1|\xi|^{2s},\qquad\,\forall |\xi|\geq 1.
\end{equation}
Similarly,
\begin{align*}
B
&=(2\pi)^{-3}\int_{\RR^{3}}\int_{\mathbb
S^{2}}b\Big(\frac{\xi}{|\xi|} \,\cdot\,\sigma\Big) \Big\{
\widehat{g^2}(0)|\widehat{\mu^{1/2}}(\xi)|^2+\widehat{g^2}(0)
|\widehat{\mu^{1/2}}(\xi^+)|^2\\
&\qquad-2Re\,\widehat{g^2}(\xi^-) \widehat{\mu^{1/2}}(\xi^+)\,
\overline{\widehat{\mu^{1/2}}}
(\xi)\Big\} d\sigma d \xi\\
&= \frac{1}{2(2\pi)^{3}} \int_{\RR^{3}}\int_{\mathbb S^{2}}
b\Big(\frac{\xi}{|\xi|}
\,\cdot\,\sigma\Big)\widehat{g^2}(0) \big|\widehat{\mu^{1/2}}(\xi^+)-
\widehat{\mu^{1/2}}(\xi)\big|^2 d\sigma d \xi
\\
&+\frac{1}{(2\pi)^{3}} \int_{\RR^{3}}\int_{\mathbb S^{2}}
b\Big(\frac{\xi}{|\xi|}
\,\cdot\,\sigma\Big)\Big(\widehat{g^2}(0)-Re\, \widehat{g^2}(\xi^-)\Big)
\overline{\widehat{\mu^{1/2}}}(\xi)\,
\widehat{\mu^{1/2}}(\xi^+) d\sigma d \xi \\
&=B_1+B_2\, .
\end{align*}
For $B_1$, one has
\begin{align*}
B_1=&\int_{\RR^{3}}\int_{\mathbb S^{2}}
b\Big(\frac{\xi}{|\xi|}
\,\cdot\,\sigma\Big)\widehat{g^2}(0) \big|\widehat{\mu^{1/2}}(\xi^+)-
\widehat{\mu^{1/2}}(\xi)\big|^2 d\sigma d \xi\\
&=C_1\|g\|^2_{L^2(\RR^3_v)}\int_{\RR^{3}_\xi}
\widehat{\mu}(2\xi)
\int_{\mathbb S^{2}}
b\Big(\frac{\xi}{|\xi|}
\,\cdot\,\sigma\Big) \big|\widehat{\mu^{1/2}}(\xi^-)
-1\big|^2 d\sigma d \xi\\
&\geq C_2 \|g\|^2_{L^2(\RR^3_v)},
\end{align*}
where
$$
C_2=C_1\int_{\RR^{3}_\xi}
\widehat{\mu}(2\xi)
\int_{\mathbb S^{2}}
b\Big(\frac{\xi}{|\xi|}
\,\cdot\,\sigma\Big) \big|\widehat{\mu^{1/2}}(\xi^-)
-1\big|^2 d\sigma d \xi>0.
$$

For the second term on the right hand side, by using
$$
\overline{\widehat{\mu^{1/2}}}(\xi)\,
\widehat{\mu^{1/2}}(\xi^+)\geq C\widehat{\mu}(2\xi),
$$
for some positive constant $C$, we have
\begin{align*}
B_2=&\int_{\RR^{3}}\int_{\mathbb S^{2}}
b\Big(\frac{\xi}{|\xi|}
\,\cdot\,\sigma\Big)\Big(\widehat{g^2}(0)-Re\, \widehat{g^2}(\xi^-)\Big)
\overline{\widehat{\mu^{1/2}}}(\xi)\,
\widehat{\mu^{1/2}}(\xi^+) d\sigma d \xi\\
&\geq C\int_{\RR^{3}}\int_{\mathbb S^{2}}
b\Big(\frac{\xi}{|\xi|}
\,\cdot\,\sigma\Big)\Big(\widehat{g^2}(0)-Re\, \widehat{g^2}(\xi^-)\Big)
\widehat{\mu}(2\xi) d\sigma d \xi.
\\
&= C\int_{\RR^{3}}\int_{\mathbb S^{2}}
b\Big(\frac{\xi}{|\xi|}
\,\cdot\,\sigma\Big)\int_{\RR^3_v} g^2(v)\Big(1-\cos(\xi^-\,\cdot\, v)\Big)d v
\widehat{\mu}(2\xi) d\sigma d \xi.
\end{align*}
We now use Bobylev's technique \cite{bobylev} to have
\[
\int_{\mathbb S^{2}}
b\Big(\frac{\xi}{|\xi|}
\,\cdot\,\sigma\Big)\psi(\xi^-\cdot v)d\sigma
=\int_{\mathbb S^{2}}
b\Big(\frac{v}{|v|}
\,\cdot\,\sigma\Big)\psi(\xi\cdot v^-)d\sigma,
\]
so that
\begin{align*}
B_2
&\ge C\int_{\RR^3_v} g^2(v) \left(
\int_{\RR^{3}}\int_{\mathbb S^{2}}
b\Big(\frac{v}{|v|}
\,\cdot\,\sigma\Big)\Big(1-\cos(\xi\,\cdot\, v^-)\Big)
\widehat{\mu}(2\xi) d\sigma d \xi \right)d v\\
&=C\int_{\RR^3_v} g^2(v) \left(
\int_{\mathbb S^{2}}
b\Big(\frac{v}{|v|}
\,\cdot\,\sigma\Big)\Big(
{\mu}(0) -{\mu} \big(\frac{v^-}{2}\big)\Big)
d\sigma  \right)d v\\
&\geq C \int_{|v|\geq 1} g^2(v)|v|^{2s}dv\geq C 2^{-2s}\|g\|^2_{L^2_s(\RR^3_v)}
-C\|g\|^2_{L^2(\RR^3_v)}.
\end{align*}
where 
we have used (2.2.9) and the change of variables in $\sigma$
by exchanging $\xi/|\xi|$ and $v/|v|$.

Finally, by choosing a suitably small constant $0<\lambda<1$,
\begin{align*}
||| g |||^2&=A+B_1+B_2 \geq \lambda A+B_1+\lambda B_2\\
&\geq C(\|g\|^2_{H^s(\RR^3_v)}+
\|g\|^2_{L^2(\RR^3_v)}),
\end{align*}
and this concludes the proof of the lemma.
\end{proof}

\subsection{Upper bound estimates}\label{section2.3}
\setcounter{equation}{0}
\smallskip

 To apply the energy method, we  need  some upper bound
 estimate on the collision operator
 in terms of the non-isotropic norm which will be given in the
 following proposition. For  this, we first prove

\begin{lemm}\label{lemm2.3.1}
 There exists
$C>0$ such that
\begin{equation}\label{2.3.1}
\iiint b(\cos\theta) f^2_\ast (g'-g)^2 \leq C\, || f||^2_{L^2_s}
\,||| g|||^2\,.
\end{equation}
\end{lemm}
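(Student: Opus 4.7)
My plan is to mirror the proof of Lemma \ref{lemm2.2.1} (which produced inequality (2.2.3+)), but to upgrade the final step so that the Sobolev factor $\|g\|^2_{H^s_s}$ appearing there is replaced by the weaker non-isotropic norm $|||g|||^2$. The starting identity, obtained from symmetrizing $\iiint b f^2_*(g')^2$ via the involution $(v,v_*)\mapsto(v',v'_*)$ together with the polarization $g(g'-g)=\tfrac12[(g')^2-g^2]-\tfrac12(g'-g)^2$, reads
\[
  \iiint b\,f^2_*\,(g'-g)^2 \;=\; -2\,(Q(f^2,g),g)_{L^2(\RR^3_v)} \;+\; \iiint b\bigl(f^{2'}_*-f^2_*\bigr)g^2.
\]

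The cancellation (second) term is controlled by the cancellation lemma of \cite{al-1}: for Maxwellian molecules with mild singularity, $\iint b(f^{2'}_*-f^2_*)\,d\sigma\,dv_* = (f^2*S)(v)$ with a bounded kernel $S$, so
\[
  \Bigl|\iiint b(f^{2'}_*-f^2_*)g^2\Bigr|\le C\|f\|^2_{L^2}\|g\|^2_{L^2}\le C\|f\|^2_{L^2_s}\,|||g|||^2
\]
by Lemma \ref{lemm2.2.4}.

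The bulk of the work is to bound the bilinear term $|(Q(f^2,g),g)_{L^2}|$ by $C\|f\|^2_{L^2_s}\,|||g|||^2$. I plan to pass to Fourier via Bobylev's identity, which rewrites the left-hand side of the starting identity as
\[
  \tfrac{1}{(2\pi)^3}\iint b(\cos\theta)\Bigl[\widehat{f^2}(0)|\hat g(\xi^+)-\hat g(\xi)|^2 + 2\,\mathrm{Re}\bigl(\widehat{f^2}(0)-\widehat{f^2}(\xi^-)\bigr)\hat g(\xi^+)\overline{\hat g(\xi)}\Bigr]d\sigma\,d\xi,
\]
with $\xi^\pm=(\xi\pm|\xi|\sigma)/2$. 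The leading positive piece is dominated by $\|f\|^2_{L^2}\cdot\iint b|\hat g(\xi^+)-\hat g(\xi)|^2 d\sigma\,d\xi$, bounded by $C|||g|||^2$ through comparison with the Bobylev representation of $\iiint b\mu_*(g'-g)^2$ from the proof of Lemma \ref{lemm2.2.4}. For the oscillatory piece, the interpolation $|e^{-ix}-1|\le C|x|^{2s}$ (valid for $s<1/2$) gives $|\widehat{f^2}(0)-\widehat{f^2}(\xi^-)|\le C\|f\|^2_{L^2_s}|\xi^-|^{2s}$; a Cauchy--Schwarz splitting $\hat g(\xi^+)\overline{\hat g(\xi)}=|\hat g(\xi)|^2+(\hat g(\xi^+)-\hat g(\xi))\overline{\hat g(\xi)}$ then sends the $|\hat g(\xi)|^2$ part to the real-part cancellation $\bigl|\mathrm{Re}\!\int b(\widehat{f^2}(\xi^-)-\widehat{f^2}(0))d\sigma\bigr|\le C\|f\|^2_{L^2_s}|\xi|^{2s}$ (via $1-\cos\le\min(x^2/2,2)$ and a dyadic split in $\theta$) paired with $\int|\xi|^{2s}|\hat g|^2 d\xi\le C\|g\|^2_{H^s}\le C|||g|||^2$, while the cross-term is closed by Cauchy--Schwarz pairing $\iint b|\hat g(\xi^+)-\hat g(\xi)|^2 d\sigma d\xi \le C|||g|||^2$ against $\int b|\widehat{f^2}(\xi^-)-\widehat{f^2}(0)|^2 d\sigma\le C\|f\|^4_{L^2_s}|\xi|^{4s}$, the angular integral $\int b\sin^{4s}(\theta/2)d\sigma$ being finite for $s>0$.

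The chief technical obstacle is the upper bound $\iint b|\hat g(\xi^+)-\hat g(\xi)|^2 d\sigma\,d\xi\le C|||g|||^2$: the Bobylev form of the first summand of $|||g|||^2$ weights this integrand by $\hat\mu(\xi^-)$, which has Gaussian decay, so the high-frequency contribution must be recovered from the second summand $\iiint b g_*^2(\sqrt{\mu'}-\sqrt\mu)^2$ of $|||g|||^2$, analysed via Lemma \ref{lemm2.2.2} with $\phi=\mu^{1/2}$, which provides the missing $|\xi|^{2s}$-weight in Fourier variables.
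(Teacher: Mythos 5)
Your plan has a genuine gap at the Cauchy--Schwarz step for the cross-term of the oscillatory piece. After the split $\hat g(\xi^+)\overline{\hat g(\xi)}=|\hat g(\xi)|^2+(\hat g(\xi^+)-\hat g(\xi))\overline{\hat g(\xi)}$, you propose to pair
\[
\mathrm{Re}\iint b\,\bigl(\widehat{f^2}(0)-\widehat{f^2}(\xi^-)\bigr)\bigl(\hat g(\xi^+)-\hat g(\xi)\bigr)\overline{\hat g(\xi)}\,d\sigma\,d\xi
\]
via Cauchy--Schwarz with the bare kernel $b$, yielding the two factors $\bigl(\iint b|\hat g(\xi^+)-\hat g(\xi)|^2\bigr)^{1/2}$ and $\bigl(\iint b|\widehat{f^2}(0)-\widehat{f^2}(\xi^-)|^2|\hat g(\xi)|^2\bigr)^{1/2}$. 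The second factor, using your pointwise bound and the finiteness of $\int b\sin^{4s}(\theta/2)\,d\sigma$, gives $\le C\|f\|_{L^2_s}^2\bigl(\int|\xi|^{4s}|\hat g(\xi)|^2 d\xi\bigr)^{1/2}=C\|f\|_{L^2_s}^2\|g\|_{\dot H^{2s}}$. But $\|g\|_{H^{2s}}$ is \emph{not} controlled by $|||g|||$: Lemma~\ref{lemm2.2.4} gives only $\|g\|_{H^s}+\|g\|_{L^2_s}\le C|||g|||$, and Lemma~\ref{lemm2.2.1} bounds $|||g|||$ above by $\|g\|_{H^s_s}$, so there is no route to $H^{2s}$. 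Squaring the kernel $|\widehat{f^2}(0)-\widehat{f^2}(\xi^-)|$ before the angular integral doubles the power of $|\xi|$ that appears, and this overshoots the Sobolev regularity that the non-isotropic norm provides.

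The paper closes the estimate by a different Cauchy--Schwarz: it uses $|\widehat{f^2}(0)-\widehat{f^2}(\xi^-)|$ to the \emph{first} power as the weight,
\[
\Bigl(\iint b\,|\widehat{f^2}(0)-\widehat{f^2}(\xi^-)|\,|\hat g(\xi)|^2\Bigr)^{1/2}\Bigl(\iint b\,|\widehat{f^2}(0)-\widehat{f^2}(\xi^-)|\,|\hat g(\xi^+)|^2\Bigr)^{1/2},
\]
so that after the angular estimate $\int b\,|\widehat{f^2}(0)-\widehat{f^2}(\xi^-)|\,d\sigma\le C\|f\|^2_{L^2_s}|\xi|^{2s}$ (proved by the dyadic split in $\theta$, not by the pointwise $|1-e^{-ix}|\le C|x|^{2s}$ bound, which alone would leave a non-integrable $\theta^{-1}$) each factor lands in $\|g\|_{H^s}$; the $\xi^+$ integral is handled by the regular change of variables $\xi\to\xi^+$ with Jacobian $\tfrac14\cos^2(\theta/2)$. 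This also makes your ``chief technical obstacle'' vanish: by comparing the Bobylev form of $\iiint b\,f^2_*(g'-g)^2$ directly against $\|f\|^2_{L^2}$ times the Bobylev form of $\iiint b\,\mu_*(g'-g)^2$ (as the paper does), the $\iint b|\hat g(\xi^+)-\hat g(\xi)|^2$ pieces cancel identically, and one is left only with two oscillatory corrections (one involving $\hat\mu$, one involving $\widehat{f^2}$), each of which is bounded by $C\|f\|^2_{L^2_s}\|g\|^2_{H^s}$. Your detour through the $Q$-form identity and the cancellation lemma is also not needed once one goes straight to Bobylev on both sides.
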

\begin{proof}
Different {}from Lemma
\ref{lemm2.2.1},  we apply Bobylev formula \cite{bobylev} to have
\begin{align*}
&\iiint b(\cos\theta) \mu_\ast (g'-g)^2dv_*d\sigma dv \\
&=\frac{1}{(2\pi)^3}\iint b \left(\frac{\xi}{|\xi|}\cdot \sigma
\right) \Big(\hat \mu (0) (|\hat g (\xi )|^2 + |\hat g (\xi^+ )|^2 )- 2
Re\,\, \hat \mu (\xi^-)  \hat g (\xi^+ ) \overline{\hat g} (\xi )
\Big )d\xi d\sigma \\
&=\frac{1}{(2\pi)^3}\iint b \Big(\frac{\xi}{|\xi|}\cdot \sigma \Big)
\Big(\hat \mu (0) | \hat g (\xi ) - \hat g (\xi^+) |^2 + 2 Re\,
\Big(\hat \mu (0) - \hat \mu (\xi^-) \Big) \hat g (\xi^+ )
\overline{\hat g} (\xi ) \Big)d\xi d\sigma ,
\end{align*}
and
\begin{align*}
&\iiint b(\cos\theta) f^2_\ast (g'-g)^2dv_*d\sigma dv\\
&=\frac{1}{(2\pi)^3}\iint b \left(\frac{\xi}{|\xi|}\cdot \sigma
\right) \Big(\widehat{f^2} (0) | \hat g (\xi ) - \hat g (\xi^+) |^2+
2 Re\, \Big(\widehat{f^2} (0) - \widehat{f^2} (\xi^-) \Big) \hat g
(\xi^+ ) \overline{\hat g} (\xi ) \Big)d\xi d\sigma\, .
\end{align*}
Since $\hat \mu (0)=1, \widehat{f^2} (0)=\|f\|^2_{L^2}$, we obtain
\begin{align*}
&\iiint b(\cos\theta) f^2_\ast (g'-g)^2dv_*d\sigma dv\\
&=\|f\|^2_{L^2}\iiint b(\cos\theta) \mu_\ast (g'-g)^2dv_*d\sigma dv\\
&\qquad-\frac{2}{(2\pi)^3}\|f\|^2_{L^2}\iint b
\left(\frac{\xi}{|\xi|}\cdot \sigma \right)\,Re\, \Big(\hat \mu (0)
- \hat \mu (\xi^-) \Big) \hat g (\xi^+ ) \overline{\hat g} (\xi )
\Big)d\xi d\sigma\\
&\qquad\qquad+ \frac{2}{(2\pi)^3}\iint b
\left(\frac{\xi}{|\xi|}\cdot \sigma \right) Re\, \Big(\widehat{f^2}
(0) - \widehat{f^2} (\xi^-) \Big) \hat g (\xi^+ ) \overline{\hat g}
(\xi ) \Big)d\xi d\sigma\, .
\end{align*}
For the last term, we note that
$$
\int b\left(\frac{\xi}{|\xi|}\cdot \sigma \right) | \widehat{f^2} (0)
- \widehat{f^2} (\xi^- ) |d\sigma
 \leq \int b\left(\frac{\xi}{|\xi|}\cdot \sigma \right)
 \left(\int_v f^2 (v) \Big| 1- e^{-iv\cdot\xi^-}
 \Big| dv\right) d\sigma.
$$
Now consider
$$
\int_{\SS^2} b\left(\frac{\xi}{|\xi|}\cdot \sigma \right) \Big| 1-
e^{-iv\cdot\xi^-} \Big| d\sigma.
$$
If $| v| | \xi | \geq \frac{2}{\pi}$, we choose $\delta = \frac{1}{
|\xi| |v|}\,\,  \leq \pi /2 $ to have $| 1- e^{-iv.\xi^-} | \leq |
v| | \xi | \sin \theta$ for any $0 \leq \theta \leq \delta $. And
if ${\frac\pi 2} \geq \theta \geq \delta$, we have $| 1-
e^{-iv.\xi^-} | \leq 2$. Hence,
\begin{align*}
\int_{\SS^2} b\left(\frac{\xi}{|\xi|}\cdot \sigma \right) \Big| 1-
e^{-iv\cdot\xi^-} \Big| d\sigma&\leq C | v | | \xi | \int^\delta_0
\frac{1}{{\theta^{1+2s}}} \,\sin \theta \,d\theta + C' \int^{\pi
/2}_\delta \frac{1}{\theta^{1+2s}} d\theta\\
&\leq C | v | | \xi | \delta^{-2s +1 } + C' \delta^{-2s} \leq \tilde
C | v|^{2s} | \xi |^{2s}.
\end{align*}
On the other hand, if $| v| | \xi| \leq  \frac{2}{\pi}$, we have directly
\begin{align*}
\int_{\SS^2} b\left(\frac{\xi}{|\xi|}\cdot \sigma \right) \Big| 1-
e^{-iv\cdot\xi^-} \Big| d\sigma&\leq C \int^{\pi /2}_0
\frac{1}{{\theta^{1+2s}}} | v |\, | \xi|  \sin \theta  d\theta\\
&\leq \tilde C | v | | \xi | \leq \tilde C | v|^{2s} | \xi |^{2s}.
\end{align*}

Thus, we have
\begin{align*}
\iint b\left(\frac{\xi}{|\xi|}\cdot \sigma \right)
\big|\widehat{f^2} (0) - \widehat{f^2} (\xi^-) \big| |{\hat g}|^2 (\xi ) d\xi d\sigma\leq
C\|f\|^2_{L^2_s}\|g\|^2_{H^s}.
\end{align*}
By using the regular change of variables $\xi\rightarrow \xi^+$,
and by noticing that
 \begin{eqnarray*}
 &\xi^-=\phi(\xi^+,\sigma)=\xi^+-\frac{|\xi^+|}{\cos\frac \theta 2}\sigma,\qquad
 |\xi^-|=|\xi^+|\tan\frac\theta 2,\qquad \cos\frac\theta 2=\frac{\xi^+}{|\xi^+|}\cdot\sigma,
 \\&
\Big|\frac {\partial (\xi^+)}{\partial ( \xi)}\Big| = \frac 14 \cos^2\theta/2,
 \end{eqnarray*}
  we have
\begin{align*}
&\iint b\left(\frac{\xi}{|\xi|}\cdot \sigma \right)
\big|\widehat{f^2} (0) - \widehat{f^2} (\xi^-) \big| |{\hat g}|^2 (\xi^+ ) d\xi d\sigma\\
&=\iint \frac{1}{\cos^2\theta/2}b\left(2(\frac{\xi^+}{|\xi^+|}\cdot \sigma)^2-1 \right)
\big|\widehat{f^2} (0) - \widehat{f^2} (\phi(\xi^+,\sigma)) \big| |{\hat g}|^2 (\xi^+ )  d\xi^+ d\sigma\\
&\leq
C\|f\|^2_{L^2_s}\|g\|^2_{H^s}.
\end{align*}
Hence,
\begin{align*}
\big|\iint b
\left(\frac{\xi}{|\xi|}\cdot \sigma \right) Re\, \Big(\widehat{f^2}
(0) - \widehat{f^2} (\xi^-) \Big) \hat g (\xi^+ ) \overline{\hat g}
(\xi ) d\xi d\sigma\big|
\leq
C\|f\|^2_{L^2_s}\|g\|^2_{H^s}.
\end{align*}
Similarly, we have
\begin{align*}
\left| \iint b \left(\frac{\xi}{|\xi|}\cdot \sigma \right)\,Re\,
\Big(\hat \mu (0) - \hat \mu (\xi^-) \Big) \hat g (\xi^+ )
\overline{\hat g} (\xi ) d\xi d\sigma \right|\leq
C\|\sqrt{\mu}\,\|^2_{L^2_s}\|g\|^2_{H^s}.
\end{align*}
Therefore, we have proved (\ref{2.3.1}) by using (\ref{2.2.13}).
\end{proof}

In view of future application of the energy method, the scalar product of the collision operator with a test function is given by

\begin{prop}\label{prop2.3.1}
 There exists
$C>0$ such that
\begin{equation*}\label{2.3.2}
\left|\Big(\Gamma (f,\, g) ,\,  h\Big)_{L^2(\RR^3)} \right| \leq
C\left(|| f ||_{L^2_s}\, ||| g||| + ||
g||_{L^2_s}\,||| f|||\, \right)\,\, |||
h|||\, .
\end{equation*}
\end{prop}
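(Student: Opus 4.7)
The plan is to start from the symmetrized representation (\ref{2.2.5}),
$$
\Big(\Gamma(f,\, g),\, h\Big)_{L^2(\RR^3_v)} = \frac{1}{2}\iiint b(\cos\theta)\,(f'_*g' - f_*g)\,(\sqrt{\mu_*}\,h - \sqrt{\mu'_*}\,h')\,dv_*\,d\sigma\, dv,
$$
and to split the first difference as $f'_*g' - f_*g = f'_*(g'-g) + (f'_* - f_*)\,g$, producing two integrals $I_1$ and $I_2$. In each, Cauchy--Schwarz with weight $b(\cos\theta)$ will isolate the common factor $\sqrt{\mu_*}\,h - \sqrt{\mu'_*}\,h'$, while the remaining factor involves either $(g'-g)^2$ or $(f'_*-f_*)^2$, which is precisely the structure controlled by Lemma \ref{lemm2.3.1}.

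For the $h$-factor, I would write $\sqrt{\mu_*}\,h - \sqrt{\mu'_*}\,h' = \sqrt{\mu_*}(h-h') + h'(\sqrt{\mu_*}-\sqrt{\mu'_*})$, square, and apply $(a+b)^2 \leq 2a^2+2b^2$. The first resulting piece is immediately bounded by $2\iiint b\,\mu_*(h-h')^2$, i.e.\ twice the first term of $|||h|||^2$. For the second piece, apply the involution $(v,v_*)\to(v',v'_*)$ (Jacobian $1$, sending $h'\to h$ and exchanging $\mu_*\leftrightarrow\mu'_*$) followed by the swap $(v,v_*)\to(v_*,v)$ (sending $h\to h_*$ and $\mu,\mu'\to\mu_*,\mu'_*$); together these convert $\iiint b\,(h')^2(\sqrt{\mu_*}-\sqrt{\mu'_*})^2$ into exactly the second term $\iiint b\, h_*^2(\sqrt{\mu'}-\sqrt{\mu})^2$ of $|||h|||^2$. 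Altogether this gives $\iiint b\,(\sqrt{\mu_*}\,h-\sqrt{\mu'_*}\,h')^2 \leq C\,|||h|||^2$.

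For $I_1$, Cauchy--Schwarz couples the bound just derived with $\iiint b\,(f'_*)^2(g'-g)^2$; the involution $(v,v_*)\to(v',v'_*)$ rewrites the latter as $\iiint b\, f_*^2\,(g'-g)^2$, which Lemma \ref{lemm2.3.1} bounds by $C\|f\|_{L^2_s}^2\,|||g|||^2$. For $I_2$, the parallel Cauchy--Schwarz produces $\iiint b\, g^2(f'_*-f_*)^2$, and the swap $(v,v_*)\to(v_*,v)$ converts this into $\iiint b\, g_*^2\,(f'-f)^2$; Lemma \ref{lemm2.3.1} with the roles of $f$ and $g$ exchanged yields $C\|g\|_{L^2_s}^2\,|||f|||^2$. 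Adding $|I_1|$ and $|I_2|$ gives exactly the asserted estimate.

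The only real obstacle is the bookkeeping: one must carefully track the primed/unprimed and starred/unstarred positions of each variable through each substitution so that the two pieces of $|||h|||^2$ are recovered with their correct pairings --- $\mu_*$ multiplying $(h-h')^2$ in the first piece, and $h_*^2$ multiplying $(\sqrt{\mu'}-\sqrt{\mu})^2$ in the second. No analytic input beyond Lemma \ref{lemm2.3.1} and the standard Jacobian-one involutions of the Boltzmann collision geometry is needed.
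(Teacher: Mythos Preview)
Your proof is correct and follows essentially the same approach as the paper: start from the symmetrized formula (\ref{2.2.5}), use Cauchy--Schwarz and the collisional involutions to isolate the factor $\iiint b(\sqrt{\mu_*}\,h-\sqrt{\mu'_*}\,h')^2 \le C|||h|||^2$, and reduce the remaining factors to Lemma~\ref{lemm2.3.1}. The only cosmetic difference is the order of operations --- the paper applies Cauchy--Schwarz once to the full product $A^{1/2}B^{1/2}$ and then splits $A$ via $(a+b)^2\le 2a^2+2b^2$, whereas you split $f'_*g'-f_*g$ first and apply Cauchy--Schwarz to each piece --- but the ingredients and the changes of variables are identical.
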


\begin{proof} Note that
\begin{align*}
\Big(\Gamma (f,\, g) ,\, h\Big)_{L^2(\RR^3)} &= \Big(\mu^{-1/2}Q
(\mu^{1/2} f,\, \mu^{1/2} g ),\,  h\Big)_{L^2(\RR^3)}\\
&= \iiint b(\cos\theta) \mu^{1/2}_\ast \Big( f'_\ast g' - f_\ast g
\Big)\,\,h\\
& =\frac 12 \iiint b(\cos\theta) \Big(f'_\ast g' - f_\ast g\Big)
\Big( \mu^{1/2}_\ast h - \mu^{1/2 '}_\ast h' \Big)\\
&\leq \frac 12 \left(\iiint b (\cos\theta) \Big(f'_\ast g'- f_\ast
g\Big)^2 \right)^{1/2}\\
&\qquad\qquad\times \left( \iiint b (\cos\theta) \Big((\mu_*)^{1/2}
h - (\mu'_*)^{1/2} h' \Big)^2 \right)^{1/2}\\
&\leq \frac 12  A^{1/2}\times B^{1/2}.
\end{align*}
For $B$, we have
\begin{align*}
B
&\leq 2 \iiint b(\cos\theta) \mu_\ast (h'-h)^2 +  2 \iiint
b(\cos\theta) h^2_\ast \Big((\mu')^{1/2} -\mu^{1/2} \Big)^2 = 2 |||
h|||^2,
\end{align*}
where we have used the change of variables $(v,v_\ast )\rightarrow (v',v'_\ast )$ for the first
term and $(v,v_\ast )\rightarrow (v_*,v)$ for the second term. Similarly,
\begin{align*}
A
&\leq 2 \iiint b(\cos\theta) {f_*}^{2} (g'-g)^2+ 2 \iiint
b(\cos\theta) {g_*}^{2} (f'-f)^2.
\end{align*}
Then (\ref{2.3.1}) implies that
$$
A\leq C\left( || f||^2_{L^2_s} \,
||| g|||^2+ || g||^2_{L^2_s}\, ||| f|||^2\, \right),
$$
which completes the proof of the proposition.
\end{proof}

\section{Commutator estimates}\label{section3}
\smallbreak

\subsection{Non-isotropic norm in $\RR^6_{x,v}$}\label{section3.1}
\setcounter{equation}{0} \smallbreak

We now define the norm associated with the collision operator on the
space of  $(x, v)$.  For $m\in\NN, \ell\in \RR$, set
$$
\cB^m_\ell(\RR^6_{x, v})=\left\{ g\in\cS'(\RR^6_{x, v});\,\,
|||g|||^2_{\cB^m_\ell(\RR^6)}=\sum_{|\alpha|\leq m}\int_{\RR^3_x}|||W^\ell\,
\partial^\alpha_{x, v} g(x,\, \cdot\,)|||^2dx <+\infty\right\},
$$
where $|||\,\cdot\,|||$ is the non-isotropic norm
defined in (\ref{2.2.1}).

First of all, one has
\begin{lemm}\label{lemm3.1.1} For any $\ell\geq 0, \gamma, \beta\in  \NN^3$,
\begin{equation}\label{3.1.0}
|||W^\ell\partial^{\gamma}_x\partial^{\beta}_v\,\,\pP
g|||_{\cB^0_0(\RR^6)}+|||\pP (W^\ell
\partial^{\gamma}_x\partial^{\beta}_v\,\, g)|||_{\cB^0_0(\RR^6)}
\leq C_{\ell, \beta}||\partial^{\gamma}_x
g||^2_{L^2(\RR^6)},
\end{equation}
\begin{equation}\label{3.1.0+1}
C_0|||g|||^2_{\cB^0_0(\RR^6)}-C_2||g||^2_{L^2(\RR^6)} \leq \Big(\cL
g,\, g\Big)_{L^2(\RR^6_{x, v})}
 \leq C_3 |||g|||^2_{\cB^0_0(\RR^6)},
\end{equation}
and
\begin{equation}\label{3.1.0+2}
||g||^2_{L^2_{l+s}(\RR^6)} +||g||^2_{L^2(\RR^3_x; H^s_{l}(\RR^3_v))} \leq
C|||g|||^2_{\cB^0_l(\RR^6)}\leq C ||g||^2_{L^2(\RR^3_x;
H^{s}_{l+s}(\RR^3_v))}.
\end{equation}
\end{lemm}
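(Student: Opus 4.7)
The overall strategy is to integrate the per-fiber velocity estimates of Section 2.2 against $x$, handling the projection $\pP$ by its explicit finite-rank Schwartz representation.

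For (3.1.0), I would exploit that
\[
\pP g(x,v)=\sum_{i=1}^{5}c_i(x)\,\varphi_i(v),\qquad \varphi_i\in\{\sqrt{\mu},\,v_j\sqrt{\mu},\,|v|^2\sqrt{\mu}\},
\]
with $c_i(x)=\sum_j\alpha_{ij}\int_{\RR^3_v} g(x,v)\psi_j(v)\,dv$ for fixed Schwartz functions $\psi_j$. For the first term, differentiating gives $W^\ell\partial^\gamma_x\partial^\beta_v\pP g=\sum_i \partial^\gamma_x c_i(x)\cdot W^\ell\partial^\beta_v\varphi_i(v)$; each $W^\ell\partial^\beta_v\varphi_i$ is Schwartz in $v$, so by Lemma \ref{lemm2.2.1} its non-isotropic norm is finite, and Cauchy-Schwarz in $v$ gives $|\partial^\gamma_x c_i(x)|^2\le C\|\partial^\gamma_x g(x,\cdot)\|^2_{L^2(\RR^3_v)}$. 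Integration in $x$ yields the bound by $\|\partial^\gamma_x g\|^2_{L^2(\RR^6)}$. For the second term, move the $\partial^\beta_v$ in $\pP(W^\ell\partial^\gamma_x\partial^\beta_v g)$ onto $W^\ell\psi_j$ by integration by parts in $v$, reducing to the same situation with a different (still Schwartz) family of test functions.

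For (3.1.0+1), the upper bound follows by integrating over $x$ the pointwise inequality $(\cL g,g)_{L^2(\RR^3_v)}\le 2|||g|||^2$, which in turn follows from the upper-bound block at the end of the proof of Lemma \ref{lemm2.2.3} combined with Lemma \ref{lemm2.1.1}. For the lower bound, write $g=\pP g+g_1$ with $g_1=(\iI-\pP)g\in\mathcal{N}^\perp$. Since $\cL \pP g=0$, $(\cL g,g)=(\cL g_1,g_1)$, and Lemma \ref{lemm2.2.3} applied pointwise in $x$ gives $(\cL g_1,g_1)_{L^2(\RR^3_v)}\ge C|||g_1|||^2$. Combining with $|||g|||^2\le 2|||g_1|||^2+2|||\pP g|||^2$ and the already proved (3.1.0) (case $\gamma=\beta=0$, $\ell=0$) to absorb $|||\pP g|||^2\le C\|g\|^2_{L^2(\RR^3_v)}$, integration in $x$ yields $(\cL g,g)_{L^2(\RR^6)}\ge C_0|||g|||^2_{\cB^0_0}-C_2\|g\|^2_{L^2(\RR^6)}$.

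For (3.1.0+2), the two bounds come from applying Lemmas \ref{lemm2.2.1} and \ref{lemm2.2.4} to $W^\ell g(x,\cdot)$ for each fixed $x$ and integrating. Using the identities $\|W^\ell g\|_{H^s_s(\RR^3_v)}=\|W^{\ell+s}g\|_{H^s(\RR^3_v)}=\|g\|_{H^s_{\ell+s}(\RR^3_v)}$ (up to harmless commutators) and $\|W^\ell g\|_{L^2_s}=\|g\|_{L^2_{\ell+s}}$, Lemma \ref{lemm2.2.1} yields the upper bound and Lemma \ref{lemm2.2.4} the lower bound after integration in $x$.

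The only point that requires any care is the lower bound in (3.1.0+1): the Mouhot-Strain estimate (Theorem \ref{theo2.2}) controls $\|(\iI-\pP)g\|^2_{L^2_s}$, not $|||g|||^2$ directly, so the key step is the passage via Lemma \ref{lemm2.2.3}'s equivalence on $\mathcal{N}^\perp$ combined with the absorption of $|||\pP g|||^2$ by $\|g\|^2_{L^2(\RR^6)}$ through (3.1.0). Everything else amounts to integrating pointwise-in-$x$ estimates already established in Section 2.2.
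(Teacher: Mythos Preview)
Your proposal is correct and follows essentially the same approach as the paper: the explicit finite-rank form of $\pP$ plus integration by parts for (3.1.0), the equivalence of Lemma~\ref{lemm2.2.3} on $\mathcal{N}^\perp$ together with absorption of $|||\pP g|||$ via (3.1.0) for (3.1.0+1), and direct integration in $x$ of Lemmas~\ref{lemm2.2.1} and~\ref{lemm2.2.4} applied to $W^\ell g(x,\cdot)$ for (3.1.0+2). One small remark: the identity $\|W^\ell g\|_{H^s_s(\RR^3_v)}=\|g\|_{H^s_{\ell+s}(\RR^3_v)}$ is exact by the definition $H^k_\ell=\{f:W^\ell f\in H^k\}$, so no commutator argument is needed there.
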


\begin{proof} By definition of the projection operator $\pP$, we have
$$
\pP g = a_g (t,x) \mu^{1/2} + \sum^3_{j=1} b_{g, j} (t,x)\, v_j
\mu^{1/2} + c_ g (t,x) | v|^2 \mu^{1/2},
$$
with
\[
a_g (t, x) = \int_{\RR^3_v} g(t,x,v) \mu^{1/2} (v) dv,\,\, c_g (t,x)
= \int_v g(t,x,v) | v|^2 \mu^{1/2}(v) dv,
\]
and
\[
b_{g, j}(t, x) = \int_v g (t,x,v)\, v_j \mu^{1/2} (v) dv,\qquad\qquad j=1,2,3.
\]
Thus (\ref{3.1.0}) can be obtained by integration by parts.
To get (\ref{3.1.0+1}), we use (\ref{2.2.2}) and (\ref{2.2.2+11})
to obtain
\begin{align*}
|||g|||^2_{\cB^0_0(\RR^6)}&\geq C \Big(\cL
g,\, g\Big)_{L^2(\RR^6_{x, v})}
\geq C_0|||(\iI-\pP) g|||^2_{\cB^0_0(\RR^6)}\\
&\geq \frac{C_0}{2}|||g|||^2_{\cB^0_0(\RR^6)}-
C_0|||\pP g|||^2_{\cB^0_0(\RR^6)}\\
&\geq \frac{C_0}{2}|||g|||^2_{\cB^0_0(\RR^6)}
-C_2||g||^2_{L^2(\RR^6)}.
\end{align*}
Finally, (\ref{3.1.0+2}) follows directly {}from (\ref{2.2.2}) and (\ref{2.2.13}).
\end{proof}

\subsection{Weighted estimates on commutators}\label{section3.2}
\setcounter{equation}{0}
 We will use the following notation, for $\gamma\in\NN^3$,
\begin{equation}\label{3.1.1}
\cT(F,\, G,\, \mu_\gamma\,)=Q(\mu_\gamma\, F,\, G)+ \iint
b(\cos\theta) \Big((\mu_\gamma)_*\, -(\mu_\gamma)'_*\,\,\Big)F'_* G'
d v_* d \sigma\,,
\end{equation}
where $\mu_\gamma=p_{\gamma}(v)\sqrt{\mu(v)}=\partial^\gamma
(\sqrt\mu\,)\,$ is a Maxwellian type function of variable $v$.

In this notation,
(\ref{1formula}) is equivalent to
$$
\Gamma(f, g) = \cT(f, g, \sqrt\mu\,).
$$
And the Leibniz formula gives
\begin{equation}\label{3.1.2}
\partial^\alpha_x\partial^\beta_v\Gamma(f,\, g)=
\sum_{\alpha_1+\alpha_2=\alpha,\,\, \beta_1+\beta_2+\beta_3=\beta}
C^{\alpha_1, \alpha_2}_{\beta_1, \beta_2, \beta_3}
\cT(\partial^{\alpha_1}_x\partial^{\beta_1}_v f,\,
\partial^{\alpha_2}_x\partial^{\beta_2}_v g,\, \mu_{\beta_3}\,)\, .
\end{equation}

First of all, let us recall the following lemma {}from \cite{amuxy-nonlinear-3}.
\begin{lemm}\label{lemm3.2.1}
Let $\ell \geq 0, 0<s<1/2 $. There exists $C>0$
such that
\begin{equation*}\label{3.2.1}
\left|\Big(\big(W^\ell  \,\, Q(f,\,\,g)-Q(f,\,\,W^\ell  \,\,
g)\big),\,\,\, h\Big)_{L^2(\RR^3)}\right| \leq C
\|f\|_{L^1_{\ell}(\RR^3_v))}\|g\|_{L^2_{\ell} (\RR^3)}
\|h\|_{L^2(\RR^3)}.
\end{equation*}
\end{lemm}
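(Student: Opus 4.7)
The starting observation is that the commutator cleanly reduces to a single gain-term integral: the loss contributions in $W^\ell Q(f,g)$ and $Q(f,W^\ell g)$ are both equal to $W^\ell(v) f_* g$, so they cancel. Thus
$$
\bigl(W^\ell Q(f,g) - Q(f,W^\ell g)\bigr)(v) = \iint b(\cos\theta)\, f'_*\, \bigl[W^\ell(v) - W^\ell(v')\bigr]\, g'\, dv_*\, d\sigma,
$$
and pairing with $h$ reduces the problem to bounding
$$
I = \iiint b(\cos\theta)\, f'_*\, \bigl[W^\ell(v) - W^\ell(v')\bigr]\, g'\, h(v)\, dv\, dv_*\, d\sigma.
$$

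The plan is a Taylor expansion of $W^\ell$ along the segment from $v$ to $v'$, using the geometric identity $|v'-v| = \sin(\theta/2)|v-v_*|$, the derivative bounds $|\nabla W^\ell(u)| \le C\langle u\rangle^{\ell-1}$ and $|D^2 W^\ell(u)| \le C\langle u\rangle^{\ell-2}$, and the estimate $\langle v + t(v'-v)\rangle \le C(\langle v\rangle + \langle v_*\rangle)$. I would decompose
$$
v' - v = -\sin^2(\theta/2)(v-v_*) + \sin(\theta/2)\cos(\theta/2)|v-v_*|\, e(\phi),
$$
where $e(\phi)$ is the unit vector in the plane orthogonal to $(v-v_*)/|v-v_*|$, parametrized by the azimuthal angle $\phi$. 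The first-order Taylor term then splits into a radial piece of size $\sin^2(\theta/2)$ and a tangential piece of size $\sin(\theta/2)$, while the second-order remainder is pointwise $O(\sin^2(\theta/2)|v-v_*|^2\langle \xi\rangle^{\ell-2})$. The radial and remainder pieces, combined with $b(\cos\theta)$, produce the integrable angular kernel $b(\cos\theta)\sin^2(\theta/2)\sim \theta^{-2s}$ --- this is precisely where $s<1/2$ is consumed --- and the remaining polynomial weight collapses to $(\langle v\rangle+\langle v_*\rangle)^\ell$, which after the pre-post collisional substitution $(v,v_*)\to(v',v'_*)$ (Jacobian one) can be transferred onto $f$ as $\langle v_*\rangle^\ell$.

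The tangential first-order piece is the delicate one: taken alone it only yields the non-integrable $b(\cos\theta)\sin(\theta/2)\sim \theta^{-1-2s}$. Here I would exploit the azimuthal cancellation $\int_0^{2\pi} e(\phi)\, d\phi = 0$: writing $f'_* g' = f_* g + (f'_* g' - f_* g)$, the $\phi$-independent summand is annihilated by the angular average of $e(\phi)\cdot\nabla W^\ell(v)$, while the defect vanishes as $\theta\to 0$ and supplies the missing factor $\sin(\theta/2)$ through a symmetrization in $\phi \leftrightarrow \phi+\pi$ carried out at the kernel level, so that no smoothness of $f$ or $g$ is invoked. Once each piece carries the integrable $\sin^2(\theta/2)$ factor together with the weight $(\langle v\rangle+\langle v_*\rangle)^\ell$, a Cauchy--Schwarz splitting $|g' h(v)| \le \tfrac12(|g'|^2+|h(v)|^2)$ followed by the unit-Jacobian change of variables transfers the weight onto $f$, giving $|I|\le C\|f\|_{L^1_\ell}\|g\|_{L^2_\ell}\|h\|_{L^2}$. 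The main technical obstacle is exactly the tangential term: extracting genuine oscillatory cancellation from the $\phi$-integral without differentiating $f$ or $g$ --- and doing so in a way that lands on the exact norms $L^1_\ell$, $L^2_\ell$, $L^2$ prescribed by the lemma --- is the point where one must be careful, and it is also the step that pins the hypothesis $s<1/2$.
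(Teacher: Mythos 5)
The paper does not actually prove Lemma~\ref{lemm3.2.1} here --- it cites \cite{amuxy-nonlinear-3} --- but the structurally identical argument is carried out in this paper for Lemma~\ref{moment-commutator-with-x}, and the same first-order mechanism appears inside the proof of Proposition~\ref{prop3.2.1}. Comparing against those, your proposal contains a genuine error that leads you to over-engineer the entire tangential analysis.

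The error is in the claim that the first-order tangential piece, carrying a single factor $\sin(\theta/2)$, produces a non-integrable angular kernel $b(\cos\theta)\sin(\theta/2)\sim\theta^{-1-2s}$. That exponent is with respect to $d\theta$, but the integration is over $\mathbb S^2$, whose surface measure supplies an extra $\sin\theta\,d\theta\sim\theta\,d\theta$. Thus $\int_{\mathbb S^2} b(\cos\theta)\sin(\theta/2)\,d\sigma \sim \int_0^{\pi/2}\theta^{-2s}\,d\theta$, which converges precisely when $s<1/2$ --- the hypothesis of the lemma. So the tangential term is already harmless at first order, and the azimuthal cancellation $\int_0^{2\pi}e(\phi)\,d\phi=0$, the symmetrization $\phi\mapsto\phi+\pi$, and the splitting $f'_*g'=f_*g+(f'_*g'-f_*g)$ are all unnecessary; that machinery is what one would reach for in the strong-singularity range $1/2\le s<1$, not here. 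You also misplace where $s<1/2$ is consumed: $\int b\,\theta^2\,d\sigma<\infty$ holds for all $s<1$, so the ``radial and remainder'' pieces never see the constraint.

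The argument the paper actually relies on is one step shorter than what you propose: after cancelling the loss terms exactly as you did, one uses the pointwise first-order bound
\begin{equation*}
|W^\ell(v)-W^\ell(v')|\le C\sin\!\big(\tfrac{\theta}{2}\big)\,\langle v'\rangle^\ell\langle v'_*\rangle^\ell,
\end{equation*}
then applies Cauchy--Schwarz in the form $|g'h|\le\frac12(|g'|^2+|h|^2)$ together with the pre-post collisional change of variables $(v,v_*)\to(v',v'_*)$ (unit Jacobian) for one factor and the regular change $v\to v'$ for the other, at which point the leftover angular integral $\int_{\mathbb S^2}b(\cos\theta)\sin(\theta/2)\,d\sigma$ is finite by $s<1/2$ and the weights collapse onto $\|f\|_{L^1_\ell}\|g\|_{L^2_\ell}\|h\|_{L^2}$. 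You already have all the right ingredients --- the cancellation of loss terms, the geometric bound on $|v-v'|$, the $L^1\times L^2\times L^2$ pairing, and the transfer of weight onto $f$ by the collisional change of variables --- but the detour through second-order Taylor and oscillatory cancellation is both unneeded and, as written, rests on a miscount of the angular singularity.
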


Using this result, we shall show that

\begin{prop}\label{prop3.2.1}
For any $\ell\geq 0$,
\begin{equation}\label{3.2.2}
\Big(W^\ell \cT(F,\, G,\, \mu_\gamma\,)\, -\cT(F,\, W^\ell\,G,\,
\mu_\gamma\,),\,\, h\Big)_{L^2(\RR^3_{v})}\leq C||F||_{L^2_\ell} ||
G ||_{L^2_\ell} || h ||_{L^2_s}.
\end{equation}
\end{prop}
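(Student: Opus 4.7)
The plan is to expand the left-hand side using the definition \eqref{3.1.1} of $\cT$ so as to separate the standard Boltzmann-operator commutator with $W^\ell$ from a purely geometric defect, and then attack the defect by Taylor expansion combined with a well-placed pre/post change of variables inside Cauchy--Schwarz. Concretely,
\[
W^\ell \cT(F,G,\mu_\gamma) - \cT(F, W^\ell G, \mu_\gamma) = A + B,
\]
where
\[
A = W^\ell Q(\mu_\gamma F, G) - Q(\mu_\gamma F, W^\ell G),
\]
\[
B = \iint b(\cos\theta)\bigl[(\mu_\gamma)_* - (\mu_\gamma)'_*\bigr] F'_* G' \bigl[W^\ell(v) - W^\ell(v')\bigr]\, dv_* d\sigma.
\]
The piece $A$ is a standard commutator, while $B$ arises because in the correction term of $\cT$ the factor $G'$ sits at the post-collisional velocity $v'$ rather than at $v$, so that $W^\ell(v)$ does not commute past it.

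For $A$ one invokes Lemma \ref{lemm3.2.1} with first slot $\mu_\gamma F$, getting
\[
|(A,h)_{L^2(\RR^3_v)}| \leq C\,\|\mu_\gamma F\|_{L^1_\ell}\,\|G\|_{L^2_\ell}\,\|h\|_{L^2}.
\]
The Gaussian decay of $\mu_\gamma = p_\gamma\sqrt{\mu}$ gives $\|\mu_\gamma F\|_{L^1_\ell} \leq \|\mu_\gamma\|_{L^2_\ell}\|F\|_{L^2_\ell} \leq C\|F\|_{L^2_\ell}$ by Cauchy--Schwarz, and $\|h\|_{L^2}\leq \|h\|_{L^2_s}$ absorbs the last factor into the stated norm.

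The main work is in $B$. I would use two pointwise Taylor estimates: since $|v_*-v'_*|=\sin(\theta/2)|v-v_*|$ and $\nabla\mu_\gamma$ has Gaussian decay,
\[
\bigl|(\mu_\gamma)_* - (\mu_\gamma)'_*\bigr| \leq C\sin(\theta/2)|v-v_*|\,\mathcal{M},
\]
where $\mathcal{M}$ is a Maxwellian factor along the segment $[v_*,v'_*]$; and since $|v-v'|=\sin(\theta/2)|v-v_*|$ and $W(v')\leq W(v)W(v_*)$ by energy conservation,
\[
\bigl|W^\ell(v) - W^\ell(v')\bigr| \leq C\sin(\theta/2)\, W^\ell(v)\, W^\ell(v_*).
\]
The product brings out $\sin^2(\theta/2)$, which makes $b\sin^2(\theta/2)$ integrable over $\SS^2$ for $s<1$ (in fact, $b\sin(\theta/2)$ alone is integrable for $s<1/2$, i.e., the regime we are in). Then split $B=B_1+B_2$ according to whether the Maxwellian sits at $v_*$ or at $v'_*$. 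On $B_1$, a Cauchy--Schwarz separating $h(v)$ from $F'_*G'$, followed by the involution $(v,v_*)\to(v',v'_*)$ to un-prime $F$ and $G$, and absorbing $W^\ell(v)$ via $W^\ell(v)\leq W^\ell(v')W^\ell(v'_*)$, pairs $W^\ell$ cleanly with $F$ and $G$ and yields $C\|F\|_{L^2_\ell}\|G\|_{L^2_\ell}\|h\|_{L^2}$. The principal obstacle is $B_2$, because $v'_*$ couples all of $(v,v_*,\sigma)$ and the Gaussian factor at $v'_*$ does not directly translate into decay in the integration variables; the remedy is to apply the involution \emph{first}, moving the decay onto $v_*$, and then running the same Cauchy--Schwarz. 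Once the rapid-decay factor always rides on a genuinely integrated variable and the angular kernel is integrable, the bounds for $A$ and $B$ combine, via $\|h\|_{L^2}\leq \|h\|_{L^2_s}$, into the inequality of the proposition.
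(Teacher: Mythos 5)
Your decomposition of the left-hand side into the pure commutator $A = W^\ell Q(\mu_\gamma F,G) - Q(\mu_\gamma F, W^\ell G)$ and the geometric defect $B$ is exactly the paper's $B_1+B_2$, and your treatment of $A$ via Lemma~\ref{lemm3.2.1} is identical. The routes diverge on $B$. The paper does \emph{not} Taylor-expand the Maxwellian difference: it applies Cauchy--Schwarz first with a $\sin^{\pm 1/2}\theta$ split, bounds the $h$-factor
$\iiint b\sin\theta\,(\mu_{\gamma*}-\mu'_{\gamma*})^2|h|^2$
by invoking the already-proved cancellation estimate~\eqref{2.2.3+} (which yields precisely $\|\mu_\gamma\|_{H^s_s}^2\|h\|_{L^2_s}^2$), and bounds the $F,G$-factor by a one-sided pointwise estimate on $|W^\ell - W'^\ell|$ followed by the pre/post involution. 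Your plan instead Taylor-expands both $\mu_{\gamma*}-\mu'_{\gamma*}$ and $W^\ell - W'^\ell$ and then splits cases; this replaces an appeal to~\eqref{2.2.3+} by direct work, which in principle is more elementary and would give the slightly stronger $\|h\|_{L^2}$, so it is a genuinely different organization.

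However, there is a concrete gap in the Taylor step. You write $|(\mu_\gamma)_* - (\mu_\gamma)'_*|\le C\sin(\theta/2)|v-v_*|\,\mathcal{M}$ with $\mathcal{M}$ "a Maxwellian factor along the segment $[v_*,v'_*]$," and then propose to split $B$ "according to whether the Maxwellian sits at $v_*$ or at $v'_*$." But the Taylor remainder places $\mathcal M$ at an \emph{interior} point $\xi_*$ of the segment, and for a Gaussian one does \emph{not} have $\mathcal M(\xi_*)\lesssim\mathcal M(v_*)+\mathcal M(v'_*)$: a segment with both endpoints far from the origin can pass arbitrarily close to $0$, so $\mathcal M(\xi_*)$ can exceed both endpoint values by an unbounded factor. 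Without that split, the rest of your $B_1$/$B_2$ argument has no Gaussian decay on any integrated variable and the $dv_*$ integral diverges. The fix is possible --- either use the one-sided Gaussian inequality $|e^{-a}-e^{-b}|\le|a-b|\max(e^{-a},e^{-b})$ together with a separate treatment of the polynomial prefactor $p_\gamma$ in $\mu_\gamma=p_\gamma\sqrt\mu$, or run the dichotomy of Lemma~\ref{lemm2.2.2} (Lipschitz bound for small $\theta$, $L^\infty$ bound for large $\theta$) and absorb the resulting polynomial weight $\langle v\rangle^{2s}\langle v_*\rangle^{2s}$ into $\|h\|_{L^2_s}$ --- but as written the proposal glosses over exactly the point that the paper's appeal to~\eqref{2.2.3+} is designed to avoid. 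You should also spell out the Cauchy--Schwarz/change-of-variables bookkeeping in $B_2$: after the involution, $h$ is evaluated at $v'$, and controlling $\iiint b\,\mathcal{M}(v_*)|h(v')|^2$ needs the regular change of variables $v\mapsto v'$ (legitimate here since $s<1/2$), which your sketch does not make explicit.
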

\begin{proof} {}From (\ref{3.1.1}), it follows that
\begin{align*}
&\Big(W^\ell \cT(F,\, G,\, \mu_\gamma\,)\, -\cT(F,\, W^\ell\,G,\,
\mu_\gamma\,),\,\, h\Big)_{L^2(\RR^3_{v})}\\
 &= \Big(W^\ell\,
Q(\mu_\gamma\, F, \,G)-Q(\mu_\gamma F , W^\ell G ),\,
h\Big)_{L^2(\RR^3_{v})}\\
& + \iiint b(\cos\theta) (\mu_{\gamma\ast} - \mu '_{\gamma\ast} )
F'_\ast G' \big(W^\ell -W'^\ell\big)\,h\\
&= B_1 + B_2.
\end{align*}
Lemma \ref{lemm3.2.1} implies that
$$
B_1 \leq C||\mu_\gamma F||_{L^1_\ell} || G ||_{L^2_\ell} || h
||_{L^2}\leq C||F||_{L^2} || G ||_{L^2_\ell} || h ||_{L^2}.
$$
For $B_2$, since we have assumed that $0<s< 1/2$, we get
\begin{align*}
B_2 &\leq \left(\iiint b(\cos\theta) | F'_\ast |^2 | G'|^2
\frac{{| W^\ell - {W'}^\ell |^2}}{\sin \theta} \right)^{1/2}
\left(\iiint b(\cos\theta) \sin \theta \Big( \mu_{\gamma\ast} -
\mu'_{\gamma\ast} \Big)^2 |h|^2 \right)^{1/2}\, .
\end{align*}
(\ref{2.2.3+}) implies that
\begin{align*}
& \iiint b(\cos\theta) \Big( \mu_{\gamma\ast} - \mu'_{\gamma\ast}
\Big)^2 |h|^2 \leq C || \mu_\gamma ||^2_{H^s_s} ||h||^2_{L^2_s},
\end{align*}
while, using
$$
| W^\ell - W'^\ell |^2 \leq \sin^2\theta\Big( (W^\ell_\ast)^2 +
(W'^{\ell} )^2 \Big) \leq \sin^2\theta \,\,(W^\ell_\ast)^2
(W'^{\ell} )^2,
$$
we get
\begin{align*}
\iiint b(\cos\theta) | F'_\ast |^2 | G'|^2 \frac{{| W^\ell -
{W'}^\ell |^2}}{\sin \theta} &\leq \iiint b(\cos\theta) \sin\theta\,
(W^\ell\, F )'^{2}_\ast (W^\ell\, G )'^{2}\\
&\leq C || F||^2_{L^2_\ell} || G ||^2_{L^2_\ell},
\end{align*}
which leads to  completion of  the proof of the
proposition.
\end{proof}
Similarly, we have also

\begin{prop}\label{prop3.2.2}
There exists a constant $C>0$ such that
\begin{align}\label{3.2.4}
\left|\Big(\cT(F,\, G,\, \mu_\gamma\,),\,\,
h\Big)_{L^2(\RR^3_{v})}\right|\leq C\left( || F||_{L^2_s} \,|||
G|||+ ||
G||_{L^2_s}\,||| F||| \right) ||| h|||.
\end{align}
\end{prop}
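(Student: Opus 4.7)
The proposition is the analogue of Proposition \ref{prop2.3.1} with $\sqrt{\mu}$ replaced by the Maxwellian-type weight $\mu_\gamma=p_\gamma(v)\sqrt{\mu(v)}$, so my plan is to mirror that proof and insert one adjustment at the point where the polynomial prefactor $p_\gamma$ prevents a literal repetition. First I will combine the two pieces of the definition (\ref{3.1.1}) of $\cT(F,G,\mu_\gamma)$: they telescope into
$$\cT(F,G,\mu_\gamma)=\iint b(\cos\theta)\,(\mu_\gamma)_*\bigl(F'_* G'-F_* G\bigr)\,dv_*\,d\sigma,$$
which has the same structure as the expression (\ref{Gamma}) for $\Gamma$, but with $\sqrt{\mu}$ replaced by $\mu_\gamma$. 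Pairing with $h$ in $L^2_v$ and symmetrizing through the change of variables $(v,v_*)\to(v',v'_*)$ gives the analogue of (\ref{2.2.5}),
$$\bigl(\cT(F,G,\mu_\gamma),h\bigr)_{L^2(\RR^3_v)}=\frac{1}{2}\iiint b(\cos\theta)\bigl(F'_* G'-F_* G\bigr)\bigl((\mu_\gamma)_* h-(\mu_\gamma)'_* h'\bigr),$$
which Cauchy--Schwarz majorizes by $\tfrac{1}{2}A^{1/2}B^{1/2}$ with the obvious squared integrals $A$ and $B$.

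For $A$, I decompose $F'_* G'-F_* G=F'_*(G'-G)+G(F'_*-F_*)$ and use $(a+b)^2\leq 2a^2+2b^2$. The first contribution reduces, via $(v,v_*)\to(v',v'_*)$, to $\iiint b F_*^2(G'-G)^2\leq C\|F\|_{L^2_s}^2\,|||G|||^2$ by Lemma \ref{lemm2.3.1}. The second, after the change of variables $(v,v_*,\sigma)\to(v_*,v,-\sigma)$ (which swaps $v'\leftrightarrow v'_*$ and preserves $b$), becomes $\iiint b G_*^2(F'-F)^2\leq C\|G\|_{L^2_s}^2\,|||F|||^2$ by the same lemma. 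Hence $A\leq C\bigl(\|F\|_{L^2_s}^2|||G|||^2+\|G\|_{L^2_s}^2|||F|||^2\bigr)$. For $B$, the parallel splitting $(\mu_\gamma)_* h-(\mu_\gamma)'_* h'=(\mu_\gamma)_*(h-h')+h'\bigl((\mu_\gamma)_*-(\mu_\gamma)'_*\bigr)$ yields two squared terms. The first, $\iiint b(\mu_\gamma)_*^2(h-h')^2$, is bounded by $C_\gamma|||h|||^2$ by Lemma \ref{lemm2.3.1} with $f=\mu_\gamma$, using that $\|\mu_\gamma\|_{L^2_s}$ is finite. The second, $\iiint b h'^2((\mu_\gamma)_*-(\mu_\gamma)'_*)^2$, is transformed through the same two changes of variables into $\iiint b h_*^2((\mu_\gamma)'-\mu_\gamma)^2$, then majorized by (\ref{2.2.3+}) with $f=h$ and $g=\mu_\gamma$ as $C_\gamma\|h\|_{L^2_s}^2\leq C_\gamma|||h|||^2$, the last step being Lemma \ref{lemm2.2.4}. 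Combining, $B\leq C_\gamma|||h|||^2$, and assembling into $\tfrac{1}{2}A^{1/2}B^{1/2}$ delivers the claim.

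The \textbf{main obstacle} is conceptually mild but substantive: in Proposition \ref{prop2.3.1} the identity $(\sqrt{\mu}_*)^2=\mu_*$ let the authors recognize $\iiint b(\sqrt{\mu}_*)^2(h-h')^2$ immediately as part of $|||h|||^2$. Here $(\mu_\gamma)_*^2=p_\gamma^2(v_*)\mu(v_*)$ has polynomial growth and is \emph{not} pointwise dominated by $\mu_*$, so the naive step fails. The resolution is to lean on the sharper Lemma \ref{lemm2.3.1}, whose Bobylev-type Fourier proof permits an arbitrary $L^2_s$ weight in front of $(h'-h)^2$; this flexibility absorbs the polynomial factor in $\mu_\gamma$ and makes the general case go through.
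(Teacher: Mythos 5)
Your proof is correct and follows essentially the same route as the paper's: telescope the definition of $\cT$, symmetrize to move the weight $(\mu_\gamma)_*$ onto $h$, apply Cauchy--Schwarz, and then estimate the two resulting squared integrals by Lemma~\ref{lemm2.3.1} after the same changes of variables. The only (cosmetic) divergence is that for the second factor $\tilde B$ the paper simply re-invokes the $A$-estimate of Proposition~\ref{prop2.3.1} with $(f,g)=(\mu_\gamma,h)$, whereas you unfold that same estimate by hand via the decomposition of $(\mu_\gamma)_* h -(\mu_\gamma)'_* h'$ and the pair Lemma~\ref{lemm2.3.1}/\eqref{2.2.3+}; your diagnosis of the ``obstacle'' (that $(\mu_\gamma)_*^2\ne\mu_*$ forbids the naive identification of $\tilde B$ with $|||h|||^2$, so one must instead use the weighted Bobylev-type bound) is exactly the right point and is precisely what the paper's re-use of the $A$-estimate accomplishes.
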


\begin{proof} By the Cauchy-Schwarz inequality, we have
\begin{align*}
\Big(\cT(F,\, G,\, \mu_\gamma\,),\,\, h\Big)_{L^2(\RR^3_{v})} &=
\iiint b(\cos\theta) (\mu_{\gamma *})^{1/2} \Big( F'_\ast G' -
F_\ast G \Big)\,\,h\\
& =\frac 12 \iiint b(\cos\theta) \Big(F'_\ast G' - F_\ast G\Big)
\Big( (\mu_{\gamma *})^{1/2} h -(\mu'_{\gamma *})^{1/2} h' \Big)\\
&\leq \frac 12 \left(\iiint (\cos\theta) \Big(F'_\ast G'- F_\ast
G\Big)^2 \right)^{1/2}\\
&\qquad\qquad\times \left( \iiint b (\cos\theta) \Big((\mu_{\gamma
*})^{1/2} h - (\mu'_{\gamma *})^{1/2} h' \Big)^2 \right)^{1/2}\\
&\leq \frac 12  \tilde A^{1/2}\times \tilde B^{1/2}.
\end{align*}
By using the estimation of the term ${A}$ in the proof of Proposition
\ref{prop2.3.1}, it follows that
$$
\tilde A\leq C\left( || F||^2_{L^2_s} \,||| G|||^2 + || G||^2_{L^2_s}\,
||| F|||^2 \right)
$$
and
$$
\tilde B\leq C\Big( ||\mu_\gamma||^2_{L^2_s} \,||| h|||^2 + || h||^2_{L^2_s}\,
||| \mu_\gamma|||^2\Big) \leq C ||| h|||^2.
$$
\end{proof}

We are now ready to prove the following estimate with differentiation and weight.

\begin{prop}\label{prop3.2.3}
For any $\ell\geq 3$, and $ N\geq 3$, we have, for all
$\beta\in\NN^6, |\beta|\leq N$,
\begin{equation*}\label{3.2.5}
\left|\Big(W^\ell \,\partial^\beta_{x, v} \Gamma(f,\, g\,),\,\,
h\Big)_{L^2(\RR^6_{x, v})}\right|\leq C ||f||_{H^N_\ell(\RR^6)}\,\,
||| g|||_{\cB^{N}_\ell(\RR^6)}\,\, ||| h|||_{\cB^0_0(\RR^6)}.
\end{equation*}
\end{prop}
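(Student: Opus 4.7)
The plan is to combine the three preceding tools — the Leibniz formula \eqref{3.1.2}, the weighted commutator estimate of Proposition \ref{prop3.2.1}, and the $\cT$-estimate of Proposition \ref{prop3.2.2} — and then to integrate in $x$ using a Sobolev embedding of $H^{2}(\RR^3_x)\hookrightarrow L^{\infty}(\RR^3_x)$. First, by \eqref{3.1.2}, $W^\ell\partial^{\beta}_{x,v}\Gamma(f,g)$ is a finite sum of terms of the form $W^\ell\,\cT(F,G,\mu_{\beta_3})$ with $F=\partial^{\alpha_1}_x\partial^{\beta_1}_v f$, $G=\partial^{\alpha_2}_x\partial^{\beta_2}_v g$ and $|\alpha_1|+|\alpha_2|+|\beta_1|+|\beta_2|+|\beta_3|\le N$. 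For each such term write
\[
W^\ell \cT(F,G,\mu_{\beta_3}) = \cT(F,W^\ell G,\mu_{\beta_3}) + \bigl[W^\ell\cT(F,G,\mu_{\beta_3})-\cT(F,W^\ell G,\mu_{\beta_3})\bigr].
\]

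For the commutator piece, Proposition \ref{prop3.2.1} applied pointwise in $x$, together with $\|h(x,\cdot)\|_{L^2_s}\le C|||h(x,\cdot)|||$ from \eqref{2.2.13}, gives a pointwise-in-$x$ bound by $C\|F(x,\cdot)\|_{L^2_\ell}\|G(x,\cdot)\|_{L^2_\ell}\,|||h(x,\cdot)|||$. For the main piece, Proposition \ref{prop3.2.2} applied pointwise in $x$ yields
\[
\bigl|(\cT(F,W^\ell G,\mu_{\beta_3}),h)_{L^2_v}\bigr| \le C\bigl(\|F(x,\cdot)\|_{L^2_s}\,|||W^\ell G(x,\cdot)|||+\|W^\ell G(x,\cdot)\|_{L^2_s}\,|||F(x,\cdot)|||\bigr)\,|||h(x,\cdot)|||.
\]
Integrating in $x$ and applying Cauchy--Schwarz, I am reduced to estimating, for each multi-index splitting,
\[
\int_{\RR^3_x}\|F(x,\cdot)\|_{L^2_\ell(\RR^3_v)}\bigl(|||W^\ell G(x,\cdot)|||+|||F(x,\cdot)|||\bigr)\,|||h(x,\cdot)|||\,dx,
\]
where the second factor is really the non-isotropic norm of a derivative of $g$ with the full weight $W^\ell$ inside.

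At this point I would split by the size of $n_1:=|\alpha_1|+|\beta_1|$ and $n_2:=|\alpha_2|+|\beta_2|$. Since $n_1+n_2\le N$ and $N\ge 3$, at least one of $n_1,n_2$ is $\le [N/2]\le N-2$. If $n_1\le N-2$, embed $H^2(\RR^3_x)\hookrightarrow L^\infty(\RR^3_x)$ to estimate
\[
\|F\|_{L^\infty_x(L^2_{\ell,v})}\le C\,\|f\|_{H^{n_1+2}_\ell(\RR^6)}\le C\,\|f\|_{H^N_\ell(\RR^6)},
\]
and then bound the remaining $x$-integral by $|||g|||_{\cB^{N}_\ell(\RR^6)}\,|||h|||_{\cB^0_0(\RR^6)}$ via Cauchy--Schwarz, using that derivatives of $g$ up to order $n_2\le N$ sit comfortably inside $\cB^N_\ell$. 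The symmetric case $n_2\le N-2$ is handled the same way: put $W^\ell\,\partial^{\alpha_2}_x\partial^{\beta_2}_v g$ in $L^\infty_x(L^2_{s,v})$ (controlled by $|||g|||_{\cB^{n_2+2}_\ell}\le|||g|||_{\cB^N_\ell}$ because of \eqref{3.1.0+2}) and keep the remaining factor of $f$ in $L^2_x$. The $\beta_3$ derivatives just contribute Maxwellian-type constants.

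The main technical obstacle is making sure the weight $W^\ell$ can genuinely be placed on \emph{either} of the two ``productive'' slots — on $G$ via the commutator above, or effectively on $F$ via the simple product inequality $\|F\|_{L^2_\ell}\le\|W^\ell F\|_{L^2}$ (since $\ell\ge 0$), so that we can always locate it on whichever side carries the $H^N_\ell$ or $\cB^N_\ell$ norm. Equally, I need the factors containing $h$ never to involve more than $|||h|||_{\cB^0_0}$, which is guaranteed because Propositions \ref{prop3.2.1} and \ref{prop3.2.2} both produce only $\|h\|_{L^2_s}$ or $|||h|||$ pointwise in $x$, both controlled by $|||h(x,\cdot)|||$ thanks to \eqref{2.2.13} and the definition of $\cB^0_0$. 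Once this bookkeeping is done, summing over the finitely many Leibniz terms yields exactly the claimed estimate.
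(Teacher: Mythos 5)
Your decomposition via the Leibniz formula, the pointwise-in-$x$ application of Propositions \ref{prop3.2.1} and \ref{prop3.2.2}, and the $L^\infty_x$ / $L^2_x$ bookkeeping are all in line with the paper. But there is a genuine gap at the extremal Leibniz term $\beta_1=\beta$, $\beta_2=\beta_3=0$, i.e.\ the term $\big(\Gamma(\partial^\beta f, W^\ell g),h\big)$ in which \emph{all} $N$ derivatives land on $f$. When you feed this into Proposition \ref{prop3.2.2} you obtain, for the second summand in that estimate, a factor $\|W^\ell g(x,\cdot)\|_{L^2_s}\,|||\partial^\beta f(x,\cdot)|||$, and after Cauchy--Schwarz in $x$ this produces $|||\partial^\beta f|||_{\cB^0_0(\RR^6)}$. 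That quantity is \emph{not} controlled by $\|f\|_{H^N_\ell}$: by Lemmas \ref{lemm2.2.1} and \ref{lemm2.2.4} the non-isotropic norm is comparable to an $H^s_s$-type norm, so $|||\partial^\beta f|||_{\cB^0_0}\lesssim\|f\|_{H^{N+s}_s}$, which loses $s$ extra derivatives relative to the target $\|f\|_{H^N_\ell}$. Your ``keep $f$ in $L^2_x$'' device cannot repair this because the problematic factor is already the non-isotropic norm $|||\partial^\beta f|||$ pointwise in $x$, not merely an $L^2_s$-norm.

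This is exactly the point the paper flags (``Since we want to avoid using the non-isotropic norm of $f$ on the right hand side, we can not use the estimate \dots'') and handles by a separate argument for $\beta_1=\beta$: rewrite $\Gamma(\partial^\beta f, W^\ell g)$ via \eqref{1formula} as $Q(\sqrt\mu\,\partial^\beta f, W^\ell g)$ plus a remainder, bound the $Q$-piece with the upper-bound Theorem \ref{theo2.1} (which never places a non-isotropic norm on $f$), and estimate the remainder directly by Cauchy--Schwarz using Lemma \ref{lemm2.2.2} and Lemma \ref{lemm2.3.1} after splitting off $h-h'$. That yields \eqref{3.2.6+4}, with only $\|f\|_{H^{|\beta|}_{3/2+2s+\epsilon}}$ on the right. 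Your proposal would be complete if you inserted this separate treatment of the top-order-on-$f$ case; for all other splittings ($|\beta_1|\le N-1$) your argument works because $|||\partial^{\beta_1}f|||_{\cB^0_0}\lesssim\|f\|_{H^{|\beta_1|+s}_s}\le\|f\|_{H^N_\ell}$ when $|\beta_1|\le N-1$ and $0<s<1$.
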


\begin{rema}
In fact, this proposition holds even when
 $\ell> \frac 32 +2s$, and $ N> \frac 32 +2s$.
  Here, we consider the case
   when $\ell\geq 3, N\geq 3$ with $0<s<1/2$ for the
    simplicity of  the notations.
\end{rema}

\begin{proof}
Using the Leibniz formula (\ref{3.1.2}) gives
\begin{align*}
&\Big(W^\ell \partial^\beta_{x, v}\Gamma(f,\, g),\,\,
h\Big)_{L^2(\RR^6_{x, v})}= \sum C^{\beta_1}_{\beta_2, \beta_3}
\Big(\cT(\partial^{\beta_1} f,\, W^\ell \partial^{\beta_2} g,\,
\mu_{\beta_3}\,) ,\,\, h\Big)_{L^2(\RR^6_{x, v})}\\
&+\sum C^{\beta_1}_{\beta_2, \beta_3}
\Big(W^\ell\cT(\partial^{\beta_1} f,\, \partial^{\beta_2} g,\,
\mu_{\beta_3}\,) - \cT(\partial^{\beta_1} f,\, W^\ell
\partial^{\beta_2} g,\, \mu_{\beta_3}\,) ,\,\,
h\Big)_{L^2(\RR^6_{x, v})}.
\end{align*}
Then {}from (\ref{3.2.2}), we get
\begin{align*}
& \left|\Big(W^\ell\cT(\partial^{\beta_1} f,\,
\partial^{\beta_2} g,\, \mu_{\beta_3}\,)
- \cT(\partial^{\beta_1}f,\, W^\ell
\partial^{\beta_2} g,\, \mu_{\beta_3}\,)
,\,\, h\Big)_{L^2(\RR^6_{x, v})}\right|\\
&\leq C \left(\int_{\RR^3_x} \|\partial^{\beta_1}
f\|^2_{L^2_\ell(\RR^3_v)} \|\partial^{\beta_2}
g\|^2_{L^2_\ell(\RR^3_v)} dx\right)^{1/2}\|h\|_{L^2_s(\RR^6_{x,
v})}\\
&\leq C\left\{\begin{array}{ll} \|\partial^{\beta_1}
f\|_{L^\infty(\RR^3_x;\, L^2_\ell(\RR^3_v))} \|\partial^{\beta_2}
g\|_{L^2_\ell(\RR^6_{x, v})}\, \|h\|_{L^2_s(\RR^6_{x, v})},
&\qquad\mbox{if}\,\, |\beta_1|\leq 1\, ;\\
\|\partial^{\beta_1} f\|_{L^2_\ell(\RR^6_{x, v})}
\|\partial^{\beta_2} g\|_{L^\infty(\RR^3_x; L^2_\ell(\RR^3_v))}\,
\|h\|_{L^2_s(\RR^6_{x, v})},&\qquad\mbox{if}\,\, |\beta_1|\geq 2\, .
\end{array}
\right.
\end{align*}
Since $|\beta_1|\leq 1$ implies $|\beta_1|+3/2< 3\leq N$ and
$|\beta_1|\geq 2$ implies $|\beta_2|+3/2< |\beta|$, it follows that
\begin{align}\label{3.2.6+1}
& \left|\Big(W^\ell\cT(\partial^{\beta_1} f,\,
\partial^{\beta_2} g,\, \mu_{\beta_3}\,)
- \cT(\partial^{\beta_1}f,\, W^\ell
\partial^{\beta_2} g,\, \mu_{\beta_3}\,)
,\,\, h\Big)_{L^2(\RR^6_{x, v})}\right|\\
&\leq C
\|f\|_{H^N_\ell(\RR^6)}\|g\|_{H^{|\beta\,|}_\ell(\RR^6)}|||h|||_{\cB^0_0(\RR^6_{x,
v})}\, .\nonumber
\end{align}
On the other hand, if $ |\beta_1|\leq
1$ so that $ |\beta_1|+\frac 3 2+s<3\leq N$, we get {}from (\ref{3.2.4})
\begin{align*}
& \left|\Big(\cT(\partial^{\beta_1} f,\, W^\ell \partial^{\beta_2}
g,\, \mu_{\beta_3}\,)
,\,\, h\Big)_{L^2(\RR^6_{x, v})}\right|\\
&\leq C \left(\int_{\RR^3_x} \|\partial^{\beta_1}
f\|^2_{L^2_s(\RR^3_v)} \Big(|||W^\ell
 \partial^{\beta_2} g|||^2+||W^\ell
 \partial^{\beta_2} g||^2_{H^s(\RR^3_v)}\Big)dx\right.\\
&\qquad+ \left.\int_{\RR^3_x}\|W^\ell \partial^{\beta_2}
g\|^2_{L^2_s(\RR^3_v)} \Big(|||
\partial^{\beta_1} f|||^2+||
\partial^{\beta_1} f||^2_{H^s(\RR^3_v)}\Big) dx\right)^{1/2}|||h|||_{\cB^0_0(\RR^6)} \\
&\leq C \Big(\|\partial^{\beta_1} f\|_{L^\infty(\RR^3_x;\,
L^2_s(\RR^3_v))} +||\partial^{\beta_1} f||^2_{L^\infty(\RR^3_x;\,
H^s_s(\RR^3_v))}\Big)|||g|||_{\cB^{|\beta_2|}_\ell(\RR^6)}
|||h|||_{\cB^0_0(\RR^6)} \\
&\leq C \| f\|_{H^{|\beta_1|+3/2+s+\epsilonup}_s(\RR^6)}
|||g|||_{\cB^{|\beta_2|}_\ell(\RR^6)} |||h|||_{\cB^0_0(\RR^6)},
\end{align*}
Hence, for $|\beta_1|\le 1$, we have
\begin{equation*}\label{3.2.6+2}
\left|\Big(\cT(\partial^{\beta_1} f,\, W^\ell \partial^{\beta_2}
g,\, \mu_{\beta_3}\,) ,\,\, h\Big)_{L^2(\RR^6_{x, v})}\right|\leq C
\| f\|_{H^{3}_s(\RR^6)} |||g|||_{\cB^{|\beta_2|}_\ell(\RR^6)}
|||h|||_{\cB^0_0(\RR^6)}\, .
\end{equation*}
We now consider the case when $|\beta_1|\geq 2$.
First of all, assume $2\leq|\beta_1|\leq
|\beta|-1$ so that $|\beta_2|= |\beta|-|\beta_1|-|\beta_3|\leq
|\beta|-2$. Then, we get
\begin{align*}
& \left|\Big(\cT(\partial^{\beta_1} f,\, W^\ell\partial^{\beta_2}
g,\, \mu_{\beta_3}\,)
,\,\, h\Big)_{L^2(\RR^6_{x, v})}\right|\\
&\leq C \Big( \|f\|_{H^{|\beta_1\,|}_s(\RR^6)} ||W^\ell
\partial^{\beta_2} g||_{L^\infty(\RR^3_x;\, H^s_s(\RR^3_v))}\\
&\qquad+ \|W^\ell\partial^{\beta_2} g\|_{L^\infty(\RR^3_x;\,
L^2_s(\RR^3_v))} ||\partial^{\beta_1}
f||_{L^2(\RR^3_x;\, H^s_s(\RR^3_v))}\Big) |||h|||_{\cB^0_0(\RR^6)} \\
&\leq C \| f\|_{H^{|\beta_1|+s}_s(\RR^6)} \,\,||W^\ell
\Lambda^{3/2+\epsilon}_x\Lambda^s_v
\partial^{\beta_2}
g||^2_{L^2_s(\RR^6)} |||h|||_{\cB^0_0(\RR^6)}\\
& \leq C \| f\|_{H^{|\beta|-1+s}_s(\RR^6)}
|||g|||_{\cB^{|\beta|-2+3/2+s+\epsilon}_\ell(\RR^6)} |||h|||_{\cB^0_0(\RR^6)}\\
& \leq C \| f\|_{H^{|\beta|}_s(\RR^6)}
|||g|||_{\cB^{|\beta|}_\ell(\RR^6)} |||h|||_{\cB^0_0(\RR^6)}.
\end{align*}
We turn next to the case when $\beta_1=\beta$, for which we have
\begin{align*}
&\Big(\cT(\partial^{\beta} f,\, W^\ell g,\, \sqrt\mu\,) ,\,\,
h\Big)_{L^2(\RR^6_{x, v})}=\Big(\Gamma(\partial^{\beta} f,\, W^\ell
g\,) ,\,\, h\Big)_{L^2(\RR^6_{x, v})}.
\end{align*}
Since we want to avoid using the non-isotropic norm of $f$ on the
right hand side, we
can not use the estimate (\ref{2.2.3}) to complete the proof. So we proceed in a different way, use firstly (\ref{1formula}) to get
\begin{align*}
&\Big(\Gamma(\partial^{\beta} f,\,
 W^\ell g) ,\,\, h\Big)_{L^2(\RR^6_{x, v})}=\Big(Q(\sqrt{\mu}\,
 \partial^{\beta}  f,\,  W^\ell g) ,\,\,
 h\Big)_{L^2(\RR^6_{x, v})}\\
 &+ \int\iiint b(\cos\theta)
\Big(\sqrt{\mu_*}\, -\sqrt{\mu'_*}\,\,\Big)(\partial^{\beta} f)'_*
(W^\ell\,g)' h d v_* d \sigma dvdx\,.
\end{align*}
On one hand, (\ref{2.1.2}) with $m=0, \alpha=-s$, implies that
\begin{align*}
 \left|\Big(Q(\sqrt{\mu}\,
 \partial^{\beta}  f,\,  W^\ell g) ,\,\,
 h\Big)_{L^2(\RR^6_{x, v})}\right|&\leq C\|h\|_{L^2_s(\RR^6)}\| \sqrt{\mu}\,
 \partial^{\beta} f\|_{L^2(\RR^3_x;\, L^1_{2s}(\RR^3_v))}
 \|W^\ell g\|_{L^\infty(\RR^3_x;\, H^{2s}_s(\RR^3_v))}\\
& \leq C \| f\|_{H^{|\beta|}(\RR^6)} ||W^\ell
g|||_{H^{3/2+2s+\epsilon}_s(\RR^6)} |||h|||_{\cB^0_0(\RR^6)}.
 \end{align*}
On the other hand, we can write
\begin{align*}
&\int\iiint b(\cos\theta) \Big(\sqrt{\mu_*}\,
-\sqrt{\mu'_*}\,\,\Big)(\partial^{\beta} f)'_*
(W^\ell\,g)' h d v_* d \sigma dvdx\,\\
&=\int\iiint b(\cos\theta) \Big(\sqrt{\mu_*}\,
-\sqrt{\mu'_*}\,\,\Big)(\partial^{\beta}f)'_*
(W^\ell\,g)' \Big(h-h'\Big) d v_* d \sigma dvdx\\
& +\int\iiint b(\cos\theta) \Big(\sqrt{\mu_*}\,
-\sqrt{\mu'_*}\,\,\Big)(\partial^{\beta} f)'_*
(W^\ell\,g)' h' d v_* d \sigma dvdx\\
&= D_1 + D_2\,.
\end{align*}
By the Cauchy-Schwarz inequality, one has
\begin{align*}
|D_1| &\leq \left(\int\iiint b(\cos\theta) | (\partial^\beta
f)'_\ast |^2 |(W^\ell g )'|^2 \Big((\mu_*)^{1/4} -
(\mu'_*)^{1/4}\Big)^2 d v_* d
\sigma dvdx\right)^{1/2}\\
&\qquad\times \left(\int\iiint b(\cos\theta) \Big( \mu^{1/4}_\ast +
(\mu'_*)^{1/4} \Big)^2 (h-h')^2 d v_* d \sigma dvdx\right)^{1/2}\, .
\end{align*}
Lemma \ref{lemm2.2.2} yields
\begin{align*}
&\int\iiint b(\cos\theta) | (\partial^\beta f)'_\ast |^2 |(W^\ell g
)'|^2 \Big((\mu_*)^{1/4} - (\mu'_*)^{1/4}\Big)^2 d v_* d \sigma
dvdx\\
&\leq C \int_{\RR^3_x}\int_{\RR^6_{v, v_*}} | (\partial^\beta
f)_\ast |^2 |(W^\ell g )|^2 \langle v\rangle^{2s}\langle
v_*\rangle^{2s} d v_* d dvdx
\\
&\leq C \int_{\RR^3_x}||\partial^\beta f||^2_{L^2_s(\RR^3_v)}
||W^\ell g ||^2_{L^2_s(\RR^3_v)} dx \leq C ||\partial^\beta
f||^2_{L^2_s(\RR^6)}
||W^\ell g ||^2_{L^\infty(\RR^3_x;\, L^2_s(\RR^3_v))}\\
&\leq C ||f||^2_{H^N_s(\RR^6)} ||W^\ell \Lambda^{3/2+\epsilon}_x g
||^2_{L^2_s(\RR^6)}\leq C ||f||^2_{H^{|\beta|}_s(\RR^6)} ||| g
|||^2_{\cB^2_\ell(\RR^6)}\, ,
\end{align*}
while {}from Lemma \ref{lemm2.3.1}, we get
\begin{align*}
& \int\iiint b(\cos\theta) \Big( \mu^{1/4}_\ast + (\mu'_*)^{1/4}
\Big)^2 (h-h')^2 d v_* d \sigma dvdx\\
&\leq 4 \int\iiint b(\cos\theta)  \mu^{1/2}_\ast (h-h')^2 d v_* d
\sigma dvdx\leq C|||h|||^2_{\cB^0_0(\RR^6)}.
\end{align*}
Therefore, we obtain
$$
|D_1|\leq C ||f||_{H^{|\beta|}_s(\RR^6)} ||| g
|||_{\cB^2_\ell(\RR^6)}|||h|||_{\cB^0_0(\RR^6)}.
$$
For the term $D_2$, we have
\begin{align*}
&\left|\int\iiint b(\cos\theta) \Big(\sqrt{\mu_*}\,
-\sqrt{\mu'_*}\,\,\Big)(\partial^{\beta} f)'_* (W^\ell\,g)' h' d v_*
d \sigma dvdx\right|\\
&=\left|\int\iiint b(\cos\theta) \Big(\sqrt{\mu'_*}\,
-\sqrt{\mu_*}\,\,\Big)(\partial^{\beta} f)_* (W^\ell\,g) h d v_* d
\sigma dvdx\right|\\
&\leq C \int_{\RR^3_x}\int_{\RR^6_{v, v_*}}\Big|(\partial^{\beta}
f)_*\Big|\,\, \Big|W^\ell\,g\Big|\,\, |h| \langle
v\rangle^{2s}\langle v_*\rangle^{2s} d v_* d  dvdx\\
&\leq C \int_{\RR^3_x}\|\partial^{\beta} f\|_{L^1_{2s}(\RR^3_v)}
\|W^\ell\,g\|_{L^2_{s}(\RR^3_v)} \|h\|_{L^2_{s}(\RR^3_v)} dx\\
&\leq C \|\partial^{\beta} f\|_{L^2(\RR^3_x;\,
L^2_{3/2+2s+\epsilon}(\RR^3_v))} \|W^\ell\,g\|_{L^\infty(\RR^3_x;\,
L^2_{s}(\RR^3_v))} \|h\|_{L^2_{s}(\RR^6)},
\end{align*}
so that
$$
|D_2|\leq C ||f||_{H^{|\beta|}_{3/2+2s+\epsilon}(\RR^6)} ||| g
|||_{\cB^2_\ell(\RR^6)}|||h|||_{\cB^0_0(\RR^6)}.
$$
Therefore, it follows that
\begin{align}\label{3.2.6+4}
\left|\Big(\Gamma(\partial^{\beta} f,\,
 W^\ell g) ,\,\, h\Big)_{L^2(\RR^6_{x, v})}\right|\leq C
 ||f||_{H^{|\beta|}_{3/2+2s+\epsilon}(\RR^6)} ||| g
|||_{\cB^2_\ell(\RR^6)}|||h|||_{\cB^0_0(\RR^6)}.
\end{align}

Finally, for the case $|\beta_1|\geq 2$, since $3/2+2s<3\leq N$, we have
also
\begin{align*}\label{3.2.6+3}
\left|\Big(\cT(\partial^{\beta_1} f,\, W^\ell\partial^{\beta_2} g,\,
\mu_{\beta_3}\,) ,\,\, h\Big)_{L^2(\RR^6_{x, v})}\right|
 \leq C \| f\|_{H^{N}_\ell(\RR^6)}
|||g|||_{\cB^{N}_\ell(\RR^6)} |||h|||_{\cB^0_0(\RR^6)}\,.
\end{align*}
The proof of the proposition is then completed.
\end{proof}

By using the argument in the  proof of the above proposition,
 the following proposition follows {}from the Sobolev
 imbedding theorems.

\begin{prop}\label{cor3.2.3}
For any $\ell\geq 3$, we have, for all $\beta\in\NN^6,\, |\beta|\leq
2$,
\begin{equation}\label{3.2.5+1+1}
\left|\Big(W^\ell \,\partial^\beta_{x, v} \Gamma(f,\, g\,),\,\,
h\Big)_{L^2(\RR^6_{x, v})}\right|\leq C ||f||_{H^3_\ell(\RR^6)}\,\,
||| g|||_{\cB^{3}_\ell(\RR^6)}\,\, |||
h|||_{\cB^0_0(\RR^6)}.
\end{equation}
\end{prop}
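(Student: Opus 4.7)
The plan is to mimic the proof of Proposition \ref{prop3.2.3}, taking advantage of the fact that $|\beta|\le 2 <3$ so that the Sobolev embedding $H^{3/2+\epsilon}(\RR^3_x)\hookrightarrow L^\infty(\RR^3_x)$ can always be applied to the factor with the smaller number of derivatives. First, I would expand $\partial^\beta_{x,v}\Gamma(f,g)$ via the Leibniz formula (\ref{3.1.2}), which reduces the task to estimating, for every decomposition $\beta_1+\beta_2+\beta_3=\beta$,
\begin{equation*}
\Big(W^\ell\cT(\partial^{\beta_1}f,\partial^{\beta_2}g,\mu_{\beta_3}),h\Big)_{L^2(\RR^6)}
\end{equation*}
and then splitting the weight through the commutator identity
\begin{equation*}
W^\ell\cT(F,G,\mu_{\beta_3})=\bigl[W^\ell\cT(F,G,\mu_{\beta_3})-\cT(F,W^\ell G,\mu_{\beta_3})\bigr]+\cT(F,W^\ell G,\mu_{\beta_3}).
\end{equation*}

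For the commutator bracket I would apply Proposition \ref{prop3.2.1}, integrate in $x$, and then use Cauchy--Schwarz together with the embedding $H^{3/2+\epsilon}(\RR^3_x)\hookrightarrow L^\infty(\RR^3_x)$ on whichever of $\partial^{\beta_1}f$ or $\partial^{\beta_2}g$ has no more than $|\beta|/2\le 1$ derivatives. Since $|\beta|\le 2$, one of $|\beta_1|,|\beta_2|$ is at most $1$, so placing an $L^\infty_x L^2_{\ell,v}$ norm on that factor costs at most $1+3/2+\epsilon<3$ derivatives in $x$, which is absorbed by $\|f\|_{H^3_\ell}$ or $\||g|\|_{\cB^3_\ell}$. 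This gives the commutator contribution bounded by $C\|f\|_{H^3_\ell}\||g|\|_{\cB^3_\ell}\|h\|_{L^2_s}$, and the factor $\|h\|_{L^2_s}$ is controlled by $\||h|\|_{\cB^0_0}$ thanks to (\ref{3.1.0+2}).

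For the main term $\cT(\partial^{\beta_1}f,W^\ell\partial^{\beta_2}g,\mu_{\beta_3})$ I would invoke Proposition \ref{prop3.2.2}. As long as $|\beta_1|\le 1$, the symmetric version of the Sobolev embedding argument bounds the integral by
\begin{equation*}
C\,\|f\|_{H^{|\beta_1|+3/2+s+\epsilon}_s(\RR^6)}\,\||g|\|_{\cB^{|\beta_2|}_\ell(\RR^6)}\,\||h|\|_{\cB^0_0(\RR^6)}\le C\|f\|_{H^3_\ell}\||g|\|_{\cB^3_\ell}\||h|\|_{\cB^0_0},
\end{equation*}
and similarly when $|\beta_2|\le 1$. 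The only remaining case is $\beta_1=\beta$, namely all derivatives falling on $f$; here we must avoid putting the non-isotropic norm on $f$ since $\||\partial^\beta f|\|$ is not controlled by $\|f\|_{H^3_\ell}$ in general. I will handle this case exactly as in the proof of Proposition \ref{prop3.2.3}, using (\ref{1formula}) to write
\begin{equation*}
\Gamma(\partial^\beta f,W^\ell g)=Q(\sqrt\mu\,\partial^\beta f,W^\ell g)+\int\!\!\iiint b(\cos\theta)\bigl(\sqrt{\mu_*}-\sqrt{\mu'_*}\bigr)(\partial^\beta f)'_*(W^\ell g)'\,dv_*d\sigma\,dv,
\end{equation*}
estimating the first term by Theorem \ref{theo2.1} with $m=0,\alpha=-s$, and splitting the second term via the decomposition $h=(h-h')+h'$ with the usual Cauchy--Schwarz, Lemma \ref{lemm2.2.2}, and Lemma \ref{lemm2.3.1}, leading to the bound $C\|f\|_{H^{|\beta|}_{3/2+2s+\epsilon}}\||g|\|_{\cB^2_\ell}\||h|\|_{\cB^0_0}$, which is again absorbed by the right-hand side of (\ref{3.2.5+1+1}).

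The main obstacle is precisely this terminal case $\beta_1=\beta$, because the bilinear estimate in Proposition \ref{prop3.2.2} would naturally want $\||\partial^\beta f|\|$ on the right; the workaround of splitting $\Gamma$ via (\ref{1formula}) and absorbing the singular angular kernel into $(h-h')$ is what makes the bound come out with only $H^3_\ell$-regularity on $f$. All the remaining arithmetic on indices is routine once one checks that $3/2+2s+\epsilon<3$, which holds since $0<s<1/2$.
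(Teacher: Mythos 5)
Your proposal is correct and follows essentially the same route as the paper, which states Proposition \ref{cor3.2.3} simply as a corollary of the argument used for Proposition \ref{prop3.2.3} together with Sobolev embedding. You correctly identify that $|\beta|\le 2$ leaves enough room for the Sobolev costs $3/2+\epsilon$ (commutator terms) and $3/2+s+\epsilon$ (main terms via Proposition \ref{prop3.2.2}) on whichever factor carries at most one derivative, and that the terminal case $\beta_1=\beta$ is handled exactly as in the paper via (\ref{1formula}), Theorem \ref{theo2.1} with $m=0,\,\alpha=-s$, Lemma \ref{lemm2.2.2} and Lemma \ref{lemm2.3.1}, yielding the bound $C\|f\|_{H^{|\beta|}_{3/2+2s+\epsilon}}\,|||g|||_{\cB^2_\ell}\,|||h|||_{\cB^0_0}$ which is absorbed since $3/2+2s+\epsilon<3$.
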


Finally, the linear operators can be also estimated as follows.

\begin{prop}\label{prop3.2.4}
For $\ell\geq 3$, we have for any $\beta\in\NN^6$,
\begin{equation}\label{3.2.7}
\left|\Big(W^\ell \,\partial^\beta_{x, v} \cL_2(f),\,\,
h\Big)_{L^2(\RR^6)}\right|\leq C_{|\beta|, \ell}
||f||_{H^{|\beta|}_\ell(\RR^6)}\,\, ||| h|||_{\cB^0_0(\RR^6)}.
\end{equation}
If~ $|\beta|\ge 1$, we have
\begin{align}\label{3.2.8}
&\left|\Big(\cL_1(W^\ell \,\partial^\beta_{x, v} g)-W^\ell
\,\partial^\beta_{x, v} \cL_1(g),\,\,
h\Big)_{L^2(\RR^6)}\right|\\
&\qquad\leq C_{|\beta|, \ell} \Big( ||
g||_{H^{|\beta|}_\ell(\RR^6)}+ |||
g|||_{\cB^{|\beta\,|-1}_\ell(\RR^6)}\, \Big)\,\, |||
h|||_{\cB^0_0(\RR^6)},\nonumber
\end{align}
and for $|\beta|=0$,
\begin{equation}\label{3.2.8+1}
\left|\Big(\cL_1(W^\ell \, g)-W^\ell \, \cL_1(g),\,\,
h\Big)_{L^2(\RR^6)}\right|\leq C || g||_{L^{2}_\ell(\RR^6)}\,\, |||
h|||_{\cB^0_0(\RR^6)}.
\end{equation}
\end{prop}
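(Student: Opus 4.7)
\medskip

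\noindent\textbf{Proof proposal.}
The plan is to reduce all three estimates to applications of Propositions \ref{prop3.2.1} and \ref{prop3.2.2} combined with the Leibniz-type expansion (\ref{3.1.2}), exploiting the fact that $\cL_1 g=-\Gamma(\sqrt\mu,g)=-\cT(\sqrt\mu,g,\sqrt\mu)$ and $\cL_2 f=-\Gamma(f,\sqrt\mu)=-\cT(f,\sqrt\mu,\sqrt\mu)$, together with the observation that every $v$-derivative $\mu_\alpha=\partial^\alpha_v\sqrt\mu$ is again a Maxwellian-type function whose $L^1_\ell$, $L^2_\ell$, $H^s_\ell$ and non-isotropic $|||\cdot|||$ norms are bounded by universal constants depending only on $\ell$ and $|\alpha|$. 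I will also use (\ref{3.1.0+2}) whenever necessary to absorb a factor $\|h\|_{L^2_s(\RR^6)}$ into $|||h|||_{\cB^0_0(\RR^6)}$.

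For (\ref{3.2.7}), I first expand $\partial^\beta_{x,v}\cL_2 f$ via (\ref{3.1.2}) (noting that all $x$-derivatives must fall on $f$ since $\sqrt\mu$ does not depend on $x$) as a finite sum of terms of the form $\cT(\partial^{\beta_1}_{x,v}f,\mu_{\beta_2},\mu_{\beta_3})$ with $|\beta_1|+|\beta_2|+|\beta_3|=|\beta|$. For each such term I decompose
\[
W^\ell \cT(\partial^{\beta_1}f,\mu_{\beta_2},\mu_{\beta_3})
=\cT(\partial^{\beta_1}f,W^\ell\mu_{\beta_2},\mu_{\beta_3})
+\bigl(W^\ell\cT-\cT(\cdot,W^\ell\cdot,\cdot)\bigr),
\]
and pair with $h$. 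The commutator piece is bounded by Proposition \ref{prop3.2.1} by $C\|\partial^{\beta_1}f\|_{L^2_\ell}\|\mu_{\beta_2}\|_{L^2_\ell}\|h\|_{L^2_s}$, while the main piece is bounded by Proposition \ref{prop3.2.2} using that both $\|W^\ell\mu_{\beta_2}\|_{L^2_s}$ and $|||W^\ell\mu_{\beta_2}|||$ are $O(1)$. Summing and applying (\ref{3.1.0+2}) yields (\ref{3.2.7}).

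For (\ref{3.2.8}) with $|\beta|\ge 1$, I write
\[
W^\ell\partial^{\beta}\cL_1(g)=-\sum C^{\ldots}\,W^\ell\cT(\mu_{\beta_1},\partial^{\beta_2}g,\mu_{\beta_3}),
\qquad \beta_1+\beta_2+\beta_3=\beta,
\]
and isolate the top-order term corresponding to $(\beta_1,\beta_3)=(0,0)$, $\beta_2=\beta$. Subtracting $\cL_1(W^\ell\partial^\beta g)=-\cT(\sqrt\mu,W^\ell\partial^\beta g,\sqrt\mu)$ produces exactly the commutator
\[
W^\ell\cT(\sqrt\mu,\partial^\beta g,\sqrt\mu)-\cT(\sqrt\mu,W^\ell\partial^\beta g,\sqrt\mu),
\]
which Proposition \ref{prop3.2.1} controls by $C\|\sqrt\mu\|_{L^1_\ell}\|\partial^\beta g\|_{L^2_\ell}\|h\|_{L^2_s}\le C\|g\|_{H^{|\beta|}_\ell}|||h|||_{\cB^0_0}$. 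This is the decisive point of the argument: a direct use of Proposition \ref{prop3.2.2} on this term would force the non-isotropic norm $|||g|||_{\cB^{|\beta|}_\ell}$ on the right-hand side, which is not allowed; the commutator structure of Proposition \ref{prop3.2.1} gives the weaker Sobolev norm $\|g\|_{H^{|\beta|}_\ell}$ instead. Each of the remaining lower-order terms has $|\beta_2|\le|\beta|-1$, and I treat it by the same split as in the first step, bounding the commutator by Proposition \ref{prop3.2.1} and the main part by Proposition \ref{prop3.2.2}; the latter contributes $|||W^\ell\partial^{\beta_2}g|||\cdot |||h|||$ which is absorbed into $|||g|||_{\cB^{|\beta|-1}_\ell}|||h|||_{\cB^0_0}$. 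For (\ref{3.2.8+1}), the case $|\beta|=0$ is just Proposition \ref{prop3.2.1} applied with $F=\sqrt\mu$, $G=g$, $\mu_\gamma=\sqrt\mu$ (using $\|\sqrt\mu\|_{L^1_\ell}<\infty$) followed by (\ref{3.1.0+2}).

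The main obstacle is the careful bookkeeping in Step 2: one must recognise that the highest-order term in the Leibniz expansion of $W^\ell\partial^\beta\cL_1 g$ cancels with $\cL_1(W^\ell\partial^\beta g)$ modulo exactly the commutator handled by Proposition \ref{prop3.2.1}; this is what enables the Sobolev norm $\|g\|_{H^{|\beta|}_\ell}$ (rather than a non-isotropic norm) to appear in the estimate, and it is also why the lower-order non-isotropic contribution involves $|||g|||_{\cB^{|\beta|-1}_\ell}$ rather than $|||g|||_{\cB^{|\beta|}_\ell}$.
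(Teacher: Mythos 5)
Your treatment of \eqref{3.2.8} and \eqref{3.2.8+1} is correct and follows the same route as the paper: the top-order term $\beta_2=\beta$ in the Leibniz expansion of $W^\ell\partial^\beta\cL_1 g$ cancels against $\cL_1(W^\ell\partial^\beta g)$ up to exactly the weight commutator, which Proposition \ref{prop3.2.1} controls by a Sobolev norm of $g$; the remaining terms all have $|\beta_2|\le|\beta|-1$, so Proposition \ref{prop3.2.2} lands on $|||g|||_{\cB^{|\beta|-1}_\ell}$. Your identification of why the cancellation matters is exactly the key point of the paper's argument.

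However, your treatment of \eqref{3.2.7} has a real gap at the top-order term. For $\cL_2(f)=-\Gamma(f,\sqrt\mu)$, the Leibniz expansion includes the term with $\beta_1=\beta$, $\beta_2=\beta_3=0$, i.e.\ $W^\ell\Gamma(\partial^\beta f,\sqrt\mu)$. After peeling off the weight commutator via Proposition \ref{prop3.2.1}, you are left with $\bigl(\cT(\partial^\beta f, W^\ell\sqrt\mu,\sqrt\mu),h\bigr)$, and you bound it by Proposition \ref{prop3.2.2}. But that proposition's estimate is
\[
\bigl(\|F\|_{L^2_s}\,|||G|||+\|G\|_{L^2_s}\,|||F|||\bigr)\,|||h|||,
\]
and with $F=\partial^\beta f$ the second term produces $|||\partial^\beta f|||$. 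Via \eqref{3.1.0+2}, the best Sobolev control on that non-isotropic norm after integration in $x$ is $\|\partial^\beta f\|_{L^2(\RR^3_x;H^s_s(\RR^3_v))}$, which requires $|\beta|+s$ derivatives of $f$ --- strictly more than $\|f\|_{H^{|\beta|}_\ell}$ allows. Observing that $\|W^\ell\mu_{\beta_2}\|_{L^2_s}$ and $|||W^\ell\mu_{\beta_2}|||$ are $O(1)$ does not save you, since the offending term is $|||F|||$, not $|||G|||$. This is precisely the same failure mode you flag --- correctly --- for \eqref{3.2.8}, but there is no analogous cancellation available in \eqref{3.2.7}, because nothing is being subtracted. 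The paper avoids this by invoking the dedicated estimate \eqref{3.2.6+4} (established inside the proof of Proposition \ref{prop3.2.3} by rearranging the quadratic form so that no non-isotropic norm of the first argument appears); applied with $g=\sqrt\mu$ it yields $C\|f\|_{H^{|\beta|}_{3/2+2s+\epsilon}}\,|||\sqrt\mu|||_{\cB^2_\ell}\,|||h|||_{\cB^0_0}\le C\|f\|_{H^{|\beta|}_\ell}\,|||h|||_{\cB^0_0}$. Your argument must be supplemented by this (or an equivalent) device for the case $\beta_1=\beta$.
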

\begin{rema} On the right hand side of (\ref{3.2.5+1+1}), the term $||| g|||_{\cB^{3}_\ell(\RR^6)}$
comes {}from the Sobolev imbedding
$$
L^\infty(\RR^3_x;\, H^{2s}_s(\RR^3_v))\supset
H^{3/2+2s+\epsilon}_s(\RR^6)\supset\cB^3_0(\RR^6),
$$
where $\epsilon$ is any small positive number. Thus the order of differentiation is equal to $3$. Note that
this is due to the nonlinearity in the operator $\Gamma(\cdot,\cdot)$. For the linear operators, the estimates given in
(\ref{3.2.8}) and (\ref{3.2.8+1}) do not involve this term.
\end{rema}

\begin{proof} For the proof of (\ref{3.2.7}), by using the
Leibniz formula (\ref{3.1.2}), we have
\begin{align*}
& -\Big(W^\ell \,\partial^\beta_{x, v} \cL_2(f),\,\,
h\Big)_{L^2(\RR^6_{x, v})}=\Big(W^\ell \,\partial^\beta_{x, v}
\Gamma(f,\, \sqrt\mu\,),\,\,
h\Big)_{L^2(\RR^6_{x, v})}\\
&= \sum C^{\beta_1}_{\beta_2, \beta_3} \Big(\cT(\partial^{\beta_1}
f,\, W^\ell \partial^{\beta_2} \sqrt\mu\,,\,
\mu_{\beta_3}\,) ,\,\, h\Big)_{L^2(\RR^6_{x, v})}\\
&\quad+\sum C^{\beta_1}_{\beta_2, \beta_3}
\Big(W^\ell\cT(\partial^{\beta_1} f\,,\, \partial^{\beta_2}
\sqrt\mu\,,\, \mu_{\beta_3}\,) - \cT(\partial^{\beta_1} f\,,\,
W^\ell
\partial^{\beta_2} \sqrt\mu\,,\, \mu_{\beta_3}\,) ,\,\,
h\Big)_{L^2(\RR^6_{x, v})}\\
&=E_1+E_2.
\end{align*}
Then (\ref{3.2.6+1}) implies
\begin{align*}
|E_2|&=\left|\sum C^{\beta_1}_{\beta_2, \beta_3}
\Big(W^\ell\cT(\partial^{\beta_1} f\,,\, \partial^{\beta_2}
\sqrt\mu\,,\, \mu_{\beta_3}\,) - \cT(\partial^{\beta_1} f\,,\,
W^\ell
\partial^{\beta_2} \sqrt\mu\,,\, \mu_{\beta_3}\,) ,\,\,
h\Big)_{L^2(\RR^6_{x, v})}\right|\\
&\leq C\|f\|_{H^{|\beta|}_\ell(\RR^6)}
\|\sqrt\mu\,\|_{H^{|\beta|}_\ell(\RR^3)} ||| h|||_{\cB^0_0(\RR^6)}
\leq C\|f\|_{H^{|\beta|}_\ell(\RR^6)} ||| h|||_{\cB^0_0(\RR^6)},
\end{align*}
and (\ref{3.2.4}) implies also,
\begin{align*}
|E_1|&=\left|\sum C^{\beta_1}_{\beta_2, \beta_3}
\Big(\cT(\partial^{\beta_1} f,\, W^\ell \partial^{\beta_2}
\sqrt\mu\,,\, \mu_{\beta_3}\,) ,\,\, h\Big)_{L^2(\RR^6_{x,
v})}\right|\\
&\leq C \|f\|_{H^{|\beta|}_\ell(\RR^6)} ||| h|||_{\cB^0_0(\RR^6)},
\end{align*}
where for the case when $\beta_1=\beta$, we have used
(\ref{3.2.6+4}).

For (\ref{3.2.8}), since $-\cL_1(g)=\Gamma(\sqrt\mu,\, g)$, by using
again the Leibniz formula (\ref{3.1.2}), we have
\begin{align*}
&-\Big(W^\ell \,\partial^\beta_{x, v}\,\cL_1( g)-
\cL_1(W^\ell \,\partial^\beta_{x, v}\,g),\,\, h\Big)_{L^2(\RR^6)}\\
&=\Big(W^\ell \,\partial^\beta_{x, v} \Gamma(\sqrt\mu\,,\,
,g)-\Gamma(\sqrt\mu\,,\,\,W^\ell \,\partial^\beta_{x, v},\, g),\,\,
h\Big)_{L^2(\RR^6)}\\
&= \sum_{|\beta_2|\leq |\beta|-1} C^{\beta_1}_{\beta_2, \beta_3}
\Big(\cT(\partial^{\beta_1} \sqrt\mu\,,\, W^\ell \partial^{\beta_2}
g,\,
\mu_{\beta_3}\,) ,\,\, h\Big)_{L^2(\RR^6_{x, v})}\\
&\qquad+\sum C^{\beta_1}_{\beta_2, \beta_3}
\Big(W^\ell\cT(\partial^{\beta_1} \sqrt\mu\,,\, \partial^{\beta_2}
g,\, \mu_{\beta_3}\,) - \cT(\partial^{\beta_1} \sqrt\mu\,,\, W^\ell
\partial^{\beta_2} g,\, \mu_{\beta_3}\,) ,\,\,
h\Big)_{L^2(\RR^6_{x, v})}\\
&=F_1+F_2.
\end{align*}
Then (\ref{3.2.6+1}) implies
\begin{align*}
|F_2|&= \left|\sum C^{\beta_1}_{\beta_2,
\beta_3}\Big(W^\ell\cT(\partial^{\beta_1} \sqrt\mu\,,\,
\partial^{\beta_2} g,\, \mu_{\beta_3}\,)
- \cT(\partial^{\beta_1}\sqrt\mu\,,\, W^\ell
\partial^{\beta_2} g,\, \mu_{\beta_3}\,)
,\,\, h\Big)_{L^2(\RR^6_{x, v})}\right|\\
&\leq C
\|\sqrt\mu\,\|_{H^{|\beta|}_\ell(\RR^3)}\|g\|_{H^{|\beta\,|}_\ell(\RR^6)}
|||h|||_{\cB^0_0(\RR^6_{x, v})} \leq C
\|g\|_{H^{|\beta\,|}_\ell(\RR^6)}|||h|||_{\cB^0_0(\RR^6_{x, v})}\, ,
\end{align*}
which also gives (\ref{3.2.8+1}) .

On the other hand, for $F_1$, (\ref{3.2.4}) implies that, when $ |\beta_2|\leq
|\beta|- 1$,
\begin{align*}
& \left|\Big(\cT(\partial^{\beta_1} \sqrt\mu\,,\, W^\ell
\partial^{\beta_2} g,\, \mu_{\beta_3}\,)
,\,\, h\Big)_{L^2(\RR^6_{x, v})}\right|\\
&\leq C \left(\int_{\RR^3_x} \|\partial^{\beta_1}
\sqrt\mu\,\|^2_{L^2_s(\RR^3_v)} \Big(|||W^\ell
 \partial^{\beta_2} g|||^2+||W^\ell
 \partial^{\beta_2} g||^2_{H^s(\RR^3_v)}\Big)dx\right.\\
&+ \left.\int_{\RR^3_x}\|W^\ell \partial^{\beta_2}
g\|^2_{L^2_s(\RR^3_v)} \Big(|||
\partial^{\beta_1} \sqrt\mu\,|||^2+||
\partial^{\beta_1} \sqrt\mu\,||^2_{H^s(\RR^3_v)}\Big)
dx\right)^{1/2}|||h|||_{\cB^0_0(\RR^6)} \\
&\leq C \| \sqrt\mu\,\|_{H^{|\beta_1|+s}_s(\RR^3_v)}
|||g|||_{\cB^{|\beta_2|}_\ell(\RR^6)} |||h|||_{\cB^0_0(\RR^6)}\\
&\leq C |||g|||_{\cB^{|\beta|-1}_\ell(\RR^6)}
|||h|||_{\cB^0_0(\RR^6)}.
\end{align*}
Then the proof of the proposition is completed.
\end{proof}

\section{Local existence}\label{section4}
\smallbreak

\subsection{Energy estimates for a linear equation}\label{section4.0}
\setcounter{equation}{0} \smallbreak We now consider the following
Cauchy problem for a linear Boltzmann equation with a given
function $f$,
\begin{equation}\label{4.0.1}
\partial_t g + v\,\cdot\,\nabla_x g + \cL_1 g = \Gamma (f,\,g) -\cL_2 f
,\qquad g|_{t=0} = g_0\,,
\end{equation}
which is equivalent to the problem:
\begin{equation*}
\partial_t G + v\,\cdot\,\nabla_x G=
Q(F,\,G) ,\qquad G|_{t=0} = G_0,
\end{equation*}
with $F=\mu+\sqrt\mu\,f$ and $G=\mu+\sqrt\mu\,g$.

We shall now study the
energy estimates on (\ref{4.0.1}) in the function space
$H^N_\ell$.
For $N\geq 3, \ell\geq 3$  and $\beta\in\NN^6,
|\beta|\leq N$, taking
$$
\varphi(t, x, v)= (-1)^{|\beta|}\Big(\partial^\beta_{x, v}
W^{2\ell}\partial^\beta_{x, v}g\Big)(t, x, v),
$$
as a test function on $\RR^3_x\times\RR^3_v$, we get
\begin{align*}
&\frac 12 \frac{d}{d t}\|\partial^\beta\,g\|^2_{L^2(\RR^6)}+
\Big(W^\ell\Big[\partial^\beta_{x, v},\, v\,\Big]\,\cdot\,\nabla_x
g,\, W^\ell\partial^\beta_{x, v}g\Big)_{L^2(\RR^6)}+
\Big(W^\ell\partial^\beta_{x, v} \cL_1(g),\,
W^\ell\partial^\beta_{x, v}g\Big)_{L^2(\RR^6)}\\
&=\Big(W^\ell\partial^\beta_{x, v} \Gamma(f, \, g),\,
W^\ell\partial^\beta_{x, v}g\Big)_{L^2(\RR^6)}-
\Big(W^\ell\partial^\beta_{x, v} \cL_2(f),\,
W^\ell\partial^\beta_{x, v}g\Big)_{L^2(\RR^6)},
\end{align*}
where we have used the fact that
$$
\left( v\,\cdot\,\nabla_x \Big(W^\ell\partial^\beta_{x, v}\,
g\Big),\, W^\ell\partial^\beta_{x, v}g\right)_{L^2(\RR^6)}=0\, .
$$
Applying now Propositions \ref{prop3.2.3} and
\ref{prop3.2.4}, we get for any  $3\leq k\leq N$ and
$|\beta|\leq k$,
\begin{align*}
&\frac 12 \frac{d}{d t}\|\partial^\beta\,g\|^2_{L^2_\ell(\RR^6)}+
\Big(\cL_1\Big(W^\ell\partial^\beta_{x, v} g\Big),\,
W^\ell\partial^\beta_{x, v}g\Big)_{L^2(\RR^6)}\\
&\leq C\Big\{ \|f\|_{H^k_\ell(\RR^6)} \,
|||g|||^2_{\cB^{k}_\ell(\RR^6)}+ \|g\|^2_{H^{k}_\ell(\RR^6)}+
\|f\|_{H^k_\ell(\RR^6)} \,
|||g|||_{\cB^{k}_\ell(\RR^6)}\\
&\qquad+  \Big(\|g\|_{H^{k}_\ell(\RR^6)}
+|||g|||_{\cB^{k-1}_\ell(\RR^6)}
 \,\Big)|||g|||_{\cB^{k}_\ell(\RR^6)}\Big\}.
\end{align*}
By taking summation over $|\beta|\leq k$, Lemma \ref{lemm2.1.1} together with
(\ref{3.1.0+1})
 and the Cauchy-Schwarz inequality imply that
\begin{align}\label{4.0.3}
&\frac{d}{d t}\|g\|^2_{H^k_\ell(\RR^6)}+\frac{C_0}{2}
|||g|||^2_{\cB^{k}_\ell(\RR^6)}\leq C_{k, \ell}
\|f\|_{H^k_\ell(\RR^6)} \,
|||g|||^2_{\cB^{k}_\ell(\RR^6)}\\
&\qquad\qquad+C_{k, \ell}\left( \|g\|^2_{H^{k}_\ell(\RR^6)}+
\|f\|^2_{H^k_\ell(\RR^6)} \, +|||g|||^2_{\cB^{k-1}_\ell(\RR^6)}
 \right)\,, \qquad 3\leq k\leq N\, .\nonumber
\end{align}
For $k=1, 2$, Proposition \ref{cor3.2.3} is used to get
\begin{align}\label{4.0.3+1}
&\frac{d}{d t}\|g\|^2_{H^k_\ell(\RR^6)}+\frac{C_0}{2}
|||g|||^2_{\cB^{k}_\ell(\RR^6)}\leq C_{k, \ell}
\|f\|_{H^3_\ell(\RR^6)} \,
|||g|||^2_{\cB^{3}_\ell(\RR^6)}\\
&\qquad\qquad+C_{k, \ell}\left( \|g\|^2_{H^{k}_\ell(\RR^6)}+
\|f\|^2_{H^3_\ell(\RR^6)} \, +|||g|||^2_{\cB^{k-1}_\ell(\RR^6)}
 \right)\,,\nonumber
\end{align}
while for $k=0$
\begin{align}\label{4.0.4}
&\frac{d}{d t}\|g\|^2_{L^2_\ell(\RR^6)}+\frac{C_0}{2}
|||g|||^2_{\cB^{0}_\ell(\RR^6)}\leq C_{0, \ell}
\|f\|_{H^3_\ell(\RR^6)} \,
|||g|||^2_{\cB^{0}_\ell(\RR^6)}\\
&\qquad\qquad\qquad+C_{0, \ell}\left( \|g\|^2_{L^{2}_\ell(\RR^6)}+
\|f\|^2_{H^3_\ell(\RR^6)} \,
 \right)\,, \nonumber
\end{align}
where $C_0$ is the constant in (\ref{3.1.0+1}), which is
independent on $k, \ell$ and $N$.

Take $N\ge 3$, when $k\ge 2$,  by taking a linear combination of
  (\ref{4.0.3}) and  (\ref{4.0.3+1}), we have
\begin{align*}
&\frac{d}{d t}\left(\|g\|^2_{H^{k-1}_\ell(\RR^6)}+\frac{C_0}{2 C_{k,
\ell}}\|g\|^2_{H^k_\ell(\RR^6)}\right)+\frac{C^2_0}{2^2 C_{k, \ell}}
|||g|||^2_{\cB^{k}_\ell(\RR^6)}\\
&\leq \frac{C_0}{2}\left( \|f\|_{H^N_\ell(\RR^6)} \,
|||g|||^2_{\cB^{N}_\ell(\RR^6)}+
\|g\|^2_{H^{N}_\ell(\RR^6)}+ \|f\|^2_{H^N_\ell(\RR^6)}\right)\\
&\qquad\qquad+ \frac{d}{dt}\|g\|^2_{H^{k-1}_\ell(\RR^6)}+\frac{C_0}{2}|||g|||^2_{\cB^{k-1}_\ell(\RR^6)}
\\
&\leq \frac{C_0}{2}\left( \|f\|_{H^N_\ell(\RR^6)} \,
|||g|||^2_{\cB^{N}_\ell(\RR^6)}+ \|g\|^2_{H^{N}_\ell(\RR^6)}+
\|f\|^2_{H^N_\ell(\RR^6)}\right)\\
&\qquad+C_{k-1, \ell}\left( \|f\|_{H^N_\ell(\RR^6)} \,
|||g|||^2_{\cB^{N}_\ell(\RR^6)} +\|g\|^2_{H^{N}_\ell(\RR^6)}+
\|f\|^2_{H^N_\ell(\RR^6)} \, +|||g|||^2_{\cB^{k-2}_\ell(\RR^6)}
 \right)
\, .
\end{align*}
By induction and by using (\ref{4.0.4}), we have the
following  estimate
\begin{align}\label{4.0.5}
&\frac{d}{d t}\left(\sum_{0\leq k\leq N}c_{k,
l}\,\,\|g\|^2_{H^k_\ell(\RR^6)}\right)+\widetilde{C_0}\,\,
|||g|||^2_{\cB^{N}_\ell(\RR^6)}\\
&\qquad\leq \widetilde C_{N, \ell} \left(\|f\|_{H^N_\ell(\RR^6)} \,
|||g|||^2_{\cB^{N}_\ell(\RR^6)}+\|g\|^2_{H^{N}_\ell(\RR^6)}+
\|f\|^2_{H^N_\ell(\RR^6)} \,
 \right)\,,\nonumber
\end{align}
for some positive constants $\widetilde{C_0}<C_0$, $c_{k, \ell}$ and $\widetilde C_{N, \ell}$. Notice that
\begin{equation}\label{4.0.6}
\|g\|^2_{H^N_\ell(\RR^6)}\sim \sum_{0\leq k\leq N}c_{k,
l}\|g\|^2_{H^k_\ell(\RR^6)}.
\end{equation}

We are now ready to prove the following theorem.

\begin{theo}\label{theo4.0.1}
Let $N\geq 3, \ell\geq 3$. Assume that $g_0\in H^N_\ell(\RR^6)$ and
$f\in L^\infty([0, T];\,H^N_\ell(\RR^6))$.  If $g\in L^\infty([0,
T];\,H^N_\ell(\RR^6))\bigcap L^2([0, T];\, \cB^N_\ell(\RR^6))$ is a
solution of Cauchy problem (\ref{4.0.1}), then there exists
$\epsilon_0>0$ such that if
\begin{equation*}\label{4.0.7}
\|f\|_{L^\infty([0, T];\,H^N_\ell(\RR^6))}\leq \epsilon_0,
\end{equation*}
we have
\begin{equation}\label{4.0.8}
\|g\|^2_{L^\infty([0, T];\,H^N_\ell(\RR^6))}+ ||g||^2_{L^2([0,
T];\,\cB^N_\ell(\RR^6))}\leq Ce^{C\,
T}(\|g_0\|^2_{H^N_\ell(\RR^6)}+\epsilon_0^2 T),
\end{equation}
for a constant $C>0$ depending only on $N, \ell$ .
\end{theo}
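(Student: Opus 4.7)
The plan is to exploit the consolidated energy inequality (\ref{4.0.5}) by absorbing the only genuinely nonlinear term on the right into the dissipation on the left, then close the estimate with Gronwall's lemma.

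First, I would fix the smallness threshold. Since $\|f\|_{L^\infty_t H^N_\ell} \leq \epsilon_0$, choose $\epsilon_0 > 0$ so small that
\begin{equation*}
\widetilde{C}_{N,\ell}\,\epsilon_0 \leq \frac{\widetilde{C_0}}{2}.
\end{equation*}
Plugging this into (\ref{4.0.5}) allows us to absorb the term $\widetilde C_{N,\ell}\|f\|_{H^N_\ell}\,|||g|||^2_{\cB^N_\ell}$ into the coercive term $\widetilde{C_0}\,|||g|||^2_{\cB^N_\ell}$, yielding
\begin{equation*}
\frac{d}{dt}\Bigl(\sum_{0 \leq k \leq N} c_{k,\ell}\,\|g\|^2_{H^k_\ell}\Bigr) + \frac{\widetilde{C_0}}{2}\,|||g|||^2_{\cB^N_\ell} \leq \widetilde{C}_{N,\ell}\bigl(\|g\|^2_{H^N_\ell} + \|f\|^2_{H^N_\ell}\bigr).
\end{equation*}

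Next, set $E(t) = \sum_{0 \leq k \leq N} c_{k,\ell}\,\|g(t)\|^2_{H^k_\ell}$. By the equivalence (\ref{4.0.6}), there is a constant $C_1$ such that $\|g\|^2_{H^N_\ell} \leq C_1 E$, so the differential inequality becomes
\begin{equation*}
\frac{d}{dt} E(t) + \frac{\widetilde{C_0}}{2}\,|||g(t)|||^2_{\cB^N_\ell} \leq C\,E(t) + C\,\epsilon_0^2,
\end{equation*}
for a constant $C$ depending only on $N$ and $\ell$. Dropping the non-negative dissipation term and applying Gronwall's inequality on $[0,t]$ gives
\begin{equation*}
E(t) \leq e^{Ct}\bigl(E(0) + C\epsilon_0^2\,t\bigr),
\end{equation*}
which, via (\ref{4.0.6}) once more, produces the first half of (\ref{4.0.8}):
\begin{equation*}
\|g\|^2_{L^\infty([0,T];\,H^N_\ell(\RR^6))} \leq C\,e^{CT}\bigl(\|g_0\|^2_{H^N_\ell} + \epsilon_0^2\,T\bigr).
\end{equation*}

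Finally, for the dissipation bound, I would integrate the differential inequality over $[0,T]$ without discarding the coercive term:
\begin{equation*}
\frac{\widetilde{C_0}}{2}\int_0^T |||g(t)|||^2_{\cB^N_\ell}\,dt \leq E(0) + C\int_0^T E(t)\,dt + C\,\epsilon_0^2\,T,
\end{equation*}
and substitute the $L^\infty_t$ bound on $E(t)$ just obtained. This yields the $L^2_t(\cB^N_\ell)$ estimate with the same form $C e^{CT}(\|g_0\|^2_{H^N_\ell} + \epsilon_0^2 T)$, completing (\ref{4.0.8}). The only subtle point is the absorption step, which is why the smallness of $\epsilon_0$ (independent of $T$) is crucial; everything else is a routine Gronwall argument once the inequality (\ref{4.0.5}) is in hand.
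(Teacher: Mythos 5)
Your proof is correct and follows essentially the same route as the paper: absorb the nonlinear dissipation term $\widetilde C_{N,\ell}\|f\|_{H^N_\ell}\,|||g|||^2_{\cB^N_\ell}$ in \eqref{4.0.5} using the smallness of $\epsilon_0$, then close with Gronwall and the norm equivalence \eqref{4.0.6}. The paper compresses the final two steps by multiplying through by the integrating factor $e^{-Ct}$ once and integrating, producing both the $L^\infty_t$ bound and the $L^2_t(\cB^N_\ell)$ bound simultaneously, whereas you obtain them in two passes; this is a cosmetic difference, not a substantive one.
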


\begin{proof}
Choosing $\epsilon_0=\frac{\widetilde{C_0}}{2\widetilde C_{N, \ell}}$,
we have, {}from (\ref{4.0.5}),
\begin{align*}
&\frac{d}{d t}\left(\sum_{0\leq k\leq N}c_{k,
l}\,\,\|g\|^2_{H^k_\ell(\RR^6)}\right)+\frac{\widetilde{C_0}}{2}\,\,
|||g|||^2_{\cB^{N}_\ell(\RR^6)}\\
&\leq 2\widetilde C_{N, \ell} (\|g\|^2_{H^{N}_\ell(\RR^6)}+\epsilon_0^2)
 \leq C (\sum_{0\leq k\leq N}c_{k,
l}\,\,\|g\|^2_{H^k_\ell(\RR^6)}+\epsilon_0^2),
\end{align*}
and
\begin{align*}
\frac{d}{d t}\left(e^{-Ct
}\sum_{0\leq k\leq N}c_{k,
l}\,\,\|g\|^2_{H^k_\ell(\RR^6)}\right)+\frac{\widetilde{C_0}}{2}\,e^{-C\,t }\, |||g|||^2_{\cB^{N}_\ell(\RR^6)}\leq
C\epsilon_0^2e^{-Ct}\,.
\end{align*}
Thus we get (\ref{4.0.8}) for some constant $C>0$ and this completes the proof
of the theorem.
\end{proof}

\subsection{Existence for the linear equation}\label{section4.1}
\setcounter{equation}{0} \smallbreak

With the energy estimate given in the above subsection, we can now prove
the following local existence theorem by using the Hahn-Banach theorem.
\begin{theo}\label{theo4.1.1}
Let $\ell\geq 3, N\geq 3$ and $g_0\in H^{N}_\ell(\RR^6)$. There
exists $\epsilon_0>0$ such that if
$$
\|f\|_{L^\infty([0, T];\, H^{N}_\ell(\RR^6))}\leq\epsilon_0,
$$
then the Cauchy problem (\ref{4.0.1}) admits a unique solution
$$
g\in L^\infty([0, T];\, H^{N}_\ell(\RR^6))\cap L^2([0, T];\,
\cB^{N}_\ell(\RR^6)).
$$
\end{theo}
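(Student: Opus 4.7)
Uniqueness would follow directly from Theorem \ref{theo4.0.1}: given two solutions $g_1, g_2$, the difference $w = g_1 - g_2$ satisfies the source-free version of (\ref{4.0.1}) with $w|_{t=0} = 0$, so repeating the derivation of Section 4.0 (with the $\|f\|^2_{H^N_\ell}$ term on the right dropped) and applying Gronwall yields $w \equiv 0$. For existence I would follow the Hahn--Banach scheme flagged by the authors. Writing $Lg = \partial_t g + v\cdot\nabla_x g + \cL_1 g - \Gamma(f, g)$, the Cauchy problem reads $Lg = -\cL_2 f$, $g|_{t=0} = g_0$. Since $\cL_1$ is symmetric on $L^2_v$ (visible from the quadratic-form expression in the proof of Lemma \ref{lemm2.1.1}), the formal $L^2_{t,x,v}$-adjoint of $L$, acting on test functions $\phi$ with $\phi|_{t=T} = 0$, is
\begin{equation*}
L^* \phi = -\partial_t \phi - v\cdot\nabla_x \phi + \cL_1 \phi - \Gamma(f, \cdot)^* \phi.
\end{equation*}

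The first key step is to establish a backward analog of Theorem \ref{theo4.0.1}: for smooth $\phi$ with $\phi|_{t=T} = 0$ and $\|f\|_{L^\infty_t H^N_\ell} \le \epsilon_0$,
\begin{equation*}
\|\phi\|^2_{L^\infty_t H^N_\ell} + \|\phi\|^2_{L^2_t \cB^N_\ell} \le C e^{CT} \|L^* \phi\|^2_{\ast}
\end{equation*}
in a suitable dual norm $\|\cdot\|_\ast$. I would derive this by running the computations of Section 4.0 with time reversed, starting from $T$ instead of $0$. The only new structural input is a Proposition \ref{prop3.2.3}-type upper bound for $\Gamma(f, \cdot)^*$, which comes essentially for free: Proposition \ref{prop2.3.1} already gives $|(\Gamma(f, g), h)| \le C(\|f\|_{L^2_s}|||g|||  + \|g\|_{L^2_s}|||f|||)\,|||h|||$, and the right-hand side is symmetric in $g$ and $h$, so the identity $(g, \Gamma(f, \cdot)^* h) = (\Gamma(f, g), h)$ furnishes the same bound for the adjoint. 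The weighted and differentiated versions would transfer analogously through the symmetrized identity (\ref{2.2.5}).

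With the adjoint estimate in hand, define on the range $R(L^*)$ the linear functional
\begin{equation*}
\Lambda(L^* \phi) = \int_0^T (-\cL_2 f,\, \phi)_{L^2(\RR^6)}\, dt + (g_0,\, \phi(0))_{L^2(\RR^6)},
\end{equation*}
which, by Proposition \ref{prop3.2.4} combined with the backward estimate, is continuous in the $\|L^* \phi\|_\ast$ norm. Hahn--Banach would then extend $\Lambda$ to the ambient Banach space, and Riesz representation would produce $g$ in the predual satisfying $\int_0^T (g,\, L^* \phi)\, dt = \Lambda(\phi)$ for every admissible $\phi$. Integrating by parts in this weak identity shows that $g$ solves (\ref{4.0.1}) distributionally with initial trace $g_0$. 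To upgrade $g$ to the class $L^\infty_t H^N_\ell \cap L^2_t \cB^N_\ell$, I would mollify in $(x, v)$ by $\rho_\varepsilon$ and apply Theorem \ref{theo4.0.1} to $g_\varepsilon$; the commutator $[\rho_\varepsilon\ast,\, \cL_1 - \Gamma(f, \cdot)]\, g$ would vanish in the appropriate norm as $\varepsilon \to 0$, and the uniform bound from Theorem \ref{theo4.0.1} would pass to the limit.

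The main obstacle will be the adjoint energy estimate, specifically checking that every trilinear and commutator bound in Section 3 is invariant under swapping the second and third slots. Proposition \ref{prop2.3.1} makes this straightforward for the bare nonlinearity, but Proposition \ref{prop3.2.3} and the rest of Section 3 involve weights $W^\ell$ and $x, v$-derivatives, and each of these estimates has to be chased through the duality, often by re-symmetrizing via (\ref{2.2.5}) before repeating the change-of-variables manipulations. A secondary point is that the smallness threshold $\epsilon_0$ in Theorem \ref{theo4.0.1} and in its backward counterpart may differ by constants; taking $\epsilon_0$ to be the minimum of the two resolves this.
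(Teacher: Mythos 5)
Your scaffolding is the same as the paper's --- a Hahn--Banach/Riesz duality argument for the linear operator $\mathcal{P}g = \partial_t g + v\cdot\nabla_x g + \cL_1 g - \Gamma(f,g)$ --- and your remark that uniqueness follows from the energy estimate of Theorem \ref{theo4.0.1} applied to the source-free difference is correct. But the ``main obstacle'' you flag, namely deriving adjoint versions of Proposition \ref{prop3.2.3} and chasing every weighted commutator estimate from Section 3 through the duality by ``re-symmetrizing via (\ref{2.2.5})'', is a detour the paper avoids entirely, and the way it does so is the one genuine idea you are missing. The paper takes the adjoint $\mathcal{P}^*_{N,\ell}$ in the Hilbert space $L^2([0,T];H^N_\ell(\RR^6))$ (not in $L^2_{t,x,v}$ as you propose), and the only pairing it ever needs is the self-pairing $\big(h,\, \mathcal{P}^*_{N,\ell}h\big)_{L^2_t H^N_\ell} = \big(\mathcal{P}h,\, h\big)_{L^2_t H^N_\ell}$. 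The right-hand side is the \emph{same} quadratic form that Section 4.1 already estimated: $-\tfrac12\tfrac{d}{dt}\|h\|^2_{H^N_\ell} + (\cL_1 h,h)_{H^N_\ell} - (\Gamma(f,h),h)_{H^N_\ell}$, so one simply re-runs the argument of Theorem \ref{theo4.0.1}, integrating from $t$ to $T$ and using $h(T)=0$ in place of the initial condition. No upper bound for $\Gamma(f,\cdot)^*$ is required, because $\Gamma(f,\cdot)^*$ never acts on anything except inside this self-pairing. Your plan to verify slot-swap invariance of all the Section 3 estimates would be both tedious and unnecessary; worse, the choice of $L^2_{t,x,v}$ adjoint would destroy the direct applicability of those $H^N_\ell$ estimates and force you to re-derive the weighted commutator machinery from scratch in dual form. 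A secondary point: the paper's last step converts the $H^N_\ell$ weak identity to an $L^2$ weak identity by observing that $\Lambda^N W^{2\ell}\Lambda^N$ is an isomorphism on the test class, which is cleaner than the mollification route you sketch (though the latter would also work); and the continuity of $\mathcal{G}$ for $H = -\cL_2 f$ uses (\ref{3.1.0+2}) to bound the pairing against the $\cB^N_\ell$ norm of $h$, not Proposition \ref{prop3.2.4} as you suggest.
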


\begin{proof}
We consider the following Cauchy problem :
\begin{equation}\label{4.1.1}
{\mathcal P} g \equiv \partial_t g + v\,\cdot\,\nabla_x g + \cL_1 g
- \Gamma (f,\, g) = H , \ g(0) =g_0.
\end{equation}
For $h\in C^\infty ([0,T];\, {\mathcal S}(\RR^6_{x,v}) )$ with $h(T)
=0$, we define
$$
\Big(g,\,\, {\mathcal P}^*_{N,\,\ell} \, h\Big)_{L^2([0,\,T];
H^N_\ell(\RR^6))} = \Big({\mathcal P}\,g,\, h\Big)_{L^2([0,\,T];
H^N_\ell(\RR^6))}\,\, ,
$$
so that ${\mathcal P}^*_{N,\,\ell} $ is the adjoint of the linear operator
${\mathcal P}$ in the Hilbert space $L^2 ([0,\,T]; H^N_\ell(\RR^6)
)$.

Set
$$
{\mathbb W} = \left\{ w = {\mathcal P}^*_{N,\,\ell}\, h ;\,\,h\in
C^\infty ([0,T];\, {\mathcal S}(\RR^6_{x,v}) )\,\,\mbox{with}\,\,
h(T) =0 \right\},
$$
which  is a dense subspace of $L^2 ([0,\,T];
\,H^N_\ell(\RR^6) )$. And we also have
$$
{\mathcal P}^\ast_{N,\,\ell} (h) = -\partial_t  h +
(v\,\cdot\,\nabla_x )^\ast h + \cL^\ast_1\, h + \Gamma^\ast (f,\,
h)\,.
$$
Then
\begin{align*}
\Big(h,\,\, {\mathcal P}^*_{N,\,\ell} \, h\Big)_{H^N_\ell(\RR^6)}
&=-\frac 12 \frac{d}{dt}|| h (t) ||^2_{H^N_\ell(\RR^6)} +
\Big(v\,\cdot\,\nabla_x\, h ,\, h\Big)_{
H^N_\ell(\RR^6)}\\
& + \Big(\cL_1  (h) , h\Big)_{H^N_\ell(\RR^6)}- \Big(\Gamma (f,\, h)
,\, h\Big)_{H^N_\ell(\RR^6)}.
\end{align*}
Same as Theorem \ref{theo4.0.1}, for
$|| f||_{L^\infty ([0, T];\, H^N_\ell(\RR^6))} \leq \epsilon_0$, we
have
\begin{align*}
\int^T_t e^{2C (s-t)}\Big|\Big(h,\,\, {\mathcal P}^*_{N,\,\ell} \,
h\Big)_{H^N_\ell(\RR^6)}\Big| dt &\geq || h(t)
||^2_{H^N_\ell(\RR^6)} + \int^T_t C e^{2C (s-t)} ||| h(s)
|||^2_{\cB^N_\ell(\RR^6)} ds\, .
\end{align*}
Thus, for all $0<t<T$,
\begin{align*}
&|| h(t)||^2_{H^N_\ell(\RR^6)}+C|| h ||^2_{L^2([t,
T];\,\,\cB^N_\ell(\RR^6))} \leq C\Big(h,\,\, {\mathcal
P}^*_{N,\,\ell} \,
h\Big)_{L^2([t,\,T];\, H^N_\ell(\RR^6))}\\
&\leq C || {\mathcal P}^*_{N,\,\ell} (h) ||_{L^2([t,\,T];
H^N_\ell(\RR^6))} || h||_{L^2([t,\,T]; H^N_\ell(\RR^6))}\,.
\end{align*}
Hence, we get
\begin{equation}\label{4.1.2+0}
|| h||_{L^2 ([0,\,T];\, H^N_\ell(\RR^6))}+|| h||_{L^\infty
([0,\,T];\, H^N_\ell(\RR^6))}  \leq CT ||  {\mathcal P}^*_{N,\,\ell}
(h) ||_{L^2 ([0,\,T];\, H^N_\ell(\RR^6))}\,.
\end{equation}
Since
$$
|| h||_{L^2([t,\,T]; H^N_\ell(\RR^6))}\leq C || h ||_{L^2([t,
T];\,\,\cB^N_\ell(\RR^6))},
$$
 we also have
\begin{equation}\label{4.1.2}
 || h ||_{L^2([0,
T];\,\,\cB^N_\ell(\RR^6))}\leq C ||  {\mathcal P}^*_{N,\,\ell} (h)
||_{L^2 ([0,\,T];\, H^N_\ell(\RR^6))}\,.
\end{equation}

Next, we define a functional ${\mathcal G}$ on ${\mathbb W}$ as
follows
$$
{\mathcal G}(w) = (H,\,h)_{L^2 ([0,\,T];\, H^N_\ell(\RR^6))} + ( g_0
, h(0))_{H^N_\ell(\RR^6)}.
$$
Then, if $H\in L^2 ([0,\,T];\, H^N_{\ell-s}(\RR^6))$,
(\ref{3.1.0+2})
gives
\begin{align*}
 |{\mathcal G}(w)| &\leq \|H\|_{L^2 ([0,\,T];\, H^N_{\ell-s}(\RR^6))}\|h\|_{L^2
([0,\,T];\, H^N_{\ell+s}(\RR^6))}
+\|g_0\|_{H^N_\ell(\RR^6)}\|h(0)\|_{H^N_\ell(\RR^6)}\\
&\leq C \|H\|_{L^2 ([0,\,T];\, H^N_{\ell-s}(\RR^6))}
|| h ||_{L^2([0, T];\,\,\cB^N_\ell(\RR^6))}
+\|g_0\|_{H^N_\ell(\RR^6)}\|h(0)\|_{H^N_\ell(\RR^6)}\\
&\leq C || {\mathcal P}^*_{N,\,\ell} (h) ||_{L^2 ([0,\,T];\,
H^N_\ell(\RR^6))}\leq C|| w||_{L^2 ([0,\,T];\, H^N_\ell(\RR^6))}\, ,
\end{align*}
where we have used (\ref{4.1.2+0})   and (\ref{4.1.2}).

Thus,
${\mathcal G}$ is a continuous linear functional on $\Big({\mathbb
W};\,\|\,\cdot\,\|_{L^2 ([0,\,T];\, H^N_\ell(\RR^6))}\Big)$. Now, there
exists $g\in L^2 ([0,\,T];\, H^N_\ell(\RR^6))$ such that for any
$w\in{\mathbb W}$,
$$
{\mathcal G}(w)= \big(g,\,w\big)_{L^2 ([0,\,T];\, H^N_\ell(\RR^6))},
$$ by Hahn-Banach Theorem. For any $h\in C^\infty ([0,T]; {\mathcal S}
(\RR^6_{x,v}) )$ with $h(T) =0$, we have
$$
\Big(g,\,\, {\mathcal P}^*_{N,\,\ell} \, h\Big)_{L^2([0,\,T];
H^N_\ell(\RR^6))} = \big(H,\, h\big)_{L^2([0,\,T]; H^N_\ell(\RR^6))}
+ \big(g_0 ,\, h(0)\big)_{H^N_\ell(\RR^6)},
$$
and by the definition of the operator ${\mathcal P}^*_{N,\,\ell}$, we
have also
$$
\Big({\mathcal P}\,g,\,\, \tilde h\Big)_{L^2([0,\,T]; L^2(\RR^6))} =
\big(H,\, \tilde h\big)_{L^2([0,\,T]; L^2(\RR^6))} + \big(g_0 ,\,
\tilde h(0)\big)_{L^2(\RR^6)},
$$
where
$$
\tilde h  = \Lambda^N W^{2\ell} \Lambda^N h \in C^\infty ([0, T];\,
{\mathcal S}(\RR^6) )\,\,\, \mbox{with}\,\,\, \tilde h (T )=0,
$$
where $\Lambda=(1-\Delta_{x,v})^{\frac 12}$.
Since $\Lambda^N W^{2\ell} \Lambda^N $ is an isomorphism on
$\big\{h:h\in C^\infty ([0, T];\, {\mathcal S}(\RR^6) )$
with $ h (T )=0\big\}$, we have shown that if $H\in L^2
([0,\,T];\!\! H^N_{\ell-s}(\RR^6))$, then $g\in L^2 ([0,\,T];\!\!
H^N_\ell(\RR^6))$ is a solution of the Cauchy problem (\ref{4.1.1}).

It remains to take
$$
H=-\cL_2(f)=\Gamma (f, \sqrt\mu),
$$
to get
$$
\left|(H,\,h)_{L^2 ([0,\,T];\, H^N_\ell(\RR^6))}\right|\leq C
 \|f\|_{L^2 ([0,\,T];\, H^N_{\ell}(\RR^6))}
|| h ||_{L^2([0, T];\,\,\cB^N_\ell(\RR^6))}.
$$
Then ${\mathcal G}\,$ is also continuous on ${\mathbb W}$.
 And this completes the proof of Theorem \ref{theo4.1.1}.
\end{proof}

\subsection{Convergence of approximate solutions}\label{section4.2}
\setcounter{equation}{0} \smallbreak

In this subsection, we prove the local existence theorem.
\begin{theo}\label{theo4.2.1}
Let $N\geq 3, \ell\geq 3$. There exist $\epsilon_1,\, T>0$ such that if $g_0\in H^N_\ell(\RR^6)$ and
$$
\|g_0\|_{H^N_\ell(\RR^6)}\leq \epsilon_1\, ,
$$
then the Cauchy problem \eqref{cauchy-problem} admits a solution
$$
g \in L^\infty([0, T];\, H^N_\ell(\RR^6))\cap L^2([0, T];\,
\cB^N_\ell(\RR^6)).
$$
\end{theo}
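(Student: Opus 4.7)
\medskip
\noindent\textbf{Proof plan.}
My plan is to build the solution as the limit of a Picard-type iteration scheme that reduces the nonlinear Cauchy problem \eqref{cauchy-problem} to a sequence of linear problems of the form \eqref{4.0.1}, each of which is solved by Theorem \ref{theo4.1.1}. Concretely, set $g^{0}\equiv 0$, and for $n\geq 0$ let $g^{n+1}$ be the unique solution, furnished by Theorem \ref{theo4.1.1}, of
\begin{equation*}
\partial_t g^{n+1}+v\cdot\nabla_x g^{n+1}+\cL_1 g^{n+1}
=\Gamma(g^n,\, g^{n+1})-\cL_2 g^n,\qquad g^{n+1}|_{t=0}=g_0.
\end{equation*}
Observe that if this iteration has a limit $g$ in a reasonable topology then $g$ satisfies $\partial_t g+v\cdot\nabla_x g+\cL g=\Gamma(g,g)$, which is \eqref{cauchy-problem}.

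The first step is to propagate a uniform bound. Assuming inductively that $\|g^n\|_{L^\infty([0,T];H^N_\ell(\RR^6))}\leq \varepsilon_0$, where $\varepsilon_0$ is the threshold provided by Theorem \ref{theo4.0.1}, the energy estimate \eqref{4.0.8} applied to $g^{n+1}$ yields
\begin{equation*}
\|g^{n+1}\|^2_{L^\infty([0,T];H^N_\ell)}+\|g^{n+1}\|^2_{L^2([0,T];\cB^N_\ell)}
\leq Ce^{CT}\bigl(\|g_0\|^2_{H^N_\ell}+\varepsilon_0^2\,T\bigr).
\end{equation*}
I would then choose $\varepsilon_1\leq\varepsilon_0$ and $T>0$ so small that $Ce^{CT}(\varepsilon_1^2+\varepsilon_0^2 T)\leq \varepsilon_0^2$; this closes the induction and gives a uniform bound for the whole sequence in the space $L^\infty([0,T];H^N_\ell)\cap L^2([0,T];\cB^N_\ell)$.

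Next I would prove that the sequence is Cauchy, but at a lower regularity level to avoid any apparent derivative loss. Setting $w^{n+1}=g^{n+1}-g^n$, one has $w^{n+1}|_{t=0}=0$ and
\begin{equation*}
\partial_t w^{n+1}+v\cdot\nabla_x w^{n+1}+\cL_1 w^{n+1}-\Gamma(g^{n-1},w^{n+1})
=\Gamma(w^n,g^{n+1})-\cL_2 w^n,
\end{equation*}
using the bilinearity of $\Gamma$ to split $\Gamma(g^n,g^{n+1})-\Gamma(g^{n-1},g^n)=\Gamma(w^n,g^{n+1})+\Gamma(g^{n-1},w^{n+1})$. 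Performing the analogue of the energy computation of Section \ref{section4.0} in, say, the $L^2_\ell(\RR^6)$ norm, and controlling the bilinear terms via Propositions \ref{prop2.3.1} and \ref{prop3.2.3} together with the uniform $H^N_\ell$ bound on $g^{n-1},g^{n+1}$, one gets an estimate of the type
\begin{equation*}
\tfrac{d}{dt}\|w^{n+1}\|^2_{L^2_\ell}+\tfrac{C_0}{2}|||w^{n+1}|||^2_{\cB^0_\ell}
\leq C\varepsilon_0\bigl(|||w^{n+1}|||^2_{\cB^0_\ell}+|||w^n|||^2_{\cB^0_\ell}\bigr)+C\|w^n\|^2_{L^2_\ell}.
\end{equation*}
Absorbing the $\varepsilon_0$-term into the left and applying Gronwall on $[0,T]$ with $T$ (and $\varepsilon_0$) small, this produces a contraction estimate
$\|w^{n+1}\|_{L^\infty_T L^2_\ell}+\|w^{n+1}\|_{L^2_T\cB^0_\ell}\leq \tfrac12(\|w^n\|_{L^\infty_T L^2_\ell}+\|w^n\|_{L^2_T\cB^0_\ell})$, so $\{g^n\}$ is Cauchy in $L^\infty([0,T];L^2_\ell)\cap L^2([0,T];\cB^0_\ell)$.

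Finally, interpolating this strong low-regularity convergence with the uniform $H^N_\ell$ bound yields strong convergence in $L^\infty([0,T];H^{N'}_\ell)$ for any $N'<N$ and, by weak-$*$ compactness, a limit $g\in L^\infty([0,T];H^N_\ell)\cap L^2([0,T];\cB^N_\ell)$. The upper-bound estimates of Section \ref{section2.3} and Proposition \ref{prop3.2.3} are continuous enough in the relevant topologies to pass to the limit in $\Gamma(g^n,g^{n+1})$ and $\cL_2 g^n$, yielding a solution of \eqref{cauchy-problem}. The main technical obstacle is the Cauchy step, since the quadratic term $\Gamma(g^n,g^{n+1})$ must be controlled at low regularity using only Proposition \ref{prop2.3.1} plus the uniform high-regularity bounds, and the gain in the dissipation $|||\,\cdot\,|||_{\cB^0_\ell}$ must be exactly what is needed to absorb the corresponding loss in the bilinear estimate; this is precisely the reason for the careful choice of the non-isotropic norm in Section \ref{section2}.
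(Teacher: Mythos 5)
Your proposal is essentially the paper's own proof: both proceed by Picard iteration using Theorem \ref{theo4.1.1} for the linear solvability and Theorem \ref{theo4.0.1} for the uniform $H^N_\ell\cap\cB^N_\ell$ bound, then establish a contraction for the differences $w^n=g^{n+1}-g^n$ at the low regularity level $L^\infty_T L^2_\ell$ (absorbing the bilinear term via smallness), and finally upgrade by interpolation with the uniform bound and weak-$*$ compactness to recover $g\in L^\infty([0,T];H^N_\ell)\cap L^2([0,T];\cB^N_\ell)$. The only cosmetic differences are the choice of initial iterate ($g^0\equiv 0$ here versus $g^0=g_0$ in the paper, the latter being convenient for the later non-negativity argument) and the particular symmetric splitting of $\Gamma(g^n,g^{n+1})-\Gamma(g^{n-1},g^n)$, which is equivalent by bilinearity.
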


\begin{rema}\label{rema4.4}
{}By the  equation in \eqref{cauchy-problem}, we have, for
 $0<s<1/2$,
By using the equation \eqref{cauchy-problem}, we have, for $0<s<1/2$,
\[
\partial_t g, \ v\cdot\nabla_x g \in L^2([0,T]; H^{N-1}_{\ell-1}(\RR^6)).
\]
Moreover, if we go back to the equation (1.1), we have that
\[
f=\mu+ \mu^{1/2}g\in H^N_\ell([0,T]\times\Omega\times \RR^3)),
\]
for any $\ell\in \NN$ and any bounded domain $\Omega\subset \RR^3_x$, and
thus the Sobolev embedding implies
that $f$ is
a classical solution of equation (1.1) if $N>7/2+1$. We will use this properties for the
smoothing effect of Theorem 1.1.

\end{rema}

For the proof of Theorem \ref{theo4.2.1}, we consider the  sequence of approximate
solutions defined by the following Cauchy problem, $n\in\NN$,
$$
\partial_t f^{n+1} + v\,\cdot\,\nabla_x f^{n+1} =Q (f^n, f^{n+1})
\,,\qquad\qquad f^{n+1}|_{t=0}=f_0,
$$
where $f^n = \mu + \mu^{1/2} g^n$ and $f^0=f_0$. Note that it is also equivalent to
\begin{equation}\label{4.2.1}
\partial_t g^{n+1} + v\,\cdot\,\nabla_x g^{n+1} + \cL_1 g^{n+1}
- \Gamma (g^n , g^{n+1})=-\cL_2 g^n\,,\qquad g^{n+1}|_{t=0}=g_0.
\end{equation}

\begin{prop}\label{prop4.2.1}
Let $N\geq 3, \ell\geq 3$. There exist $\epsilon_1,\,\,T>0$
such that if $g_0\in H^N_\ell(\RR^6)$ and
\begin{equation*}\label{4.2.2}
\|g_0\|_{H^N_\ell(\RR^6)}\leq \epsilon_1\, ,
\end{equation*}
the Cauchy problem (\ref{4.2.1}) admits a sequence of solutions
$$
\{g^n, n\in\NN\}\subset L^\infty([0, T];\, H^N_\ell(\RR^6))\cap
L^2([0, T];\, \cB^N_\ell(\RR^6)).
$$
Moreover, for all $n\in\NN$,
\begin{equation}\label{4.2.3}
\|g^n\|_{L^\infty([0, T];\, H^N_\ell(\RR^6))}+\|g^n\|_{L^2([0, T];\,
\cB^N_\ell(\RR^6))}\leq \epsilon_0,
\end{equation}
where $\epsilon_0$ is the constant in
Theorem \ref{theo4.0.1}.
\end{prop}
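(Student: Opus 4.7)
The plan is to construct $\{g^n\}$ by Picard iteration, invoking Theorem \ref{theo4.1.1} at each step to produce $g^{n+1}$ from $g^n$ and Theorem \ref{theo4.0.1} to propagate the uniform bound (4.2.3) along the iteration. I would initialize with $g^0\equiv 0$, which trivially lies in $L^\infty([0,T];H^N_\ell)\cap L^2([0,T];\cB^N_\ell)$ and satisfies (4.2.3); the induction hypothesis is that
\begin{equation*}
\|g^n\|_{L^\infty([0,T];H^N_\ell)}^2 + \|g^n\|_{L^2([0,T];\cB^N_\ell)}^2 \leq \epsilon_0^2,
\end{equation*}
where $\epsilon_0$ is the constant from Theorem \ref{theo4.0.1}.

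For the inductive step, since $\|g^n\|_{L^\infty H^N_\ell}\leq \epsilon_0$ is small, Theorem \ref{theo4.1.1} applied to the linear Cauchy problem (4.2.1) with $f := g^n$ (the forcing $H:=-\cL_2 g^n$ is controlled in $L^2([0,T];H^N_\ell)$ by Proposition \ref{prop3.2.4}) yields a unique solution $g^{n+1}\in L^\infty([0,T];H^N_\ell)\cap L^2([0,T];\cB^N_\ell)$. Then the a priori estimate of Theorem \ref{theo4.0.1}, again with $f:=g^n$, gives
\begin{equation*}
\|g^{n+1}\|_{L^\infty([0,T];H^N_\ell)}^2 + \|g^{n+1}\|_{L^2([0,T];\cB^N_\ell)}^2 \leq Ce^{CT}\bigl(\|g_0\|_{H^N_\ell}^2 + \epsilon_0^2 T\bigr).
\end{equation*}

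To close the induction I would fix the parameters in two stages: first choose $T>0$ so small that $Ce^{CT}T\leq \tfrac{1}{2}$, which forces $Ce^{CT}\epsilon_0^2 T\leq \tfrac12\epsilon_0^2$; then choose $\epsilon_1 \leq \epsilon_0/\sqrt{2Ce^{CT}}$, so that $Ce^{CT}\epsilon_1^2\leq \tfrac12\epsilon_0^2$. Under the assumption $\|g_0\|_{H^N_\ell}\leq \epsilon_1$ the right-hand side above is $\leq \epsilon_0^2$, so $g^{n+1}$ again satisfies (4.2.3) and the induction continues. The only mild bookkeeping point is that the thresholds $\epsilon_0$ appearing in Theorems \ref{theo4.1.1} and \ref{theo4.0.1} are, strictly speaking, two separate constants; since both are imposed as $\|f\|_{L^\infty H^N_\ell}\leq \epsilon_0$, one takes the common value to be the minimum of the two. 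Given this, Proposition \ref{prop4.2.1} is essentially a standard iteration argument riding on the linear theory of Sections 4.0--4.1, and I do not expect any deeper obstacle beyond the careful two-stage choice of $T$ and $\epsilon_1$ described above; the heavy analytic work is encapsulated in the non-isotropic commutator estimates of Section 3 and the linear existence of Theorem \ref{theo4.1.1}.
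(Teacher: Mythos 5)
Your proposal is correct and follows essentially the same route as the paper's own proof: initialize, apply Theorem~\ref{theo4.1.1} for existence of $g^{n+1}$, apply Theorem~\ref{theo4.0.1} for the uniform bound, and close the induction by choosing $T$ and then $\epsilon_1$ small. The only cosmetic differences are your initialization $g^0\equiv 0$ versus the paper's $g^0=g_0$ (both valid here, though the paper's choice aligns more naturally with the later non-negativity argument in Theorem~\ref{theo4.4.1}), and a small bookkeeping point that your inductive hypothesis is stated in the squared form $\|g^n\|_{L^\infty H^N_\ell}^2+\|g^n\|_{L^2\cB^N_\ell}^2\le\epsilon_0^2$ while \eqref{4.2.3} asserts the sum of the unsquared norms is $\le\epsilon_0$, so a factor $\sqrt{2}$ should be absorbed into the choice of $T,\epsilon_1$; this is the same benign slack present in the paper's own argument.
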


\begin{proof}

(\ref{4.2.3}) will be proven by induction on $n$. Firstly, consider the equation
\begin{equation*}
\partial_t g^{1} + v\,\cdot\,\nabla_x g^{1} + \cL_1 g^{1}
- \Gamma (g_0 , g^{1})=-\cL_2 g_0\,,\qquad g^{1}|_{t=0}=g_0.
\end{equation*}
When $\epsilon_1<\epsilon_0$, the existence of $g^1$ is given by Theorem \ref{theo4.1.1} satisfying
$$
g^1\in L^\infty([0, T];\, H^N_\ell(\RR^6))\cap L^2([0, T];\,
\cB^N_\ell(\RR^6)).
$$
{}From Theorem \ref{theo4.0.1}, we can deduce
$$
\|g^1\|_{L^\infty([0, T];\, H^N_\ell(\RR^6))}+\|g^1\|_{L^2([0, T];\,
\cB^N_\ell(\RR^6))}\leq  Ce^{C \, T} \|g_0\|_{H^N_\ell(\RR^6)}.
$$
Thus (\ref{4.2.3}) holds when $\epsilon_1$
is chosen to be small compared to $\epsilon_0$.

For $n\ge 1$,
under the assumption  that
$$
\|g^n\|_{L^\infty([0, T];\, H^N_\ell(\RR^6))}+\|g^n\|_{L^2([0, T];\,
\cB^N_\ell(\RR^6))}\leq \epsilon_0,
$$
Theorem \ref{theo4.1.1} yields the existence of
$$
g^{n+1}\in L^\infty([0, T];\, H^N_\ell(\RR^6))\cap L^2([0, T];\,
\cB^N_\ell(\RR^6)).
$$
{}From Theorem \ref{theo4.0.1}, we can deduce
$$
\|g^{n+1}\|^2_{L^\infty([0, T];\,
H^N_\ell(\RR^6))}+\|g^{n+1}\|^2_{L^2([0, T];\, \cB^N_\ell(\RR^6))}\leq
Ce^{C\, T} (\|g_0\|^2_{H^N_\ell(\RR^6)} +\epsilon_0^2 T).
$$
and this gives
$$
\|g^{n+1}\|_{L^\infty([0, T];\,
H^N_\ell(\RR^6))}+\|g^{n+1}\|_{L^2([0, T];\, \cB^N_\ell(\RR^6))}\leq \epsilon_0,
$$
when $ T>0$ is sufficiently small,

Thus we  prove (\ref{4.2.3}) for all $n\in\NN$, and this completes
the proof of the proposition.
\end{proof}

It remains to prove the convergence. Set $w^n=g^{n+1}-g^n$ and
deduce {}from \eqref{4.2.1} that
\begin{equation*}
\partial_t w^{n} + v\,\cdot\,\nabla_x w^{n} + \cL_1 w^{n}
- \Gamma (g^n , w^{n})=\Gamma(w^{n-1}, g^n)-\cL_2 w^{n-1}\,,\qquad w^{n}|_{t=0}=0.
\end{equation*}
Similar to the computation for \eqref{4.0.4}, we obtain
\begin{align*}\label{4.0.4-u}
&\frac{d}{d t}\|w^n\|^2_{L^2_\ell(\RR^6)}+\frac{C_0}{2}
|||w^n||^2_{\cB^{0}_\ell(\RR^6)}\leq C_{0, \ell}
\|g^n\|_{H^3_\ell(\RR^6)} \,
|||w^n|||^2_{\cB^{0}_\ell(\RR^6)}\\
&\qquad\qquad\qquad+C_{0,\ell}
\|w^{n-1}\|_{L^2_\ell(\RR^6)}(|||g^n|||_{\cB^3_\ell(\RR^3)}
+1)|||w^n|||_{\cB^0_\ell(\RR^6)}. \nonumber
\end{align*}
If $\epsilon_0$ is sufficiently small, this yields,
\begin{align*}
&\frac{d}{d t}\|w^n\|^2_{L^2_\ell(\RR^6)}+C_1
|||w^n|||^2_{\cB^{0}_\ell(\RR^6)}\leq C_2
\|w^{n-1}\|_{L^2_\ell(\RR^3)},
 \nonumber
\end{align*}
which, in turn, gives, if $T$ is sufficiently small,
\[
\|w^n\|_{L^\infty([0, T]; L^2_\ell(\RR^6))}\le \lambda\|w^{n-1}\|_{L^\infty([0, T]; L^2_\ell(\RR^6))},
\]
for some $\lambda\in(0,1).$
Thus we conclude that the sequence $\{g^n\}$ is a Cauchy sequence in
$L^\infty([0, T]; L^2_\ell(\RR^6))$. Let $g$ be the limit function.

By interpolation with the uniform estimates \eqref{4.2.3}, we see that
the sequence is strongly convergent in
\[
L^\infty([0, T];\, H^{N-\delta}_\ell(\RR^6))\cap
L^2([0, T];\, \cB^{N-\delta}_\ell(\RR^6))
\]
for any $\delta>0$.
Furthermore, by using equation
\eqref{4.2.1} and Proposition
\ref{prop4.2.1}, we see that $\{\p_tg^n\}$ is uniformly bounded in 
$L^\infty([0,T]; H^{N-1}_{\ell-1})$, so that  it is a
compact set in  the function space
\[
C^{1-\delta} (]0, T_*[\,;\,\,H^{N-1-2\delta}_{l-1}(\Omega
\times\RR^3_v))
\]
for any bounded domain $\Omega\subset\RR^3_x$.
Now we can take the limit in equation \eqref{4.2.1}
and thus $g$ is a solution of Cauchy problem (\ref{cauchy-problem}).

Finally,
by a standard weak compactness argument, we can extract a subsequence
of approximate solutions such that
\begin{align*}
&g^n\rightarrow g\in L^\infty([0, T];\, H^N_\ell(\RR^6))
\quad\text{weakly*},
\\&
g^n\rightarrow g\in
L^2([0, T];\, \cB^N_\ell(\RR^6)) \quad\text{weakly},
\end{align*}
which shows that
\[
g\in L^\infty([0, T];\, H^{N}_\ell(\RR^6))\cap
L^2([0, T];\, \cB^{N}_\ell(\RR^6)).
\]
Now the proof of Theorem \ref{theo4.2.1}
 is complete.

\section{Qualitative study on the solutions}\label{section5}
\smallbreak

In this section, we will prove two main qualitative properties of the
solutions to the problem considered in this paper, that is, the uniqueness
and non-negativity.
\subsection{Uniqueness}\label{section5.1}
\setcounter{equation}{0} \smallbreak
The uniqueness of solutions can be proved in a larger function space as stated in
Theorem \ref{theo2}. To obtain this theorem, we now first prove
 two preliminary results in  the following   lemmas.

Set $\varphi(v,x) = \la v, x \ra^2 = 1+|v|^2 +|x|^2$ and
\[
W_{\varphi,l} = \frac{\la v \ra^l}{\varphi(v,x)}= \frac{(1+|v|^2)^{l/2}}{1+|v|^2 + |x|^2} .
\]

\begin{lemm}\label{weight} For $ l \geq 4$, we have
\begin{align}\label{diff-sing}
|W_{\varphi,l} - W_{\varphi,l} '| &\leq C
\sin \left(\frac{\theta}{2} \right)
\left( \frac{W_l' + W_{l-3}'  W_{3,*}'}{\varphi(v_*',x)}
+ \sin^{l-3} \left(\frac{\theta}{2} \right)
W_{\varphi,l, *}'\right)  \notag \\
&\leq C \left(\theta W_l' W_{\varphi, 3,*}'   + \theta^{l-2} W_{\varphi,l, *}' \right ),
\end{align}
where $W_{\varphi,l,*}' =\displaystyle  \frac{W_{l,*}'}{\varphi(v_*',x)} $, and also for $l\ge 1$,
\begin{equation}\label{diff-reg}
|W_{\varphi,l} - W_{\varphi,l} '| \leq C
\sin \left(\frac{\theta}{2} \right)
 \frac{W_l' +  W_{l,*}'}{\varphi(v',x)} \leq C \theta \frac{W_l'  W_{l,*}'}{\varphi(v',x)}.
\end{equation}
\end{lemm}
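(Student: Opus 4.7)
My plan is to prove both estimates by a case analysis according to the relative size of $\langle v\rangle$ and $\langle v'\rangle$. The backbone consists of the kinematic identities
\begin{equation*}
|v-v'| = |v-v_*|\sin(\theta/2) = |v_*-v_*'|,
\end{equation*}
together with the conservation laws $v+v_*=v'+v_*'$ and $|v|^2+|v_*|^2=|v'|^2+|v_*'|^2$, which yield $\langle v\rangle+\langle v_*\rangle\sim \langle v'\rangle+\langle v_*'\rangle$. A key algebraic observation that simplifies the gradient is $\varphi(v,x) = \langle v\rangle^2 + |x|^2$, whence a direct computation gives $|\nabla_v W_{\varphi,l}(v,x)|\leq C_l\langle v\rangle^{l-1}/\varphi(v,x)$.

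In the regular regime $\langle v\rangle\leq 2\langle v'\rangle$, applying the fundamental theorem of calculus along $v_t := tv+(1-t)v'$, together with $\langle v_t\rangle\sim \langle v'\rangle$ and $\varphi(v_t,x)\sim\varphi(v',x)$, yields
\begin{equation*}
|W_{\varphi,l}-W_{\varphi,l}'| \leq C\sin(\theta/2)\,|v-v_*|\,\frac{\langle v'\rangle^{l-1}}{\varphi(v',x)}.
\end{equation*}
With $|v-v_*|\leq C(\langle v'\rangle+\langle v_*'\rangle)$ and Young's inequality, this already gives \eqref{diff-reg}. To obtain \eqref{diff-sing}, one must transfer $\varphi(v',x)^{-1}$ into $\varphi(v_*',x)^{-1}$; I plan to use the alternative decomposition
\begin{equation*}
W_{\varphi,l}-W_{\varphi,l}' = \frac{\langle v\rangle^l-\langle v'\rangle^l}{\varphi(v',x)} + \frac{\langle v\rangle^l\,(|v_*|^2-|v_*'|^2)}{\varphi(v,x)\varphi(v',x)},
\end{equation*}
estimating the numerator difference as before and the second term via $||v_*|^2-|v_*'|^2|\leq \sin(\theta/2)|v-v_*|(\langle v_*\rangle+\langle v_*'\rangle)$, then interpolating to match the target combination $W_l'+W_{l-3}'W_{3,*}'$: the exponent $l-3$ in the second monomial is exactly what forces the hypothesis $l\geq 4$.

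In the singular regime $\langle v\rangle > 2\langle v'\rangle$, the bound $|v|\geq 2|v'|$ forces $|v|\leq 2|v-v'|=2\sin(\theta/2)|v-v_*|$, which combined with $\langle v_*\rangle\leq 2\langle v\rangle+\langle v_*'\rangle$ gives $\langle v\rangle\leq C\sin(\theta/2)\langle v_*'\rangle$ for small angles (the large-angle case being absorbed into constants). Since $|v|\geq 2|v'|$ one also has $\varphi(v,x)\geq \varphi(v',x)$. Using the triangle inequality $|W_{\varphi,l}-W_{\varphi,l}'|\leq W_{\varphi,l}+W_{\varphi,l}'$ and $\langle v'\rangle^l<\langle v\rangle^l\leq C\sin^l(\theta/2)\langle v_*'\rangle^l$, both pieces are dominated by $C\sin^l(\theta/2)\langle v_*'\rangle^l/\varphi(v',x)$, giving \eqref{diff-reg} at once (since $\sin^l(\theta/2)\leq \sin(\theta/2)$ for $l\geq 1$). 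For \eqref{diff-sing} one then transfers the denominator via $\varphi(v,x)^{-1}\leq C\langle v_*\rangle^2/\varphi(v_*',x)\leq C\langle v_*'\rangle^2/\varphi(v_*',x)$ (valid here because $\langle v_*\rangle\sim\langle v_*'\rangle$ in this regime), producing the second summand $\sin^{l-2}(\theta/2)W_{\varphi,l,*}'$ after one factor of $\sin(\theta/2)$ is pulled out in front of the parenthesis. The final inequality $\leq C(\theta W_l'W_{\varphi,3,*}'+\theta^{l-2}W_{\varphi,l,*}')$ is immediate from $\sin(\theta/2)\leq\theta$ and $\langle v'\rangle,\langle v_*'\rangle\geq 1$.

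The principal obstacle is the interpolation step of the regular regime for \eqref{diff-sing}: one must carefully organize the surviving velocity weights so that only the combination $\langle v'\rangle^l + \langle v'\rangle^{l-3}\langle v_*'\rangle^3$ appears (producing the $l\geq 4$ hypothesis), while simultaneously paying for the $\langle v_*\rangle^2$ lost in the denominator comparison from $\varphi(v,x)\varphi(v',x)$ to $\varphi(v_*',x)$. The alternative decomposition above is what prevents the naive application of $\varphi(v,x)^{-1}\leq C\langle v_*\rangle^2/\varphi(v_*',x)$ from producing suboptimal powers of $\langle v_*'\rangle$. Everything else reduces to the fundamental theorem of calculus, collisional kinematics, and Young-type inequalities.
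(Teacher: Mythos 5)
Your proposal deviates substantially from the paper's argument, and in a way that introduces two genuine gaps.

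First, the paper never applies the fundamental theorem of calculus along the straight segment $v_t=tv+(1-t)v'$ in $\RR^3$. Instead, exploiting that $W_{\varphi,l}$ depends on $v$ only through $\langle v\rangle^2$, it sets $F_k(\lambda)=\lambda^k/(\lambda+a)$ and applies the mean value theorem to the one--variable function $F_k$ at the points $\lambda=\langle v\rangle^2$, $\lambda'=\langle v'\rangle^2\geq 1$. This matters: on the interval $[\min(\lambda,\lambda'),\max(\lambda,\lambda')]$ the variable $\lambda$ never drops below $\min$, and one controls $F_k'$ there by monotonicity ($F_k'$ increasing, using $k\ge 2$). Your segment $v_t$ in $\RR^3$ can dip toward the origin when $v$ and $v'$ are close to antipodal while $\langle v\rangle\le 2\langle v'\rangle$; in that configuration $\langle v_t\rangle\sim\langle v'\rangle$ is simply false, and the one-sided bound $\sup_t \langle v_t\rangle^{l-1}/\varphi(v_t,x)\leq C\langle v'\rangle^{l-1}/\varphi(v',x)$ that your computation actually needs fails for $1\leq l<3$ (where $\lambda\mapsto\lambda^{l-1}/(\lambda^2+a)$ is no longer monotone on $[1,\infty)$), so your route does not deliver \eqref{diff-reg} in the full range $l\ge1$. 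The antipodal configuration is precisely where the FTC-along-segment estimate is lossy, because $W_{\varphi,l}$ is rotationally symmetric and transverse motion of $v$ contributes nothing to the actual difference; the paper's radial/univariate formulation is immune to this.

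Second, your singular-regime transfer for \eqref{diff-sing} does not close. Starting from $\langle v\rangle\leq C\sin(\theta/2)\langle v_*'\rangle$ and $\varphi(v',x)^{-1}\leq\langle v_*'\rangle^2/\varphi(v_*',x)$, the triangle-inequality pieces are bounded by $C\sin^l(\theta/2)\langle v_*'\rangle^{l+2}/\varphi(v_*',x)$, which exceeds the target term $\sin^{l-2}(\theta/2)\langle v_*'\rangle^l/\varphi(v_*',x)$ by the factor $\sin^2(\theta/2)\langle v_*'\rangle^2$. This factor is unbounded in the singular regime: for instance, take $v'=0$, $v\perp v_*$ with $|v_*|=2|v|\to\infty$, which gives $\theta=\arccos(3/5)$ fixed and $\langle v_*'\rangle\to\infty$. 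To absorb the overshoot one must also use the first summand $\sin(\theta/2)W_{l-3}'W_{3,*}'/\varphi(v_*',x)$, splitting the velocity weights appropriately, but your sketch only targets the $\sin^{l-3}(\theta/2)W_{\varphi,l,*}'$ part. The paper avoids this difficulty entirely: no case split is needed, since after the mean value step with $F_k'(\lambda+|\lambda-\lambda'|)$ the denominator $\langle v\rangle^2+|v-v'|^2+a$ automatically sees both $\langle v\rangle$ and $|v-v'|$, and the split into $I$ and $II$ with the elementary bound $|v-v_*|^2\geq\frac12|v_*|^2-|v|^2$ produces the two summands cleanly; only at the end is the $(v,v_*)\leftrightarrow(v',v_*')$ symmetry invoked to put primes on the right-hand side.
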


\begin{proof} For $k \geq 0,  a \geq 0$, set
\[
F_k(\lambda)= \frac{\lambda^k}{\lambda + a}.
\]
Then for $\lambda \in [1,\infty[$, we have $\frac{d}{d\lambda}F_k(\lambda) \geq 0$ if $k \geq 1$
and $\frac{d^2}{d\lambda^2}F_k(\lambda) \geq 0$ if $k \geq 2$.
Since $\frac{d}{d\lambda}F_k(\lambda)$ is positive and increasing on
$[1,\infty[$ if $k \geq 2$, it follows {}from the mean
value theorem that for $\lambda, \lambda' \geq 1$
\[
|F_k(\lambda) -F_k(\lambda')| \leq \frac{d}{d\lambda}F_k (\lambda + |\lambda-\lambda'|) |\lambda-\lambda'|.
\]
Setting $\lambda = \la v \ra^2, \lambda' = \la v' \ra^2$, we have
\begin{align*}
|F_k(\la v \ra^2) -F_k(\la v' \ra^2)| &\leq
\frac{d}{d\lambda}F_k (2(\la v \ra^2 + |v-v'|^2))\big( 2|v|+ |v-v'| \big )|v-v'|\\
&\leq 2k F_{k-1/2}(2(\la v \ra^2 + |v-v'|^2))|v-v'|,
\end{align*}
because $ |\lambda-\lambda'| \leq 2|v-v'||v| + |v-v'|^2 \leq |v|^2 + 2 |v-v'|^2$
and $\sqrt \lambda \frac{d}{d\lambda}F_k(\lambda) \leq k F_{k-1/2}(\lambda)$.
Therefore, we obtain, choosing $a = |x|^2$
\begin{align}\label{important-w-ineq}
|W_{\varphi,l} - W_{\varphi,l} '| &\leq
C_l \left( \frac{\la v \ra ^{l-1}|v-v'|}
{\la v \ra^2 + |v-v'|^2 +a }
+ \frac{|v-v'|^l}
{\la v \ra^2 + |v-v'|^2 + a}\right) \notag \\
&\leq
C_l \enskip |v-v'| \la v \ra^{l-3} F_1(\la v \ra^2)
+ C_l \frac{|v-v'|^l}
{\la v \ra^2 + |v-v'|^2 + a}\\
&= I + II. \notag
\end{align}
Note that $\la v \ra^2 \leq 2\la v_* \ra^2 + 2|v-v_*|^2$.
Since $F_1$ is increasing, we have
\begin{align*}
I &\leq C_l \enskip |v-v'| \la v \ra^{l-3} \frac{\la v_* \ra^2
+ |v-v_*|^2}{\la v_* \ra^2 + |v-v_*|^2+|x|^2}\\
&\leq C_l \sin \left(\frac{\theta}{2} \right)
\frac{W_l + W_{l-3}W_{3,*}}{\varphi(v_*,x)}.
\end{align*}
On the other hand
\begin{align*}
II &\leq C_l \frac{|v-v_*|^l  \sin^l\left(\frac{\theta}{2} \right)}
{1 + \big (1-\sin^2\left(\frac{\theta}{2} \right) \big )|v|^2 +
\frac{1}{2}\sin^2\left(\frac{\theta}{2} \right)|v_*|^2 +|x|^2} \\
&\leq C_l \sin^{l-2}\left(\frac{\theta}{2} \right)\frac{W_l + W_{l,*}}{\varphi(v_*,x)}.
\end{align*}
Since $v$ and $v'$ are symmetric, we get the first conclusion.
The second one is a direct consequence of the first inequality of \eqref{important-w-ineq}.
\end{proof}

\begin{lemm}\label{moment-commutator-with-x}
Let $l \in \NN$. If \, $0<s <1/2$, there exists $C >0$ such
that
\begin{align}\label{usual}
&\left|\Big(\big(W_{\varphi,l}  \,\, Q(f,\,\,g)-Q(f,\,\,W_{\varphi,l}  \,\,
g)\big),\,\,\, h\Big)_{L^2(\RR^6)}\right| \\
&\leq C
\|f\|_{L^\infty(\RR_x^3; L^1_{l}(\RR^3_v))} \|W_{\varphi, l} g\|_{L^{2}
(\RR^6)} \|h\|_{L^{2}(\RR^6)}. \notag
\end{align}

Moreover, if $l \geq 5 $
then
\begin{align}\label{singular}
&\left|\Big(\big(W_{\varphi, l}  \,\, Q(f,\,\,g)-Q(f,\,\,W_{\varphi,l}  \,\,
g)\big),\,\,\, h\Big)_{L^2(\RR^6)}\right| \\
&\leq C
\|W_{\varphi, l}f\|_{L^2(\RR^6)}\|g\|_{L^\infty(\RR^3_x; L^2_{l}(\RR^3_v))}
\|h\|_{L^2(\RR^6)}. \notag
\end{align}
\end{lemm}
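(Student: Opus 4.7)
The plan is to reduce both bounds to estimating a single collision integral and then apply the pointwise weight-difference estimates from Lemma \ref{weight}, following the commutator strategy of Lemma \ref{lemm3.2.1} but adapted to handle the extra $\varphi(v,x)$ factor. Since
$$
\big(W_{\varphi,l}Q(f,g)-Q(f,W_{\varphi,l}g)\big)(v,x)
=\int_{\RR^3}\int_{\SS^2}b(\cos\theta)\, f'_*\, g'\,(W_{\varphi,l}-W'_{\varphi,l})\,d\sigma\,dv_*,
$$
the quantity to bound in either case is
$$
\mathcal{I}=\iint_{\RR^6}\!\int_{\RR^3}\!\int_{\SS^2} b(\cos\theta)\, f'_*\, g'\,(W_{\varphi,l}-W'_{\varphi,l})\, h\,d\sigma\,dv_*\,dv\,dx.
$$

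For \eqref{usual}, I would insert \eqref{diff-reg} to get $|W_{\varphi,l}-W'_{\varphi,l}|\le C\theta\, W'_{l,*}W'_{\varphi,l}$ and observe that $W'_{\varphi,l}|g'|=|(W_{\varphi,l}g)(v',x)|$. A Cauchy--Schwarz separating $(W_{\varphi,l}g)(v',\cdot)$ from $h$, with common weight $b\theta\, W'_{l,*}|f'_*|$, gives $|\mathcal{I}|^2\le A\cdot B$ with $A,B$ the two corresponding integrals. Applying the pre-post collisional change of variables $(v,v_*,\sigma)\leftrightarrow(v',v'_*,\sigma')$ (unit Jacobian, preserving $b$ and $\theta$) replaces $W'_{l,*}|f'_*|$ by $W_{l,*}|f_*|$; integrating $d\sigma$ first via $\int_{\SS^2}b(\cos\theta)\theta\,d\sigma<\infty$ (finite precisely because $s<1/2$, since $d\sigma\sim\sin\theta\,d\theta\,d\phi$) yields $A\le C\|f\|_{L^\infty_xL^1_l}\|W_{\varphi,l}g\|_{L^2}^2$. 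For $B$ the same change produces an integrand $b\theta\,W_{l,*}|f_*|\,|h(v',x)|^2$; at fixed $v_*,\sigma,x$ the change $v\mapsto v'$ has Jacobian $\cos^2(\theta/2)/4\ge 1/8$, so $\int|h(v')|^2dv\le 8\|h(x,\cdot)\|^2_{L^2_v}$ and one gets $B\le C\|f\|_{L^\infty_xL^1_l}\|h\|_{L^2}^2$.

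For \eqref{singular}, I would use the sharper bound \eqref{diff-sing} in the form $|W_{\varphi,l}-W'_{\varphi,l}|\le C\theta W'_lW'_{\varphi,3,*}+C\theta^{l-2}W'_{\varphi,l,*}$ and split $\mathcal{I}=\mathcal{I}_1+\mathcal{I}_2$. The crucial point is that $W'_{\varphi,3,*}$ and $W'_{\varphi,l,*}$ are weights in the $v'_*$-variable, and since $W_{\varphi,3}\le W_{\varphi,l}$ for $l\ge 3$, multiplication by $|f'_*|$ produces the pointwise quantity $(W_{\varphi,l}f)(v'_*,x)$, which the pre-post change turns into $(W_{\varphi,l}f)(v_*,x)$ whose $L^2_{x,v_*}$-norm is exactly $\|W_{\varphi,l}f\|_{L^2(\RR^6)}$. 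Arranging Cauchy--Schwarz to pair this $f$-factor with $h$, and carrying the $g(v,x)$-factor through the $L^\infty_xL^2_l$-norm (the extra $W'_l$ in $\mathcal{I}_1$ becoming $W_l$ under the change and being absorbed into this norm), and using the angular integrability $\int_{\SS^2}b(\cos\theta)\theta^{l-2}d\sigma<\infty$ (which requires $l-2>2s$, comfortably ensured by $l\ge 5$ with $s<1/2$), leads to \eqref{singular}.

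The main technical obstacle will be the tension between angular integrability and the unweighted $\int_{\RR^3}dv_*$: the latter is not damped a priori, so at each Cauchy--Schwarz step the factor $(W_{\varphi,l}f)(v'_*,x)$ or $W'_{l,*}|f'_*|$ must be placed on the side of the inequality that provides the decay in $v_*$ after the pre-post change, while simultaneously all $\int b(\cos\theta)\theta^a\,d\sigma$ remain finite (the $\sin\theta$ factor from $d\sigma$ being the sole source of integrability). In \eqref{singular}, the decay $1/\varphi(v'_*,x)^2$ hidden inside $(W_{\varphi,l}f)^2(v'_*,x)$ is what ultimately produces the $x$-integrability needed to recover a genuine $L^2_{x,v_*}$ norm rather than a supremum in $x$.
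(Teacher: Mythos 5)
Your argument for \eqref{usual} is correct and essentially identical to the paper's (the only difference being the order in which you apply the pre–post-collisional change of variables and the Cauchy--Schwarz inequality, which does not matter; your verification that $\int_{\SS^2}b(\cos\theta)\,\theta\,d\sigma<\infty$ for $s<1/2$ and the Jacobian bound $\cos^2(\theta/2)/4\ge 1/8$ for the regular change $v\mapsto v'$ match the paper's $J_1,J_2$ computation).

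For \eqref{singular}, however, the sketch has two genuine gaps. First, for the $\mathcal{I}_1$-term you propose to bound $W'_{\varphi,3,*}\le W'_{\varphi,l,*}$ and then pair $(W_{\varphi,l}f)_*$ with $h$ under Cauchy--Schwarz, carrying $W_lg$ as the weight. But then the $g$-factor ends up in $\|W_lg\|_{L^\infty_x L^1_v}=\|g\|_{L^\infty_x L^1_l}$, which cannot be controlled by $\|g\|_{L^\infty_x L^2_l}$ at fixed $l$. The paper does the opposite pairing: at each $x$ it applies Cauchy--Schwarz in $(v,v_*,\sigma)$ with weight $b\theta\,(W_{\varphi,3}f)_*$ (keeping the weak weight $W_{\varphi,3}$, not the full $W_{\varphi,l}$), pairing $W_lg$ with $h'$; this isolates $\|g\|_{L^\infty_x L^2_l}$ and leaves $\|W_{\varphi,3}f\|_{L^1_v}(x)\,\|h\|_{L^2_v}(x)$, which is then converted via the embedding $\|W_{\varphi,3}f\|_{L^1(\RR^3_v)}\lesssim\|W_{\varphi,5}f\|_{L^2(\RR^3_v)}$ and a final Cauchy--Schwarz in $x$ into $\|W_{\varphi,5}f\|_{L^2(\RR^6)}\|h\|_{L^2(\RR^6)}$. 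The threshold $l\ge 5$ enters precisely through this embedding, which your scheme never reaches.

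Second, for the $\mathcal{I}_2$-term your angular-integrability criterion ``$\int b\,\theta^{l-2}\,d\sigma<\infty$, i.e.\ $l-2>2s$'' is not the one that governs the estimate. After the Cauchy--Schwarz (here $|g|$ is the weight and $(W_{\varphi,l}f)_*$ is paired with $h'$), the $h$-factor requires the \emph{singular} change of variables $v_*\mapsto v'$ — not the regular $v\mapsto v'$ you used for \eqref{usual} — whose Jacobian is $\sim\theta^{-2}$ and is unbounded; moreover the natural polar angle is $\psi=(\pi-\theta)/2\in[\pi/4,\pi/2]$ and $d\sigma=\sin\psi\,d\psi\,d\phi$ furnishes no extra small factor near $\theta=0$. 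For this reason the paper splits $\theta^{l-2}=\theta^{(l-2)-3/2}\cdot\theta^{(l-2)+3/2}$ asymmetrically across the two Cauchy--Schwarz factors: the first ($\mathcal{M}_{2,1}$) needs $\int b\,\theta^{l-7/2}\,\sin\theta\,d\theta<\infty$, the second ($\mathcal{M}_{2,2}$) needs $\int b\,\theta^{l-1/2}\cdot\theta^{-2}\,d\theta<\infty$, and both reduce to $l>7/2+2s$, which $l\ge 5$ with $s<1/2$ does satisfy. A symmetric split (your implicit choice) would force $l>5+2s$, which fails for $l=5$ and $s$ close to $1/2$. Without the singular change and the $\pm 3/2$-shift, the bound \eqref{singular} as stated does not follow.
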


\begin{proof}
It follows {}from \eqref{diff-reg} that
\begin{eqnarray*}
&&\Big|\Big(\big(W_{\varphi,l}\,\,Q(f,\,\,g)-Q(f,\,\,W_{\varphi,l}\,\, g)\big),\,\,\,
h\Big)_{L^2(\RR^6)}\Big| \\
&=&\Big|\iiiint
b\,  \,f'_* g'(W'_{\varphi, l}-W_{\varphi,l})\, h\, dv dv_* d\sigma dx\Big|\\
& \leq& C \iiiint  b\, |\theta|\,\,
|(W_{l} f)'_*|\,\,|(W_{\varphi,l}g)'|\,\,|h|\,\,dvdv_*d\sigma dx\\
& =&C\iiiint  b\, |\theta| |(W_{l}f)_*|\,\,
|(W_{\varphi, l}g)|\,\,|h'|\, dvdv_*d\sigma dx\\
& \leq& C \Big(\iiiint  b\, |\theta|\,\, |(W_{l}\,
 f)_*|\,\,|(W_{\varphi, l}\, g)|^2 dv dv_*
d\sigma dx \Big)^{1/2}\\
&&\times \Big(\iiiint  b\, |\theta|\,\, |(W_{l}\, f)_*|\,
|h'|^2\, dvdv_*d\sigma dx\Big)^{1/2}\\
 &=&C J_1 \times J_2.
\end{eqnarray*}
Clearly, one has
\[
J^2_1\leq C \|f\|_{L^\infty(\RR^3_x; L^1_{l}(\RR^3_v))}
\|W_{\varphi, l} g\|^2_{L^2(\RR^6)}\int_{\SS^2}b(\cos\theta)\,|\theta|\, d\sigma \leq C
\|f\|_{L^\infty(\RR^3_x; L^1_{l}(\RR^3_v))}
\|W_{\varphi, l} g\|^2_{L^2(\RR^6)}.
\]
Next, by the regular change of variables $v\to v'$, cf.
\cite{al-1, al-3}, we have
\[
J^2_2=\iiint   D_0(v_*,v')|(W_{l}f)_*||h'|^2dv_*dv'dx,
\]
where
$$
D_0(v,v')=2\int_{\SS^2} \frac{\theta(v_*, v',
\sigma)}{\cos^2(\theta(v_*, v', \sigma)/2)} b(\cos\theta(v_*,
v',\sigma))d\sigma \leq C\int_0^{\pi/4}\psi^{-1-2s}\sin\psi\,\,
d\psi,
$$
and
$$
\cos\psi=\frac{v'-v_*}{|v'-v_*|}\cdot \sigma, \quad \psi=\theta/2,
\qquad d\sigma=\sin\psi d\psi d\phi.
$$
Thus,
\[
J^2_2\leq C \|f\|_{L^\infty(\RR_x^3;L^1_{l}(\RR^3_v))}\|h\|^2_{L^2(\RR^6)},
\]
and this together with the estimate on
$J_1$ give (\ref{usual}).

\smallbreak
We now prove \eqref{singular} by using \eqref{diff-sing}
instead of \eqref{diff-reg}. For this purpose,
when $l\ge 5$, we write
\begin{eqnarray*}
&&\Big|\Big(\big(W_{\varphi,l}\,\,Q(f,\,\,g)-Q(f,\,\,W_{\varphi,l}\,\, g)\big),\,\,\,
h\Big)_{L^2(\RR^6)}\Big| \\
& \leq& C \left \{ \iiiint  b\, |\theta|\,\,
|(W_{\varphi,3} f)'_*|\,\,|(W_{l}g)'|\,\,|h|\,\,dvdv_*d\sigma dx
\right.\\
&&\left . + \iiiint  b\, |\theta|^{l-2}\,\,
|(W_{\varphi,l} f)'_*|\,\,|g'|\,\,|h|\,\,dvdv_*d\sigma dx \right \}\\
& =&\cM_1 + \cM_2 .
\end{eqnarray*}
The estimation on  $\cM_1$ can be obtained following the proof of  (\ref{usual}) except for the $x$ variable.
Indeed,
\begin{align*}
\cM_1
&\leq C \int \Big(\iiint  b\, |\theta|\,\, |(W_{\varphi,3}\,
 f)_*|\,\,|(W_{l}\, g)|^2 dv dv_*
d\sigma \Big)^{1/2}\\
&\hskip1cm \times \Big(\iiint  b\, |\theta|\,\, |(W_{\varphi,3}\, f)_*|\,
|h'|^2\, dvdv_*d\sigma \Big)^{1/2} dx\\
&\leq C \|g\|_{L^\infty(\RR^3_x; L^2_{l}(\RR^3_v))} \int
\|W_{\varphi,3}\,f\|_{L^1(\RR_v^3)}\|h\|_{L^2(\RR_v^3)}\,dx\\
&\leq C\|g\|_{L^\infty(\RR^3_x; L^2_{l}(\RR^3_v))} \left(\int
\|W_{\varphi,5}\,f\|_{L^2(\RR_v^3)}^2 dx
\right)^{1/2}\left(\int\|h\|_{L^2(\RR^3_v)}dx
\right)^{1/2}\\
&\leq C\|W_{\varphi,5}\,f\|_{L^2(\RR^6)}\|g\|_{L^\infty(\RR^3_x; L^2_{l}(\RR^3_v))}
\|h\|_{L^2(\RR^6)}.
\end{align*}
 $\cM_2$ can be estimated as follows. Firstly, we have
\begin{align*}
\cM_2^2 =& C^2  \left(\iiiint  b\, |\theta|^{l-2}
|(W_{\varphi,l} f)_*||g|\,\,|h'|\,\,dvdv_*d\sigma dx \right)^2  \\
 \le &C^2  \iiiint b \, |\theta|^{\, l-2-\frac{3}{2}}
 |g||(W_{\varphi,l} f)_*|^2 dv dv_* d\sigma dx \\
& \times \iiiint b \, |\theta|^{\,l-2+\frac{3}{2}}|g||h'|^2 dv dv_* d \sigma dx\\
= & \cM_{2,1} \times \cM_{2,2}.
\end{align*}
Then, if $l-2-\frac 32-2s-1>-1$, that is,  $l>2s+\frac 32 +2$, we have
$$\cM_{2,1} \leq C  \|g\|_{L^\infty(\RR_x^3; L^1(\RR_v^3))} \|W_{\varphi, l}f\|_{L^2(\RR^6)}^2.$$
On the other hand, for $\cM_{2,2}$ we need to apply
the singular change of
variables $v_* \rightarrow v'$. The Jacobian of this transform
is
\begin{equation*}
\Big|\frac{\pa v_*}{\pa v'}\Big|=\frac{8}{ \Big|I- \vk\otimes
\sigma\Big|}=\frac{8}{|1-\vk\cdot\sigma|} =\frac{4}{\sin^2
(\theta/2)}\le 16\theta^{-2}, \ \ \theta\in[0,\pi/2].
\end{equation*}
Notice that this gives rise to an additional singularity
in the angle $\theta$ around $0$. Actually, the
situation is even worse in the following sense.
Recall that $\theta$ is no longer
a legitimate polar angle. In this case, the best choice of the pole is
$\vk''=(v'-v)/|v'-v|$ for which polar angle $\psi$ defined by
$\cos\psi=\vk''\cdot\sigma $  satisfies (cf. \cite[Fig. 1]{al-1})
\[
\psi=\frac{\pi-\theta}{2}, \qquad d\sigma=\sin\psi d\psi d\phi,
\qquad \psi\in[\frac{\pi}{4},\frac{\pi}{2}].
\]
This measure does not cancel any of the singularity of $b(\cos\theta)$,
unlike the case in the usual polar coordinates.
Nevertheless, this singular change of variables yields
\begin{align*}
\cM_{1,2} &= C \iiiint  b(\cos\theta)\, |\theta|^{l+ \frac{3}{2}}
|(g)|\,\,|h'|^2\,\,dvdv_*d\sigma dx \\
&\leq C\iiint D_1(v, v') |(g)|\,\,|h'|^2dv dv' dx,
\end{align*}
when $l-2>\frac 32+2s$ because
\[
D_1(v,v')=\int_{\SS^2}\theta^{l-2+\frac 32-2}b(\cos\theta)d\sigma \le
C \int_{\pi/4}^{\pi/2}(\frac{\pi}{2}-\psi)^{-2-2s+l-2+\frac 32-2}d\psi
\le C.
\]
Therefore,
\begin{equation*}
\cM_{2,2} \leq C \|g\|_{L^\infty(\RR^3_x;L^1(\RR_v^3))} \|h\|_{L^2(\RR^6)}^2.
\end{equation*}
Now the proof of \eqref{diff-sing} is completed by using the imbedding estimate
for $l>\frac 32$,
$$
\|g\|_{L^1(\RR_v^3)}\leq C\|g\|_{L^2_l(\RR_v^3)}.
$$
And this completes the proof of the lemma.
\end{proof}

We are now ready to conclude the proof of the uniqueness theorem.\\

\noindent{\bf Proof of Theorem \ref{theo2} : }
Set $F=f_1-f_2$. Then we have
\begin{equation}\label{E-Cauchy}
\left\{\begin{array}{l}
F_t+v\cdot\nabla_x F =Q(f_1,\, F)+Q(F,\, f_2)\, ,\\
F|_{t=0}=0.
\end{array}\right.
\end{equation}
Let $S(\tau) \in C_0^\infty(\RR)$ satisfy $0 \leq S \leq 1$ and
$$S(\tau)=1, \enskip |\tau| \leq 1 \enskip; \enskip S(\tau)=0, \enskip |\tau| \geq 2. $$
Set $S_N(D_x) = S(2^{-2N}|D_x|^2)$ and multiply $W_{\varphi, l} S_N(D_x)^2 W_{\varphi, l} F$
to \eqref{E-Cauchy}. Integrating and letting $N \rightarrow \infty$, we have
\begin{align*}
\frac 12 \frac{d}{d t} \|  W_{\varphi, l} F(t)\|^2_{L^2(\RR^6)}&=\Big(W_{\varphi,l}\,
Q(f_1,\, F)+W_{\varphi, l}\, Q(F,\, f_2)\, , W_{\varphi,l} F\Big)_{L^2(\RR^6)}\\
&- ( v\cdot \nabla_x (\varphi^{-1}) W_l F, W_{\varphi,l} F)_{L^2(\RR^6)}, 
\end{align*}
because
$( v\cdot \nabla_x S_N(D_x)W_{\varphi, l }F, S_N(D_x)W_{\varphi,l} F)_{L^2(\RR^6)} =0$.
The second term on the right hand side is estimated by $\|W_{\varphi,l} F\|^2_{L^2(\RR^6)}$.
Since $f_1\geq 0$, {}from the coercivity  of $-(Q(f_1, g), g)$ it follows that
$$
\Big(Q(f_1,\, W_{\varphi, l }F)\, ,W_{\varphi,l }F\Big)_{L^2(\RR^6)}\leq C
\|f_1(t)\|_{L^\infty(\RR^3_x; \,L^1(\RR^3_{v}))}
\|W_{\varphi,l} F(t)\|^2_{L^2(\RR^6_{x, \, v})}.
$$
By Lemma \ref{moment-commutator-with-x},
we have 
\begin{align*}
&\left|\Big(\big(W_{\varphi,l}  \,\, Q(f_1,\,\,F)-Q(f_1,\,\,W_{\varphi,l}  \,\,
F)\big),\,\,\, W_{\varphi,l}  \,\,
F\Big)_{L^2(\RR^6)}\right| \\
&\leq C
\|f_1\|_{L^\infty(\RR_x^3; L^1_{l}(\RR^3_v))} \|W_{\varphi, l} F\|_{L^{2}
(\RR^6)} ^2, \notag
\end{align*}
and
\begin{align*}
&\left|\Big(\big(W_{\varphi, l}  \,\, Q(F,\,\,f_2)-Q(F,\,\,W_{\varphi,l}  \,\,
f_2)\big),\,\,\, W_{\varphi, l} F      \Big)_{L^2(\RR^6)}\right| \\
&\leq C \|f_2\|_{L^\infty(\RR^3_x; L^2_{l}(\RR^3_v))}
\|W_{\varphi, l}F\|^2_{L^2(\RR^6)}\,.
\end{align*}
Finally, for $l>7/2+2s$, we have
\begin{eqnarray*}
&&\left|\Big(Q(F,\, W_{\varphi, l } f_2)\, ,W_{\varphi, l} F\Big)_{L^2(\RR^6)}\right|\leq C
\|Q( F,\, W_{\varphi,l} f_2)\|_{L^2(\RR^6)}\|W_{\varphi, l}F(t)\|_{L^2(\RR^6)}\\
&& \leq C \|W_{\varphi, l}F(t)\|_{L^2(\RR^6)} \left(\int_{\RR^3_x}
\|\frac{F(t, x,\cdot\,)}{\la x \ra^2}\|^2_{L^{1}_{2s}(\RR^3_{v})}
\|\frac{\la x \ra^2}{\varphi}f_2(t, x, \cdot\,
)\|^2_{H^{2s}_{l+2s}(\RR^3_{v})} dx\right)^{1/2}\\
&&\leq C \|W_{\varphi,l}F(t)\|^2_{L^2(\RR^6)}\|f_2(t)\|_{L^\infty(\RR^3_x;\;
H^{2s}_{l+2s}(\RR^3_{v}))},
\end{eqnarray*}
because $\la x \ra^{-2} \leq W_{\varphi,2}$ and $\la x \ra^2/\varphi$ is a bounded
operator on $H^{2s}$ uniformly with respect to $x$.
Thus, we have, for any $0<t< T$
\begin{align*}
&\frac{d}{d t} \|W_{\varphi,l} F(t)\|^2_{L^2_l(\RR^6)} \\
&\leq C \left(
\|f_1\|_{L^\infty(]0, T[\times \RR_x^3 ;\; L^1_{l}(\RR^3_{v}))}+
\|f_2\|_{L^\infty(]0, T[  \times \RR_x^3;\; H^{2s}_{l+2s}(\RR^6_{v}))}
\right)\|W_{\varphi,l}F(t)\|^2_{L^2_{l}(\RR^6)}.
\end{align*}
Therefore, $\| W_{\varphi,l}F(0)\|_{L^2(\RR^6)}=0$ which implies $\|
W_{\varphi,l}F(t)\|_{L^2(\RR^6)}=0$ for all $t\in [0, T[$.
And this gives $f_1=f_2$, and thus completes the proof of Theorem \ref{theo2}.\\

\begin{rema}
For the function space considered in Theorem \ref{theo1}, the uniqueness of
solutions
is a direct consequence of Theorem \ref{theo2} if there exists a non-negative
solution. It is
because $g_0 \in H^k_{\ell}$ and $ g \in L^\infty(]0,T[\times H_{\ell}^k)$ with $k, \ell \geq 3$
imply
$f_0 = \mu + \sqrt \mu g_0 \in L^\infty(\RR_x^3; H^{2s}_m)$ and $f = \mu + \sqrt \mu g
\in L^\infty(]0,T[\times \RR_x^3; H_m^{2s})$ for any $m$, respectively, and
$k > 3/2 + 2s$.
\end{rema}

\subsection{Non-negativity}\label{section4.4}
\setcounter{equation}{0}
In this subsection, we will prove the non-negativity of the solution
obtained in Theorem \ref{theo1}.
\begin{theo}\label{theo4.4.1}
Let $N\geq 3, \ell\geq 3$. There exists $\epsilon_1>0$ such that if
 $g_0\in H^N_\ell(\RR^6)$ with $\mu + \mu^{1/2} g_0\geq 0$ and
$ \|g_0\|_{H^N_\ell(\RR^6)}\leq \epsilon_1\,$, and $ g \in
L^\infty([0, T];\, H^N_\ell(\RR^6))$ is a solution of Cauchy problem
(\ref{cauchy-problem}), then we have $\mu + \mu^{1/2} g\geq 0$ on $
[0, T]\times\RR^6$.
\end{theo}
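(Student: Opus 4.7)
The plan is to prove non-negativity inductively along the iterative scheme of Proposition \ref{prop4.2.1} and then to pass to the limit. Taking $f^0 \equiv f_0 \geq 0$ as the initial iterate, I will show by induction that $f^n = \mu + \sqrt{\mu}\, g^n \geq 0$ for all $n \in \NN$. Since the convergence $f^n \to f$ obtained in Section \ref{section4.2} takes place strongly in $L^\infty([0,T]; L^2_\ell(\RR^6))$, a subsequence converges pointwise almost everywhere, and hence $f = \mu + \sqrt{\mu}\, g \geq 0$ on $[0,T]\times\RR^6$ (up to a null set, and everywhere by continuity). Uniqueness for the perturbation in $H^N_\ell$, obtained by an energy estimate of the type used in Section \ref{section4.0} applied to the difference of two solutions, identifies this limit with the given $g$.

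For the inductive step, assume $f^n \geq 0$ and set $F = f^{n+1}$, which solves the linear Cauchy problem $\partial_t F + v \cdot \nabla_x F = Q(f^n, F)$ with $F|_{t=0} = f_0 \geq 0$. Let $F_- = \max(-F, 0)$. Multiplying by $-F_-$, invoking the Kato-type chain rule $F_- \partial_t F = -\tfrac{1}{2}\partial_t F_-^2$ (justified by standard mollification in $(x,v)$), and noting that the transport term $v \cdot \nabla_x(F_-^2/2)$ integrates to zero over $\RR^3_x$, one obtains
\begin{equation*}
\tfrac{1}{2}\frac{d}{dt}\|F_-(t)\|_{L^2(\RR^6)}^2 = -\int_{\RR^6} F_-\, Q(f^n, F)\, dv\, dx.
\end{equation*}
The right-hand side is unpacked via the weak form $\int Q(g, h)\,\phi\, dv = \iiint b\, g_*\, h\, (\phi' - \phi)\, dv\, dv_* d\sigma$ together with the pointwise identity $F(F_- - F_-') = -F_+ F_-' - F_-(F_- - F_-')$ (which uses $F_+ F_- = 0$), yielding
\begin{equation*}
-\int F_- Q(f^n, F)\, dv = -\iiint b\, (f^n)_* F_+ F_-'\, dv\, dv_* d\sigma - \iiint b\, (f^n)_* F_-(F_- - F_-')\, dv\, dv_* d\sigma.
\end{equation*}

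Since $(f^n)_* \geq 0$ by the inductive hypothesis and $F_+, F_-' \geq 0$, the first term on the right is $\leq 0$ and may be discarded. For the second, I apply the algebraic identity $A(A - B) = \tfrac{1}{2}(A - B)^2 + \tfrac{1}{2}(A^2 - B^2)$ with $A = F_-, B = F_-'$: the $(A-B)^2$-piece is again non-positive once weighted by $(f^n)_*\geq 0$, whereas the $A^2 - B^2$-piece, after the pre-post change of variables $(v, v_*) \leftrightarrow (v', v'_*)$, becomes $\tfrac{1}{2}\int F_-^2(v)\, \bigl[\int b\bigl((f^n)'_* - (f^n)_*\bigr)\, d\sigma\, dv_*\bigr]\, dv$. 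By the cancellation lemma of \cite{al-1}, the inner bracket is bounded in $L^\infty_v$ by $C \|f^n\|_{L^1(\RR^3_v)}$ (with a suitable moment weight), and this in turn is uniformly bounded in $(t, x)$ thanks to the smallness $\|g^n\|_{L^\infty_t H^N_\ell} \leq \epsilon_0$ of Proposition \ref{prop4.2.1} combined with the rapid decay of $\mu$. Therefore $\frac{d}{dt}\|F_-\|^2_{L^2(\RR^6)} \leq C \|F_-\|^2_{L^2(\RR^6)}$, and since $F_-(0) = (f_0)_- \equiv 0$, Gronwall's inequality gives $F_- \equiv 0$, closing the induction. The chief obstacle is the non-integrable angular singularity of $b(\cos\theta)$: the pieces $\iiint b\, (f^n)_* F_-^2$ and $\iiint b\, (f^n)_* F_- F_-'$ are each individually divergent, and it is the grouping dictated by $A(A-B)$ -- one dissipative square plus one cancellation-lemma remainder -- that renders them finite and Gronwall-ready.
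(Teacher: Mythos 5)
Your proposal follows the same route as the paper: induction along the iterates of Proposition~\ref{prop4.2.1}, testing the equation for $f^{n+1}$ with $\beta_s(f^{n+1})=\min\{f^{n+1},0\}=-F_-$ (the derivative of the convex function $\beta(s)=\tfrac12 (s^-)^2$), isolating the cancellation-lemma contribution by a quadratic manipulation, and closing with Gronwall, followed by passage to the pointwise a.e.\ limit and an appeal to uniqueness. Your identity $A(A-B)=\tfrac12(A-B)^2+\tfrac12(A^2-B^2)$ is the convexity inequality $\beta_s(a)(b-a)\le\beta(b)-\beta(a)$ specialised to this $\beta$, so that step is the same in substance; the decompositions of the collision integral differ only in whether the pre--post difference is put on the unknown $F$ (the paper) or on the test function (your weak-form computation), and both lead to the dissipative square and the cancellation-lemma remainder. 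The only structural difference is that the paper does not test with $-F_-$ but with $\beta_s(f^{n+1})\,\phi$, where $\phi(x,v)=(1+|x|^2+|v|^2)^{-2}$.

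This weight is not cosmetic, and omitting it leaves a genuine gap. The paper introduces $\phi$ precisely so that $\beta_s(f^{n+1})\phi\in L^\infty\big([0,T];L^1(\RR^3_x;L^2(\RR^3_v))\big)$, which makes the pairing with $Q(f^n,f^{n+1})\in L^\infty(\RR^3_x;L^2(\RR^3_v))$ (Theorem~\ref{theo2.1}) well defined, and so that all intermediate integrals in the Fubini and pre--post change-of-variable manipulations converge absolutely when the $x$-integral runs over all of $\RR^3_x$. You correctly diagnose the angular problem and handle it by the $A(A-B)$-grouping, but that grouping cures the $\theta$-singularity at fixed $x$ and says nothing about the $x$-integrability of the pieces such as $\iiint b\,(f^n)_*F_+F_-'$ or $\iiint b\,(f^n)_*(F_-^2-F_-'^2)$: you never verify that $\int_{\RR^6}F_-\,Q(f^n,F)\,dv\,dx$ itself makes sense (one would need, say, $Q(f^n,F)\in L^2(\RR^6)$, which holds because $Q(\mu,\mu)=0$ gives $Q(f^n,F)=Q(\sqrt{\mu}g^n,\mu)+Q(f^n,\sqrt{\mu}g^{n+1})$ and $F_-\le\sqrt{\mu}|g^{n+1}|\in L^2$, but none of this is stated), nor that the splitting of that integral into signed pieces can be integrated term by term over $x$. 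The paper's weight resolves all of this at the cost of two extra, controlled, terms: the transport commutator $-\int(\phi^{-1}v\cdot\nabla_x\phi)\beta(f^{n+1})\phi$, absorbed into Gronwall via $|v\cdot\nabla_x\phi|\le C\phi$, and $\iiint b\,f^n_*\beta(f^{n+1})(\phi'-\phi)$, estimated by Taylor expansion of $\phi$ in $v$ together with $|v'-v|\le\la v\ra\la v_*\ra\sin(\theta/2)$. You should either reinstate the weight or supply the integrability argument that replaces it; otherwise the manipulations after the weak form are not justified.
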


\begin{proof} By applying the Remark 5.3 on the uniqueness to
 the Cauchy problem (\ref{cauchy-problem}), it is
enough to prove the non-negativity of the approximate solutions given by
Proposition \ref{prop4.2.1}, that is,
\begin{equation}\label{4.4.1+100}
f^n = \mu + \mu^{1/2} g^n \geq 0\,,\qquad n\in\NN.
\end{equation}
Again, this will be proved by induction. It is clearly true for $n=0$
by taking $g^0=g_0$, and so we now
assume that it is true for some $n$ and will prove that (\ref{4.4.1+100}) is true
for $n+1$.

{}From (\ref{4.2.1}), $f^{n+1}= \mu + \mu^{1/2} g^{n+1}$ is the
solution of the following Cauchy problem :
\begin{equation}\label{4.4.3}
\left\{\begin{array}{l} \partial_t f^{n+1} + v\,\cdot\,\nabla_x
f^{n+1} =Q (f^n, f^{n+1}), \\ f^{n+1}|_{t=0} = f_0 =\mu + \mu^{1/2}
g_0\geq 0.
\end{array} \right.
\end{equation}

Take the convex function 
\[
\beta (s) = \frac 1 2 (s^- )^2= \frac 1 2
s\,(s^- )
\]
with $s^-=\min\{s, 0\}$, and notice that 
\[
\beta_s(s)=\frac{d\beta(s)}{ds}=s^-.
\]  
Setting
$\phi(x,v) = (1+|x|^2+|v|^2)^{-2}$ and noticing that
$$
\beta_s (f^{n+1}) \phi(x, v)
=\min\{f^{n+1}, 0\}\phi(x, v)\in
L^\infty([0,T];L^1(\RR_x^3; L^2(\RR^3_v)),
$$
we have by \eqref{4.4.3},
\begin{align*}
&\frac{d}{dt} \int_{\RR^6} \beta (f^{n+1}) \phi dxdv = \int_{\RR^6}
Q(f^n,\, f^{n+1})\,\, \beta_s(f^{n+1}) \phi \,\,dxdv \\
&\qquad\qquad- \int_{\RR^6} { v\,\cdot\, \nabla_x \enskip (\beta
(f^{n+1}) \phi) }dxdv - \int_{\RR^6} { (\phi^{-1}\,\,v\, \cdot\,
\nabla_x \,\phi ) }\enskip \beta (f^{n+1})\phi dxdv,
\end{align*}
where the first term on the right hand side is well defined by
Theorem \ref{theo2.1}, because $f^n$ belongs to $L^\infty([0,T]\times \RR_x^3;
L^1_{2s} \cap H_{2s}^{2s}(\RR_v^3))$. Since the second term vanishes
and $|v\, \cdot\, \nabla_x\, \phi | \leq C \phi$,
we obtain
\begin{align*}
\frac{d}{dt} \int_{\RR^6} \beta (f^{n+1}) \phi dxdv  \leq
\int_{\RR^6} Q(f^n, f^{n+1} ) \beta_s(f^{n+1}) \phi dxdv + C
\int_{\RR^6}  \beta (f^{n+1}) \phi dxdv.
\end{align*}

For the first term on the right hand side, we have
\begin{align*}
&\int_{\RR^6} Q(f^n, f^{n+1} )\, \beta_s(f^{n+1})\phi  dxdv\\
& =  \int_{\RR^6_{x, v}}\int_{\RR^3_{v_*}\times\SS^2_\sigma}
b(\cos\theta) \Big( f^{n '}_\ast f^{n+1 '}
 - f^n_\ast f^{n+1} \Big) \beta_s (f^{n+1})\phi \\
&= \int_{\RR^6_{x, v}}\int_{\RR^3_{v_*}\times\SS^2_\sigma}
b(\cos\theta) f^{n '}_\ast \Big( f^{n+1 '}
 -f^{n+1}\Big) \beta_s(f^{n+1}) \phi \\
 &\qquad\qquad+  \int_{\RR^6_{x, v}}\beta_s(f^{n+1} )
 f^{n+1} \phi \int_{\RR^3_{v_*}\times\SS^2_\sigma}
 b(\cos\theta) \Big(f^{n '}_\ast -f^n_\ast \Big)\\
&=I+II \,\, .
\end{align*}
{}From (\ref{4.2.3}), we have, for any $n\in\NN$,
\begin{align*}
&\|f^n\|_{L^\infty([0, T]\times\RR^3_x;\, L^1(\RR^3_v))} \leq 1 +
\|\sqrt\mu\,\, g^n\|_{L^\infty([0, T]\times\RR^3_x;\,
L^1(\RR^3_v))}\\
&\qquad\leq 1 + C\|g^n\|_{L^\infty([0, T]\times\RR^3_x;\,
L^2(\RR^3_v))}\leq 1+C\epsilon_0\, ,
\end{align*}
so that the cancellation lemma {}from \cite{al-1} implies that
$$\int_{\RR^3_{v_*}\times\SS^2_\sigma}
 b(\cos\theta) \Big(f^{n '}_\ast -f^n_\ast \Big) =C \int_{\RR^3_{v}}
 f^n (t,x,v) dv \leq C || f^n||_{L^\infty([0, T]\times\RR^3_x;\, L^1(\RR^3_v))}
 \leq C\, ,
 $$
while $\beta_s (s) s =2 \beta (s)$ implies that
$$
|II|\leq C \int_{\RR^6}  \beta (f^{n+1}) \phi dxdv\, .
$$
On the other hand, by the convexity of $\beta$, that is,
$$
\beta_s(a)(b-a)\leq \beta(b)-\beta(a)\,,
$$
and the assumption that ${f_*^n}' \geq 0$, we get
\begin{align*}
I&=\int_{\RR^6_{x, v}}\int_{\RR^3_{v_*}\times\SS^2_\sigma}
b(\cos\theta) f^{n '}_\ast \Big( f^{n+1 '}
 -f^{n+1}\Big) \beta_s(f^{n+1}) \phi \\
&\leq \int_{\RR^6_{x, v}}\int_{\RR^3_{v_*}\times\SS^2_\sigma}
b(\cos\theta) f^{n '}_\ast \Big( \beta (f^{n+1 '})
 - \beta (f^{n+1}) \Big) \phi \\
 &\leq \int_{\RR^6_{x, v}}\int_{\RR^3_{v_*}\times\SS^2_\sigma}
b(\cos\theta) \Big( f^{n'}_\ast \beta (f^{n+1 '}) - f^n_\ast \beta
(f^{n+1})\Big) \phi\\
&\qquad\qquad- \int_{\RR^6_{x,
v}}\int_{\RR^3_{v_*}\times\SS^2_\sigma} b(\cos\theta)\beta
(f^{n+1}) \Big( f^{n'}_\ast -f^n_\ast \Big)\phi\\
& \leq  \int_{\RR^6_{x,
v}}\int_{\RR^3_{v_*}\times\SS^2_\sigma} b(\cos\theta)  f^{n}_\ast
\beta (f^{n+1 })
 \Big ( \phi' - \phi \Big)
+C \int_{\RR^6}  \beta (f^{n+1}) \phi dxdv \\
&=I_1+I_2 \,.
\end{align*}
Applying Taylor's formula to the first term gives
$$
 I_1 = \int_0^1 d \tau  \int_{\RR^6_{x,
v}}\int_{\RR^3_{v_*}\times\SS^2_\sigma} b(\cos\theta)  f^{n}_\ast
\beta (f^{n+1 })\,\big(v'-v\big)\, \cdot\,  \nabla_v \phi  \Big(v+
\tau (v'-v))\Big) \,.
$$
Since
\begin{equation}\label{4.2.8}
|v' -v|=|v-v_*|\sin\,\Big(\frac\theta 2\Big) \leq  \langle v \rangle
\,\,\langle v_* \rangle \sin\,\Big(\frac\theta 2\Big),
\end{equation}
by setting $v_\tau = v+ \tau (v'-v)$, $0<\tau<1$, $0\leq\theta\leq
\pi/2$, we have
$$
|v| \leq |v_\tau| + |v'-v|\leq |v_\tau| +  \sin\,\Big(\frac\theta
2\Big)\Big(|v|+|v_*|\Big)\leq |v_\tau| + \frac{\sqrt 2
}{2}|v|+|v_*|.
$$
Then
$$
(1+|x|^2+|v|^2) \leq C(1+|x|^2+|v_\tau|^2)(1 + |v_*|^2),
$$
which implies
$$
|\nabla_v \phi (x,\, v_\tau)| \leq (1+|x|^2+|v_\tau|^2)^{-5/2}\leq C
\phi(x,v) \frac{\langle v_* \rangle^5}{\langle v \rangle}\,\, .
$$
So we obtain
$$|I_1| \leq C ||f^n||_{L^\infty([0,\, T]\times\RR_x^3;\, L^1_{6}(\RR^3_v))}
\int_{\RR^6}  \beta (f^{n+1}) \phi dxdv.
$$
Again {}from (\ref{4.2.3}), we have, for any $n\in\NN$,
\begin{equation*}
\|f^n\|_{L^\infty([0, T]\times\RR^3_x;\, L^1_6(\RR^3_v))} \leq
\|\mu^{1/2}\|_{L^1_6(\RR^3)} + \|\mu^{1/2} g^n\|_{L^\infty([0,
T]\times\RR^3_x;\, L^1_6(\RR^3_v))}\leq C (1+\epsilon_0).
\end{equation*}
Finally, we have obtained, for $0<t<T$,
$$
 \frac{d}{dt} \int_{\RR^6}\beta
(f^{n+1})\phi\, dx\,dv \leq C \int_{\RR^6} \beta (f^{n+1})\phi
\,dx\,dv ,\qquad \beta (f^{n+1})|_{t=0} = 0.
$$
Therefore, for $0<t<T$, and by continuity,
$$
\int_{\RR^6} \beta(f^{n+1}(t))\phi \,dx\,dv=0
$$
which implies that, $f^{n+1}(t, x, v) \geq 0$ for $(t, x, v)\in [0,
T]\times\RR^6_{x, v}$. Therefore,  the proof of Theorem \ref{theo4.4.1}
is completed.
\end{proof}

\begin{rema}
Note that the above analysis can be extended to the strong singularity
case.

Indeed, by writing
\begin{align*}
 I_1& =   \int_{\RR^6_{x,
v}}\int_{\RR^3_{v_*}\times\SS^2_\sigma} b(\cos\theta)  f^{n}_\ast
\beta (f^{n+1 })\,\big(v'-v\big)\, \cdot\,  \nabla_v \phi  (v) \\
&+ \frac{1}{2} \int_0^1 d \tau  \int_{\RR^6_{x,
v}}\int_{\RR^3_{v_*}\times\SS^2_\sigma} b(\cos\theta)  f^{n}_\ast
\beta (f^{n+1 })\,\big(v'-v\big)^2 \, \,  \nabla_v^2 \phi  \Big(v+
\tau (v'-v))\Big) \\
&= I_{11} + I_{12},
\end{align*}
since we have
$$
|\nabla_v^2 \phi (x,\, v_\tau)| \leq (1+|x|^2+|v_\tau|^2)^{-3}\leq C
\phi(x,v) \frac{\langle v_* \rangle^6}{\langle v \rangle^2}\,\, ,
$$
it follows {}from \eqref{4.2.8} that
$$|I_{12}| \leq C ||f^n||_{L^\infty([0,\, T]\times\RR_x^3;\, L^1_{8}(\RR^3_v))}
\int_{\RR^6}  \beta (f^{n+1}) \phi dxdv.
$$
On the other hand, setting $ {\bf k} = \frac{v-v_*}{|v-v_*|}$ and
writing
$$v' -v = \frac{1}{2}|v-v_*|\Big( \sigma - (\sigma \cdot {\bf k})  {\bf k} \Big)
+ \frac{1}{2} ((\sigma \cdot {\bf k})-1)( v-v_*),$$ we have
$$
 I_{11} =   \frac{1}{2}\int_{\RR^6_{x,
v}}\int_{\RR^3_{v_*}\times\SS^2_\sigma} b(\cos\theta)  f^{n}_\ast
\beta (f^{n+1 })\,\big(\cos \theta -1 \big)(v-v_*) \cdot  \nabla_v
\phi  (v),
$$
where we have used the symmetry that $\int_{\SS^2} b(\sigma
\cdot {\bf k})\Big( \sigma - (\sigma \cdot {\bf k})  {\bf k} \Big) d
\sigma =0$. Therefore, we have
$$|I_{11}| \leq C ||f^n||_{L^\infty([0,\, T]\times\RR_x^3;\, L^1_{6}(\RR^3_v))}
\int_{\RR^6}  \beta (f^{n+1}) \phi dxdv,
$$
and the same estimation holds also in the
strong singularity case.
\end{rema}

\section{Full regularity }\label{section6}
\smallbreak


We now prove the smoothness effect of the Cauchy problem  for the
non-cutoff Boltzmann equation. More precisely, the main result of this section is given by

\begin{theo}\label{theo4.5.1}
Assume that $0<s<1/2$. There exists $\epsilon_1>0$ such that if
 $g_0\in H^3_3(\RR^6)$ with $\mu + \mu^{1/2} g_0\geq 0$,
$ \|g_0\|_{H^3_3(\RR^6)}\leq \epsilon_1\,$,  and $ g \in L^\infty([0,
T];\, H^3_3(\RR^6))$ is the solution of Cauchy problem
(\ref{cauchy-problem}), then we have $g\in C^\infty(]0,
T[\times\RR^6)$.
\end{theo}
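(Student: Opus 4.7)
The plan is to establish $C^\infty$ smoothness via a hypoelliptic bootstrap scheme, following the approach of \cite{amuxy-nonlinear-3}. The starting point is the solution already known to lie in $L^\infty([0,T]; H^3_3)$ (Theorem 4.2.1), together with the additional dissipation $g \in L^2([0,T]; \cB^3_3)$ coming from the energy inequality \eqref{4.0.5}. The goal is to upgrade, on every compact subset of $]0,T[$, to $H^k_\ell(\RR^6)$ regularity for arbitrary $k, \ell \in \NN$, after which Sobolev embedding yields $C^\infty(]0,T[\times \RR^6)$.

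First I would improve the velocity regularity. Applying $W^\ell \partial^\beta_{x,v}$ to \eqref{cauchy-problem} and testing against itself, Lemma \ref{lemm3.1.1} provides the coercive lower bound through the non-isotropic norm, while Propositions \ref{prop3.2.3} and \ref{prop3.2.4} control respectively the nonlinear source and the linear commutator terms. Under the smallness of $\|g\|_{H^3_3}$, the resulting energy inequality closes and gives a gain of $s$ fractional derivatives in $v$ and a weight $\langle v\rangle^s$ relative to the data. To absorb the initial roughness, one inserts into the multiplier a time-vanishing weight $t^N$ with $N$ large, working with operators of the form $t^N \Lambda^a_v \Lambda^b_x$; all new commutator terms are lower order in the $t^N$ hierarchy and can be absorbed into the coercivity. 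Iterating this step produces arbitrary $v$-regularity for positive time.

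Next, velocity regularity is transferred into $x$-regularity through the classical Kolmogorov commutator
\[
[\,\partial_t + v\cdot\nabla_x,\; \partial_{v_j}\,] = -\partial_{x_j}.
\]
Choosing the exponents $(a,b)$ of the multiplier $t^\alpha \Lambda^a_v \Lambda^b_x$ according to the fractional Kolmogorov scaling $b \sim s\, a$, each $v$-differentiation generates, via the transport commutator, a fractional gain of order $s$ in $x$ that the dissipation in the non-isotropic norm just suffices to control. Combining this with the equation itself, which writes $\partial_t g = - v\cdot\nabla_x g - \cL g + \Gamma(g,g)$, yields joint $t$-regularity. Bootstrapping then gives $g \in H^k_\ell(]\tau, T[\times \RR^6)$ for any $\tau > 0$ and arbitrary $k, \ell$, hence $g \in C^\infty(]0,T[\times\RR^6)$.

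The main obstacle will be the sharp bookkeeping of commutators between the pseudo-differential multipliers $t^\alpha \Lambda^a_v \Lambda^b_x$ and both the nonlinear term $\Gamma(g,g)$ and the velocity weight $W^\ell$. Since only $s \in (0,1/2)$ derivatives are gained from coercivity, the tolerance for losing derivatives in $[v\cdot\nabla_x, \Lambda^a_v]$ is tight, and the induction has to be balanced so that each step uses no more velocity regularity than is provided by the dissipation at the previous step. A sharp Gårding-type inequality together with the commutator estimates in Section 3 — themselves designed with exactly this application in mind — should make the scheme close and deliver the claimed full regularity.
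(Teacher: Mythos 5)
Your overall strategy---a hypoelliptic bootstrap that alternately gains velocity regularity from the coercivity and then transfers it to spatial regularity through the transport operator---is indeed the right one and the one used in the paper (Section~6). However, two concrete steps in your sketch are not correct as stated, and both matter quantitatively.

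First, the claimed $x$-gain per step is too optimistic. The paper does not obtain a gain of order $s$ in $x$ from the Kolmogorov commutator $[\partial_t+v\cdot\nabla_x,\partial_{v_j}]=-\partial_{x_j}$. Instead it applies the uncertainty-principle / averaging-lemma estimate of Lemma~\ref{lemm5.2.1+0}: if $(\partial_t+v\cdot\nabla_x)f\in H^{-s'}$ and $\Lambda_v^s f\in L^2$, then only $\Lambda_x^{s(1-s')/(s+1)}f\in L^2$. With $s'=s$ this is the gain $s_0=s(1-s)/(s+1)$, which is strictly smaller than $s$ (for $s=1/2$ it is $1/6$, not $1/2$). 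Your ``fractional Kolmogorov scaling $b\sim s\,a$'' is not what the sharp estimate delivers, and so the bookkeeping in your bootstrap does not actually balance. The correct scheme is to iterate the smaller gain $s_0$ finitely many times (Proposition~\ref{prop5.4.3}) until $\Lambda_x^{1+\varepsilon}f_1\in H^3_\ell$, \emph{before} attempting to raise the $v$-regularity above $s$.

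Second, and more fundamentally, you do not explain how the $v$-bootstrap closes against the commutator $[\Lambda_v^\lambda,v\cdot\nabla_x]=\lambda\,\Lambda_v^{\lambda-2}\partial_v\cdot\nabla_x$. This commutator loses one full $x$-derivative, not a fractional one, and ``sharp G\aa rding'' plus ``tight bookkeeping'' does not by itself absorb it at every step. The paper's resolution is precisely the two-phase structure: only after obtaining one full $x$-derivative (via the iterated $s_0$-gain) is the commutator term lower order, because $\Lambda_v^{\lambda-2}\partial_v$ is bounded on $L^2$ and the remaining $\nabla_x$ is controlled; this is the content of Proposition~\ref{prop6.5.1} and the explicit remark following it. Without first upgrading to a full $x$-derivative, your $v$-iteration would stall. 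Finally, a minor remark: the paper does not bootstrap all the way to $C^\infty$ by hand; once it reaches $\cH^5_\ell$ regularity it invokes Theorem~\ref{theo0.1} of \cite{amuxy-nonlinear-3}. Your direct bootstrap (and your use of a $t^N$ weight in place of the cutoff $\varphi(t)\psi(x)$) is a legitimate alternative, but it would still have to incorporate the two corrections above to close.
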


Let us recall that $ H^k_\ell  (\RR^7_{t, x,}) $, $H^k_\ell(\RR^6_{x,\,
v})$ and $H^k_\ell(\RR^3_{v})$ denote some weighted Sobolev spaces
 with the weight defined in  the variable $v$. Since the regularity result to be proved is local in space and time,  for convenience, we define the corresponding
 local version of weighted Sobolev spaces. For
$0\leq T_1<T_2<+\infty$, and any given open domain
$\Omega\subset\RR^3_x$, define
\begin{eqnarray*}
{\mathcal H}^m_l  (]T_1, \, T_2[\times\Omega\times\RR^3_v)&=&
\Big\{f\in \cD^{\, '}(]T_1, \, T_2[\times\Omega\times\RR^3_v);\,\, \\
&&\varphi(t)\psi(x) f\in H^m_l(\RR^7)\,,\,\,\forall\,\varphi\in
C^\infty_0(]T_1, \, T_2[),\,\,\psi\in C^\infty_0(\Omega)\, \Big\}.
\end{eqnarray*}

The proof of Theorem \ref{theo4.5.1} will be divided into several steps.

\subsection{Formulation of the problem}\label{section6.1}
\setcounter{equation}{0}

First of all, we recall  the main result  in \cite{amuxy-nonlinear-3}  given below.

\begin{theo}\label{theo0.1}
Assume that $0<s<1$,\, $0\leq T_1<T_2<+\infty$,
$\Omega\subset\RR^3_x$ is an open domain. Let $f$ be a non-negative
solution of the Boltzmann equation (\ref{1.1}) satisfying $f\in
{\mathcal H}^5_\ell  (]T_1, \, T_2[\times\Omega\times\RR^3_v)$ for
all $\ell \in\NN$. Moreover, assume that $f$ satisfies the non-vacuum
condition
\begin{equation}\label{0.1.4}
\|f(t, x, \cdot)\|_{L^1(\RR^3_v)}>0,
\end{equation}
for all $(t, x)\in ]T_1, \, T_2[\times\Omega$. Then we have
$$
f\in {\mathcal H}^{+\infty}_\ell(]T_1, \,
T_2[\times\Omega\times\RR^3_v)\subset C^{+\infty}(]T_1, \,
T_2[\times\Omega\times\RR^3_v),
$$
for any $\ell\in\NN$.
\end{theo}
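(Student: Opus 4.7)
\medskip

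The plan is to prove the theorem by a bootstrap argument: starting {}from $f \in \mathcal{H}^5_\ell$ for all $\ell \in \NN$, I would establish a single ``gain of regularity" step asserting that $f \in \mathcal{H}^{m}_\ell$ for all $\ell$ implies $f \in \mathcal{H}^{m+s'}_\ell$ for all $\ell$, with some fixed $s' > 0$ depending only on $s$. Iterating yields $f \in \mathcal{H}^{+\infty}_\ell$, and the Sobolev embedding then gives $C^{\infty}$. The whole argument is local in $(t,x)$: I would fix $\varphi \in C^\infty_0(]T_1,T_2[)$, $\psi \in C^\infty_0(\Omega)$, and work with $F = \varphi(t)\psi(x) f$, which satisfies
\begin{equation*}
\partial_t F + v\cdot\nabla_x F = \varphi\psi\, Q(f,f) + (\partial_t\varphi)\psi f + \varphi(v\cdot\nabla_x\psi) f,
\end{equation*}
so the localization produces only lower-order source terms of the same regularity as $f$.

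For the gain step, I would apply a velocity-regularizing pseudo-differential operator $M_\eta = \langle D_v\rangle^{\eta}$, with $0 < \eta < s$ small, together with the weight $W^\ell$, and test the equation against $W^{2\ell} M_\eta^2 F$. The transport term $v\cdot\nabla_x$ contributes a commutator $[v\cdot\nabla_x, M_\eta]$ that is of order $\eta$ in velocity but carries a crucial gradient in $x$; this is precisely where the generalized uncertainty principle \`a la Fefferman (as introduced in \cite{amuxy-nonlinear,amuxy-nonlinear-b}) is used to convert velocity regularity into $(t,x)$-regularity, yielding the ``hypoelliptic" gain. On the collisional side, coercivity (\ref{2.1.1}) applied to $M_\eta F$ gives a positive term $c_f \|M_\eta F\|^2_{H^s_v}$: here the hypothesis $f \geq 0$ together with the non-vacuum condition (\ref{0.1.4}) is essential, since $c_f$ depends only on $\|f\|_{L^1_2}$ and $\|f\|_{L\log L}$, which are locally uniformly positive by continuity of $f$ (its $\mathcal{H}^5_\ell$ regularity and Sobolev embedding ensure this).

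The remaining commutator terms $[Q(f,\cdot), M_\eta]$ and $[M_\eta, W^\ell]Q(\cdot,\cdot)$ must be controlled. For this I would invoke Theorem~\ref{theo2.1} together with the commutator-weight estimates of Section~\ref{section3}, which bound such terms by $\|f\|_{H^{m+2s}_{\ell'}}$ with a loss of at most $2s$ derivatives --- absorbable since by the inductive hypothesis $f$ already enjoys $m$ derivatives with every weight, and the coercive term on the left supplies $s$ extra derivatives in $v$. Choosing $\eta$ small and absorbing the commutators yields
\begin{equation*}
M_\eta F \in L^2(]T_1,T_2[; H^s_\ell(\RR^3_v))
\end{equation*}
for every $\ell$, which translates to a net gain of $\eta$ derivatives in $v$ for $f$. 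The uncertainty-principle step of the previous paragraph then converts a fraction of this into $(t,x)$ regularity; combined, this provides $f \in \mathcal{H}^{m+s'}_\ell$ with $s' = s'(s) > 0$ explicitly determined (e.g.\ $s' \sim s/(1+2s)$, the natural hypoelliptic exponent).

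The main obstacle is arranging the commutator balance so that the nonlinear term $\varphi\psi\,Q(f,f)$, after differentiation by $M_\eta$ and $W^\ell$, does not overwhelm the coercive gain: the product rule will put $m+\eta$ derivatives on one factor of $f$ and none on the other, and Theorem~\ref{theo2.1} then requires the ``other'' factor to lie in $H^{2s}_\ell$, which is guaranteed only because of the uniform control on low norms provided by the inductive assumption for \emph{all} $\ell$. A second delicate point is that the commutator $[v\cdot\nabla_x, M_\eta]$ is not small in a naive sense; the uncertainty-principle machinery of \cite{amuxy-nonlinear,amuxy-nonlinear-b,amuxy2,amuxy2b} is what makes the whole scheme close, and carrying this through in the anisotropic symbol class adapted to the fractional Kolmogorov operator $\partial_t + v\cdot\nabla_x + c_f(-\Delta_v)^s$ is the technical heart of the argument.
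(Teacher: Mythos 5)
Your proposal captures the broad strategy that the paper uses (the paper itself cites Theorem~\ref{theo0.1} from \cite{amuxy-nonlinear-3} and runs the same machinery in Section~\ref{section6} to prove the closely related Theorem~\ref{theo5.3.1}): localize with $\varphi(t)\psi(x)$, use the coercivity estimate (\ref{2.1.1}) plus the non-vacuum condition to gain $\Lambda^s_v$-regularity, invoke a hypo-elliptic/uncertainty-principle lemma to transfer velocity regularity into $(t,x)$-regularity, and then bootstrap. But there are two places where the mechanism you describe does not match what actually makes the argument close, and the second of these is a genuine gap.

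First, you assert that the commutator $[v\cdot\nabla_x, M_\eta]$ is ``where the uncertainty principle is used to convert velocity regularity into $(t,x)$-regularity.'' That is not how Lemma~\ref{lemm5.2.1+0} enters. The uncertainty-principle lemma is a statement about the transport equation $\partial_t f + v\cdot\nabla_x f = g$ as a whole: given $g\in H^{-s'}$ and $\Lambda^s_v f\in L^2$, it produces $\Lambda^{s(1-s')/(s+1)}_x f\in L^2$. It is applied \emph{after} the velocity-regularity gain as a post-processing step (Proposition~\ref{prop5.4.1}), with the source $Q(f_2,g)+G$ shown to lie in $H^{-s}$ via Theorem~\ref{theo2.1} with $m=-s$. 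It is not a commutator estimate for $[v\cdot\nabla_x, M_\eta]$. Also, $[v\cdot\nabla_x, \langle D_v\rangle^\eta]=\sum_j[v_j,\langle D_v\rangle^\eta]\partial_{x_j}$ is of order $\eta-1$ in $v$ (not $\eta$) but carries a \emph{full} $\partial_x$, and the resulting hypo-elliptic exponent in the paper is $s_0=s(1-s)/(s+1)$, not $s/(1+2s)$.

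Second — and this is the substantive gap — your bootstrap $\mathcal{H}^m_\ell \Rightarrow \mathcal{H}^{m+s'}_\ell$ is stated as a simultaneous gain in $(t,x)$ and $v$, iterated in one loop. This does not close. The obstruction is exactly the commutator you flag: when one tries to gain $v$-regularity beyond the initial $\Lambda^s_v$, the commutator $[\Lambda^\lambda_v, v\cdot\nabla_x]=\lambda\,\Lambda^{\lambda-2}_v\partial_v\cdot\partial_x$ is a bounded $v$-operator times a \emph{full} $\nabla_x$, so it requires already having one full $x$-derivative in $H^3_\ell$ for all $\ell$. After the first pass, you only have $\Lambda^{s_0}_x$ with $s_0<1$; you cannot yet gain further $v$-regularity. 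The paper therefore imposes an ordering: first push the $x$-regularity past order $1$ (Propositions~\ref{prop5.4.2} and~\ref{prop5.4.3}, by iterating the coercivity plus the uncertainty principle while $|D_x|^\lambda$ commutes with $v\cdot\nabla_x$), and \emph{only then} bootstrap $v$-regularity (Proposition~\ref{prop6.5.1}, whose hypothesis explicitly contains $\nabla_x(\varphi\psi f)\in H^3_\ell$). Your scheme needs to be reorganized to this two-stage form to work. Relatedly, the $x$-bootstrap uses the Leibniz-type decomposition \eqref{leibniz-x} for $|D_x|^\lambda Q(f,g)$ — this is an essential ingredient you do not mention, and without it the commutators $[|D_x|^\lambda, Q(f,\cdot)]$ cannot be handled at fractional order in $x$.
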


To apply this result, let us firstly note that, by Theorem \ref{theo4.2.1} and Theorem
\ref{theo4.4.1}, under the assumption of Theorem \ref{theo4.5.1},
the unique solution of the Cauchy problem (\ref{cauchy-problem})
satisfies
$$
\|g\|_{L^\infty([0,\, T];\, H^3_3(\RR^6))}\leq\,\epsilon_0,\qquad
\mu+ \sqrt\mu\, g\geq 0.
$$
Therefore, $f=\mu + \sqrt \mu\, g \geq 0$ is a solution of Boltzmann
equation (\ref{1.1}). On the other hand, we can choose $\epsilon_0>0$
small enough such that
$$
|| \sqrt\mu\,g||_{L^\infty([0, T]\times\RR^3_{x};\, L^1(\RR^3_v
))}\leq C\|g\|_{L^\infty([0,\, T];\, H^2(\RR^6))}\leq\,C\epsilon_0<1
$$
where $C$ is the Sobolev constant of the imbedding $H^2(\RR^3_x)\subset
L^\infty(\RR^3_x)$. Thus, for any $(t, x)\in ]0, T[\times\RR^3_x$,
\begin{equation}\label{5.3.1+0}
\int_{\RR^3_v} f(t,x,v) dv =1+ \int_{\RR^3_v} \sqrt\mu\,\, g(t, x,
v) dv \geq 1- || \sqrt\mu\,g||_{L^\infty([0, T]\times\RR^3_{x};\,
L^1(\RR^3_v ))}>0\,,
\end{equation}
so that $f=\mu + \sqrt \mu\, g$ satisfies the condition (\ref{0.1.4}).

By using equation (\ref{1.1}) and Remark \ref{rema4.4}, we
have also, for any $\ell\in\NN$,  $0< T_1<T_2< T$ and
bounded open domain $\Omega\subset\RR^3_x$,
$$
f=\mu + \sqrt \mu\, g \in {\mathcal H}^3_\ell  (]T_1, \,
T_2[\times\Omega\times\RR^3_v).
$$

 Note that we can not apply directly Theorem \ref{theo0.1}
  because we now only know that $f$ has regularity just in ${\mathcal H}^3_\ell  (]T_1, \,
T_2[\times\Omega\times\RR^3_v)$.
To overcome this, we prove the following theorem.

\begin{theo}\label{theo5.3.1}
Under the assumptions of Theorem \ref{theo4.5.1}, we have, for any
$0<T_1<T_2<T$ and bounded open domain $\Omega\subset\RR^3_x$,
$$
f=\mu + \sqrt \mu\, g \in {\mathcal H}^5_l  (]T_1, \,
T_2[\times\Omega\times\RR^3_v),
$$
holds for all $\ell\in\NN$.
\end{theo}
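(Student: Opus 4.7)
The plan is to bootstrap local space-time regularity from the already-established $\cH^3_\ell$ up to $\cH^5_\ell$ for all $\ell \in \NN$, which then feeds into Theorem 6.1 to deliver the $C^\infty$ conclusion of Theorem 6.2. The argument iterates the hypo-elliptic machinery of \cite{amuxy-nonlinear-3,amuxy-nonlinear} and rests on three ingredients already in place: the coercivity $|||g|||^2 \gtrsim \|g\|^2_{H^s_s}$ (Lemmas 2.2.4 and 3.1.1), the weighted commutator estimates of Propositions 3.2.3 and 3.2.4, and a generalized Fefferman-type uncertainty principle that converts velocity dissipation into $(t,x)$-regularity through the transport term.

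First I would fix cutoffs $\varphi \in C_0^\infty(]0,T[)$ and $\psi \in C_0^\infty(\RR^3_x)$ and study $\tilde g = \varphi(t)\psi(x) g$; commutators of the transport operator with these cutoffs produce only lower-order terms controlled by the $\cH^3_\ell$ bound. The energy estimate (4.0.5) together with Lemma 3.1.1 already provides $\tilde g \in L^2(\RR; \cB^3_\ell(\RR^6))$ and thus an extra $s$ orders of weighted velocity regularity. Next I would apply a tangential pseudo-differential multiplier $\Lambda_{t,x}^\sigma = \langle D_{t,x}\rangle^\sigma$ with $\sigma \in (0, s/(1+2s))$ and perform a weighted $L^2$-energy estimate with weight $W^\ell$. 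The commutators $[\Lambda_{t,x}^\sigma,\,\Gamma(g,\cdot)]$ and $[\Lambda_{t,x}^\sigma,\,\cL_1]$ are handled by a Littlewood-Paley decomposition in $(t,x)$ via fractional analogues of Propositions 3.2.3 and 3.2.4; here it is crucial that $\Lambda_{t,x}^\sigma$ commutes with the velocity integration defining $\Gamma$ and with $v\cdot \nabla_x$ modulo zero-order operators. The uncertainty principle then transfers the $H^s_v$-dissipation into a gain of $\sigma$ derivatives in $(t,x)$. Iterating this gain of $\sigma$ a finite number of times (depending only on $s$) raises the integer local regularity from $3$ to $5$, yielding $f = \mu + \sqrt\mu\, g \in \cH^5_\ell(]T_1,T_2[\times\Omega\times\RR^3_v)$.

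The main obstacle is establishing the fractional tangential commutator bound with the correct polynomial weight and the coercive norm $|||\cdot|||_{\cB^N_\ell}$ on the right-hand side, since Propositions 3.2.3--3.2.4 are written only for integer $(x,v)$-differentiation. This extension follows the dyadic strategy of Section 6 of \cite{amuxy-nonlinear-3}: it combines the cancellation identities from the proofs of Lemmas 2.2.1 and 2.3.1 with a paradifferential calculus in the regular variables $(t,x)$, exploiting that the velocity integrals in $\Gamma$ are unaffected by Fourier multipliers in $(t,x)$. Preservation of the weight $W^\ell$ through the iteration is handled exactly as in Section 3, by incorporating $W^\ell$ into the energy functional at each step and absorbing the low-order remainders into the coercive lower bound via Theorem 2.2. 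The non-vacuum bound (5.3.1+0), although not needed at this stage, is what finally allows the output of this bootstrap to be fed into Theorem 6.1.
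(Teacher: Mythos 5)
Your opening matches the paper's strategy: localize with cutoffs $\varphi(t)\psi(x)$, use coercivity to win $s$ derivatives in $v$, then transfer through the transport operator via the uncertainty principle (Lemma~\ref{lemm5.2.1+0}) and a fractional Leibniz formula in $x$ (the paper's (\ref{leibniz-x})) to build up tangential regularity. Up to the exact exponent ($s(1-s)/(1+s)$ in the paper rather than $s/(1+2s)$), this is essentially Propositions~\ref{prop5.3.1}--\ref{prop5.4.3}.

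However, there is a genuine gap at the final step. The iteration you describe \emph{only} gains derivatives in $(t,x)$: the uncertainty principle turns $H^s_v$-dissipation into $\Lambda^\sigma_{t,x}$-regularity, and repeating it raises the $x$-regularity (the paper carries this to $\Lambda^{1+\varepsilon}_x$ in Proposition~\ref{prop5.4.3}), but the $v$-regularity never moves past the single $s$-gain supplied by coercivity. Your claim that ``iterating this gain of $\sigma$ a finite number of times raises the integer local regularity from $3$ to $5$'' therefore cannot be correct as stated, because $\mathcal H^5_\ell$ requires $5$ derivatives in $v$ as well, and nothing in your scheme feeds the tangential gain back into the velocity variable. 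The missing ingredient is the $v$-bootstrap of Proposition~\ref{prop6.5.1}: testing the localized equation with $W^\ell\Lambda^{2\lambda}_v W^\ell g$ produces the commutator
$[\Lambda^\lambda_v, v]\cdot\partial_x = \lambda \Lambda^{\lambda-2}_v\partial_v\cdot\partial_x$,
and absorbing this term requires one already-controlled full $\nabla_x$ derivative of $g$. Hence the proof must first drive the $x$-regularity up to order $1+\varepsilon$ (Proposition~\ref{prop5.4.3}), and only then can it iterate $\Lambda^\lambda_v \mapsto \Lambda^{\lambda+s}_v$; this second loop is structurally different from the first and is not just ``another round of the same iteration.'' Your proposal omits this staged structure entirely, so the argument does not close on the $v$-side and cannot reach $\mathcal H^5_\ell$.

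A secondary imprecision: $\Lambda^\sigma_{t,x}$ commutes \emph{exactly} with $v\cdot\nabla_x$, so ``modulo zero-order operators'' is unnecessary there; the nontrivial transport commutator is precisely the one with $\Lambda^\lambda_v$ noted above, which is why the order of operations matters.
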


The proof of this theorem is similar but easier than
that of Theorem \ref{theo0.1} which was proved in \cite{amuxy-nonlinear-3}. In fact, since we have
$$
g^n\,\rightarrow \,g,
$$
by mollifying the initial data and using the uniqueness of solution,
we do not need at all to mollify the solution as in  \cite{amuxy-nonlinear-3}.
It follows that to obtain the above regularization result, we
 only need to prove the \`a priori estimate on smooth solution, which can deduced {}from \cite{amuxy-nonlinear-3}.

To make the paper self-contained, we give a proof here.   Let us recall that
 here we consider the Maxwellian molecule type cross-sections with the mild singularity.

Here and below,  $\phi$ denotes a cutoff  function satisfying
$\phi\in C^\infty_0$ and $0\leq \phi\leq 1$. Notation
$\phi_1\subset\subset\phi_2$ stands for two cutoff functions such
that $\phi_2=1$ on the support of $\phi_1$.

Take the smooth cutoff functions $\varphi,\, \varphi_2, \varphi_3\in
C^\infty_0(]T_1, T_2[)$ and $ \psi,\, \psi_2, \psi_3\in
C^\infty_0(\Omega)$  such that $\varphi\subset\subset
\varphi_2\subset\subset \varphi_3$ and
$\psi\subset\subset\psi_2\subset\subset\psi_3$. Set
$f_1=\varphi(t)\psi(x) f$, $f_2=\varphi_2(t) \psi_2(x) f$ and
$f_3=\varphi_3(t) \psi_3(x) f$. For $\alpha\in\NN^6, |\alpha|\leq
3$, define
$$
g=\partial^\alpha_{x, v} (\varphi(t)\psi(x) f)\in L^\infty(]T_1,
T_2[;\, L^2_\ell (\RR^6)).
$$
Then the Leibniz formula  yields the following equation :
\begin{equation}\label{5.3.1}
 g_t + v\,\cdot\,\partial_x {g } = Q(f_2,\,\, g)+ G,
\enskip (t, x, v) \in \RR^7,
\end{equation}
where
\begin{eqnarray}\label{5.3.2}
G&=&\sum_{\alpha_1+\alpha_2=\alpha,\,\, 1\leq
|\alpha_1|}C^{\alpha_1}_{\alpha_2} Q\Big(\partial^{\alpha_1}
{f_2},\,\,
\partial^{\alpha_2} f_1\Big)\\
&+&\partial^\alpha\Big( \varphi_t \psi(x) f+ v\,\cdot\,\psi_x(x)
\varphi(t) f\Big) +[\partial^\alpha,\,\,\,
v\,\cdot\,\partial_x](\varphi(t)\psi(x) f)\nonumber \\
&\equiv & A+B+C.\nonumber
\end{eqnarray}
Then we can take $W^{2\ell}\,g $ as a test function for equation (\ref{5.3.1}). It follows by integration by parts on
$\RR^7=\RR^1_t\times \RR^3_x\times\RR^3_v$ that
\begin{equation*}\label{5.3.3}
0=\Big(W^{\ell}\,Q({f_2},  g),\, W^{\ell}\,g\Big)_{L^2(\RR^7)}+
\Big(G,\, W^{2\ell}\,g \Big)_{L^2(\RR^7)},
\end{equation*}
which is sufficient for obtaining the required initial regularity.

\subsection{Gain of regularity in velocity variable}\label{section6.2}
\setcounter{equation}{0}
The next step is to show the gain of regularity in the velocity variable
by using the coercivity of the collision operator.

\begin{prop}\label{prop5.3.1}
Under the assumption of Theorem \ref{theo4.5.1}, for any
$0<T_1<T_2<T$ and  bounded open domain $\Omega\subset\RR^3_x$,
one has,
\begin{equation*}\label{5.3.4}
\Lambda^s_v f_1\in L^2(\RR_t;\, H^3_\ell  (\RR^6)),
\end{equation*}
for any $\ell\in\NN$, where $\Lambda_v=(1-\Delta_v)^{\frac 12}$,
$f_1=\varphi(t)\psi(x) f$ with
$\varphi\in C^\infty_0(]T_1, T_2[), \psi\in C^\infty(\Omega)$.
\end{prop}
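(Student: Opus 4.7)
The plan is to use the coercivity estimate (2.1.1) to convert the already-available $L^2$-control of $g = \partial^\alpha_{x,v} f_1$ for $|\alpha|\le 3$ into an $s$-gain of regularity in the velocity variable. Testing equation (5.3.1) by $W^{2\ell} g$ and integrating over $\RR^7$, the transport pieces $(g_t,W^{2\ell}g)$ and $(v\cdot\nabla_x g,W^{2\ell}g)$ both vanish because $g$ has compact support in $(t,x)$, so (5.3.3) reduces to
$$-\bigl(Q(f_2,g),\,W^{2\ell}g\bigr)_{L^2(\RR^7)} = \bigl(G,\,W^{2\ell}g\bigr)_{L^2(\RR^7)}.$$

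First I would rewrite the left-hand side as
$$-\bigl(Q(f_2,W^\ell g),W^\ell g\bigr)_{L^2(\RR^7)} + \bigl(W^\ell Q(f_2,g)-Q(f_2,W^\ell g),\,W^\ell g\bigr)_{L^2(\RR^7)},$$
with the commutator term controlled, via Lemma 3.2.1 integrated in $(t,x)$, by $C\|f_2\|_{L^\infty_{t,x}L^1_\ell}\|g\|_{L^2_\ell(\RR^7)}\|W^\ell g\|_{L^2(\RR^7)}$. To the main term I would apply the Alexandre--Desvillettes--Villani--Wennberg coercivity (2.1.1) fibrewise in $(t,x)$, giving
$$-\bigl(Q(f_2,W^\ell g),W^\ell g\bigr)_{L^2(\RR^3_v)}\ge c_{f_2(t,x)}\|W^\ell g(t,x,\cdot)\|^2_{H^s(\RR^3_v)}-C\|f_2(t,x,\cdot)\|_{L^1_v}\|W^\ell g(t,x,\cdot)\|^2_{L^2_v}.$$
The essential point is that the constant $c_{f_2(t,x)}$ is uniformly positive on the support of $\varphi_2\otimes\psi_2$: the non-vacuum lower bound (5.3.1+0), together with the uniform $L^1_2\cap L\log L$-control of $f=\mu+\sqrt\mu\,g$ coming from the smallness of $\|g\|_{H^3_3}\le\epsilon_0$, ensures this.

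Next I would control the source $G=A+B+C$. The terms $B$ and $C$, which involve the cut-off derivatives $\varphi_t$, $\psi_x$ and the transport commutator, are bounded in $L^2_\ell(\RR^7)$ directly by $\|f_3\|_{\mathcal{H}^3_\ell}$, so they contribute at most $\eta\|W^\ell g\|^2_{L^2(\RR^7)}+C_\eta\|f_3\|^2_{\mathcal{H}^3_\ell}$ after Cauchy--Schwarz. For the nonlinear piece
$A=\sum_{|\alpha_1|\ge1,\,\alpha_1+\alpha_2=\alpha}C^{\alpha_1}_{\alpha_2}Q(\partial^{\alpha_1}f_2,\partial^{\alpha_2}f_1)$,
Theorem 2.1 with $m=0$ and $\alpha=\ell$ yields, fibrewise in $(t,x)$,
$$\|Q(\partial^{\alpha_1}f_2,\partial^{\alpha_2}f_1)\|_{L^2_\ell(\RR^3_v)}\le C\|\partial^{\alpha_1}f_2\|_{L^1_{\ell+2s}(\RR^3_v)}\|\partial^{\alpha_2}f_1\|_{H^{2s}_{\ell+2s}(\RR^3_v)}.$$
Since $|\alpha_1|\ge1$ forces $|\alpha_2|\le 2$, and since $2s<1$, the second factor is controlled by $\|\partial^{\alpha_2}f_1\|_{H^1_{\ell+2s}}\le\|f_1\|_{H^3_{\ell+2s}}$; the first factor is handled by Sobolev embedding in the $(t,x)$-variables applied to $f_3$ in $\mathcal{H}^3_{m}$ for $m$ large enough. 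Pairing with $W^{2\ell}g$ and integrating in $(t,x)$ produces at most $\eta\|W^\ell g\|^2_{L^2(\RR^7)}+C_\eta\|f_3\|^2_{\mathcal{H}^3_m}$.

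Summing the resulting inequality over $|\alpha|\le3$ and absorbing the small $\eta$-contributions on the left delivers
$$c_0\sum_{|\alpha|\le 3}\int_{\RR}\int_{\RR^3_x}\|W^\ell\partial^\alpha f_1(t,x,\cdot)\|^2_{H^s(\RR^3_v)}\,dxdt\le C\|f_3\|^2_{\mathcal{H}^3_m},$$
which, after exchanging $\partial^\alpha$ with $\Lambda^s_v$ and $W^\ell$ (introducing only lower-order remainders already estimated), is precisely the statement $\Lambda^s_v f_1\in L^2(\RR_t;H^3_\ell(\RR^6))$. The main obstacle I expect lies in the nonlinear term $A$: the collision operator costs $2s$ velocity derivatives on its second argument, and one must verify that no choice of $(\alpha_1,\alpha_2)$ demands more regularity of $f_1$ than the $H^3_{\ell+2s}$ we already have. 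It is exactly here that the mild-singularity hypothesis $0<s<1/2$ (so that $2s<1$) becomes essential to close the estimate without circularity.
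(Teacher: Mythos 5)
Your overall structure matches the paper's: test (\ref{5.3.1}) by $W^{2\ell}g$, invoke the coercivity estimate (\ref{2.1.1}) on the main term $-\bigl(Q(f_2,W^\ell g),W^\ell g\bigr)$ with the uniform non-vacuum lower bound (\ref{5.3.1+0}), control the weight commutator by Lemma \ref{lemm3.2.1}, and then estimate the source $G$ term by term. The one meaningful methodological difference is the choice of $m=0$ in Theorem \ref{theo2.1} when bounding the nonlinear source $A$, which lands $Q(\partial^{\alpha_1}f_2,\partial^{\alpha_2}f_1)$ in $L^2_\ell(\RR^3_v)$; the paper instead takes $m=-s$, landing $Q$ in $H^{-s}_\ell(\RR^3_v)$ and paying a factor $\|\Lambda^s_v W^\ell g\|_{L^2}$ on the right of (\ref{5.3.7}) which it then absorbs by Young's inequality into the coercivity term. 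Both choices close, since $2s<1$ leaves room for either $H^{2s}$ or $H^s$ of the second argument. Your route avoids the absorption step, while the paper's $H^{-s}$-valued source bound is what gets reused verbatim in Proposition \ref{prop5.4.1} for the hypo-elliptic estimate.

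There is, however, a gap in your treatment of $A$. You bound the first factor $\|\partial^{\alpha_1}f_2\|_{L^1_{\ell+2s}(\RR^3_v)}$ uniformly in $(t,x)$ by Sobolev embedding of $f_3\in\mathcal{H}^3_m$ with ``$m$ large enough,'' but this fails when $|\alpha_1|=3$: after three $(t,x)$-derivatives on $f_2$ there are none left to pay for the $L^\infty_x$ embedding, which costs an additional $3/2+\epsilon$ derivatives in $x$, and enlarging the velocity weight $m$ does not help with derivatives. The paper's Case 2 fixes this by switching factors when $|\alpha_1|\ge 2$: since then $|\alpha_2|\le 1$, one places the $(t,x)$-Sobolev embedding on $\partial^{\alpha_2}f_1$ (needing roughly $H^{1+3/2+\epsilon}$ in the $(x,v)$-variables, well within $\mathcal{H}^3_\ell$) and keeps the $L^2_{t,x}$ norm on $\partial^{\alpha_1}f_2$. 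Your closing remark correctly identifies $A$ as the delicate term, but you scrutinize only the regularity burden on the \emph{second} argument $f_1$; the actual obstruction is the regularity of the \emph{first} argument $f_2$ when $\alpha_1=\alpha$, and it is resolved not by the hypothesis $2s<1$ but by this case split.
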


\begin{proof} Firstly,  the local positive lower bound
(\ref{5.3.1+0}) implies that
 $$
 \inf_{(t,  x)\in \mbox{Supp}\,\,\varphi \times \mbox{Supp}\,\psi_1}
 \|{f_2}(t, x, \cdot)\|_{L^1(\RR^3_v)}=c_0>0.
 $$
Thus, the coercivity estimate \eqref{2.1.1} gives
\begin{eqnarray*}
&&-\Big(Q({f_2},\,  W^{\ell}\, g),\,\,
 W^{\ell}\, g\Big)_{L^2(\RR^7)}=-\int_{t\in
 \mbox{Supp}\,\varphi }\int_{x\in
 \mbox{Supp}\,\psi_1 }\Big(Q({f_2},\,  W^{\ell}\, g),\,\,
W^{\ell}\, g\Big)_{L^2(\RR^3_v)} dx dt
\\
&& \geq \int_{\RR_t}\int_{\RR^3_x}\Big( C_0\| W^{\ell}\, g(t, x,
\cdot)\|^2_{H^s(\RR^3_v)}
 -C \|{f_2}(t, x, \cdot)\|_{L^1(\RR^3_v)}
 \| W^{\ell}\, g(t, x, \cdot)\|^2_{L^2(\RR^3_v)}\Big)dx dt \\
&&\geq C_0\|\Lambda^{s}_v W^{\ell}\, g\|^2_{L^2(\RR^7)}- C T\|
{f_2}\|_{L^\infty(\RR^4_{t, x};\,\,
L^1(\RR^3_v))}\|W^{\ell}\,g\|^2_{L^\infty([0, T];\, L^2(\RR^6))}.
\end{eqnarray*}
Since
$$
\|{f_2}\|_{L^\infty(\RR^4_{t, x};\,\, L^1(\RR^3_v))}\leq C
\|{f_2}\|_{L^\infty([0, T];\,\,
H^{3/2+\epsilon}_{3/2+\epsilon}(\RR^6))},
$$
and
$$
\|W^{\ell}\,g\|^2_{L^\infty([0, T];\, L^2(\RR^6))}\leq C
\|f_1\|^2_{L^\infty([0, T];\, H^3_\ell(\RR^6))},
$$
for $l>3/2$, we have
\begin{eqnarray}\label{5.3.6}
\|\Lambda^{s}_v  W^{\ell}\, g\|^2_{L^2(\RR^7)}&\leq& C
\|{f_2}\|^2_{L^\infty([0, T];\,\, H^{3}_{\ell}(\RR^6))}
+\left|\big(G,\, W^{2\ell}\, g\big)_{L^2(\RR^7)}\right|\\
&&+\Big|\big(  W^{\ell}\,  Q({f_2}, g)-Q({f_2},\,  W^{\ell}\,  g),
\,\,  W^{\ell}\, g\big)_{L^2(\RR^7)}\Big|\, .\nonumber
\end{eqnarray}

By applying Lemma \ref{lemm3.2.1}, the third term on the right hand side
 of \eqref{5.3.6} can be estimated as follows:
\begin{align*}
&\left|\Big(\big(W^\ell\, Q(f_2,\,\,g)-Q(f_2,\,\,W^\ell\,
g)\big),\,\,W^{\ell}\, g\Big)_{L^2(\RR^7)}\right|\\
& \leq C T \|f_2\|_{L^\infty([0, T];\,
H^{3/2+\epsilon}_{\ell+3}(\RR^6))}\|g\|_{L^\infty([0, T];\,
L^2_{\ell} (\RR^6))} \|g\|_{L^2_\ell(\RR^7)}\\
& \leq C T^2 \|f_2\|_{L^\infty([0, T];\,
H^{3/2+\epsilon}_{\ell+3}(\RR^6))}\|f_1\|^2_{L^\infty([0, T];\,
H^3_{\ell} (\RR^6))}.
\end{align*}
For the second term in \eqref{5.3.6}, we shall prove the following claim:

 For $0<s<1/2$, one has
\begin{equation}\label{5.3.7}
\left|\Big(G,\, W^{2\ell}\, g\Big)_{L^2(\RR^7)}\right|\leq
C\Big(\|f_3\|_{L^\infty([0, T];\,\, H^{3}_{\ell}(\RR^6))}+
\|f_2\|^2_{L^\infty([0, T];\,\, H^{3}_{\ell}(\RR^6))}\Big)
\|\Lambda^s_v W^{\ell}\, g\|_{L^2(\RR^7_{t, x, v})}.
\end{equation}
In fact, recalling the expression \eqref{5.3.2}, it is easy to get
$$
\|B\|^2_{L^2_\ell(\RR^7)}+\|C\|^2_{L^2_\ell(\RR^7)}\leq C T
\|f_3\|^2_{L^\infty([0, T];\,\,H^3_{\ell+1}(\RR^6))}.
$$
For the term $A$, firstly recall that $\alpha_1+\alpha_2=\alpha$,
$|\alpha|\leq 3$ and $|\alpha_2|\leq 2$. In the following,
 we will apply Theorem \ref{theo2.1} with $m=-s$. We separate the discussion
 on $A$ into
two cases.

\noindent{\bf Case 1.} If $|\alpha_1|=1 $, we have
\begin{eqnarray*}
&&\int_{\RR_t}\int_{\RR^3_x}\|Q(\partial^{\alpha_1}
{f_2},\,\,\partial^{\alpha_2} f_1)(t, x,
\cdot)\|^2_{H^s_\ell(\RR^3_v)}dx dt\\
&\leq& C\int_{\RR_t}\int_{\RR^3_x}\|\partial^{\alpha_1} {f_2}(t, x,
\cdot)\|^2_{L^{1}_{\ell+2s}(\RR^3_v)} \|
\partial^{\alpha_2} f_1(t, x,
\cdot)\|^2_{H^{s}_{\ell+2s}(\RR^3_v)}dx dt\\
&\leq& C\|\partial^{\alpha_1} {f_2}\|^2_{L^\infty(\RR^4_{t, x};\,
L^{1}_{\ell+2s}(\RR^3_v))} \int_{\RR_t}\int_{\RR^3_x} \|
\partial^{\alpha_2} f_1(t, x,
\cdot)\|^2_{H^{s}_{\ell+2s}(\RR^3_v)}dx dt\\
&\leq& C T\|f_2\|^2_{L^\infty([0, T];\,\,H^3_{\ell+3}(\RR^6))}
\|f_1\|^2_{L^\infty([0, T];\,\,H^3_{\ell+2s}(\RR^6))}.
\end{eqnarray*}
\noindent{\bf Case 2.} If $|\alpha_1|\geq 2$, then $|\alpha_2|\leq
1$, it follows that
\begin{eqnarray*}
&&\int_{\RR_t}\int_{\RR^3_x}\|\partial^{\alpha_1} {f_2}(t, x,
\cdot)\|^2_{L^{1}_{\ell+2s}(\RR^3_v)} \|
\partial^{\alpha_2} f_1(t, x,
\cdot)\|^2_{H^{s}_{l+2s}(\RR^3_v)}dx dt\\
&\leq& C \|\partial^{\alpha_2} {f_1}\|^2_{L^\infty(\RR^4_{t,x}; \,
H^{s}_{l+2s}(\RR^3_v))} \int_{\RR_t}\int_{\RR^3_x} \|
\partial^{\alpha_1} f_2(t, x,
\cdot)\|^2_{L^{2}_{l+3/2+\delta+2s}(\RR^3_v)}dx dt\\
&\leq& C T\|f_1\|^2_{L^\infty([0,
T];\,\,H^{1+3/2+\epsilon}_{\ell+2s}(\RR^6))} \|f_2\|^2_{L^\infty([0,
T];\,\,H^3_{\ell+3}(\RR^6))}.
\end{eqnarray*}
By combining these two cases,  we have
\[
\left|\Big(G,\, W^{2\ell}\, g\Big)_{L^2(\RR^7)}\right|\leq C T\Big(
\|f_2\|^2_{L^\infty([0, T];\,\,H^3_{\ell+1}(\RR^6))}+
\|f_2\|^2_{L^\infty([0, T];\,\,H^3_{\ell+3}(\RR^6))} \|\Lambda^s_v
W^{\ell}\, g\|_{L^2(\RR^7_{t, x, v})} \Big).
\]
Therefore, we obtain
$$
\|\Lambda^{s}_v  W^{\ell}\, g\|^2_{L^2(\RR^7)}\leq C
\Big(1+\|{f_3}\|_{L^\infty([0, T];\,\, H^{3}_{\ell}(\RR^6))}\Big)^4.
$$
The proof of  the proposition  is then completed.
\end{proof}

\subsection{Gain of regularity in space variable}\label{section6.3}
\setcounter{equation}{0}
 With the gain of regularity in  the variable $v$ given in the
 above subsection, we now want to prove the gain of  regularity in the variable $x$. Here, the hypo-elliptic nature of the  equation will be used.

For this purpose, we introduce a more general framework. First of all, let us consider a transport equation in the form of
\begin{equation}\label{5.3.8}
 f_t + v\cdot\nabla_x f = g \in \cD '({\RR}^{7}) ,
\end{equation}
where $(t,x,v) \in  \RR^{7}$.

In
\cite{amuxy-nonlinear-b}, by using a generalized uncertainty
principle, we proved the following hypo-elliptic estimate.

\begin{lemm}\label{lemm5.2.1+0}
Assume that $ g \in H^{-s'} (\RR^{7})$, for some $0\leq s' <1$.
Let $f\in L^2 (\RR^{7}) $ be a weak solution of the transport
equation (\ref{5.3.8}), such that $\Lambda^s_v\, f \in L^2
(\RR^{7})$ for some $0<s\leq 1$. Then it follows that
\[
\Lambda_x^{s (1-s')/(s +1)}f\in L^2_{-\frac{s s'}{s +1}}(\RR^{7})
, \enskip \enskip \Lambda_t^{s (1-s')/(s +1)}f\in L^2_{-\frac{s}{s
+1}}(\RR^{7}),
\]
where $\Lambda_\bullet=(1-\Delta_\bullet)^{1/2}$.
\end{lemm}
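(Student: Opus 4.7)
The plan is to work in the partial Fourier picture: let $\tilde f(\tau,\eta,v)$ denote the Fourier transform of $f$ in $(t,x)$ only. The transport equation (\ref{5.3.8}) then becomes the pointwise algebraic relation
\[
i(\tau+v\cdot\eta)\,\tilde f(\tau,\eta,v)=\tilde g(\tau,\eta,v),
\]
and the extra regularity in $(t,x)$ will be recovered by playing the $H^s$--regularity of $f$ in $v$ against the $H^{-s'}$ regularity of $g$, via the symbol $\tau+v\cdot\eta$ of the transport operator.

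First I would introduce a standard cutoff $\chi\in C_0^\infty(\RR)$ supported in $[-2,2]$ with $\chi\equiv1$ on $[-1,1]$, and for a threshold $M>0$ to be chosen as a function of $(\tau,\eta)$, split
\[
\tilde f=\chi_M\,\tilde f+(1-\chi_M)\,\tilde f,\qquad \chi_M=\chi\bigl((\tau+v\cdot\eta)/M\bigr).
\]
On the non-resonant set $\{|\tau+v\cdot\eta|\geq M\}$ the algebraic relation yields the pointwise bound $|(1-\chi_M)\tilde f|\leq C|\tilde g|/M$. On the resonant set $\{|\tau+v\cdot\eta|\leq 2M\}$ the velocity is localized to a slab of thickness $\sim M/|\eta|$ around the hyperplane $\{v\cdot\eta=-\tau\}$; writing $v=v_\parallel\,\eta/|\eta|+v_\perp$ and applying the one--dimensional Sobolev trace bound
\[
\int_{|v_\parallel-v_0|\leq\delta}|\phi(v_\parallel)|^2\,dv_\parallel\leq C\,\delta^{2s}\,\|\phi\|_{H^s(\RR)}^2
\]
(valid for $0<s<1/2$; with a logarithmic correction at $s=1/2$ and an elementary $L^\infty$ bound when $s>1/2$) in the direction $\eta/|\eta|$, followed by integration in $v_\perp$, gives
\[
\int_{\RR^3}\chi_M^2\,|\tilde f(\tau,\eta,v)|^2\,dv\leq C\,(M/|\eta|)^{2s}\|\Lambda_v^s\tilde f(\tau,\eta,\cdot)\|_{L^2_v}^2.
\]

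Next, I would multiply by $|\eta|^{2\sigma}\langle v\rangle^{-2\alpha}$ with $\sigma=s(1-s')/(s+1)$ and $\alpha=ss'/(s+1)$, and integrate over $(\tau,\eta,v)$. The resonant contribution is bounded by $C\,\sup_{(\tau,\eta)}|\eta|^{2\sigma}(M/|\eta|)^{2s}\cdot\|\Lambda_v^sf\|_{L^2}^2$, which forces $M\lesssim|\eta|^{1-\sigma/s}$. The non-resonant contribution is bounded, after moving the $v$-weight into $\|g\|_{H^{-s'}}$ via the almost-commutativity of $\langle v\rangle^{-\alpha}$ with $\langle D\rangle^{-s'}$, by $C\,\sup|\eta|^{2\sigma}M^{-2}(1+|\tau|^2+|\eta|^2)^{s'}\cdot\|g\|_{H^{-s'}}^2$, which forces $M\gtrsim|\eta|^{\sigma-s'}$ for $|\eta|\geq1$. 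The two constraints are compatible precisely when $\sigma=s(1-s')/(s+1)$, at which point $\alpha=ss'/(s+1)$ emerges as the exponent needed to absorb the $v$-weight. The time estimate is obtained by the same argument applied to the decomposition $\tau=(\tau+v\cdot\eta)-v\cdot\eta$, in which the second piece contributes an extra factor $|v||\eta|$ on the resonant set, forcing the larger weight $\langle v\rangle^{-s/(s+1)}$.

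The main obstacle is not the resonant/non-resonant splitting itself but the careful bookkeeping needed to transport the weight $\langle v\rangle^{-\alpha}$ through the full $H^{-s'}(\RR^7)$ norm of $g$. Because this norm couples the $(\tau,\eta)$-frequencies with the $v$-frequencies non-trivially, the absorption is not automatic; it must be handled through a Paley--Littlewood dyadic decomposition in the full dual variable combined with the elementary submultiplicative inequality $(1+|\tau|^2+|\eta|^2+|\xi_v|^2)\lesssim(1+|\tau|^2+|\eta|^2)(1+|\xi_v|^2)$. This is precisely what the Fefferman generalized uncertainty principle packages in a single clean statement, and I would invoke the framework already developed in \cite{amuxy-nonlinear-b} to carry the bookkeeping through without loss.
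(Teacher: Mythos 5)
Your Fourier-in-$(t,x)$ splitting against the resonance surface $\tau+v\cdot\eta=0$ is a natural averaging-lemma style attack, and for $0<s<1/2$ the exponent bookkeeping does recover $\sigma=s(1-s')/(s+1)$. (For context: the paper itself does not prove this lemma but cites \cite{amuxy-nonlinear-b}, whose mechanism is the generalized uncertainty principle; your direct Fourier splitting is a different packaging.) However, the closing step --- converting the non-resonant contribution $\int|\eta|^{2\sigma}M^{-2}\langle v\rangle^{-2\alpha}|\tilde g|^2$ into $C\|g\|_{H^{-s'}(\RR^7)}^2$ while \emph{producing}, rather than merely allowing, the weight $\langle v\rangle^{-ss'/(s+1)}$ --- is exactly where a quantitative uncertainty-type estimate is needed, and you do not supply it. You acknowledge the absorption ``is not automatic'' and then invoke the framework of \cite{amuxy-nonlinear-b}, which is circular because that is the very paper proving the lemma. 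Your non-resonant constraint also carries a factor $(1+|\tau|^2+|\eta|^2)^{s'}$ which is unbounded in $\tau$ at fixed $\eta$; the mechanism that trades the $\tau$-frequency for a $v$-weight (essentially $|\tau|\lesssim\langle v\rangle|\eta|$ on the region relevant to the $x$-estimate) is invisible in your sketch, so $\langle v\rangle^{-\alpha}$ is imposed by fiat rather than arising by necessity. Also, for your balance $\sigma(1+1/s)=1-s'$ to close, the non-resonant constraint must read $M\gtrsim|\eta|^{\sigma+s'}$, not $|\eta|^{\sigma-s'}$.

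Separately, the one-dimensional bound
\begin{equation*}
\int_{|v_\parallel-v_0|\leq\delta}|\phi(v_\parallel)|^2\,dv_\parallel\leq C\,\delta^{2s}\,\|\phi\|^2_{H^s(\RR)}
\end{equation*}
holds only for $0<s<1/2$. For $s\geq1/2$ the gain saturates: the $L^\infty$ embedding yields $\delta^1$, not $\delta^{2s}$, and re-running your optimization with $\delta^{1}$ gives $\sigma=(1-s')/3$, which is strictly smaller than $s(1-s')/(s+1)$ when $s>1/2$. So the proposed argument cannot cover the full stated range $0<s\leq 1$; this is harmless for the application in the present paper, where $s<1/2$, but it is a genuine gap relative to the lemma as stated.
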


Of course $g$ is typically linked with the Boltzmann collision operator. Through the above uncertainty principle, we have the following result on the gain of regularity in the
variable $x$

\begin{prop}\label{prop5.4.1}
Under the hypothesis of Theorem \ref{theo4.5.1}, one has
\begin{equation}\label{5.3.10}
\Lambda^{s_0}_{x}\, f_1\in L^2(\RR_t;\, H^3_\ell  (\RR^6)),
\end{equation}
for any $\ell \in\NN$ and $0<s_0=\frac{s(1-s)}{(s+1)}$.
\end{prop}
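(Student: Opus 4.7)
The approach parallels that of Proposition \ref{prop5.3.1}: apply the hypo-elliptic transfer Lemma \ref{lemm5.2.1+0} to the transport-type equation satisfied by $W^\ell\p^\alpha_{x,v}(\varphi\psi f)$, using the velocity regularity already obtained in Proposition \ref{prop5.3.1}. Fix $\alpha\in\NN^6$ with $|\alpha|\leq 3$ and set $g=\p^\alpha_{x,v}(\varphi\psi f)$. Since $W^\ell$ depends only on $v$, equation (5.3.1) gives
\begin{equation*}
\p_t(W^\ell g)+v\cdot\nabla_x(W^\ell g)=W^\ell Q(f_2,g)+W^\ell G.
\end{equation*}
The plan is to apply Lemma \ref{lemm5.2.1+0} to $W^\ell g$ with $s'=s\in(0,1)$, which requires (i) $W^\ell g\in L^2(\RR^7)$, (ii) $\Lambda^s_v(W^\ell g)\in L^2(\RR^7)$, and (iii) the right-hand side above belongs to $H^{-s}(\RR^7)$.

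Properties (i) and (ii) follow from the local regularity $f\in\mathcal{H}^3_m$ (for all $m$) combined with Proposition \ref{prop5.3.1}. The verification of (iii) is the main technical point. For the nonlinear term, we write
\begin{equation*}
W^\ell Q(f_2,g)=Q(f_2,W^\ell g)+\bigl(W^\ell Q(f_2,g)-Q(f_2,W^\ell g)\bigr),
\end{equation*}
bound the commutator in $L^2(\RR^7)\subset H^{-s}(\RR^7)$ by Lemma \ref{lemm3.2.1}, and control the principal term via Theorem \ref{theo2.1} with $m=-s$:
\begin{equation*}
\|Q(f_2,W^\ell g)\|_{H^{-s}(\RR^3_v)}\leq C\|f_2\|_{L^1_{2s}(\RR^3_v)}\|W^\ell g\|_{H^s_{2s}(\RR^3_v)}.
\end{equation*}
Squaring, integrating in $(t,x)$, and using the uniform bound on $\|f_2(t,x,\cdot)\|_{L^1_{2s}(\RR^3_v)}$ together with Proposition \ref{prop5.3.1} applied at weight $\ell+2s$ yields the desired $L^2(\RR^7)$ bound. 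The terms in $G$ coming from (5.3.2) are handled analogously: the transport-type contributions $B,C$ already sit in $L^2_\ell(\RR^7)\subset H^{-s}(\RR^7)$ because $f_3\in\mathcal{H}^3_{\ell+1}$, while the bilinear piece $A$ reduces, via Theorem \ref{theo2.1} and the Sobolev embedding in $x$, to Cases 1 and 2 treated in the proof of Proposition \ref{prop5.3.1}, only now measured in $H^{-s}$ rather than $L^2$.

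Granted (i)--(iii), Lemma \ref{lemm5.2.1+0} yields
\begin{equation*}
\Lambda_x^{s(1-s)/(s+1)}(W^\ell g)\in L^2_{-s^2/(s+1)}(\RR^7),
\end{equation*}
i.e.\ $\Lambda_x^{s_0}(W^{\ell-s^2/(s+1)}g)\in L^2(\RR^7)$. Since the weight $\ell$ is arbitrary, we run the entire argument with weight $\ell'=\ell+1$ to absorb the loss $s^2/(s+1)<1$, obtaining $\Lambda_x^{s_0}\p^\alpha(\varphi\psi f)\in L^2(\RR_t;L^2_\ell(\RR^6))$ for each $|\alpha|\leq 3$; summing over $\alpha$ gives (5.3.10). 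The principal obstacle is the verification of (iii), specifically the $H^{-s}$ control of the nonlinear collision term after stripping the weight; the commutator estimate of Lemma \ref{lemm3.2.1} is precisely the tool that allows us to move $W^\ell$ past $Q(f_2,\cdot)$ at the cost of only an $L^2$ remainder, so that Theorem \ref{theo2.1} with index $m=-s$ can then be applied.
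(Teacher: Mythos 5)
Your proposal is correct and takes essentially the same route as the paper: verify that the transport equation for $W^\ell g$ has right-hand side in $H^{-s}(\RR^7)$ via Theorem \ref{theo2.1} with $m=-s$ (together with the commutator estimate and the bound \eqref{5.3.7} already obtained in the proof of Proposition \ref{prop5.3.1}), then invoke Lemma \ref{lemm5.2.1+0} with $s'=s$. The paper is slightly more terse in handling the interaction between $W^\ell$ and $Q$, and it does not spell out the absorption of the weight loss $s^2/(s+1)$ by working with a larger $\ell$; both are correctly supplied in your write-up.
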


\begin{proof} For any $\ell\in\NN$, it follows {}from
Proposition \ref{prop5.3.1} that $
\Lambda^s_v W^\ell g \in L^2(\RR^7)$, while using the upper bound estimation given by Theorem \ref{theo2.1} with
$m=-s$, we get
$$
W^\ell Q( f_2,\,\, g)\in L^2(\RR^4_{t, x};\,\, H^{-s}(\RR^3_v)).
$$
On the other hand, \eqref{5.3.7} gives
$$
W^\ell G\in L^2(\RR^4_{t, x};\,\, H^{-s}(\RR^3_v)).
$$
By using equation (\ref{5.3.1}), it follows that
\begin{equation*}\label{5.3.11}
\partial_t(W_l g) + v\,\cdot\,\partial_x {(W_l g )} =W_l Q(f_2,\,\, g)+
W_l G \in H^{-s}(\RR^7).
\end{equation*}
Finally, by using Lemma \ref{lemm5.2.1+0} with $s'=s$, we can
conclude (\ref{5.3.10}) and this completes the proof of the
proposition.
\end{proof}

Therefore,  under the hypothesis $f\in L^\infty([0, T];\, H^3_\ell
(\RR^6))$ for all $\ell  \in\NN$, it follows that for any $ \ell
\in\NN$ we have
\begin{equation}\label{5.3.12}
\Lambda^{s}_{v}(\varphi(t)\psi(x) f) \, \in L^2(\RR_t;\, H^3_\ell
(\RR^6)),\,\, \hskip 0.5cm \Lambda^{s_0}_{x}(\varphi(t)\psi(x) f) \,
\in L^2(\RR_t;\, H^3_\ell  (\RR^6))\, .
\end{equation}

\smallbreak We now improve this partial regularity in the variable $x$. Since fractional derivatives will be involved, it is not surprising that a Leibniz type formula for fractional derivatives in the variable $x$ is needed. We shall use the following one, taken {}from \cite{amuxy-nonlinear-3}.
Let $0<\lambda<1$.
Then there exists a positive constant $C_\lambda\neq 0$ such that
\begin{align}\label{leibniz-x}
|D_{x}|^{\lambda}Q\big({f},\,\, g\big)&= Q\big(|D_{
x}|^{\lambda}{f},\,\, g\big)+ Q\big({f},\,\, |D_{
x}|^{\lambda}g\big)\\
&\qquad+C_\lambda\int_{\RR^3} |h|^{-3-\lambda}Q\big(f_h,\,\,
g_h\big) d h\,,\nonumber
\end{align}
with
$$
f_h(t, x, v)={f}(t, x, v)-{f}(t, x+h, v),\,\,\,\,\,\, h\in\RR^3_{
x}\, .
$$

With this preparation, we need a preliminary step, given by
\begin{prop}\label{prop5.4.2}
Let $0<\lambda< 1$ and $f\in L^\infty([0, T];\, \mathcal{H}^3_\ell (\RR^6))$
be a solution of (\ref{1.1}). Assume that, for all
$\ell \in\NN$, we have
\begin{equation*}\label{5.3.13}
\Lambda^{s}_{v} f_1 \, \in H^3_\ell(\RR^7),\,\,
\, \Lambda^{\lambda}_{x} f_1\in
H^3_\ell(\RR^7).
\end{equation*}
Then, one has for any $\ell  \in\NN$,
\begin{align*}
&\|\Lambda^{s}_v \Lambda^{\lambda}_{x}  f_1 \|_{
L^2(\RR_t;\, H^3_\ell  (\RR^6))}\leq C \|f_2\|_{L^\infty([0,
T];\,H^{3}_{\ell+3}(\RR^6))}\\
&\qquad\times\Big(\|\Lambda^{s}_v f_1 \|_{
H^3_{\ell+2s}(\RR^7)}+\|\Lambda^{\lambda}_{x}  f_1
\|_{ H^3_{\ell+2s}(\RR^7)}\Big)\, .
\end{align*}
\end{prop}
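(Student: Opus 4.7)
The plan is to adapt the strategy of Proposition 6.2.1 (gain of regularity in $v$) to $\Lambda^\lambda_x f_1$ in place of $f_1$, while carefully managing the nonlocal commutator between $\Lambda^\lambda_x$ and the collision operator $Q$. Fix $\alpha\in\NN^6$ with $|\alpha|\leq 3$ and set $g=\partial^\alpha_{x,v} f_1$, so that (after localization and the Leibniz rule in $\alpha$) $g$ solves the equation \eqref{5.3.1}: $g_t+v\cdot\nabla_x g=Q(f_2,g)+G$. Since $\Lambda^\lambda_x$ acts only in $x$ and commutes with $\partial_t$ and with $v\cdot\nabla_x$, applying it gives
$$
\partial_t(\Lambda^\lambda_x g)+v\cdot\nabla_x(\Lambda^\lambda_x g)=\Lambda^\lambda_x Q(f_2,g)+\Lambda^\lambda_x G.
$$
Using the Leibniz-type identity \eqref{leibniz-x}, I would then write
$$
\Lambda^\lambda_x Q(f_2,g)=Q(f_2,\Lambda^\lambda_x g)+Q(\Lambda^\lambda_x f_2,g)+C_\lambda\int_{\RR^3}|h|^{-3-\lambda}\,Q((f_2)_h,g_h)\,dh,
$$
and use $W^{2\ell}\Lambda^\lambda_x g$ as test function on $\RR^7$, noting that the transport term vanishes after integration by parts.

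The coercivity estimate \eqref{2.1.1}, applied pointwise in $(t,x)$ together with the local non-vacuum bound \eqref{5.3.1+0}, turns $-(Q(f_2,W^\ell\Lambda^\lambda_x g),W^\ell\Lambda^\lambda_x g)_{L^2(\RR^7)}$ into a lower bound for $\|\Lambda^s_v W^\ell \Lambda^\lambda_x g\|^2_{L^2(\RR^7)}$ (modulo lower-order $L^2$ terms). The associated weight commutator between $W^\ell$ and $Q(f_2,\cdot)$ is controlled by Lemma 3.2.1, exactly as in the proof of \eqref{5.3.6}. The "first" $x$-commutator $Q(\Lambda^\lambda_x f_2,g)$ is bounded by Theorem 2.1 with $m=-s$, yielding
$$
\|Q(\Lambda^\lambda_x f_2,g)\|_{H^{-s}_v}\leq C\|\Lambda^\lambda_x f_2\|_{L^1_{2s}}\|g\|_{H^s_{2s}},
$$
and both factors are finite in $L^2_{t,x}$ by the hypotheses $\Lambda^\lambda_x f_1\in H^3_\ell$ and $\Lambda^s_v f_1\in H^3_\ell$ combined with Sobolev embedding. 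The forcing $\Lambda^\lambda_x G$ is handled by the same case-by-case analysis carried out to establish \eqref{5.3.7}, simply moving the factor $\Lambda^\lambda_x$ onto whichever argument retains enough regularity.

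The principal difficulty, and the step most likely to require real work, is the nonlocal integral commutator
$$
\mathcal{I}=\int_{\RR^7}W^{2\ell}\Lambda^\lambda_x g\cdot\Bigl(\int_h|h|^{-3-\lambda}Q((f_2)_h,g_h)\,dh\Bigr)\,dt\,dx\,dv.
$$
My plan is to dualize in $v$ at fixed $(t,x)$, invoke Theorem 2.1 with $m=-s$ to get
$$
\bigl|\bigl(Q((f_2)_h,g_h),W^{2\ell}\Lambda^\lambda_x g\bigr)_{L^2_v}\bigr|\leq C\|(f_2)_h\|_{L^1_{2s}(\RR^3_v)}\|g_h\|_{H^s_{2s}(\RR^3_v)}\|W^{2\ell}\Lambda^\lambda_x g\|_{H^s(\RR^3_v)},
$$
and then split $|h|^{-3-\lambda}=|h|^{-(3+\lambda)/2}\cdot|h|^{-(3+\lambda)/2}$ and apply Cauchy--Schwarz in $h$ (and then in $(t,x)$). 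This reduces matters to bounding
$$
\int_h|h|^{-3-\lambda}\|(f_2)_h(t,x,\cdot)\|^2_{L^1_{2s}(\RR^3_v)}\,dh,\qquad \int_h|h|^{-3-\lambda}\|g_h(t,x,\cdot)\|^2_{H^s_{2s}(\RR^3_v)}\,dh,
$$
which, by the Gagliardo characterization of $H^{\lambda/2}_x$, are controlled respectively by $\|f_2\|^2_{H^{\lambda/2}_x(\cdot;L^1_{2s})}$ and $\|g\|^2_{H^{\lambda/2}_x(\cdot;H^s_{2s})}$. Since $\lambda/2<\lambda$, and we have full $\Lambda^\lambda_x$-regularity of $f_1$ by hypothesis (hence of $f_2$ and of $g=\partial^\alpha f_1$), both integrals are finite and bounded by the right-hand side of the proposition.

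Summing over $|\alpha|\leq 3$, absorbing the resulting $\|\Lambda^s_v W^\ell\Lambda^\lambda_x g\|^2$ on the left-hand side, and reorganizing produces the desired estimate. The nonlocal commutator controlled via the Gagliardo splitting is the essential new ingredient; all other terms mimic the $v$-regularity argument of Proposition 6.2.1.
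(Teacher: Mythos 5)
Your overall structure matches the paper's: apply a fractional $x$-derivative to the equation for $g=\partial^\alpha f_1$, test against a weighted version of the same, use coercivity for the leading term, Theorem~\ref{theo2.1} for $Q(\Lambda^\lambda_x f_2,g)$, and the trilinear Leibniz identity \eqref{leibniz-x} for the remainder. The main divergence is in how you treat the cross-integral $\int |h|^{-3-\lambda}Q((f_2)_h,g_h)\,dh$. The paper keeps the $h$-integral linear and discharges the entire $|h|$-singularity onto $f_2$: using $\|(f_2)_h\|_{L^\infty(L^1_{2s})}\lesssim \min(|h|,1)\,\|f_2\|_{L^\infty(H^3_\ell)}$ (a consequence of Lipschitz regularity in $x$) and translation invariance $\|g_h\|\leq 2\|g\|$, the integral $\int|h|^{-3-\lambda}\min(|h|,1)\,dh$ converges for $0<\lambda<1$ and one lands directly on $\|f_2\|_{L^\infty(H^3)}\|\Lambda^s_v g\|_{L^2}$. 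Your Cauchy--Schwarz split $|h|^{-3-\lambda}=|h|^{-(3+\lambda)/2}\cdot|h|^{-(3+\lambda)/2}$ distributes the singularity symmetrically, which works but costs you two things you should not gloss over: (i) the $f_2$-factor $\sup_{t,x}\int_h|h|^{-3-\lambda}\|(f_2)_h\|^2\,dh$ is \emph{not} controlled by the Gagliardo characterization of $H^{\lambda/2}_x$, which is an $\int_x$-quantity, not a $\sup_x$; you need exactly the same Lipschitz bound the paper uses, so invoking Gagliardo there is the wrong justification; (ii) the $g$-factor produces the mixed norm $\|\Lambda^{\lambda/2}_x\Lambda^s_v g\|_{L^2_{t,x,v}}$, which is neither $\|\Lambda^\lambda_x g\|$ nor $\|\Lambda^s_v g\|$ from the hypotheses; one must interpolate on the Fourier side, $\langle\xi\rangle^\lambda\langle\eta\rangle^{2s}\leq \delta\langle\xi\rangle^{2\lambda}\langle\eta\rangle^{2s}+C_\delta\langle\eta\rangle^{2s}$, absorb the $\delta$-piece into the left-hand side, and keep the $\|\Lambda^s_v g\|^2$ piece on the right. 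You mention absorption but do not state this interpolation, and without it your argument does not close. Two smaller points: the paper's test function carries a second spatial cutoff $\psi_2^2$ (hence the nonvanishing $v\cdot\nabla_x\psi_2$ term) which you omit; your globalized test is admissible since $\Lambda^\lambda_x g\in L^2$ by hypothesis, but you should say so. And the identity \eqref{leibniz-x} is stated for $|D_x|^\lambda$, not $\Lambda^\lambda_x=(1-\Delta_x)^{\lambda/2}$; the two differ by a bounded Fourier multiplier, but one must record this rather than apply \eqref{leibniz-x} verbatim to $\Lambda^\lambda_x$. In summary, the route is viable and genuinely a bit different from the paper in the cross-term handling, but the proposal as written misattributes the bound on the $f_2$-factor and leaves out the interpolation that closes the estimate; the paper's linear-in-$h$ argument avoids both complications.
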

\begin{proof} Set $ g=\partial^\alpha_{x, v} f_1$ and $\alpha\in\NN^6, |\alpha|\leq 3$. We choose
$W^{\ell}\,|D_{x}|^{\lambda}\,\psi^2_2(x)\, |D_{x}|^{\lambda}\,
W^{\ell}\,g $ as a test function for equation (\ref{5.3.1}). Then
\begin{align*}
&\Big( \big(v\,\cdot\,\partial_x\psi_2(x)\big) |D_{x}|^{\lambda}\,
W^{\ell}\,g,\,\,\psi_2(x)\,
|D_{x}|^{\lambda}\, W^{\ell}\,g \Big)_{L^2(\RR^7)} \\
=&\Big(\psi_2(x)\, |D_{x}|^{\lambda}\, W^{\ell}\,Q(f_2,\,\,
g),\,\,\psi_2(x)\, |D_{x}|^{\lambda}\, W^{\ell}\,g
\Big)_{L^2(\RR^7)}\\
&\qquad+ \Big( \psi_2(x)\, |D_{x}|^{\lambda}\,
W^{\ell}\,G,\,\,\psi_2(x)\, |D_{x}|^{\lambda}\, W^{\ell}\,g
\Big)_{L^2(\RR^7)}. 
\end{align*}
One has
$$
\Big| \Big( \big(v\,\cdot\,\partial_x\psi_2(x)\big)
|D_{x}|^{\lambda}\, W^{\ell}\,g,\,\,\psi_2(x)\, |D_{x}|^{\lambda}\,
W^{\ell}\,g \Big)_{L^2(\RR^7)}\Big| \leq C \|\,\Lambda_{x}^{\lambda}
\partial^\alpha f_1\|_{L^2_{\ell+1}(\RR^7)}\, ,
$$
and the same estimation for the linear term of $G$ in \eqref{5.3.2}
$$
\Big| \Big( \psi_2(x) |D_{x}|^{\lambda}\,
W^{\ell}\,\big(B+C\big),\,\,\psi_2(x)\, |D_{x}|^{\lambda}\,
W^{\ell}\,g \Big)_{L^2(\RR^7)}\Big| \leq C \|\,\Lambda_{x}^{\lambda}
\partial^\alpha f_1\|_{L^2_{\ell+1}(\RR^7)}\, .
$$
For the nonlinear terms of $G$ in \eqref{5.3.2},  we shall use  the
Leibniz formula \eqref{leibniz-x}. First of all, the coercivity
estimate \eqref{2.1.1} gives, as in (\ref{5.3.6}),
\begin{eqnarray*}
&&-\Big(Q({f_2},\, \psi_2(x)|D_{x}|^{\lambda} W^{\ell}\, g),\,\,
\psi_2(x)|D_{x}|^{\lambda} W^{\ell}\,
g\Big)_{L^2(\RR^7)} \\
&&\geq C_0\|\Lambda^{s}_v \psi_2(x)|D_{x}|^{\lambda} W^{\ell}\,
g\|^2_{L^2(\RR^7)}- C T\| {f_2}\|_{L^\infty(\RR^4_{t, x};\,\,
L^1(\RR^3_v))}\|\psi_2(x)|D_{x}|^{\lambda}W^{\ell}\,g\|^2_{L^\infty([0,
T];\, L^2(\RR^6))}\,.
\end{eqnarray*}
On the other hand, the upper bound estimate of Theorem \ref{theo2.1} with
$m=-s$ gives,
\begin{eqnarray*}
&&\Big|\Big(Q(|D_{x}|^{\lambda}f_2,\, \psi_1(x) W^{\ell}\, g),\,\,
\psi_1(x)|D_{x}|^{\lambda} W^{\ell}\, g\Big)_{L^2(\RR^7)}\Big|
\\
&\leq& C\||D_{x}|^{\lambda} f_2\|_{L^\infty(\RR^4_{t, x},\,
L^1_{2s}(\RR^3_v))}
 { \|\psi_2(x)\Lambda^{s}_v W^{\ell}\, g\|_{L^2_{2s}(\RR^7)}}
\|\psi_2(x)|D_{x}|^{\lambda}\Lambda^{s}_v W^{\ell}\,
g\|_{L^2(\RR^7)}\\
&\leq& C\| f_2\|_{L^\infty([0,
T];\,\,H^{3/2+\lambda+\epsilon}_{3/2+2s+\epsilon}(\RR^6))}
 { \|\psi_2(x)\Lambda^{s}_v W^{\ell}\, g\|_{L^2_{2s}(\RR^7)}}
\|\psi_2(x)|D_{x}|^{\lambda}\Lambda^{s}_v W^{\ell}\,
g\|_{L^2(\RR^7)}\\
&\leq& \delta \|\psi_2(x)|D_{x}|^{\lambda}\Lambda^{s}_v W^{\ell}\,
g\|^2_{L^2(\RR^7)} +C_\delta \| f_2\|^2_{L^\infty([0,
T];\,\,H^{3}_{3}(\RR^6))}
 { \|\Lambda^{s}_v g\|^2_{L^2_{\ell+2s}(\RR^7)}} .
\end{eqnarray*}
For the cross term coming {}from the decomposition
\eqref{leibniz-x}, by using again Theorem \ref{theo2.1} with $m=-s$,
we get
\begin{eqnarray*}
&&\int_{\RR^3} |h|^{-3-\lambda}\Big|\Big(Q((f_2)_h,\, (W^{\ell}\,
g)_h),\,\, \psi^2_2(x)|D_{x}|^{\lambda} W^{\ell}\,
g\Big)_{L^2(\RR^7)}d h\Big|
\\
&\leq& |C_\lambda| \|\psi_2(x)|D_{x}|^{\lambda}\Lambda^{s}_v
W^{\ell}\, g\|_{L^2(\RR^7)}\\
&&\,\,\,\,\,\,\,\,\,\times \int_{\RR^3} |h|^{-3-\lambda} \|
(f_2)_h\| _{L^\infty(\RR^4_{t, x},\, L^1_{2s}(\RR^3_v))}
\|\Lambda^{s}_v (W^{\ell}\, g)_h\|_{L^2_{2s}(\RR^7)}
 d h
\\
&\leq& \delta \|\psi_1(x)|D_{x}|^{\lambda}\Lambda^{s}_v W^{\ell}\,
g\|^2_{L^2(\RR^7)} +C_\delta \| f_2\|^2_{L^\infty([0,
T];\,\,H^{3}_{3}(\RR^6))}
 { \|\Lambda^{s}_v g\|^2_{L^2_{\ell+2s}(\RR^7)}}.
\end{eqnarray*}
Hence, we have
\begin{eqnarray*}
&&\Big|\Big(|D_{x}|^{\lambda}Q(f_2,\, \psi_2(x) W^{\ell}\, g)-
Q(|D_{x}|^{\lambda}f_2,\, \psi_2(x) W^{\ell}\, g),\,\,
\psi_2(x)|D_{x}|^{\lambda} W^{\ell}\, g\Big)_{L^2(\RR^7)}\Big|
\\
&\leq& \delta \|\psi_2(x)|D_{x}|^{\lambda}\Lambda^{s}_v W^{\ell}\,
g\|^2_{L^2(\RR^7)} +C_\delta \| f_2\|^2_{L^\infty([0,
T];\,\,H^{3}_{3}(\RR^6))}
 { \|\Lambda^{s}_v g\|^2_{L^2_{\ell+2s}(\RR^7)}} .
\end{eqnarray*}

In conclusion,  we get
\begin{eqnarray*}
&&\|\psi_2(x)|D_{x}|^{\lambda}\Lambda^{s}_v W^{\ell}\,
g\|^2_{L^2(\RR^7)}\\
&\leq& C \| f_2\|^2_{L^\infty([0, T];\,\,H^{3}_{3}(\RR^6))}\Big(
\|\,\,|D_{x}|^{\lambda} g\|^2_{L^2_{\ell+2s}(\RR^7)}+\|\Lambda^{s}_v
g\|^2_{L^2_{\ell+2s}(\RR^7)}
\Big)\nonumber\\
& +&\left|\left( |D_{x}|^{\lambda} \Big(W^{\ell}\, Q\big(f_2,\,\,
g\big)-Q\big(f_2,\,\,W^{\ell}\, g\big)\Big),\,\, \psi^2_2(x)|D_{
x}|^{\lambda}W^{\ell}\, g
\right)_{L^2(\RR^7)}\right| \nonumber \\
&+&\left|\left(|D_{x}|^{\lambda} W^{\ell}\,\, A,\,\,
\psi^2_2(x)|D_{x}|^{\lambda} \,
W^{\ell}\,g\right)_{L^2(\RR^7)}\right|
\nonumber\\
&=& {\rm I}+{\rm II}+{\rm III}\, . \nonumber
\end{eqnarray*}
For the term II,  again, formula (\ref{leibniz-x}) yields,
\begin{eqnarray*}
&&\left( |D_{x}|^{\lambda} \Big(W^{\ell}\, Q\big(f_2,\,\,
g\big)-Q\big(f_2,\,\,W^{\ell}\, g\big)\Big),\,\, \psi^2_2(x)|D_{
x}|^{\lambda}W^{\ell}\, g
\right)_{L^2(\RR^7)}\\
&&=\Big(\big( W^{\ell}\, Q\big(|D_{x}|^{\lambda}f_2,\,\,
g\big)-Q\big(|D_{x}|^{\lambda}f_2,\,\, W^{\ell}\, g\big)\big) ,\,\,
\psi^2_2(x)|D_{x}|^{\lambda} W^{\ell}\,g\Big)_{L^2(\RR^7)}\\
&&+\Big(\big( W^{\ell}\, Q\big(f_2,\,\, |D_{t, x}|^{\lambda}
g\big)-Q\big(f_2,\,\, W^{\ell}\, |D_{x}|^{\lambda} g\big)\big) ,\,\,
\psi^2_2(x)|D_{x}|^{\lambda} W^{\ell}\,g\Big)_{L^2(\RR^7)}
\\
&&+C_\lambda\int_{\RR^3} |h|^{-3-\lambda}\Big( \big( W^{\ell}\,
Q\big((f_2)_h,\,\, g_h\big)-Q\big((f_2)_h,\,\, W^{\ell}\,
g_h\big)\big) ,\,\, \psi^2_2(x)|D_{x}|^{\lambda}
W^{\ell}\,g\Big)_{L^2(\RR^7)}d h.
\end{eqnarray*}
Since $0<s<1/2$ , Lemma \ref{lemm3.2.1} implies
\begin{eqnarray*}
&&\Big|\Big(\big( W^{\ell}\, Q\big(|D_{x}|^{\lambda}f_2,\,\,
g\big)-Q\big(|D_{x}|^{\lambda}f_2,\,\, W^{\ell}\, g\big)\big) ,\,\,
\psi^2_2(x)|D_{x}|^{\lambda} W^{\ell}\,g\Big)_{L^2(\RR^7)}\Big|\\
&&\leq C\|\,|D_{x}|^{\lambda}f_2\|_{L^\infty(\RR^4_{t, x}\,, \,\,
L^1_{\ell}(\RR^3_v))} \|g\|_{L^2(\RR^4_{t, x}\,, \,\,
L^2_{\ell}(\RR^3_v))} \|\,|D_{x}|^{\lambda} g\|_{L^2_{\ell
}(\RR^7)}\\
&&\leq C\|f_2\|_{L^\infty ([0, T];\,
H^{3/2+\lambda+\epsilon}_{\ell+3/2+\epsilon}(\RR^6))}
\|g\|_{L^2_{\ell}(\RR^3)} \|\,|D_{x}|^{\lambda} g\|_{L^2_{\ell
}(\RR^7)},
\end{eqnarray*}
and
\begin{eqnarray*}
&&\Big|\Big(\big( W^{\ell}\, Q\big(f_2,\,\,
|D_{x}|^{\lambda}g\big)-Q\big(f_2,\,\, W^{\ell}\,|D_{x}|^{\lambda}
g\big)\big) ,\,\,
\psi^2_2(x)|D_{x}|^{\lambda} W^{\ell}\,g\Big)_{L^2(\RR^7)}\Big|\\
&&\leq C\|f_2\|_{L^\infty(\RR^4_{t, x}\,, \,\, L^1_{\ell}(\RR^3_v))}
\||D_{x}|^{\lambda}g\|_{L^2(\RR^4_{t, x}\,, \,\,
L^2_{\ell}(\RR^3_v))} \|\,|D_{x}|^{\lambda} g\|_{L^2_{\ell
}(\RR^7)}\\
&&\leq C\|f_2\|_{L^\infty ([0, T];\,
H^{1+3/2+\epsilon}_{\ell+3/2+\epsilon}(\RR^6))}
\||D_{x}|^{\lambda}g\|^2_{L^2_{\ell}(\RR^3)}\, .
\end{eqnarray*}
For the cross term,
\begin{eqnarray*}
&&\Big|\int_{\RR^3} |h|^{-3-\lambda}\Big( \big( W^{\ell}\,
Q\big((f_2)_h,\,\, g_h\big)-Q\big((f_2)_h,\,\, W^{\ell}\,
g_h\big)\big) ,\,\, \psi^2_2(x)|D_{x}|^{\lambda}
W^{\ell}\,g\Big)_{L^2(\RR^7)}d h\Big|\\
&&\leq C\|f_2\|_{L^\infty ([0, T];\,
H^{1+3/2+\epsilon}_{\ell+3/2+\epsilon}(\RR^6))}
\|g\|_{L^2_{\ell}(\RR^3)} \|\,|D_{x}|^{\lambda} g\|_{L^2_{\ell
}(\RR^7)}.
\end{eqnarray*}
Thus, we have
$$
{\rm II}\leq C\|f_2\|_{L^\infty ([0, T];\, H^{3}_{\ell+2}(\RR^6))}
\||D_{x}|^{\lambda}g\|^2_{L^2_{\ell}(\RR^3)}\, .
$$

We now consider  the last term $III$. Recall that $A$ stands for
the nonlinear terms {}from $G$
\begin{eqnarray*}
A=\sum_{\alpha_1+\alpha_2=\alpha\,\, \alpha_1\neq
0}C^{\alpha_1}_{\alpha_2} Q\Big(\partial^{\alpha_1} f_2,\,\,
\partial^{\alpha_2} f_1\Big).
\end{eqnarray*}
We have
\begin{align*}
&\left|\left(|D_{x}|^{\lambda} \, \Big(Q\big(\partial^{\alpha_1}
f_2,\,\,\partial^{\alpha_2} f_1\big)\Big),\,\, W^{\ell}\,
\psi^2_2(x)|D_{x}|^{\lambda} \,  W^{\ell}\,g\right)_{L^2(\RR^7)}\right|\\
&\leq C\|\Lambda^{s}_v  \psi_1(x)|D_{x}|^{\lambda}
W^{\ell}\,g\|_{L^2(\RR^7)}\times\\
&\times\Big\{\big\|Q\big(|D_{t, x}|^\lambda\partial^{\alpha_1}
f_2,\,\,
\partial^{\alpha_2} f_1\big)\big\|_{L^2(\RR^4_{t, x}; H^{-s}_{\ell}(\RR^3_v))}
+\big\|Q\big(\partial^{\alpha_1} f_2,\,\, |D_{x}|^\lambda
\partial^{\alpha_2} f_1\big)\big\|_{L^2(\RR^4_{t, x};
H^{-s}_{\ell}(\RR^3_v))}\\
&+\big\|\int h^{-3-\lambda}Q\big(\partial^{\alpha_1} (f_2)_h,\,\,
\partial^{\alpha_2}(f_1)_h\big) dh\big\|_{L^2(\RR^4_{t, x};
H^{m}_{l+|\gamma|/2}(\RR^3_v))}\Big\}.
\end{align*}

We divide the discussion into two cases.

\noindent{\bf Case 1.}  $|\alpha_1|=1$ (then $|\alpha_2|\leq 2$).
Applying \eqref{2.1.2} with $m=-s$. We have
\begin{align*}
&\Big\|Q\Big(|D_{x}|^\lambda\partial^{\alpha_1} f_2,\,\,
\partial^{\alpha_2} f_1\Big)\Big\|_{L^2(\RR^4_{t, x}; H^{-s}_{\ell}(\RR^3_v))}
\\
&\leq C\|\,\Lambda_{x}^{\lambda}
\partial^{\alpha_1} f_2\|_{L^2(\RR_t;\, (L^\infty(\RR^3_{x};
L^1_{\ell +2s}(\RR^3_v)))}\|\,\Lambda_v^{s}
\partial^{\alpha_2} f_1\|_{L^\infty(\RR_t;\, L^2_{\ell+2s}(\RR^6))}
\\
&\leq C\|\,\Lambda_{x}^{\lambda}f_2\|_{H^{1+3/2+
\epsilon}_{\ell+3/2+\epsilon+2s}(\RR^7)}\|\Lambda_v^s
f_1\|_{H^{2+1/2+\epsilon}_{\ell +2s}(\RR^7)},
\\
&\Big\|Q\Big(\partial^{\alpha_1} f_2,\,\, |D_{ x}|^\lambda
\partial^{\alpha_2} f_1\Big)\Big\|_{L^2(\RR^4_{t, x};
H^{-s}_{\ell}(\RR^3_v))}\\
&\leq C\|
\partial^{\alpha_1} f_2\|_{L^\infty(\RR^4_{t, x};
L^1_{\ell +2s}(\RR^3_v))}\|\,\Lambda_v^{s} \Lambda_{x}^{\lambda}
\partial^{\alpha_2} f_1\|_{L^2_{\ell+2s}(\RR^7)}
\\
&\leq C\|f_2\|_{L^\infty([0,
T];\,H^{1+3/2+\epsilon}_{\ell+3/2+\epsilon+2s}(\RR^6))}\|\Lambda_v^s
f_1\|_{H^{3}_{\ell +2s}(\RR^7)},
\end{align*}
and
\begin{eqnarray*}
&&\Big\|\int_{\RR^3}h^{-3-\lambda}Q\Big(\partial^{\alpha_1}
(f_2)_h,\,\,
\partial^{\alpha_2} (f_1)_h\Big) dh\Big\|_{L^2(\RR^4_{t, x};
H^{-s}_{\ell}(\RR^3_v))}\\
&&\leq C\int |h|^{-3-\lambda}\|\,\partial^{\alpha_1}
(f_2)_h\|_{L^\infty(\RR^4_{t, x};\, L^1_{\ell
+2s}(\RR^3_v))}\|\,\Lambda_v^{s}
\partial^{\alpha_2} (f_1)_h\|_{L^2_{\ell+2s}(\RR^7)}dh\\
&&\leq C\|\,\partial^{\alpha_1} f_2\|_{L^\infty(\RR^4_{t, x};
L^1_{\ell+2s}(\RR^3_v))}\|\,\Lambda_v^{s}
\partial^{\alpha_2} \nabla_{x}f_1\|_{L^2_{\ell +2s}(\RR^7)}
\\
&&\leq C\|f_2\|_{L^\infty([0,
T];\,H^{1+3/2+\epsilon}_{\ell+3/2+\epsilon+2s}(\RR^6))}\|\Lambda_v^s
f_1\|_{H^{3}_{\ell +2s}(\RR^7)}\,.
\end{eqnarray*}

\noindent{\bf Case 2.}  $|\alpha_1|\geq 2$. By the same argument as
above, one has
\begin{eqnarray*}
&&\Big\|Q\Big(|D_{x}|^\lambda\partial^{\alpha_1} f_2,\,\,
\partial^{\alpha_2} f_1\Big)\Big\|_{L^2(\RR^4_{t, x}; H^{-s}_{\ell}(\RR^3_v))}
+\Big\|Q\Big(\partial^{\alpha_1} f_2,\,\, |D_{x}|^\lambda
\partial^{\alpha_2} f_1\Big)\Big\|_{L^2(\RR^4_{t, x};
H^{-s}_{\ell}(\RR^3_v))}\\
&&\leq C\|f_2\|_{L^\infty([0,
T];\,H^{1+3/2+\epsilon}_{\ell+3/2+\epsilon+2s}(\RR^6))}\Big(\|\Lambda_v^s
f_1\|_{H^{3}_{\ell +2s}(\RR^7)}+\|\Lambda_x^\lambda
f_1\|_{H^{3}_{\ell +2s}(\RR^7)}\Big)\,.
\end{eqnarray*}
When $|\alpha_1|= 2$, we have
\begin{align*}
&\Big\|\int_{\RR^3}h^{-3-\lambda}Q\Big(\partial^{\alpha_1}
(f_2)_h,\,\,
\partial^{\alpha_2} (f_1)_h\Big) dh\Big\|_{L^2(\RR^4_{t, x};\,
H^{-s}_{\ell}(\RR^3_v))}\\
&\leq C\int |h|^{-3-\lambda}\|\,\partial^{\alpha_1}
(f_2)_h\|_{L^\infty(\RR_t;\, (L^2(\RR^3_{x};\, L^1_{\ell
+2s}(\RR^3_v)))}\|\,\Lambda_v^{s}
\partial^{\alpha_2} (f_1)_h\|_{L^2(\RR_t;\, (L^\infty(\RR^3_{x};\,
L^2_{\ell +2s}(\RR^3_v)))}dh\\
&\leq C\|\,\nabla_{x}\partial^{\alpha_1} f_2\|_{L^\infty(\RR_t;\,
(L^2(\RR^3_{x};\, L^1_{\ell +2s}(\RR^3_v)))}\|\,\Lambda_v^{s}
\partial^{\alpha_2} f_1\|_{L^2(\RR_t;\, (L^\infty(\RR^3_{x};\, L^2_{\ell +2s}(\RR^3_v)))}
\\
&\leq C\|f_2\|_{L^\infty([0, T];\, H^{3}_{\ell+3/2+\epsilon
+2s}(\RR^6))} \|\,\Lambda_v^{s}f_1\|_{H^{3}_{\ell +2s}(\RR^7)},
\end{align*}
while when $|\alpha_1|= |\alpha|=3$, we have
\begin{eqnarray*}
&&\Big\|\int_{\RR^3}h^{-3-\lambda}Q\Big(\partial^{\alpha}
(f_2)_h,\,\,
(f_1)_h\Big) dh\Big\|_{L^2(\RR^4_{t, x}; H^{-s}_{\ell}(\RR^3_v))}\\
&&\leq C\int |h|^{-3-\lambda}\|\,\partial^{\alpha}
(f_2)_h\|_{L^2(\RR^4_{t, x}; L^1_{\ell
+2s}(\RR^3_v))}\|\,\Lambda_v^{s}
(f_1)_h\|_{L^\infty(\RR^4_{t, x};\, L^2_{\ell+2s}(\RR^3_v))}dh\\
&&\leq C\|\,\partial^{\alpha} f_2\|_{L^2(\RR^4_{t, x};\,
L^1_{\ell+2s}(\RR^3_v))}\|\,\Lambda_v^{s} \nabla_{x}
f_1\|_{L^\infty(\RR^4_{t, x};\, L^2_{\ell+2s}(\RR^3_v))}
\\
&&\leq C\|f_2\|^2_{L^\infty([0, T];\,
H^3_{\ell+3/2+\epsilon+2s}(\RR^6))}.
\end{eqnarray*}
Thus, by the Cauchy-Schwarz inequality, we obtain
\begin{align*}
{\rm III}\leq\delta\|\Lambda^{s}_v \psi_1(x)|D_{ x}|^{\lambda}
W^\ell\,g\|^2_{L^2(\RR^7)}+ C_\delta \|f_2\|^2_{L^\infty([0,
T];\,H^{3}_{\ell+3}(\RR^6))}\Big(\|\Lambda_v^s f_1\|^2_{H^{3}_{\ell
+2s}(\RR^7)}+\|\Lambda_x^\lambda f_1\|^2_{H^{3}_{\ell
+2s}(\RR^7)}\Big).
\end{align*}
Finally, we get
$$
\|\Lambda^{s}_v \psi_1(x)|D_{x}|^{\lambda}
W^\ell\,g\|^2_{L^2(\RR^7)}\leq C_\delta \|f_2\|^2_{L^\infty([0,
T];\,H^{3}_{\ell+3}(\RR^6))}\Big(\|\Lambda_v^s f_1\|^2_{H^{3}_{\ell
+2s}(\RR^7)}+\|\Lambda_x^\lambda f_1\|^2_{H^{3}_{\ell
+2s}(\RR^7)}\Big),
$$
and the proof of the proposition is completed.
\end{proof}

\smallbreak We are now ready to show that the gain of at least order $1$
 regularity
in the variable $x$.
\begin{prop}\label{prop5.4.3}
Under the hypothesis of Theorem \ref{theo4.5.1}, one has
\begin{equation}\label{5.5.1}
\Lambda^{1+\varepsilon}_{x}\, (\varphi(t)\psi(x) f)\in H^3_\ell
(\RR^7),
\end{equation}
for any $\ell  \in\NN$ and some $\varepsilon>0$.
\end{prop}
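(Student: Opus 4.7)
The strategy is to iterate the hypo-elliptic estimate of Lemma \ref{lemm5.2.1+0} together with the gain-of-velocity-regularity mechanism used in Propositions \ref{prop5.3.1} and \ref{prop5.4.2}, bootstrapping the exponent of $x$-regularity by an increment of $s_0 = s(1-s)/(s+1)$ at each step. Since $f_1 \in H^3_\ell(\RR^7)$ for all $\ell \in \NN$ (by Remark \ref{rema4.4} and the assumption of Theorem \ref{theo4.5.1}), we may absorb the weight loss $s s'/(s+1)$ incurred at each application of Lemma \ref{lemm5.2.1+0} by starting from a sufficiently large base weight; this is the standard trade-off in this scheme.

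The plan is as follows. First, I would record the starting data: Proposition \ref{prop5.3.1} yields $\Lambda^s_v f_1 \in L^2(\RR_t; H^3_\ell(\RR^6))$, Proposition \ref{prop5.4.1} yields $\Lambda^{s_0}_x f_1 \in L^2(\RR_t; H^3_\ell(\RR^6))$, and Proposition \ref{prop5.4.2} with $\lambda = s_0$ yields the mixed gain $\Lambda^s_v \Lambda^{s_0}_x f_1 \in L^2(\RR_t; H^3_\ell(\RR^6))$ for every $\ell \in \NN$. Next, I would apply $\Lambda^{s_0}_x$ to the localized equation \eqref{5.3.1} and, using the fractional Leibniz-type identity \eqref{leibniz-x}, write
\[
\partial_t \Lambda^{s_0}_x g + v\cdot \nabla_x \Lambda^{s_0}_x g = Q(f_2, \Lambda^{s_0}_x g) + \tilde{G},
\]
where $\tilde G$ collects $\Lambda^{s_0}_x G$, the commutator $Q(\Lambda^{s_0}_x f_2, g)$, and the cross integral $\int |h|^{-3-s_0} Q((f_2)_h, g_h)\,dh$. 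Using Theorem \ref{theo2.1} with $m=-s$ (as was done in the proof of Proposition \ref{prop5.4.2}) together with the uniform control of $f_2$ in $H^3_\ell$ for all $\ell$, I would verify $\tilde G \in L^2(\RR^4_{t,x}; H^{-s}_\ell(\RR^3_v))$ and $Q(f_2, \Lambda^{s_0}_x g) \in L^2(\RR^4_{t,x}; H^{-s}_\ell(\RR^3_v))$. This places $\partial_t \Lambda^{s_0}_x g + v\cdot\nabla_x \Lambda^{s_0}_x g$ in $H^{-s}$, which, combined with the already established $\Lambda^s_v \Lambda^{s_0}_x f_1 \in L^2$, permits a second application of Lemma \ref{lemm5.2.1+0} (with $s'=s$) to conclude $\Lambda^{2s_0}_x f_1 \in L^2_{\ell - s^2/(s+1)}(\RR^4_{t,x}; H^3(\RR^3_v))$.

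The iteration then continues: at the $k$-th step, I would first upgrade Proposition \ref{prop5.4.2} to obtain $\Lambda^s_v \Lambda^{k s_0}_x f_1 \in L^2 H^3_\ell$ (repeating the energy argument of that proposition, where now $|D_x|^\lambda$ is replaced by $|D_x|^{k s_0}$ and \eqref{leibniz-x} is applied to split the commutator), and then run the hypo-elliptic estimate once more to gain an additional $s_0$ in $x$. After a finite number $k_0$ of iterations with $k_0 s_0 > 1$, setting $\varepsilon = k_0 s_0 - 1 > 0$, I obtain $\Lambda^{1+\varepsilon}_x f_1 \in L^2(\RR_t; H^3_{\ell'}(\RR^6))$ for some $\ell'$, and then choosing the initial weight large enough (which is allowed since $f_1 \in H^3_\ell$ for every $\ell$) delivers the claim with the prescribed $\ell$. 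To conclude the statement in the full $H^3_\ell(\RR^7)$ scale (i.e.~including $t$-regularity), I would use the equation \eqref{5.3.1} and Remark \ref{rema4.4} to trade a $t$-derivative for $v\cdot\nabla_x$ plus the collision source already controlled above.

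The principal technical obstacle is the bootstrap of Proposition \ref{prop5.4.2} past the first iteration: at step $k\geq 2$ the fractional Leibniz identity \eqref{leibniz-x} must be iterated so that $|D_x|^{k s_0}$ can be distributed across $Q(f_2,g)$ without creating a term whose $x$-regularity exceeds $3$ (recall $f_2$ is only in $H^3$). This will force a careful allocation of derivatives between the two factors via Sobolev embedding $H^{3/2+\delta}_x \hookrightarrow L^\infty_x$, with each $\alpha_1,\alpha_2$-split treated as in Cases 1 and 2 of Propositions \ref{prop5.3.1} and \ref{prop5.4.2}. The other delicate point is a bookkeeping one: tracking the cumulative weight loss $k s^2/(s+1)$ and matching it against the arbitrarily large initial $\ell$; since $k_0$ is a fixed finite integer determined only by $s$, this is harmless but must be stated explicitly.
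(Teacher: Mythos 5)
Your proposal matches the paper's argument exactly: iterate Proposition \ref{prop5.4.2} with $\lambda = k s_0$ together with the hypo-elliptic Lemma \ref{lemm5.2.1+0} (taking $s'=s$) to gain $s_0$ in $x$-regularity at each step, stopping once $(k_0+1)s_0 > 1$. The only quibble is that the ``principal technical obstacle'' you flag is illusory: Proposition \ref{prop5.4.2} is already stated and proved for every $\lambda \in (0,1)$, so at step $k$ one simply invokes it with $\lambda = k s_0$ and applies the Leibniz identity \eqref{leibniz-x} once at that exponent — no iteration of \eqref{leibniz-x} and no re-derivation of the energy estimate is needed — while the cumulative weight loss $k_0 s^2/(s+1)$ over a fixed finite number of steps is absorbed by starting from a sufficiently large $\ell$, just as you observe.
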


\begin{proof}  By fixing $s_0=\frac{s(1-s)}{(s+1)}$, then
\eqref{5.3.12} and Proposition \ref{prop5.4.2} with $\lambda=s_0$
imply
$$
\Lambda^{s}_{v}\,\Lambda^{s_0}_{x} g\in H^{3}_\ell(\RR^7).
$$
It follows that,
\begin{equation*}\label{4.1+17}
 (\Lambda^{s_0}_{x} g)_t + v\,\cdot\,\partial_x {(\Lambda^{s_0}_{x} g)}
  =\Lambda^{s_0}_{x} Q(f_2,\,\, g)+ \Lambda^{s_0}_{x}  G\in
H^{-s}_\ell(\RR^7).
\end{equation*}
By applying Lemma \ref{lemm5.2.1+0} with $s'=s$, we can deduce that
\begin{equation*}
\Lambda^{s_0+s_0}_{x} (\varphi(t)\psi(x) f)\in H^3_\ell  (\RR^7),
\end{equation*}
for any $\ell  \in\NN$. If $2s_0<1$, by using again Proposition
\ref{prop5.4.2} with $\lambda=2s_0$ and Lemma \ref{lemm5.2.1+0} with
$s'=s$, we have
$$
\Lambda^{s}_{v} (\varphi(t)\psi(x) f),\,\,\Lambda^{2s_0}_{x}
(\varphi(t)\psi(x) f)\in H^3_\ell (\RR^7)\,\,\Rightarrow\,
\Lambda^{3s_0}_{x} (\varphi(t)\psi(x) f)\in H^3_\ell (\RR^7).
$$
Choose $k_0\in\NN$ such that
$$
k_0 s_0<1,\,\,\,\,\,\,\,\,\, (k_0+1) s_0=1+\varepsilon>1.
$$
Finally, (\ref{5.5.1}) follows {}from Proposition \ref{prop5.4.2} with
$\lambda=k_0s_0$ by induction. And
this completes the proof of the proposition.
\end{proof}

\subsection{Higher order regularity}\label{section6.4}
\setcounter{equation}{0}
\smallskip
The proof of Theorem \ref{theo5.3.1} will now be concluded with
the above preparation.

{}From Proposition \ref{prop5.3.1},
Proposition \ref{prop5.4.3} and equation \eqref{1.1}, it follows
that for any $\ell\in\NN$,
\begin{equation*}
\Lambda^{s}_{v}\, (\varphi(t)\psi(x) f),\,\,\,\,\,\,\, \nabla_{x} \,
(\varphi(t)\psi(x) f) \in H^3_\ell(\RR^7).
\end{equation*}

This fact will be used to show higher order regularity in
the variable $v$ by using the following

\begin{prop}\label{prop6.5.1}
Let $0<\lambda< 1$. Assume that, for any cutoff functions
$\varphi\in C^\infty_0(]T_1, T_2[), \psi\in C^\infty_0(\Omega)$ and
all $\ell\in\NN$,
\begin{equation*}
\Lambda^{\lambda}_{v}\, (\varphi(t)\psi(x) f),\,\,\,\,\,
\nabla_{x}\, (\varphi(t)\psi(x) f) \in H^3_\ell  (\RR^7).
\end{equation*}
Then, for any cutoff function and any $\ell\in\NN$,
\begin{equation*}
\Lambda^{\lambda+s}_v\, (\varphi(t)\psi(x) f)\in H^3_\ell(\RR^7).
\end{equation*}
\end{prop}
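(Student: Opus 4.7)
The plan is to follow the template of Proposition 5.4.2 but with the fractional derivative $\Lambda_v^\lambda$ replacing $|D_x|^\lambda$, exploiting the coercivity (\ref{2.1.1}) to convert $\Lambda_v^\lambda$ into $\Lambda_v^{\lambda+s}$. For $\alpha\in\NN^6$ with $|\alpha|\leq 3$, set $g=\partial^\alpha(\varphi(t)\psi(x)f)$, which still satisfies (\ref{5.3.1}). Testing this equation against $\psi_2(x)^2\, W^\ell\,\Lambda_v^{2\lambda}\,W^\ell g$ and integrating by parts so that one $\Lambda_v^\lambda$ falls on each side, I would rewrite the collision term as
\[
\bigl(\Lambda_v^\lambda\,W^\ell Q(f_2,g),\,\psi_2^2\,\Lambda_v^\lambda W^\ell g\bigr)_{L^2(\RR^7)}
=\bigl(Q(f_2,\,\psi_2\,\Lambda_v^\lambda W^\ell g),\,\psi_2\,\Lambda_v^\lambda W^\ell g\bigr)+\text{commutators},
\]
so that the local positive lower bound (\ref{5.3.1+0}) and the coercivity estimate give a dissipation $c_0\,\|\Lambda_v^s\,\psi_2\,\Lambda_v^\lambda W^\ell g\|_{L^2(\RR^7)}^2$ on the left-hand side. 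This is exactly the quantity that, after passing to smaller cutoffs, delivers $\Lambda_v^{\lambda+s}(\varphi\psi f)\in H^3_\ell(\RR^7)$.

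Next I would handle the transport term, which is where the hypothesis on $\nabla_x f$ enters. The commutator
\[
[\Lambda_v^\lambda,\,v\cdot\nabla_x]=[\Lambda_v^\lambda,v]\cdot\nabla_x
\]
is a pseudo-differential operator of order $\lambda-1<0$ in $v$ composed with a first order $x$-derivative, and therefore bounded in $L^2_\ell$ by $\|\nabla_x(\varphi\psi f)\|_{H^3_\ell(\RR^7)}$, which is finite by assumption. The $\p_t$ contribution integrates to zero over the support of $\varphi$, and the $(v\cdot\nabla_x\psi_2)$ term is of lower order and absorbed into $\|\Lambda_v^\lambda W^\ell g\|_{L^2_{\ell+1}}$.

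The main obstacle is the genuinely nontrivial commutator $[\Lambda_v^\lambda,\,Q(f_2,\cdot)]$: in contrast to Proposition 5.4.2, where $|D_x|^\lambda$ only interacted with $f$ as a parameter and the Leibniz formula (\ref{leibniz-x}) was available in closed form, here $\Lambda_v^\lambda$ acts in the same variable as the collision integral. My plan is to use an analogue of (\ref{leibniz-x}) in the $v$-variable, namely the representation
\[
\Lambda_v^\lambda Q(f_2,g)=Q(f_2,\Lambda_v^\lambda g)+R(f_2,g),
\]
where the remainder $R$ is controlled by the pseudo-differential commutator calculus of Section 3 together with the upper bound (\ref{2.1.2}) applied with $m=-s$; this produces an estimate of the form
\[
\bigl|(R(f_2,g),\,\psi_2^2\,\Lambda_v^\lambda W^\ell g)\bigr|\leq \delta\,\|\Lambda_v^s\psi_2\Lambda_v^\lambda W^\ell g\|_{L^2}^2+C_\delta\,\|f_2\|_{L^\infty H^3_{\ell+3}}^2\,\|\Lambda_v^\lambda (\varphi\psi f)\|_{H^3_{\ell+2s}}^2,
\]
the first term being absorbed into the dissipation. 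The weight commutator $[W^\ell,Q(f_2,\cdot)]$ is handled exactly by Lemma \ref{lemm3.2.1}.

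Finally, the source term $G$ from (\ref{5.3.2}) splits into its linear piece $B+C$, which is bounded directly by $\|\Lambda_v^\lambda\nabla_{x,v}(\varphi_3\psi_3 f)\|_{L^2_{\ell+1}}$ after localization, and the nonlinear piece $A=\sum Q(\partial^{\alpha_1}f_2,\partial^{\alpha_2}f_1)$. For $A$ I would again apply the $v$-Leibniz formula for $\Lambda_v^\lambda$ and then Theorem \ref{theo2.1} with $m=-s$, distributing derivatives in exactly the two cases $|\alpha_1|=1$ and $|\alpha_1|\geq 2$ treated in Proposition \ref{prop5.4.2}; in each case Sobolev embedding in $x$ places one factor in $L^\infty_{t,x}L^1_{\ell+2s}$ and the other in $L^2_{t,x}H^s_{\ell+2s}$, reducing everything to the already controlled norms $\|f_2\|_{L^\infty H^3_{\ell+3}}$, $\|\Lambda_v^s(\varphi_3\psi_3 f)\|_{H^3_{\ell+2s}}$ and $\|\Lambda_v^\lambda(\varphi_3\psi_3 f)\|_{H^3_{\ell+2s}}$. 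Collecting all terms, choosing $\delta$ small, and varying the cutoff pair $(\varphi,\psi)\subset\subset(\varphi_2,\psi_2)\subset\subset(\varphi_3,\psi_3)$ yields
\[
\|\Lambda_v^s\,\psi\,\Lambda_v^\lambda\,W^\ell\,\partial^\alpha(\varphi\psi f)\|_{L^2(\RR^7)}^2\leq C_{\varphi_3,\psi_3,\ell},
\]
from which the conclusion $\Lambda_v^{\lambda+s}(\varphi\psi f)\in H^3_\ell(\RR^7)$ follows.
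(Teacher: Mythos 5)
Your proposal follows essentially the same route as the paper: test \eqref{5.3.1} against $W^\ell\Lambda_v^{2\lambda}W^\ell g$ (the extra $\psi_2^2$ you insert is harmless but unnecessary here, because $\Lambda_v^\lambda$ is local in $x$, unlike $|D_x|^\lambda$ in Proposition~\ref{prop5.4.2}), isolate the transport commutator
$[\Lambda_v^\lambda,v]\cdot\nabla_x=\lambda\Lambda_v^{\lambda-2}\partial_v\cdot\nabla_x$
as the genuinely new term that requires the hypothesis $\nabla_x(\varphi\psi f)\in H^3_\ell(\RR^7)$, and otherwise repeat Proposition~\ref{prop5.4.2}. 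This is precisely what the paper does; its own proof is equally terse and simply says to proceed as in Proposition~\ref{prop5.4.2} modulo the convection commutator.

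One point deserves correction, since you flag it as the main obstacle. You attribute the representation
$\Lambda_v^\lambda Q(f_2,g)=Q(f_2,\Lambda_v^\lambda g)+R(f_2,g)$
to ``the pseudo-differential commutator calculus of Section~3,'' but Section~3 contains no such result. The correct justification is that $Q$ is translation-invariant in $v$: $Q(\tau_h f,\tau_h g)=\tau_h Q(f,g)$, where $\tau_h$ is translation by $h$ in the velocity variable. Writing $|D_v|^\lambda$ as its singular-integral kernel in $h$ and using $Q(f,g)-\tau_hQ(f,g)=Q(f_h,g)+Q(f,g_h)-Q(f_h,g_h)$ (with $f_h=f-\tau_h f$) gives exactly the $v$-variable analogue of \eqref{leibniz-x}:
\begin{equation*}
|D_v|^\lambda Q(f,g)=Q(|D_v|^\lambda f,g)+Q(f,|D_v|^\lambda g)+\tilde C_\lambda\int_{\RR^3}|h|^{-3-\lambda}Q(f_h,g_h)\,dh,
\end{equation*}
after which the three pieces are estimated by Theorem~\ref{theo2.1} with $m=-s$ exactly as in Proposition~\ref{prop5.4.2}. (One then passes from $|D_v|^\lambda$ to $\Lambda_v^\lambda$ by the usual low-frequency correction.) This algebraic point, rather than any symbolic calculus, is what makes the reduction to the Proposition~\ref{prop5.4.2} template legitimate; the rest of your plan is sound.
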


For the proof, we choose $W^{\ell}\,\Lambda^{2 \, \lambda}_v \, W^{\ell}\,g $ as a
test function for (\ref{5.3.1}), and then proceed as in the proof of Proposition \ref{prop5.4.2}. The only difference is in the
estimation on the  commutator with the convection:
$$
\Big|\Big( \big[\Lambda^{\lambda}_v, \,\, v\big]\,\cdot\,\partial_x
\, W^\ell\,g ,\,\,  \Lambda^{\lambda}_v W^\ell\,g\Big)_{L^2(\RR^7)}
\Big| \leq C
\|\Lambda^{\lambda}_{v}\,g\|_{L^2_\ell(\RR^7)}\|\nabla_x
\,g\|_{L^2_\ell(\RR^7)} ,
$$
since
$$
\big[\Lambda^{\lambda}_v, \,\, v\big]\,\cdot\,\partial_x =\lambda
\Lambda^{\lambda-2}_v \, \partial_v\,\cdot\,\partial_x,
$$
and $\Lambda^{\lambda-2}_v \, \partial_v$ are bounded operators in
$L^2$. This is the reason why we need in the first
step the gain of the regularity of order $1$ in the variable $x$.

To complete the proof of the full
regularization result, firstly, exactly as Proposition \ref{5.5.1}, we can get
\begin{equation*}
\Lambda^{1+\varepsilon}_{v} (\varphi(t)\psi(x) f)\in H^3_\ell
(\RR^7),
\end{equation*}
for any $\ell  \in\NN$ and some $\varepsilon>0$.

Therefore, we obtain that there exists $\epsilon>0$ such
that for any $\ell\in\NN$, and any cutoff functions $\varphi(t)$ and
$\psi(x)$,
$$
\varphi(t)\psi(x) f\in H^{4+\epsilon}_l(\RR^7).
$$
Notice that the estimate in Proposition \ref{prop5.4.2} can be
modified as follows. In fact, we can obtain
\begin{align*}
&\|\Lambda^{s}_v \Lambda^{\lambda}_{x}  f_1 \|_{
L^2(\RR_t;\, H^4_\ell  (\RR^6))}\leq C \| f_2 \|_{{H^{4+\epsilon}_{\ell+3}(\RR^7)}}\\
&\qquad\times\Big(\|\Lambda^{s}_v f_1 \|_{
H^4_{\ell+2s}(\RR^7)}+\|\Lambda^{\lambda}_{x}  f_1
\|_{ H^4_{\ell+2s}(\RR^7)}\Big)\, .
\end{align*}
By using this, the proof of
$$
f\in \cH^{4+\epsilon}_\ell(]T_1,
T_2[\times\Omega\times\RR^3_v),\,\,\forall
\ell\in\NN\,\,\Longrightarrow \,\, f\in \cH^{5}_\ell(]T_1,
T_2[\times\Omega\times\RR^3_v),\,\,\forall \ell\in\NN,
$$
 is direct and this completes  the proof of Theorem
\ref{theo5.3.1} by the bootstrapping argument.

\section{Global existence}\label{section7}
\smallbreak

We shall establish a global energy estimate for the Cauchy problem
\eqref{cauchy-problem}. For ease of exposition, and unless necessary, generic constants will be dropped out {}from the estimates in this section. Finally, all in all, we shall follow and adapt the method initiated by Guo \cite{guo-1} on the estimation on the
macroscopic components. Here we point out
that his method works also for the non-cutoff case 
but only when the estimations of the nonlinear and related terms are carried out
in terms of the non-isotorpic norms (2.2.1). We also note that his  method 
has been generalized to various  directions. Among them, a few are 
  the external force case \cite{duan,duan-ukai-yang}, the Vlassov-Maxwell-Boltzmann equation
\cite{strain}, the soft potential case \cite{strain-guo,h-yu} and the Landau equation \cite{guo,h-yu}.
Independently of his method which is based on the macro-micro decomposition near a global Maxwellian,
the energy method based on the macro-micro decomposition around a local Maxwellian
has
also been developed with application to the classical fluid dynamical equations 
\cite{liu-1,liu-2,liu-yang-yu-zhao}. Further references are found in 
these paper.

Introduce the macro-micro  decomposition near the absolute Maxwellian $\mu$:
\begin{align*}
g&=\pP g+(\iI-{\pP})g=g_1+g_2,
\quad
\pP g =g_1= \big(a+ b\cdot v+ c| v|^2 \big) \mu^{1/2},
\end{align*}

In this section, the following result on the energy estimate will be proved.
\begin{theo}\label{aprioriestimate}
For $N,\ell\ge 3$, let $T>0$ and suppose that
$g$ is a classical solution to the Cauchy problem \eqref{cauchy-problem} on $[0, T]$.
There exist constants $M_0, M_1>0$ such that
if
\[
\max_{0\le t\le T}\mathcal{E}(t)\le M_0,
\]
then  $g$ enjoys  the estimate
\[
\mathcal{E}(t)+
\int_0^t\mathcal{D}(\tau)d\tau\le M_1 \mathcal{E}(0),
\]
for any $t\in[0,T]$, where
\begin{equation*}
\mathcal{E}=\| g\|^2_{H^N_\ell(\RR^6)}\sim\|(a,b,c)\|_{H^N(\RR^3_x)}^2
+\| g_2\|^2_{H^N_\ell(\RR^6)},
\end{equation*}
is the  instant energy functional, and
\[
\mathcal{D}=\|\nabla_x(a,b,c)\|_{H^{N-1}(\RR^3_x)}^2
+||| g_2|||^2_{\cB^N_\ell(\RR^6)}
\]
the total dissipation rate.
\end{theo}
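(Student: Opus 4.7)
The plan is to combine a high--order weighted energy estimate for the full equation with a separate macroscopic dissipation estimate coming from the local conservation laws, then close the inequality by smallness of $\mathcal{E}$.

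\textbf{Step 1 (microscopic dissipation via energy method).}
First I would apply $W^\ell\partial^\alpha_{x,v}$ to \eqref{cauchy-problem} for every multi--index $|\alpha|\leq N$ and take the $L^2(\RR^6)$ inner product with $W^\ell\partial^\alpha g$. The transport term vanishes, and using Lemma \ref{lemm2.1.1}, the coercivity/equivalence in Lemma \ref{lemm2.2.3} and \eqref{3.1.0+1} applied fibre--wise in $x$, the linear part produces $c_0|||(\iI-\pP)(W^\ell\partial^\alpha g)|||_{\cB^0_0}^2$ up to the kernel $\mathcal N$. The commutator terms $[W^\ell\partial^\alpha,\cL_1]$ and $W^\ell\partial^\alpha\cL_2$ are absorbed by Proposition \ref{prop3.2.4}, and the nonlinear contribution $(W^\ell\partial^\alpha\Gamma(g,g),W^\ell\partial^\alpha g)$ by Proposition \ref{prop3.2.3} (and Proposition \ref{cor3.2.3} for low $|\alpha|$), yielding schematically
\begin{equation*}
\tfrac{d}{dt}\|g\|_{H^N_\ell}^2+c_0|||g_2|||^2_{\cB^N_\ell}
\leq C\sqrt{\mathcal E}\,|||g|||^2_{\cB^N_\ell}+C\|g\|^2_{H^N_\ell},
\end{equation*}
after the linear combination trick already used in \eqref{4.0.5}. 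The obvious deficit is that $\cL$ controls only the microscopic part; the estimate does not yet see $\|\nabla_x(a,b,c)\|_{H^{N-1}}^2$.

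\textbf{Step 2 (macroscopic equations).}
Next I would extract the local conservation laws by testing the equation against the basis $\{\sqrt\mu,v_j\sqrt\mu,|v|^2\sqrt\mu\}$ of $\mathcal N$. Writing $\pP g=(a+b\cdot v+c|v|^2)\sqrt\mu$, using $\pP\cL g=0$, $\pP\Gamma(g,g)=0$, and splitting $v\cdot\nabla_x g=v\cdot\nabla_x g_1+v\cdot\nabla_x g_2$, this gives a closed evolution system
\begin{equation*}
\partial_t a+\nabla_x\!\cdot\! b=0,\qquad
\partial_t b+\nabla_x(a+2c)=-\nabla_x\!\cdot\!\Lambda(g_2),\qquad
\partial_t(3c)+\nabla_x\!\cdot\! b=-\nabla_x\!\cdot\!\Theta(g_2),
\end{equation*}
where $\Lambda,\Theta$ are quadratic velocity moments of $g_2$ against $\sqrt\mu$ and are therefore bounded by $|||g_2|||_{\cB^0_0}$ (and higher Sobolev analogues after $\partial^\alpha_x$).

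\textbf{Step 3 (macroscopic dissipation via interaction functional).}
Following Guo's method, for each $|\alpha|\leq N-1$ I would introduce auxiliary elliptic potentials $\Phi^{(a)},\Phi^{(b)}_j,\Phi^{(c)}$ solving Poisson--type equations $-\Delta_x\Phi^{(\cdot)}=\partial^\alpha(\cdot)$, and build the interaction functional
\begin{equation*}
\mathcal I(t)=\sum_{|\alpha|\leq N-1}\Big\{(\partial^\alpha\nabla_x\Phi^{(a)},\partial^\alpha b)_{L^2_x}
+(\partial^\alpha\nabla_x\Phi^{(c)},\partial^\alpha b)_{L^2_x}
-(\partial^\alpha\nabla_x\Phi^{(b)},\partial^\alpha(a,c))_{L^2_x}\Big\}.
\end{equation*}
Differentiating in $t$ and using the system from Step 2 produces, after the usual integration by parts and elliptic regularity, the dissipation
\begin{equation*}
\tfrac{d}{dt}\mathcal I(t)+c_1\|\nabla_x(a,b,c)\|_{H^{N-1}(\RR^3_x)}^2
\leq C|||g_2|||^2_{\cB^N_\ell}+C\sqrt{\mathcal E}\,\mathcal D,
\end{equation*}
where the nonlinear remainder on the right stems from the fact that after $\partial^\alpha$ there will be terms of the form $\pP\,\partial^\alpha\Gamma(g,g)$ which can be absorbed because the $(a,b,c)$ factors come in contracted with Maxwellian weights and can be controlled by Proposition \ref{prop3.2.3}. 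Note that $\mathcal I(t)$ is a priori dominated by $\mathcal E(t)$ thanks to elliptic boundedness of $\nabla_x\Phi$ by $\partial^\alpha(\cdot)$ in $H^{-1}$.

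\textbf{Step 4 (combining and closing).}
Finally I would form the total Lyapunov functional $\widetilde{\mathcal E}=\mathcal E+\kappa\mathcal I$ for a small constant $\kappa>0$, so that $\widetilde{\mathcal E}\sim\mathcal E$. Adding Step 1 and $\kappa\times$ Step 3 and invoking Lemma \ref{lemm3.1.1} to dominate $\|g_2\|^2_{H^N_\ell}$ by $|||g_2|||^2_{\cB^N_\ell}$, one gets
\begin{equation*}
\tfrac{d}{dt}\widetilde{\mathcal E}+c_2\,\mathcal D
\leq C(\sqrt{\mathcal E}+\mathcal E)\,\mathcal D.
\end{equation*}
Under $\mathcal E\leq M_0$ with $M_0$ so small that $C(\sqrt{M_0}+M_0)\leq c_2/2$, the right side is absorbed, yielding $\frac{d}{dt}\widetilde{\mathcal E}+\tfrac{c_2}{2}\mathcal D\leq 0$; integrating in $t\in[0,T]$ and using $\widetilde{\mathcal E}\sim\mathcal E$ proves the theorem with $M_1=\sup_{[0,T]}\widetilde{\mathcal E}/\mathcal E(0)$ explicitly controlled.

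\textbf{Main obstacle.}
The critical difficulty, in contrast to the cutoff or Landau cases, is Step 1 at the nonlinear level: $\Gamma(g,g)$ cannot be absorbed directly into a plain weighted $L^2_v$ dissipation. It is exactly the point of the non--isotropic norm $|||\cdot|||_{\cB^N_\ell}$ (Section \ref{section2.2}), together with the non--trivial upper bound of Proposition \ref{prop3.2.3}, that provides the right functional framework in which $(\Gamma(g,g),g)_{H^N_\ell}\lesssim\sqrt{\mathcal E}\,\mathcal D$ holds cleanly. A secondary technical point is that $\pP\Gamma(g,g)=0$ and the delicate use of $W^\ell$ commutators at top order $|\alpha|=N$ force the argument to pass through the Leibniz formula \eqref{3.1.2} and the auxiliary $\cT(\cdot,\cdot,\mu_\gamma)$ estimates of Section \ref{section3.2}, which explains the requirement $N,\ell\geq 3$.
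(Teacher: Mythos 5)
Your overall strategy (microscopic energy estimate via the coercivity of $\cL$ in the non-isotropic norm, plus a macroscopic dissipation estimate via an interaction functional, closed by smallness) is the same as the paper's, but two specific steps as you have written them would not go through.

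First, in Step~1 you invoke the linear-combination trick ``already used in \eqref{4.0.5}.'' That estimate has $\|g\|^2_{H^N_\ell(\RR^6)}$ sitting additively on the right-hand side, which was acceptable in Section~\ref{section4} because there one applies Gronwall on a short time interval; here it is fatal, since $\|g\|^2_{H^N_\ell}\sim\mathcal E$ contains $\|(a,b,c)\|^2_{L^2(\RR^3_x)}$, and this quantity is \emph{not} dominated by $\mathcal D=\|\nabla_x(a,b,c)\|^2_{H^{N-1}}+|||g_2|||^2_{\cB^N_\ell}$. The paper avoids this by (a) treating $\p^\alpha_x$ with $1\le|\alpha|\le N$ and $\p^\beta_{x,v}$ with a genuine $v$-derivative and $|\alpha|\le N-1$ separately, so that the macroscopic terms produced by the commutators with $\cL$ and with $W^\ell$ come out as $\|\p^\alpha(a,b,c)\|_{L^2_x}\le\|\nabla_x(a,b,c)\|_{H^{N-1}}$, and (b) handling $|\alpha|=0$ by applying $W^\ell(\iI-\pP)$ to the equation so that only $g_2$ enters, with $\|g_2\|_{L^2_\ell}$ absorbed through the interpolation \eqref{norm}. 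Your schematic bound, by contrast, is off by exactly the low-frequency macroscopic content that the dissipation cannot see.

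Second, and more seriously, Steps~2--3 use only the five conservation laws coming from testing against $\mathcal N=\mathrm{span}\{\sqrt\mu,v_j\sqrt\mu,|v|^2\sqrt\mu\}$. These alone cannot produce the full dissipation $\|\nabla_x(a,b,c)\|^2_{H^{N-1}}$. Linearizing and taking Fourier transform in $x$, the system $\p_t\hat a=-i\xi\cdot\hat b$, $\p_t\hat b=-i\xi(\hat a+\text{const}\cdot\hat c)$, $\p_t\hat c=\text{const}\cdot(i\xi\cdot\hat b)$ leaves the solenoidal part $\hat b_\perp=\hat b-(\hat b\cdot\xi/|\xi|)\xi/|\xi|$ unforced apart from moments of $g_2$, so $\nabla\times b$ is \emph{not} dissipated by this system; no choice of Poisson-type interaction functional built only from $(a,b,c)$ can cure this. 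The paper instead tests against the thirteen-moment basis $\{v_i|v|^2\mu^{1/2},v_i^2\mu^{1/2},v_iv_j\mu^{1/2},v_i\mu^{1/2},\mu^{1/2}\}$, obtaining in particular the off-diagonal relation \eqref{macroeq}(iii) $\p_ib_j+\p_jb_i=-\p_tr_{ij}+l_{ij}+h_{ij}$ and the longitudinal relation \eqref{macroeq}(ii); combined, these give the elliptic identity $\Delta_x\p^\alpha b_i+\p^2_i\p^\alpha b_i=\p_i\p^\alpha(-\p_tr+l+h)$ that yields the full $\nabla_x b$ dissipation after a Korn-type argument, and \eqref{macroeq}(i) gives $\nabla_x c$ directly. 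In short, you must go beyond the collision-invariant moments: the non-conservative moments of $g_2$ are precisely what convert the wave-like macroscopic system into an elliptic one. As a side remark, solving $-\Delta_x\Phi=\p^\alpha(\cdot)$ on $\RR^3_x$ adds an avoidable technical burden (no Poincar\'e inequality, slow decay of $\Phi$); the paper's interaction functional $(\p^\alpha r,\nabla_x\p^\alpha(a,-b,c))_{L^2_x}+(\p^\alpha b,\nabla_x\p^\alpha a)_{L^2_x}$ built from the microscopic moment $r=(g_2,e)_{L^2_v}$ sidesteps this entirely.

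Your Steps~1 and~4, once corrected along the lines above (full $\cL$-coercivity together with the careful $g_1$/$g_2$ splitting at each derivative order; the thirteen-moment macroscopic system in place of only the conservation laws), do recover the paper's argument, and your identification of Proposition~\ref{prop3.2.3} as the key tool to estimate $(\Gamma(g,g),\cdot)$ against $\sqrt{\mathcal E}\,\mathcal D$ is exactly the point.
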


The proof of this theorem is divided into two parts, that is, the estimation
on the macroscopic component and the microscopic component respectively.

\subsection{Macroscopic Energy Estimate}\label{section7.1}
\setcounter{equation}{0}

By the macro-micro decomposition, the equation in
\eqref{cauchy-problem} is reduced to

\begin{align*}
\partial_t\Big\{ a + &b\cdot v+ c| v|^2 \Big\} \mu^{1/2}
+v\cdot\nabla_x\Big\{ a + b\cdot v+|v|^2 c\Big\} \mu^{1/2}
\\&\hspace*{2cm}=
-(\p_t+v\cdot \nabla_x)g_2+\mathcal{L} g_2+\Gamma(g,g).\notag
\end{align*}
Using
$$v\cdot\nabla_x b\cdot v=\sum_{i,j}v_iv_j\partial_i b_j
=\sum_{i}v_i^2 \partial_i b_i+\sum_{i>j}v_iv_j(\partial_i b_j+\partial_jb_i),
$$
we deduce
\begin{equation}\label{macroeq}
\left\{
\begin{array}{lrlrl}
\text{(i)}\quad& v_i | v|^2 \mu^{1/2} :&\quad&\nabla_x c &= -\partial_t r_c+l_c + h_c,
\\
\text{(ii)}\quad &v^2_i \mu^{1/2}:&\quad&\partial_t c +\partial_ib_i &= -\partial_t r_i+l_i + h_i ,
\\
\text{(iii)}\quad& v_iv_j \mu^{1/2}:& &\partial_ib_j + \partial_j b_i &
= -\partial_t r_{ij}+l_{ij} + h_{ij} , \quad i\neq j,
\\
\text{(iv)}\quad& v_i \mu^{1/2} :&&\partial_t b_i + \partial_i a &
= -\partial_t r_{bi}+l_{bi} + h_{bi},
\\
\text{(v)}\quad& \mu^{1/2} :&& \partial_t a &= -\partial_t r_a+l_a + h_a,
\end{array}
\right.
\end{equation}
where
\begin{align*}
&r=(g_2,e)_{L^2_v},\qquad l=-(v\cdot\nabla_x g_2+\mathcal{L} g_2,e)_{L^2_v},
\qquad h=(\Gamma(g,g),e)_{L^2(\RR^3_v)},
\end{align*}
stand for $r_c, \cdots, h_a$, while
\begin{align*}
 e\in \text{span}
\lbrace v_i | v|^2 \mu^{1/2} , v^2_i \mu^{1/2} ,
v_iv_j \mu^{1/2}, v_i \mu^{1/2} , \mu^{1/2} \rbrace.
\end{align*}
\begin{lemm}\label{abc2}
Let $\p^\alpha=\p^\alpha_x$, $\alpha\in\NN^3, |\alpha|\le m, m\ge 3$. Then,
\begin{equation*}
\|\p^\alpha (a,b,c)^2\|_{L^2(\RR^3_x)}\le \|\nabla_x (a,b,c)
\|_{H^{m-1}(\RR^3_{x})}\|(a,b,c)\|_{H^{m-1}(\RR^3_{x})}.
\end{equation*}
\end{lemm}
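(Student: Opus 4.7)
This is a classical Moser-type tame product estimate in $H^m(\RR^3_x)$, specialized so that exactly one factor on the right-hand side carries a full gradient (which is crucial for closing the energy estimate against the dissipation). The plan is to write $(a,b,c)^2$ as a finite sum of scalar products $f^2$ of components of $(a,b,c)$, apply the Leibniz rule
\[
\partial^\alpha (f^2) \;=\; \sum_{\beta\leq\alpha}\binom{\alpha}{\beta}\,\partial^\beta f\cdot\partial^{\alpha-\beta}f,
\]
and bound each term in $L^2$ by H\"older's inequality combined with three-dimensional Sobolev embedding. Since the Leibniz sum is symmetric under $\beta\leftrightarrow\alpha-\beta$, we may restrict attention to $|\beta|\leq|\alpha|/2$. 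The essential bookkeeping is that at least one derivative must always be peeled off from the highest-order factor, so that what ends up on the right is $\|\nabla_x(a,b,c)\|_{H^{m-1}}$ rather than the full $H^m$-norm of $(a,b,c)$.

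\textbf{Main case $|\alpha|\geq 1$.} For the extreme term $|\beta|=0$, I would bound
\[
\|f\,\partial^\alpha f\|_{L^2}\leq\|f\|_{L^\infty}\,\|\partial^\alpha f\|_{L^2};
\]
the embedding $H^{m-1}(\RR^3)\hookrightarrow L^\infty$ (valid since $m\geq 3$) controls the first factor by $\|(a,b,c)\|_{H^{m-1}}$, while the decomposition $\partial^\alpha=\partial_j\partial^{\alpha-e_j}$ with $|\alpha-e_j|\leq m-1$ gives $\|\partial^\alpha f\|_{L^2}\leq\|\nabla_x(a,b,c)\|_{H^{m-1}}$. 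For the interior terms $1\leq|\beta|\leq|\alpha|/2$, I would place $\partial^\beta f$ in $L^\infty$ and $\partial^{\alpha-\beta}f$ in $L^2$: from $m\geq 3$ and $|\beta|\leq m/2$ one checks $|\beta|+1\leq m-1$, hence
\[
\|\partial^\beta f\|_{L^\infty}\leq C\|\partial^\beta f\|_{H^2}\leq C\|\nabla_x f\|_{H^{|\beta|+1}}\leq C\|\nabla_x(a,b,c)\|_{H^{m-1}},
\]
while $|\alpha-\beta|\leq m-1$ yields $\|\partial^{\alpha-\beta}f\|_{L^2}\leq\|(a,b,c)\|_{H^{m-1}}$.

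\textbf{Edge case $|\alpha|=0$ and the subtle point.} Here a gradient must be produced purely by Sobolev embedding in three space dimensions: I would combine $\|f^2\|_{L^2}\leq\|f\|_{L^\infty}\|f\|_{L^2}$ with the Gagliardo--Nirenberg inequality
\[
\|u\|_{L^\infty(\RR^3)}\leq C\|\nabla u\|_{L^2}^{1/2}\|\nabla^2 u\|_{L^2}^{1/2}\leq C\|\nabla u\|_{H^1}\leq C\|\nabla_x(a,b,c)\|_{H^{m-1}}
\]
(valid because $m-1\geq 2$) together with the trivial $\|f\|_{L^2}\leq\|(a,b,c)\|_{H^{m-1}}$. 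The only delicate point---and really the main (minor) obstacle---is that the hypothesis $m\geq 3$ is tight in three dimensions: the embedding $H^{m-1}\hookrightarrow L^\infty$ and the bound $|\beta|+1\leq m-1$ both saturate at $m=3$, reflecting the critical Sobolev index $n/2=3/2$, so that any weakening of $m\geq 3$ would require a more delicate Kato--Ponce or intermediate $L^p\cdot L^q$ interpolation argument.
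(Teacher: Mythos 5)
Your proof is correct and follows essentially the same route as the paper: split by Leibniz, exploit the symmetry to restrict to $|\beta|\le|\alpha|/2$, place the low-order factor in $L^\infty$ via Sobolev embedding and the high-order factor in $L^2$, with the key bookkeeping that one full derivative always lands on the factor estimated in $\|\nabla_x(a,b,c)\|_{H^{m-1}}$. The only cosmetic difference is the $|\alpha|=0$ case, where the paper uses the H\"older split $\|fg\|_{L^2}\le\|f\|_{L^6}\|g\|_{L^3}$ together with $\dot H^1(\RR^3)\hookrightarrow L^6$ and interpolation for $L^3$, while you use $\|f^2\|_{L^2}\le\|f\|_{L^\infty}\|f\|_{L^2}$ and a Gagliardo--Nirenberg bound $\|u\|_{L^\infty}\lesssim\|\nabla u\|_{L^2}^{1/2}\|\nabla^2 u\|_{L^2}^{1/2}$; both extract the gradient by the same dimensional mechanism.
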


\begin{proof} Let $k=|\alpha|$. Then, one has for $k=0$ that
\begin{align*}
&\|(a,b,c)^2\|_{L^2(\RR^3_x)}\le
\|(a,b,c)\|_{L^6(\RR^3_x)}\|(a,b,c)\|_{L^3(\RR^3_x)}
\le \|\nabla_x (a,b,c)\|_{L^2(\RR^3_x)}\|(a,b,c)\|_{H^2(\RR^3_x)},
\\ \intertext{since}&
\|ab\|_{L^2(\RR^3_x)}\le \|a\|_{L^6(\RR^3_x)}\|b\|_{L^3(\RR^3_x)}\le
\|\nabla_x a\|_{L^2(\RR^3_x)}\|b\|_{L^\infty(\RR^3_x)}^{1/3}\|b\|_{L^2(\RR^3_x)}^{2/3}
\le \|\nabla_x a\|_{L^2(\RR^3_x)}\|b\|_{H^{2}(\RR^3_{x})}.
\end{align*}
Also for $k=1$, we have
\begin{align*}
&\|\p (a,b,c)^2\|_{L^2(\RR^3_x)}\le \|(a,b,c)\p (a,b,c)\|_{L^2(\RR^3_x)}
\le \|(a,b,c)\|_{L^\infty(\RR^3_x)}\|\nabla_x (a,b,c)\|_{L^2(\RR^3_x)}
\\&\hspace*{3cm}
\le\|(a,b,c)\|_{H^{m-1}(\RR^3_x)}\|\nabla_x (a,b,c)\|_{L^2(\RR^3_x)},
\end{align*}
and for $2\le k\le m$,
\begin{align*}
&\|\p^\alpha (a,b,c)^2\|_{L^2(\RR^3_x)}\le \sum_{k'\le \frac k 2}
\|\p_x^{k'}(a,b,c)\p_x^{k-k'}(a,b,c)\|_{L^2(\RR^3_x)}
\\&\le \sum\|\p_x^{k'}(a,b,c)\|_{L^\infty(\RR^3_x)}
\|\p_x^{k-k'} (a,b,c)\|_{L^2(\RR^3_x)}
\le \|(a,b,c)\|_{H^{m-1}(\RR^3_x)}
\|\nabla_x (a,b,c)\|_{H^{m-1}(\RR^3_x)}.
\end{align*}
And this completes the proof of the lemma.
\end{proof}

\begin{lemm} \label{rlh}\text{\bf (Estimate of $r,l,h$).}

Let $\p^\alpha=\p^\alpha_x  , \p_i=\p_{x_i}$, $|\alpha|\le N-1, N\ge 3$. Then, one has
\begin{align}\label{Dr}
&\|\p_i\p^\alpha r\,\|_{L^2(\RR^3_x)}+\|\p^\alpha   l\,\|_{L^2_{x}}
\le
\|g_2\|_{H^N(\RR^3_x,\, L^2(\RR^3_v))} \equiv A_1,
\\&\label{Dh}
\|\p^\alpha   h\|_{L^2(\RR^3_x)}
\le \|\nabla_x (a,b,c)\|_{H^{N-2}(\RR^3_x)}\|(a,b,c)\|_{H^{N-1}(\RR^3_x)}
\\  &\quad+\|  (a,b,c)\|_{H^{N-1}(\RR^3_x)}\|g_2\|_{H^{N-1}(\RR^3_x,\, L^2(\RR^3_v))}
+
\|g_2\|_{H^{N-1}(\RR^3_x,\, L^2(\RR^3_v))}^2\equiv A_2. \notag
\end{align}
\end{lemm}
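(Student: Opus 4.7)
The estimates split into linear bounds for $r, l$ and a quadratic bound for $h$; I will handle them by rather different arguments.

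For the $r, l$ bound, I will exploit that each test function $e$ is a Maxwellian-weighted polynomial, hence Schwartz in $v$. Cauchy--Schwarz in $v$ gives $|\partial_i\partial^\alpha r(x)| \le \|e\|_{L^2_v}\|\partial_i\partial^\alpha g_2(x,\cdot)\|_{L^2_v}$, and taking $L^2_x$-norms yields $\|\partial_i\partial^\alpha r\|_{L^2_x} \le C\|g_2\|_{H^N_x(L^2_v)}$. For $l$, I split into the transport piece $-(v\cdot\nabla_x g_2, e)_{L^2_v} = -\nabla_x\cdot(g_2, ve)_{L^2_v}$, handled by the same Cauchy--Schwarz argument with $ve$ replacing $e$, and the collision piece $-(\mathcal{L}g_2, e)_{L^2_v}$, which by self-adjointness of $\mathcal{L}$ equals $-(g_2, \mathcal{L}e)_{L^2_v}$; since $\mathcal{L}e = -\Gamma(\sqrt\mu, e) - \Gamma(e, \sqrt\mu)$ lies in every weighted $L^2_v$ space (by Theorem \ref{theo2.1}), one more Cauchy--Schwarz closes the bound. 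Summing over $|\alpha|\le N-1$ produces $A_1$.

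For the $h$ bound, I first apply Leibniz to write $\partial^\alpha h = \sum_{\alpha_1+\alpha_2=\alpha}\binom{\alpha}{\alpha_1}(\Gamma(\partial^{\alpha_1}g,\partial^{\alpha_2}g), e)_{L^2_v}$ and then substitute the macro-micro decomposition $g = g_1 + g_2$ into each factor, producing four families of bilinear terms. The crucial analytic step, which is the main obstacle, is the pointwise-in-$x$ bound
\[
\bigl|(\Gamma(F,G), e)_{L^2_v}\bigr|(x) \le C\,\|F(x,\cdot)\|_{L^2_v}\,\|G(x,\cdot)\|_{L^2_v},
\]
involving only plain $L^2_v$ norms on the right-hand side. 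A direct application of Proposition \ref{prop2.3.1} is insufficient here since it would introduce the non-isotropic norm $|||\cdot|||_v$, which is not present in $A_2$. Instead I will use the weak form $\int Q(H,K)\phi\, dv = \iiint b(\cos\theta)\, H_\ast K\,(\phi'-\phi)\, dv dv_\ast d\sigma$ with $H = \mu^{1/2}F$, $K = \mu^{1/2}G$, and $\phi = \mu^{-1/2}e$ a polynomial of degree at most three: Taylor expansion yields $|\phi' - \phi| \le C\,\theta\,\langle v\rangle^2\langle v_\ast\rangle^2$, and since $s<1/2$ we have $\int b(\cos\theta)\,\theta\, d\sigma <\infty$, after which the Maxwellian factors $\mu^{1/2}_\ast\mu^{1/2}$ absorb all polynomial growth via Cauchy--Schwarz in $(v, v_\ast)$.

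Once the pointwise bound is in hand, I apply it to each term of the Leibniz sum. Since $\partial^\beta g_1 = P_\beta(x,v)\mu^{1/2}$ with $P_\beta$ quadratic in $v$ with coefficients built from $\partial^\beta(a,b,c)(x)$, one has $\|\partial^\beta g_1(x,\cdot)\|_{L^2_v}\le C|\partial^\beta(a,b,c)(x)|$, so the four families reduce to three types of $L^2_x$-products: $\|\partial^{\alpha_1}(a,b,c)\cdot\partial^{\alpha_2}(a,b,c)\|_{L^2_x}$, controlled by Lemma \ref{abc2} (redistributing a derivative onto one factor so as to produce the gradient that appears in the first term of $A_2$); the mixed products $\|\,|\partial^{\alpha_i}(a,b,c)|\cdot\|\partial^{\alpha_j}g_2\|_{L^2_v}\,\|_{L^2_x}$, controlled by placing the factor of smaller multi-index order in $L^\infty_x$ via the Sobolev embedding $H^{N-1}(\RR^3_x)\hookrightarrow L^\infty$ (valid for $N\ge 3$) and yielding the second term of $A_2$; and the pure micro products $\|\,\|\partial^{\alpha_1}g_2\|_{L^2_v}\,\|\partial^{\alpha_2}g_2\|_{L^2_v}\,\|_{L^2_x}$, controlled by the same Sobolev embedding and yielding the third term of $A_2$.
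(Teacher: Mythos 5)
Your proposal follows essentially the same route as the paper. For $r,l$ your Cauchy--Schwarz argument in $v$ is identical; for $h$ your reduction via the weak form and the change of variables $(v,v_*)\to(v',v'_*)$ is exactly the paper's observation that $h=\Phi(g,g)$ with $\Phi$ having a bounded kernel because $q(v')-q(v)$ gains a factor of $\theta$ and $\int b\,\theta\,d\sigma<\infty$ for $s<\tfrac12$; and the macro-micro expansion into $\Phi^{(ij)}$ is the same. (Your pointwise Taylor bound should read $|\phi'-\phi|\le C\theta\langle v\rangle^3\langle v_*\rangle^3$ since $q$ has degree $3$, but the Maxwellian factors absorb either exponent, so this is immaterial.)

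One small imprecision worth flagging: in the mixed and pure-microscopic products you propose to place the factor carrying the smaller multi-index in $L^\infty_x$ via $H^{N-1}(\RR^3_x)\hookrightarrow L^\infty$. This works only when the factor actually sits in $H^{N-1}$, i.e.\ when it carries zero $x$-derivatives. After Leibniz the factor $\partial^{\alpha_1}(a,b,c)$ belongs only to $H^{N-1-|\alpha_1|}(\RR^3_x)$, so for $N=3$ and $|\alpha_1|=|\alpha_2|=1$ you would need $H^1(\RR^3)\hookrightarrow L^\infty$, which fails. The fix is the same $L^3_x\times L^6_x$ H\"older/Sobolev interpolation already used in Lemma \ref{abc2} (and implicit in the paper's Moser-type algebra bound for $H^{N-1}(\RR^3_x)$ when $N\ge 3$); it closes the estimate without loss, so the argument is easily repaired.
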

\noindent
\begin{proof}
\eqref{Dr} follows {}from
\[
\|\p_i\p^\alpha r\,\|_{L^2(\RR^3_x)}
\le
\| (\p_i\p^\alpha g_2, e)_{L^2(\RR^3_v)}\|_{L^2(\RR^3_x)}
\le
\|\p_i\p^\alpha  g_2\|_{L^2(\RR^6_{x, v})},
\]
and
\[
\|\p^\alpha   l\,\|_{L^2_{x}}\le
\| (\nabla_x\p^\alpha g_2, ve)_{L^2(\RR^3_v)}+(\p^\alpha g_2, \mathcal{L}^*e)_{L^2(\RR^3_v)}\|_{L^2(\RR^3_x)}
\le
\| \p^{\alpha}_xg_2\|_{H^{1}(\RR^3_{x},\, L^2(\RR^3_v))}.
\]

We prove \eqref{Dh} as follows.
\begin{align*}
 h&=\iiint b(\cos\theta)\mu_*^{1/2}(g'_*g'-g_*g)edvdv_*d\sigma
\\&=\iiint b(\cos\theta)g g_*\Big((\mu^{1/2})_*'e'-\mu_*^{1/2}e\Big)dvdv_*d\sigma
\\&=\iiint b(\cos\theta)(\mu^{1/2}g)(\mu^{1/2} g)_*\Big(q(v')-q(v)\Big)dvdv_*d\sigma
\\&\equiv \Phi(g,g)=\sum_{i,j=1}^2\Phi(g_i,g_j)=\sum_{i,j=1}^2\Phi^{(ij)}, 
\end{align*}
where $q=q(v)$ is some polynomial of $v$.
Firstly, we have
\[
\Phi^{(11)}=\sum_{\eta_j, \eta_k\in \{a,b,c\}} \eta_j\eta_k\Phi(\psi_j,\psi_k),
\]
where $
\psi_j(v)\in \mathcal{N}$.
Clearly, $|\Phi(\psi_j,\psi_k)|<\infty$, so that by virtue of Lemma \ref{abc2},
\begin{align*}
 \|\p^\alpha  \Phi^{(11)}\|_{L^2(\RR^3_x)}\le \|\p^\alpha  (a,b,c)^2\|_{L^2(\RR^3_x)}
 \le \|\nabla_x (a,b,c)\|_{H^{N-2}(\RR^3_x)}\|(a,b,c)\|_{H^{N-1}(\RR^3_x)}.
\end{align*}
On the other hand,
\[
\|\Phi(g,f)\|_{L^2(\RR^3_x)}\le \|\mu^{1/2}g\|_{{L^2(\RR^3_x; L^1_3(\RR_v^3))}}
\|\mu^{1/2}f\||_{{L^2(\RR^3_x; L^1_3(\RR_v^3))}}
\le \|g\|_{L^2(\RR^6_{x, v})}\|f\|_{L^2(\RR^6_{x, v})}
\]
which yields for $|\alpha|\le N-1$,
\begin{align*}
 &
  \|\p^\alpha   \Phi^{(12)}\|_{L^2(\RR^3_x)}\le \|\p^\alpha  (a,b,c)\|_{L^2(\RR^3_x)}
  \|\p^\alpha  g_2\|_{L^2(\RR^6_{x, v})}
  \le \|  (a,b,c)\|_{H^{N-1}(\RR^3_x)}\|g_2\|_{H^{N-1}(\RR^3_x, L^2(\RR^3_v))},
 \\
 & \|\p^\alpha   \Phi^{(21)}\|_{L^2(\RR^3_x)}\le \|\p^\alpha  g_2\|_{L^2(\RR^6_{x, v})}
 \|\p^\alpha  (a,b,c)\|_{L^2(\RR^3_x)}
\le \|g_2\|_{H^{N-1}(\RR^3_x, L^2(\RR^3_v))}\|  (a,b,c)\|_{H^{N-1}(\RR^3_x)},
 \\
 &\|\p^\alpha   \Phi^{(22)}\|_{L^2(\RR^3_x)}\le \|\p^\alpha  g_2\|_{L^2(\RR^6_{x, v})}^2
\le \|g_2\|_{H^{N-1}(\RR^3_x, L^2(\RR^3_{v}))}^2.
 \end{align*}
Now the proof of \eqref{Dh} is completed.
\end{proof}

\begin{lemm}\text{\bf (Macro-energy estimate)}
Let $|\alpha|\le N-1$.
\begin{align}\label{pabc}
\|\nabla_x \p^\alpha (a, b, c)\|_{L^2(\RR^3_x)}^2
\le
-\frac{d}{dt}\Big\{(\p^\alpha r,&\nabla_x \p^\alpha (a, - b, c))_{L^2(\RR^3_x)}
+(\p^\alpha b, \nabla_x \p^\alpha a)_{L^2(\RR^3_x)}\Big\}
\\&\quad +\notag
 \|g_2\|_{H^{N}(\RR^3_x, L^2(\RR^3_v))}^2+
\mathcal D_1\mathcal E_1\, ,
\end{align}
where
\[
\mathcal{D}_1=
\|\nabla_x(a,b,c)\|_{H^{N-1}(\RR^3_x)}^2+\|g_2\|_{H^{N}(\RR^3_x, L^2(\RR^3_v))}^2
\]
is a dissipation rate and
\[
\mathcal{E}_1=
\|(a,b,c)\|^2_{H^{N-1}(\RR^3_x)}+\|g_2\|_{H^{N-1}(\RR^3_x, L^2(\RR^3_v))}^2=\|g\|_{H^{N-1}(\RR^3_x, L^2(\RR^3_v))}^2
\]
is an instant energy functional.
\end{lemm}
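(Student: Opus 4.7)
\textbf{Proof proposal for the macro-energy estimate \eqref{pabc}.}
The plan is to derive three separate macroscopic identities --- one for each of $\nabla_x\p^\alpha c$, $\nabla_x\p^\alpha b$, $\nabla_x\p^\alpha a$ --- by pairing the equations in \eqref{macroeq} with suitable test functions in $L^2(\RR^3_x)$, then combine them with small weights. The cross terms that refuse to be absorbed will assemble exactly into the total $t$--derivative appearing on the right of \eqref{pabc}. Throughout, Lemma~\ref{rlh} controls $l,h,\nabla_x r$ pointwise in $t$ by $A_1$ and $A_2$, and the definitions of $\mathcal{E}_1,\mathcal{D}_1$ give $A_1^2\le \mathcal{D}_1$ and $A_2^2\le \mathcal{D}_1\mathcal{E}_1$.

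\emph{Estimate for $c$.} Apply $\p^\alpha$ to equation (i) and take the $L^2_x$ inner product with $\nabla_x\p^\alpha c$. The contribution of $-\p_t r_c$ is rewritten as
\[
-\frac{d}{dt}(\p^\alpha r_c,\nabla_x\p^\alpha c)_{L^2(\RR^3_x)}+(\p^\alpha r_c,\nabla_x\p^\alpha\p_t c)_{L^2(\RR^3_x)},
\]
and in the commutator term $\p_t c$ is substituted from (ii) (after taking the trace), producing $-\tfrac13\mathrm{div}\,\p^\alpha b$ up to terms of size $A_1+A_2$; this piece is bounded by $\eta\|\nabla_x\p^\alpha b\|^2+C_\eta(A_1^2+A_2^2)$. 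The $l_c,h_c$ contributions are controlled directly by Cauchy--Schwarz and Lemma~\ref{rlh}.

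\emph{Estimate for $b$.} Split $\|\nabla_x\p^\alpha b\|^2=\sum_i\|\p_i\p^\alpha b_i\|^2+\sum_{i\neq j}\|\p_i\p^\alpha b_j\|^2$. For the diagonal part use (ii), pairing $\p_i\p^\alpha b_i$ with $\p^\alpha(-\p_t c-\p_t r_i+l_i+h_i)$; for the off-diagonal part pair $\p_i\p^\alpha b_j$ with (iii) and use the symmetry $\p_i b_j+\p_j b_i$ to reconstruct $\|\nabla b\|^2$ modulo $\|\mathrm{div}\,b\|^2$, which then folds back into the diagonal step. As before, the $-\p_t r$--pieces produce total derivatives; the accompanying commutators $(\mathrm{div}\,\p^\alpha r,\p_t\p^\alpha b)$ are handled by inserting (iv) for $\p_t b$, yielding $\|\nabla_x\p^\alpha a\|\cdot A_1$ terms that are absorbed by $\eta\|\nabla_x\p^\alpha a\|^2+C_\eta A_1^2$. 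The integration by parts in $x$ needed to package the surviving total derivative produces precisely the sign flip on the $b$--component, giving the $(a,-b,c)$ pattern in the statement.

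\emph{Estimate for $a$ and conclusion.} Pair $\p^\alpha(\mathrm{iv})$ with $\nabla_x\p^\alpha a$:
\[
\|\nabla_x\p^\alpha a\|^2=\bigl(\nabla_x\p^\alpha a,\,-\p_t\p^\alpha b-\p_t\p^\alpha r_b+\p^\alpha l_b+\p^\alpha h_b\bigr)_{L^2(\RR^3_x)}.
\]
The $-\p_t b$ term yields $-\tfrac{d}{dt}(\p^\alpha b,\nabla_x\p^\alpha a)+(\p^\alpha b,\nabla_x\p^\alpha\p_t a)$; the second piece is handled by substituting (v) for $\p_t a$, which produces either a further total derivative (merged with the $c$--step contribution into $(\p^\alpha r_a,\nabla_x\p^\alpha a)$) or terms of size $A_1+A_2$. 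The $-\p_t r_b$ piece again generates a total derivative consistent with the stated form. Adding the three estimates with sufficiently small weights so that the $\eta\|\nabla_x\p^\alpha(a,b,c)\|^2$ terms on the right are absorbed into the left, and invoking Lemma~\ref{rlh} once more to bound all $A_1^2$ by $\|g_2\|_{H^N(\RR^3_x,L^2(\RR^3_v))}^2$ and all $A_2^2$ by $\mathcal{D}_1\mathcal{E}_1$, yields \eqref{pabc}.

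\emph{Main obstacle.} The routine part is each individual pairing; the delicate point is the bookkeeping that ensures all surviving $t$--derivative contributions collapse into exactly $-\tfrac{d}{dt}\{(\p^\alpha r,\nabla_x\p^\alpha(a,-b,c))+(\p^\alpha b,\nabla_x\p^\alpha a)\}$. This requires a consistent choice of integration by parts producing the sign flip on $b$, and a cascade substitution $\p_t c\to\mathrm{div}\,b$, $\p_t b\to -\nabla a$, $\p_t a\to\cdots$ via (ii),(iv),(v), so that every extra commutator either becomes a further cross term already listed or is manifestly bounded by $\mathcal{D}_1^{1/2}\mathcal{E}_1^{1/2}$.
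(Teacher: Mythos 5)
Your overall architecture is right---pair each row of \eqref{macroeq} with a spatial gradient, convert the $-\p_t r$ contributions into total $t$-derivatives by Leibniz, and absorb the small remainders by $\eta$-inequalities---and you correctly identified that the sign flip on $b$ comes out of the integration by parts. However, there is a genuine gap in the way you propose to handle the commutator terms produced by that Leibniz step: the ``cascade substitution'' $\p_t c\to\operatorname{div}b$, $\p_t b\to-\nabla a$, $\p_t a\to\cdots$ from (ii), (iv), (v) does not close. Each of those equations carries its own $-\p_t r$ on the right, so substituting, say, $\p_t c$ from (ii) into $(\p^\alpha r_c,\nabla_x\p^\alpha\p_t c)$ reintroduces a new time derivative $\p_t\p^\alpha r_i$ that is \emph{not} of size $A_1+A_2$ pointwise in $t$; applying another integration by parts in $t$ just generates a further $\p_t r$, and these extra cross terms do not fold into the single claimed total derivative $-\tfrac{d}{dt}\big\{(\p^\alpha r,\nabla_x\p^\alpha(a,-b,c))+(\p^\alpha b,\nabla_x\p^\alpha a)\big\}$. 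The same issue recurs in your $a$-step when you substitute (v) for $\p_t a$.

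The paper avoids the cascade entirely with a short fourth step: since $\pP\mathcal{L}g=0$ and $\pP\Gamma(g,g)=0$, applying $\pP$ to the equation gives directly
\[
\|\p_t\p^\alpha(a,b,c)\|_{L^2(\RR^3_x)}=\|\p^\alpha\pP(v\cdot\nabla_xg)\|_{L^2(\RR^6)}
\le\|\nabla_x\p^\alpha(a,b,c)\|_{L^2(\RR^3_x)}+\|\nabla_x\p^\alpha g_2\|_{L^2(\RR^6)}.
\]
Then every commutator such as $(\nabla_x\p^\alpha r,\p_t\p^\alpha c)$ is handled by Cauchy--Schwarz as $C_\eta A_1^2+\eta\|\p_t\p^\alpha c\|^2$, and the $\eta\|\p_t\p^\alpha(a,b,c)\|^2$ terms are absorbed into the left-hand side at the end. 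You should replace the cascade with this projection estimate. Separately, for the $b$-step the paper combines (ii) and (iii) into the elliptic identity
$\Delta_x\p^\alpha b_i+\p_i^2\p^\alpha b_i=\p_i\p^\alpha(-\p_t r+l+h)$
and pairs with $-\p^\alpha b$, so that $\|\nabla_x\p^\alpha b\|^2+\|\p_i\p^\alpha b\|^2$ comes out on the left in one stroke. Your diagonal/off-diagonal split can in principle be made to work, but the cross terms $(\p_i b_j,\p_j b_i)=(\p_ib_i,\p_jb_j)$ require exactly the kind of div--curl bookkeeping you only gesture at; the elliptic identity makes it automatic.
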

\noindent
\begin{proof}
(a) Estimate of $\nabla_x\p^\alpha  a$. Let $A_1,A_2$ be as in Lemma \ref{rlh}.
{}From \eqref{macroeq} (iv),
\begin{align*}
 \|\nabla_x \p^\alpha a\|_{L^2(\RR^3_x)}^2
&=(\nabla_x \p^\alpha a,\nabla_x \p^\alpha a)_{L^2(\RR^3_x)}
\\&= (\p^\alpha (-\p_t b-\p_tr+l+h),\nabla_x \p^\alpha a)_{L^2(\RR^3_x)}
\\&\le R_1+C_\eta (A_1^2+A_2^2)+\eta \|\nabla_x \p^\alpha a\|_{L^2(\RR^3_x)}^2.
\end{align*}
\begin{align*}
R_1&=-(\p^\alpha \p_tb+\p^\alpha \p_tr,\nabla_x \p^\alpha a)_{L^2(\RR^3_x)}
\\&=-\frac{d}{dt}(\p^\alpha (b+r),\nabla_x \p^\alpha a)_{L^2(\RR^3_x)}+
(\nabla_x\p^\alpha (b+r), \p_t \p^\alpha a)_{L^2(\RR^3_x)}
\\&
\le
-\frac{d}{dt}(\p^\alpha (b+r),\nabla_x \p^\alpha a)_{L^2(\RR^3_x)}+
C_\eta ( \|\nabla_x\p^\alpha  b\|_{L^2(\RR^3_x)}^2+A_1^2)
+\eta
\|\p_t\p^\alpha  a\|^2_{L^2(\RR^3_x)},
\end{align*}
(b) Estimate of $\nabla_x\p^\alpha  b$. {}From \eqref{macroeq} (iii) and (ii),
\begin{align*}
\Delta_x&\p^\alpha b_i+\p^2_{i}\p^\alpha   b_i=
{\sum_{j \ne i} \p_j \p^{\alpha}( \p_j b_i + \p_i b_j)
+ \p_i \p^\alpha( 2 \p_i b_i - \sum_{j\ne i} \p_i b_j)
}
\\
&\hskip2cm =
\p_i\p^\alpha  (-\p_t r +l+h),
\\
&\|\nabla_x\p^\alpha  b\|_{L^2(\RR^3_x)}^2+\|\p_i\p^\alpha  b\|_{L^2(\RR^3_x)}^2=
-(\Delta_x\p^\alpha b_i+\p^2_{i}\p^\alpha   b_i, \p^\alpha  b)_{L^2(\RR^3_x)}
= R_2+R_3+R_4,
\end{align*}
where
\begin{align*}
R_2&=(\p_t\p^\alpha   r,\p_i\p^\alpha  b)_{L^2(\RR^3_x)}
=\frac{d}{dt}(\p^\alpha   r,\p_i\p^\alpha  b)_{L^2(\RR^3_x)}+
(\p_i\p^\alpha   r,\p_t\p^\alpha  b)_{L^2(\RR^3_x)}
\\&\hspace*{1cm}\le
\frac{d}{dt}(\p^\alpha   r,\p_i\p^\alpha  b)_{L^2(\RR^3_x)}+
C_\eta A_1^2
+\eta
\|\p_t\p^\alpha  b\|^2_{L^2(\RR^3_x)},
\\
R_3&=-(\p^\alpha   l,\p_i\p^\alpha  b)_{L^2(\RR^3_x)}\le
C_\eta A_1^2
+
\eta
\|\p_i\p^\alpha  b\|^2_{L^2(\RR^3_x)},
\\R_4&=-(\p^\alpha   h,\p_i\p^\alpha  b)_{L^2(\RR^3_x)}\le
C_\eta A_2^{{2}}
+
\eta
\|\p_i\p^\alpha  b\|^2_{L^2(\RR^3_x)}.
\end{align*}
(c) Estimate of $\nabla_x\p^\alpha  c$. {}From \eqref{macroeq} (i),
 \begin{align*}
 \|\nabla_x \p^\alpha &c\|_{L^2(\RR^3_x)}^2
=(\nabla_x \p^\alpha c,\nabla_x \p^\alpha c)_{L^2(\RR^3_x)}
= (\p^\alpha (-\p_tr+l+h),\nabla_x \p^\alpha c)_{L^2(\RR^3_x)}
\\&
\le R_5 
+C_\eta(A_1^2+A_2^2)+\eta \|\nabla_x \p^\alpha c\|_{L^2(\RR^3_x)}^2
\end{align*}
where
\begin{align*}
R_5&=-(\p^\alpha \p_tr,\nabla_x \p^\alpha c)_{L^2(\RR^3_x)}=
-\frac{d}{dt}(\p^\alpha r,\nabla_x \p^\alpha c)_{L^2(\RR^3_x)}+
(\nabla_x\p^\alpha r, \p_t \p^\alpha c)_{L^2(\RR^3_x)}
\\&\quad
\le
-\frac{d}{dt}(\p^\alpha r,\nabla_x \p^\alpha c)_{L^2(\RR^3_x)}+
C_\eta A_1^2 
+\eta
\|\p_t\p^\alpha  c\|^2_{L^2(\RR^3_x)},
\end{align*}
(d) Estimate of $\p_t\p^\alpha  (a,b,c)$. \
\begin{align}\label{ptabc}
\|\p_t\p^\alpha & (a,b,c)\|_{L^2(\RR^3_x)}=\|\p^\alpha \p_t \pP g\|_{L^2(\RR^6_{x, v})}
\\& \notag
=\|\p^\alpha  \pP\Big(-v\cdot\nabla_xg-\mathcal{L}g+\Gamma(g,g)\Big)\|_{L^2(\RR^6_{x, v})}
\\&=\|\p^\alpha  \pP(v\cdot\nabla_xg)\|_{L^2(\RR^6_{x, v})}
\le \|\nabla_x\p^\alpha  (a,b,c)\|_{L^2(\RR^3_x)}+
\|\nabla_x\p^\alpha g_2\|_{L^2(\RR^6_{x, v})}.\notag
\end{align}
Combining all the above estimates and
taking $\eta>0$ sufficiently small, we deduce
\begin{align*}
\|\nabla_x \p^\alpha (a, b,  c)\|_{L^2(\RR^3_x)}^2
&\le
-\frac{d}{dt}\Big\{(\p^\alpha r,\nabla_x \p^\alpha (a, -b,c))_{L^2(\RR^3_x)}
+(\p^\alpha b, \nabla_x \p^\alpha a)_{L^2(\RR^3_x)}\Big\}
\\&\quad +\notag
A_1^2+A_2^2+
 \eta\|\nabla_x\p^\alpha g_2\|_{L^2(\RR^6_{x, v})}^2.
\end{align*}
Finally, choosing $|\alpha|\le N-1$, and using Lemma \eqref{rlh}, we obtain
\begin{align*}
A_1^2&+A_2^2+
 \eta\|\nabla_x\p^\alpha g_2\|_{L^2(\RR^6_{x, v})}^2
\le \mathcal D_1\mathcal E_1 +\eta \|g_2\|_{H^{N}_x(L^2(\RR^3_v))}^2,
\end{align*}
which  completes the proof of Lemma \ref{pabc}.
\end{proof}

\subsection{Microscopic Energy Estimate}\label{section7.2}
\setcounter{equation}{0}
In this subsection,
We shall use Lemma \ref{lemm2.2.3} and Proposition \ref{prop3.2.3} to
estimate the microscopic component.

\noindent
{\bf Step 1.}
 Let $\alpha\in\NN^3$, $|\alpha|\le N$, and apply $\p^\alpha=\p^\alpha_x$ to \eqref{cauchy-problem} to have,
\begin{equation*}
\p_t(\p^\alpha g)+ v\cdot\nabla_x (\p^\alpha g)
+ \cL(\p^\alpha g)=\p^\alpha\Gamma(g,\,
g),
\end{equation*}
and take the $L^2(\RR^6_{x, v})$ inner product with $\p^\alpha g$. By Lemma \ref{lemm2.2.3}, we have
\begin{equation}\label{microenergy1}
\frac 12 \frac{d}{dt}\|\p^\alpha g\|^2_{L^2(\RR^6_{x, v})}
+D_1\le J,
\end{equation}
where $D_1$ is a dissipation rate
\begin{align*}
D_1&=\int_{\RR^3}||| \p^\alpha g_2 |||^2dx=||| \p^\alpha g_2 |||^2_{\cB^0_0(\RR^6)},
\end{align*}
and $J$ is given by
\begin{align*}
J&=(\p^\alpha\Gamma(g,\,g), \p^\alpha g)_{L^2(\RR^6)}
=\sum_{i,j=1}^2(\p^\alpha\Gamma(g_i,\,g_j), \p^\alpha g_2)_{L^2(\RR^6)}
\\&=\sum_{i,j=1}^2 J^{(ij)}.
\end{align*}

First, consider $J^{(11)}$. We have, with $\psi_j\in\mathcal{N}$,
\begin{align}\label{J11}
|J^{(11)}|&
\le
\|\p^\alpha\Gamma(g_1,\,g_1)\|_{L^2(\RR^6)}    \|\p^\alpha g_2\|_{L^2(\RR^6)}
\\&
\notag
\hspace*{2cm}\le \|\Big(\|\p^\alpha \Gamma(g_1,\,g_1)\|_{L^2(\RR^3_v)}
\Big)\|_{L^2(\RR^3_x)}    \|\p^\alpha g_2\|_{L^2(\RR^6)},
\\ \notag
&\| \p^\alpha\Gamma(g_1,\,g_1)\|_{L^2(\RR^3_v)}\le
\sum_{\eta_j,\eta_k\in \{a,b,c\}} |\p^\alpha(\eta_j\eta_k)|
\|\Gamma(\psi_j,\psi_k)\|_{L^2(\RR^3_v)},
\\ \notag
&\|\Gamma(\psi_j,\psi_k)\|_{L^2(\RR^3_v)}^2=\int\Big(\int\int b(\cos\theta)\mu_*^{1/2}
\{ (\psi_j)_*' \psi_k'-(\psi_j)_*\psi_k\}dv_*d\sigma\Big)^2dv \notag
\\&\qquad \notag
=\int\mu\Big(\int\int b(\cos\theta)\mu_*
\{ (p_j)_*' p_k'-(p_j)_*p_k\}dv_*d\sigma\Big)^2dv<\infty,
\end{align}
where $p_j\in\{1,v,|v|^2\}$.
Consequently, by virtue of Lemma \ref{abc2},
\begin{align*}
|J^{(11)}|&
\le \|\p^\alpha (a,b,c)^2\|_{L^2(\RR^3_x)} \|\p^\alpha g_2\|_{L^2(\RR^6)}
\\&
 \le
\|\nabla_x (a,b,c)\|_{H^{N-2}(\RR^3_x)}\|(a,b,c)\|_{H^{N-1}(\RR^3_x)}\|g_2\|_{H^N(\RR^3_x, L^2(\RR^3_v))}
\\
&\le \|(a,b,c)\|_{H^{N-1}(\RR^3_x)}\Big(\|\nabla_x (a,b,c)\|_{H^{N-2}(\RR^3_x)}^2+
|||g_2|||_{\cB^{N}_0(\RR^6)}^2\Big).
\end{align*}

On the other hand, using Proposition \ref{prop3.2.3} gives
\begin{align*}
|J^{(12)}|&\le \|g_1\|_{H^N_0(\RR^6)}|||g_2|||_{\cB^{N}_0(\RR^6)}
|||g_2|||_{\cB^{N}_0(\RR^6)}
\\&\quad\quad\leq ||(a,b,c)||_{H^N(\RR^3_x)}\,\,
||| g_2|||_{\cB^{N}_0(\RR^6)}^2,
\\
|J^{(21)}|&\le \|g_2\|_{H^N_0(\RR^6)}|||g_1|||_{\cB^{N}_0(\RR^6)}
|||g_2|||_{\cB^{N}_0(\RR^6)}
\\&\quad\quad\leq  ||g_2||_{H^N_0(\RR^6)}||(a,b,c)||_{H^N(\RR^3_x)}\,\,
||| g_2|||_{\cB^{N}_0(\RR^6)},
\\
|J^{(22)}|&\le \|g_2\|_{H^N_0(\RR^6)}|||g_2|||_{\cB^{N}_0(\RR^6)}
|||g_2|||_{\cB^{N}_0(\RR^6)}
\\&\quad\quad\leq ||g_2||_{H^N_0(\RR^6)}\,\,
||| g_2|||_{\cB^{N}_0(\RR^6)}^2.
\end{align*}
Taking the summation of \eqref{microenergy1} for $|\alpha|\le N, N\ge 3$, we have

\begin{lemm} Let $ N\ge 3$. Then,
\begin{equation}\label{microenergy11}
\frac{d}{dt}\| g\|^2_{H^N(\RR^3_x, L^2(\RR^3_v))}+
|||  g_2 |||^2_{\cB^N_0(\RR^6)}\le
\mathcal{D}_2\mathcal{E}_2^{{1/2}},
\end{equation}
where
\begin{align*}
\mathcal{D}_2&=
\|\nabla_x(a,b,c)\|_{H^{N-1}(\RR^3_x)}^2+|||g_2|||_{\cB^{N}_0(\RR^6)}^2,
\\
\mathcal{E}_2&=\|g\|_{H^{N}(\RR^3_x, L^2(\RR^3_v))}^2\sim
\|(a,b,c)\|^2_{H^{N}(\RR^3_x)}+\|g_2\|_{H^{N}(\RR^3_x, L^2(\RR^3_v))}^2.
\end{align*}
\vspace{0.5cm}
\end{lemm}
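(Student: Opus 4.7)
The plan is to obtain \eqref{microenergy11} by summing the pointwise--in--$\alpha$ identity \eqref{microenergy1} over all $x$--multi-indices $|\alpha|\le N$ and collecting the four bilinear contributions $J^{(ij)}$ that were isolated in Step~1. First I would use that $\pP$ commutes with $\partial^\alpha_x$ and annihilates $\mathcal N$, so $\cL(\partial^\alpha g)=\cL(\partial^\alpha g_2)$ with $\partial^\alpha g_2(x,\cdot)\in\mathcal N^\perp$ for every $x$. Integrating Lemma~\ref{lemm2.2.3} in $x$ then gives
\[
\bigl(\cL\partial^\alpha g_2,\,\partial^\alpha g_2\bigr)_{L^2(\RR^6)}\;\gtrsim\;\int_{\RR^3_x}|||\partial^\alpha g_2(x,\cdot)|||^2\,dx,
\]
and summing over $|\alpha|\le N$ produces precisely the $|||g_2|||^2_{\cB^N_0(\RR^6)}$ dissipation term on the left-hand side of \eqref{microenergy11}.

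Next I would insert the four $J^{(ij)}$ estimates already recorded above. The piece $J^{(11)}$ is handled by the explicit expansion $g_1=(a+b\cdot v+c|v|^2)\sqrt\mu$: since each $\Gamma(\psi_j,\psi_k)$ with $\psi_j,\psi_k\in\mathcal N$ lies in $L^2(\RR^3_v)$, Lemma~\ref{abc2} reduces matters to a product estimate on $\partial^\alpha(a,b,c)^2$, giving the bound $\|(a,b,c)\|_{H^{N-1}}(\|\nabla_x(a,b,c)\|^2_{H^{N-2}}+|||g_2|||^2_{\cB^N_0})\lesssim\mathcal E_2^{1/2}\mathcal D_2$. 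The mixed and pure microscopic pieces $J^{(12)}$, $J^{(21)}$, $J^{(22)}$ are handled by Proposition~\ref{prop3.2.3} (with $\ell=0$) applied to $\Gamma(g_i,g_j)$ and tested against $\partial^\alpha g_2$; in the case of $J^{(21)}$ one additionally invokes Lemma~\ref{lemm3.1.1} to replace $|||g_1|||_{\cB^N_0}$ by $\|(a,b,c)\|_{H^N}$.

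Finally, summing \eqref{microenergy1} over $|\alpha|\le N$ and combining the coercivity bound with the four pieces yields \eqref{microenergy11}. The factor $\mathcal E_2\sim\|(a,b,c)\|^2_{H^N}+\|g_2\|^2_{H^N}$ absorbs the pure-$L^2$ factors in $J^{(11)}$--$J^{(22)}$, while the surplus derivatives always appear either as $\|\nabla_x(a,b,c)\|_{H^{N-1}}$ or as $|||g_2|||_{\cB^N_0}$ and so are swallowed by $\mathcal D_2$.

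The main obstacle I expect is the book-keeping for $J^{(21)}$, whose natural bound $\|g_2\|_{H^N}\|(a,b,c)\|_{H^N}|||g_2|||_{\cB^N_0}$ has \emph{two} $\mathcal E_2^{1/2}$--type factors and only one $\mathcal D_2^{1/2}$--type factor; it does not sort cleanly into $\mathcal E_2^{1/2}\mathcal D_2$ as the other three terms do. The required reshuffling — a Cauchy--Schwarz split that pairs $\|(a,b,c)\|_{H^N}$ with $|||g_2|||_{\cB^N_0}$ and keeps the remaining $\|g_2\|_{H^N}\le\mathcal E_2^{1/2}$ as the prefactor — is the one place where the form of the right-hand side of \eqref{microenergy11} must be read with the equivalence $\mathcal E_2\sim\|(a,b,c)\|^2_{H^N}+\|g_2\|^2_{H^N}$ in mind, and is the only step that is not a purely mechanical summation.
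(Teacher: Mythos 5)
Your overall route matches the paper's: differentiate in $x$, test against $\partial^\alpha g$, use Lemma~\ref{lemm2.2.3} for the dissipation, and feed the $J^{(ij)}$ bounds back in. You also correctly identify $J^{(21)}$ as the one piece that does not slot automatically into the form $\mathcal{E}_2^{1/2}\mathcal{D}_2$. However, the fix you propose for it is wrong. You suggest applying Cauchy--Schwarz to the product $\|(a,b,c)\|_{H^N(\RR^3_x)}\,|||g_2|||_{\cB^N_0(\RR^6)}$ while keeping $\|g_2\|_{H^N(\RR^3_x,L^2(\RR^3_v))}\le\mathcal{E}_2^{1/2}$ as a prefactor; this produces the bound
\[
\mathcal{E}_2^{1/2}\cdot\tfrac12\Bigl(\|(a,b,c)\|^2_{H^N(\RR^3_x)}+|||g_2|||^2_{\cB^N_0(\RR^6)}\Bigr),
\]
but the term $\|(a,b,c)\|^2_{H^N(\RR^3_x)}$ is \emph{not} controlled by $\mathcal{D}_2$, which only contains the gradient quantity $\|\nabla_x(a,b,c)\|^2_{H^{N-1}(\RR^3_x)}$ and carries no information about $\|(a,b,c)\|_{L^2(\RR^3_x)}$. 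That factor is an energy, not a dissipation, so the product you end up with is of the type $\mathcal{E}_2^{3/2}+\mathcal{E}_2^{1/2}\mathcal{D}_2$ rather than $\mathcal{E}_2^{1/2}\mathcal{D}_2$.

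The correct resolution is the opposite reassignment: the microscopic $L^2$-based norm $\|g_2\|_{H^N_0(\RR^6)}$ is itself a dissipation quantity, because the non-isotropic norm dominates the $L^2$ norm (Lemma~\ref{lemm2.2.4}, or equivalently inequality~\eqref{3.1.0+2} of Lemma~\ref{lemm3.1.1} with $\ell=0$, applied to each $\partial^\beta g_2$): one has $\|g_2\|_{H^N_0(\RR^6)}\le C\,|||g_2|||_{\cB^N_0(\RR^6)}\le C\,\mathcal{D}_2^{1/2}$. With that observation the $J^{(21)}$ bound becomes
\[
|J^{(21)}|\;\lesssim\;\|(a,b,c)\|_{H^N(\RR^3_x)}\,|||g_2|||^2_{\cB^N_0(\RR^6)}\;\le\;\mathcal{E}_2^{1/2}\mathcal{D}_2,
\]
where now the single factor $\|(a,b,c)\|_{H^N(\RR^3_x)}$ supplies the $\mathcal{E}_2^{1/2}$ and both copies of $|||g_2|||_{\cB^N_0(\RR^6)}$ go into $\mathcal{D}_2$. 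In short: you applied Lemma~\ref{lemm3.1.1} to trade $|||g_1|||_{\cB^N_0}$ for $\|(a,b,c)\|_{H^N}$, but you should apply the companion inequality (in the other direction) to trade $\|g_2\|_{H^N_0}$ for $|||g_2|||_{\cB^N_0}$; the Cauchy--Schwarz split you describe does not close the estimate.
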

\noindent
{\bf Step 2.} Let
$
\p^\alpha=\p^\alpha_x, 1\le |\alpha|\le N, N\ge 3,
$
and apply $W^\ell\p^\alpha$ to \eqref{cauchy-problem}.
We have
\begin{equation}\label{g1-cauchy-problem}
 \p_t (W^\ell\p^\alpha g)+ v\cdot\nabla_x
(W^\ell\p^\alpha g) + \cL (W^\ell\p^\alpha g)=S_1+S_2,
\end{equation}
where
\[
S_1=W^\ell\p^\alpha\Gamma(g,\, g),
\qquad
S_2=[\cL,\, W^\ell ](\p^\alpha g).
\]
Take the $L^2(\RR^6_{x, v})$ inner product with $W^\ell\p^\alpha g$ to deduce
\begin{equation*}
\frac 12 \frac{d}{dt}\|W^\ell\p^\alpha g\|^2_{L^2(\RR^6)}+D_2\le G,
\end{equation*}
where $D_2$ is a dissipation rate
\begin{align*}
 D_2&=\int_{\RR^3}||| (\iI-\pP)W^\ell\p^\alpha g |||^2dx
\\&\ge \frac 12||| \p^\alpha g |||^2_{\cB^{0}_\ell(\RR^6)}-
(\|\nabla_x (a,b,c)\|_{H^{N-1}(\RR^3_x)}+|| g_2 ||^2_{H^N(\RR^3_x, L^2(\RR^3_v))}).
\end{align*}
Here we have used for $1\le |\alpha|\le N$,
\begin{align*}
\int_{\RR^3}&|||\pP W^\ell\p^\alpha g|||^2 dx
\le ||\p^\alpha g||_{L^2(\RR^6_{x, v})}^2
\\&\le
 \|\nabla_x (a,b,c)\|^2_{H^{N-1}(\RR^3_x)}+||g_2 ||^2_{H^N(\RR^3_x, L^2(\RR^3_v))}.
\end{align*}
On the other hand, $G$ is defined by
\[
G=G_1+G_2, \quad
G_i=(S_i, W^\ell\p^\alpha g)_{L^2(\RR^6)}, \qquad i=1,2.
\]
The estimation on $G_1$ and $G_2$ will be given in the following lemmas.
\begin{lemm}\label{G1}
Let $N\ge 3, \ell\ge 3$. Then, for $\mathcal{E}$ and $\mathcal{D}$ defined in   Theorem \ref{aprioriestimate}, we have
\begin{align*}
G_1\le \mathcal{E}^{1/2} \mathcal{D},
\end{align*}
\end{lemm}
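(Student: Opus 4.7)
The plan is to expand $\p^\alpha \Gamma(g,g)$ by the Leibniz rule and handle each summand $\bigl(W^\ell\Gamma(\p^{\alpha_1} g,\, \p^{\alpha_2} g),\, W^\ell\p^\alpha g\bigr)_{L^2(\RR^6)}$ of $G_1$ separately. For each such trilinear term I would apply, fiberwise in $x$, the bilinear estimate of Proposition \ref{prop2.3.1}, using the weighted commutator bounds \eqref{3.2.2} and \eqref{3.2.4} to transfer $W^\ell$ from the front of $\Gamma$ into its arguments. After integration in $x\in\RR^3_x$ and Cauchy--Schwarz, this produces a trilinear bound in which one of the three factors $\p^{\alpha_1} g,\,\p^{\alpha_2} g,\,\p^\alpha g$ appears in the ``low'' norm $\|\cdot\|_{L^\infty(\RR^3_x;\,L^2_{s}(\RR^3_v))}$ and the other two in the non-isotropic norm $|||\cdot|||_{\cB^0_0(\RR^6)}$, with the freedom to decide which factor carries which norm via the choice of which side of Proposition \ref{prop2.3.1} to use.

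The key observation is that the total derivative count across the three factors equals $2|\alpha|\ge 2$, so the two quantities $\max(|\alpha_1|,|\alpha_2|)$ and $|\alpha|$ are each $\ge 1$, while the minimum $\min(|\alpha_1|,|\alpha_2|)$ is at most $\lfloor|\alpha|/2\rfloor\le\lfloor N/2\rfloor$. I would therefore put the non-isotropic norm on the two factors of highest derivative order and reserve the low norm for the remaining factor. Since $\lfloor N/2\rfloor + 3/2 + \epsilon\le N$ for any $N\ge 3$, the Sobolev embedding $H^{\lfloor N/2\rfloor+3/2+\epsilon}(\RR^3_x)\hookrightarrow L^\infty(\RR^3_x)$ bounds the low factor by $C\|g\|_{H^N_\ell(\RR^6)}=C\,\mathcal{E}^{1/2}$. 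For each of the two non-isotropic factors, which carries a derivative of order $|\alpha_j|\ge 1$, the macro-micro decomposition $g=\pP g+g_2$ and Lemma \ref{lemm3.1.1} yield
\[
|||W^\ell\p^{\alpha_j} g|||_{\cB^0_0}\le C\bigl(\|\p^{\alpha_j}(a,b,c)\|_{L^2(\RR^3_x)} + |||g_2|||_{\cB^N_\ell}\bigr) \le C\sqrt{\mathcal{D}},
\]
where I used $|\alpha_j|\ge 1$ to absorb the first term into $\|\nabla_x(a,b,c)\|_{H^{N-1}(\RR^3_x)}$. Combining the three factors then gives exactly $|G_1|\le C\,\mathcal{E}^{1/2}\sqrt{\mathcal{D}}\sqrt{\mathcal{D}}=C\,\mathcal{E}^{1/2}\mathcal{D}$.

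The main obstacle lies in the extreme Leibniz terms $(\alpha_1,\alpha_2)=(\alpha,0)$ and $(0,\alpha)$ when $|\alpha|=N$: the undifferentiated $g$ sits in the low slot with essentially no margin left in the Sobolev embedding, and, more critically, placing a non-isotropic norm on $\p^\alpha g$ in an argument slot would demand $|||\p^\alpha g|||_{\cB^N_\ell}$, which is out of reach. In that regime I would, rather than invoking Proposition \ref{prop2.3.1} as a black box, follow the refined argument leading to \eqref{3.2.6+4} in the proof of Proposition \ref{prop3.2.3}: split $\Gamma(\p^\alpha g,\,g)$ via formula \eqref{1formula} into a $Q(\sqrt\mu\,\p^\alpha g,\,g)$-term, controlled through Theorem \ref{theo2.1}, plus a Maxwellian-difference piece estimated using Lemma \ref{lemm2.2.2}. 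This splitting keeps $\p^\alpha g$ in a plain $L^2$-factor on the ``operator'' side while only the $|||\cdot|||$-norm of the untouched $g$ (equivalently of $W^\ell\p^\alpha g$ on the test side) needs to be bounded by $\sqrt{\mathcal{D}}$, thus preserving the $\mathcal{E}^{1/2}\mathcal{D}$ scaling in all Leibniz terms.
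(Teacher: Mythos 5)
Your route --- Leibniz expansion in $\alpha$ first, macro--micro decomposition only at the end --- is structurally different from the paper's, which splits $\Gamma(g,g)=\sum_{i,j=1,2}\Gamma(g_i,g_j)$ first and then feeds each piece to the packaged trilinear estimate Proposition~\ref{prop3.2.3} (with the Leibniz rule buried inside that proposition's proof). The difference is not cosmetic, and your argument contains an incorrect claim at the decisive step. In the extreme Leibniz terms you assert that only the non-isotropic norm of the undifferentiated $g$, ``or equivalently of $W^\ell\p^\alpha g$ on the test side,'' needs to be bounded by $\sqrt{\mathcal D}$. This is false. The zero-derivative non-isotropic norm of the full $g$ contains $|||W^\ell g_1|||_{\cB^0_0}\sim\|(a,b,c)\|_{L^2(\RR^3_x)}$, and the macroscopic coefficients \emph{without a gradient} simply do not appear in $\mathcal D=\|\nabla_x(a,b,c)\|^2_{H^{N-1}}+|||g_2|||^2_{\cB^N_\ell}$; they can only be controlled by $\mathcal E^{1/2}$. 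The same issue afflicts the $|||g|||_{\cB^2_\ell}$ factor produced by the refined estimate \eqref{3.2.6+4} that you propose to invoke. A smaller inaccuracy: Proposition~\ref{prop2.3.1} gives a \emph{sum} $(\|f\|_{L^2_s}|||g|||+\|g\|_{L^2_s}|||f|||)|||h|||$, so there is no freedom ``to decide which side to use'' --- both orderings must be estimated.

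Repairing the argument requires macro--micro decomposing the undifferentiated second argument of $\Gamma$: for $\Gamma(\p^\alpha g,g_1)$ the non-isotropic factor is only $\mathcal E^{1/2}$ and one must then bound the first-argument factor $\|\p^\alpha g\|$ by $\sqrt{\mathcal D}$ (via $|\alpha|\ge 1$ plus macro--micro), whereas for $\Gamma(\p^\alpha g,g_2)$ one must instead bound $\|\p^\alpha g\|$ by $\mathcal E^{1/2}$ so that the two $\sqrt{\mathcal D}$'s come from $|||g_2|||$ and the test function. This case-by-case coordination is absent from your write-up, and once it is added one has essentially reconstituted the paper's $G_1^{(ij)}$ split. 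The paper's ordering makes this clean: in $G_1^{(i2)}$ the factor $|||g_2|||_{\cB^N_\ell}$ lies \emph{entirely} inside $\mathcal D$, including its zero-derivative component; in $G_1^{(21)}$ the second $\sqrt{\mathcal D}$ comes from $\|g_2\|_{H^N_\ell}\le C|||g_2|||_{\cB^N_\ell}$ through \eqref{3.1.0+2}; and $G_1^{(11)}$ is handled not by the abstract trilinear estimate but by the explicit polynomial structure of $\Gamma(g_1,g_1)$ and the Gagliardo--Nirenberg-type Lemma~\ref{abc2}, which manufactures a $\|\nabla_x(a,b,c)\|$ factor from a product of zero-derivative coefficients.
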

\begin{proof} First, write
\begin{align*}
G_1&=\sum_{i,j=1,2}(W^\ell\p^\alpha \Gamma(g_i,\,g_j), W^\ell\p^\alpha g)_{L^2(\RR^6)}
=\sum_{i,j=1,2}G_1^{(ij)}.
\end{align*}
Proceeding as in \eqref{J11},
\begin{align}\label{G11}
|G^{(11)}|&\le
\|W^{2\ell}\p^\alpha\Gamma(g_1,\,g_1)\|_{L^2(\RR^6)}    \|\p^\alpha g\|_{L^2(\RR^6)}
\\&
\notag
\le \|\Big(\|W^{2\ell}\p^\alpha \Gamma(g_1,\,g_1)\|_{L^2(\RR^3_v)}\Big)\|_{L^2(\RR^3_x)}
\|\p^\alpha g\|_{L^2(\RR^6)},
\\& \notag
\sim \|\p^\alpha\{(a,b,c)^2\}\|_{L^2(\RR^3_x)}\|W^{2\ell}\Gamma(\psi_j,\psi_k)\|_{L^2(\RR^3_v)}
\|\p^\alpha g\|_{L^2(\RR^6)},
\\ \notag
\|W^{2\ell}\Gamma&(\psi_j,\psi_k)\|_{L^2(\RR^3_v)}^2=
\int\Big(W^{2\ell}\int\int b(\cos\theta)\mu_*^{1/2}
\{ (\psi_j)_*' \psi_k'-(\psi_j)_*\psi_k\}dv_*d\sigma\Big)^2dv \notag
\\&\qquad \notag
=\int\mu W^{4\ell}\Big(\int\int b(\cos\theta)\mu_*
\{ (p_j)_*' p_k'-(p_j)_*p_k\}dv_*d\sigma\Big)^2dv<\infty.
\end{align}
Since $1\le|\alpha|\le N$,
\begin{align*}
|G^{(11)}|\le \|&\nabla_x(a,b,c)\|_{H^{N-1}(\RR^3_x)}\|(a,b,c)\|_{H^{N-1}(\RR^3_x)}
\\&\times 
(\|\nabla_x(a,b,c)\|_{H^{N-1}(\RR^3_x)}+|| g_2 ||_{H^N(\RR^3_x, L^2(\RR^3_v))})\,.
\end{align*}
On the other hand, we have
\begin{align*}
||| g_1|||_{\cB^{N}_\ell(\RR^6)}^2&=
\sum_{|\beta|\le N}\int_{\RR^3_x} |||W^\ell
\p^\beta_{x,v} g_1(x,\, \cdot\,)|||^2dx
\le \|(a,b,c)\|^2_{H^{N}(\RR^3_x)}.
\end{align*}
This fact and  Proposition \ref{prop3.2.3} yield
 \begin{align*}
|G_1^{(12)}|\le &||g_1||_{H^N_\ell(\RR^6)}\,\,
||| g_2|||_{\cB^{N}_\ell(\RR^6)}\,\, ||| W^\ell\p^\alpha g|||_{\cB^0_0(\RR^6)}
\\&
 \leq  ||(a,b,c)||_{H^N(\RR^3_x)}\,\,
 ||| g_2|||_{\cB^{N}_\ell(\RR^6)}||| \p^\alpha g|||_{\cB^0_\ell(\RR^6)},
\\
|G_1^{(21)}|\le&||g_2||_{H^N_\ell(\RR^6)}\,\,
||| g_1|||_{\cB^{N}_\ell(\RR^6)}\,\, ||| W^\ell\p^\alpha g|||_{\cB^0_0(\RR^6)}
\\&
 \leq \|g_2\|_{H^N_\ell(\RR^6)} ||(a,b,c)||_{H^N(\RR^3_x)}\,\,
 ||| \p^\alpha g|||_{\cB^0_\ell(\RR^6)},
 \\
 |G_1^{(22)}|\le&||g_2||_{H^N_\ell(\RR^6)}\,\,
||| g_2|||_{\cB^{N}_\ell(\RR^6)}\,\,
||| W^\ell\p^\alpha g|||_{\cB^0_0(\RR^6)}
\\&
 \leq \|g_2\|_{H^N_\ell(\RR^6)}
 ||| g_2|||_{\cB^{N}_\ell(\RR^6)}||| \p^\alpha g|||_{\cB^0_\ell(\RR^6)}.
 \end{align*}
Noticing that for
 $1\le|\alpha|\le N$,
\begin{align*}
|||  \p^\alpha g|||_{\cB^{0}_\ell(\RR^6)}^2&\le
\int_{\RR^3_x} |||W^\ell
\p^\alpha g_1(x,\, \cdot\,)|||^2dx
+\int_{\RR^3_x} |||W^\ell
\p^\alpha g_2(x,\, \cdot\,)|||^2dx
\\&\le \|\nabla_x(a,b,c)\|^2_{H^{N-1}(\RR^3_x)}+
|||g_2|||_{\cB^N_\ell(\RR^6)},
\end{align*}
we  now conclude the proof of the lemma.
\end{proof}

We shall evaluate $G_2$. In view of Proposition \ref{prop3.2.4},
\begin{align*}
|G_2|&\le
\left|\Big([\cL_1,W^\ell \,] \p^\alpha g,W^\ell
\,\p^\alpha
g)_{L^2(\RR^6)}\right|
\\&\hspace*{0.5cm}+
\left|\Big(W^\ell \,\cL_2(\p^\alpha g),\,\,
W^\ell\p^\alpha  g\Big)_{L^2(\RR^6)}\right|+
\left|\Big( \cL_2(W^\ell \,\p^\alpha g),\,\,
W^\ell\p^\alpha  g\Big)_{L^2(\RR^6)}\right|
\\&\le
 ||\p^\alpha
g||_{L^2_\ell(\RR^6)}|||\p^\alpha
g|||_{\cB^0_\ell(\RR^6)}
\\&\le (\|\nabla_x(a,b,c)\|_{H^{N-1}(\RR^3_x)}+\|\p^\alpha g_2\|_{L^2_\ell(\RR^6)})|||\p^\alpha
g|||_{\cB^0_\ell(\RR^6)}
\\&\le
\|\nabla_x(a,b,c)\|_{H^{N-1}(\RR^3_x)}^2+
 {C_\eta||\p^\alpha
g_2||_{L^2(\RR^6)}^2}+\eta|||\p^\alpha
g|||_{\cB^0_\ell(\RR^6)}^2, \ \ (\eta>0).
\end{align*}

Thus, we have established
\begin{lemm}
Let $ 1\le|\alpha|\le N, N\ge 3$. Then,
\begin{align}\label{microenergy2}
 \frac{d}{dt}\|&\p^\alpha g\|^2_{L^2_\ell(\RR^6)}
+||| \p^\alpha g |||^2_{\cB^0_\ell(\RR^6)}
\\ \le&\notag
\mathcal{E}^{1/2}\mathcal{D}+||\p^\alpha
g_2||_{L^2(\RR^6)}^2
+\|\nabla_x (a,b,c)\|_{H^{N-1}(\RR^3_x)}^2.
\end{align}
\end{lemm}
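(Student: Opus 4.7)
The proof proceeds by carrying out the computation that has already been set up in Step~2 above, now combined with the microscopic coercivity of Lemma~\ref{lemm2.2.3} together with the commutator and upper bound estimates of Propositions \ref{prop3.2.3} and \ref{prop3.2.4}. The plan is to take the $L^2(\RR^6)$ inner product of equation \eqref{g1-cauchy-problem} against $W^\ell\p^\alpha g$, integrate by parts in $v\cdot\nabla_x$ (which gives zero), and reorganize as
\[
\tfrac12\tfrac{d}{dt}\|W^\ell\p^\alpha g\|_{L^2(\RR^6)}^2
+\bigl(\cL(W^\ell\p^\alpha g),\,W^\ell\p^\alpha g\bigr)_{L^2(\RR^6)}
= G_1+G_2,
\]
with $G_1,G_2$ as defined above. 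The dissipative term on the left is then bounded below through Lemma~\ref{lemm2.2.3} applied fiberwise in $x$: since the null space $\mathcal{N}$ only sees the velocity variable, we get $(\cL(W^\ell\p^\alpha g),W^\ell\p^\alpha g)_{L^2}\gtrsim |||(\iI-\pP)W^\ell\p^\alpha g|||_{\cB^0_0}^2$.

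Next, the key observation is that the projection piece $\pP(W^\ell\p^\alpha g)$ is harmless when $|\alpha|\ge 1$. Indeed, $\pP(W^\ell\p^\alpha g)$ is a Maxwellian times a polynomial in $v$ whose coefficients are $L^2_v$-inner products of $\p^\alpha g$ against $W^\ell$-weighted Hermite-type functions; these coefficients are bounded in $L^2_x$ by $\|\p^\alpha g\|_{L^2(\RR^6)}$, which since $|\alpha|\ge 1$ splits as $\|\p^\alpha g_1\|_{L^2}+\|\p^\alpha g_2\|_{L^2}\lesssim \|\nabla_x(a,b,c)\|_{H^{N-1}(\RR^3_x)}+\|g_2\|_{H^N(\RR^3_x,L^2(\RR^3_v))}$. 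Hence
\[
|||(\iI-\pP)W^\ell\p^\alpha g|||_{\cB^0_0}^2
\ge \tfrac12|||\p^\alpha g|||_{\cB^0_\ell}^2
-\bigl(\|\nabla_x(a,b,c)\|_{H^{N-1}(\RR^3_x)}^2+\|g_2\|_{H^N(\RR^3_x,L^2(\RR^3_v))}^2\bigr),
\]
which produces the good dissipative term $|||\p^\alpha g|||^2_{\cB^0_\ell(\RR^6)}$ on the left and contributes absorbable error terms on the right.

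For $G_1$, Lemma~\ref{G1} (whose proof just precedes this statement) gives directly $|G_1|\leq \mathcal{E}^{1/2}\mathcal{D}$, using Proposition~\ref{prop3.2.3} on the mixed terms $G_1^{(12)},G_1^{(21)},G_1^{(22)}$ together with the elementary estimate \eqref{J11} and Lemma~\ref{abc2} for the purely macroscopic contribution $G_1^{(11)}$. For $G_2=([\cL,W^\ell]\p^\alpha g,W^\ell\p^\alpha g)_{L^2}$, split $\cL=\cL_1+\cL_2$. The commutator with $\cL_1$ is handled by \eqref{3.2.8}, which yields a bound $\lesssim (\|\p^\alpha g\|_{L^2_\ell}+|||\p^\alpha g|||_{\cB^{|\alpha|-1}_\ell})\cdot|||\p^\alpha g|||_{\cB^0_0}$; the $\cL_2$ term is handled by \eqref{3.2.7} applied twice (to $W^\ell\cL_2(\p^\alpha g)$ and to $\cL_2(W^\ell\p^\alpha g)$). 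Using the decomposition $g=g_1+g_2$ and the estimate $\|\p^\alpha g\|_{L^2_\ell}\lesssim \|\nabla_x(a,b,c)\|_{H^{N-1}(\RR^3_x)}+\|\p^\alpha g_2\|_{L^2_\ell}$ (valid since $|\alpha|\ge 1$), followed by Cauchy--Schwarz with a small parameter $\eta>0$, gives
\[
|G_2|\le \eta\,|||\p^\alpha g|||^2_{\cB^0_\ell(\RR^6)}
+C_\eta\bigl(\|\nabla_x(a,b,c)\|_{H^{N-1}(\RR^3_x)}^2+\|\p^\alpha g_2\|_{L^2(\RR^6)}^2\bigr).
\]
Absorbing the $\eta$-term into the left-hand side dissipation and summing with the previously obtained bound for $G_1$ yields \eqref{microenergy2}. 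The only delicate bookkeeping is choosing $\eta$ small enough (and the constant in front of $|||\p^\alpha g|||^2_{\cB^0_\ell}$ coming from the projection estimate) so that the dissipation absorbs all error terms of the form $|||\p^\alpha g|||_{\cB^0_\ell}^2$ with constants $<1$; this is routine since all constants are universal in $N,\ell$.
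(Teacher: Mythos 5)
Your proposal follows essentially the same route as the paper: take the $L^2$ inner product of \eqref{g1-cauchy-problem} against $W^\ell\p^\alpha g$, use the fiberwise coercivity of Lemma \ref{lemm2.2.3} and the harmlessness of $\pP(W^\ell\p^\alpha g)$ for $|\alpha|\ge 1$ to extract $|||\p^\alpha g|||^2_{\cB^0_\ell}$ as dissipation, bound $G_1$ by Lemma \ref{G1}, and bound $G_2$ via \eqref{3.2.7}--\eqref{3.2.8}. So the overall structure and lemma dependencies are identical to the paper's.

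One spot is stated more optimistically than it is justified: in the $G_2$ estimate you pass from the product $\|\p^\alpha g_2\|_{L^2_\ell}\cdot|||\p^\alpha g|||_{\cB^0_\ell}$ directly to $\eta|||\p^\alpha g|||^2_{\cB^0_\ell}+C_\eta\|\p^\alpha g_2\|^2_{L^2(\RR^6)}$ by ``Cauchy--Schwarz with a small parameter.'' Cauchy--Schwarz alone leaves $C_\eta\|\p^\alpha g_2\|^2_{L^2_\ell}$, i.e.\ with the weight still attached. To drop the weight you need the interpolation inequality $\|h\|_{L^2_\ell}\le\eta'\|h\|_{L^2_{\ell+s}}+C_{\eta'}\|h\|_{L^2}$ (this is exactly \eqref{norm}, which the paper records in Step 3), combined with $\|h\|_{L^2_{\ell+s}}\lesssim|||h|||_{\cB^0_\ell}$ from \eqref{3.1.0+2}, so the extra weighted piece is absorbed by the dissipation; you should state this extra step. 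Likewise, in the projection bound you keep the looser term $\|g_2\|^2_{H^N(\RR^3_x,L^2(\RR^3_v))}$ where the lemma asserts only $\|\p^\alpha g_2\|^2_{L^2(\RR^6)}$; the sharper bound is obtained by directly writing $\|\p^\alpha g\|^2_{L^2}\lesssim\|\nabla_x(a,b,c)\|^2_{H^{N-1}}+\|\p^\alpha g_2\|^2_{L^2}$ for $|\alpha|\ge 1$ instead of summing over all orders. Neither gap is serious, but as written the displayed inequality for $G_2$ is not a consequence of what you invoked.
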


\noindent
{\bf Step 3 .} We need also to estimate $W^\ell g_2$.
Apply $W^\ell (\iI-\pP)$ to the equation in \eqref{cauchy-problem} to have
\begin{align*}
\p_t(W^\ell g_2)&+ v\cdot\nabla_x (W^\ell g_2)
+ \cL(W^\ell g_2)
\\&
=W^\ell\Gamma(g,\,
g)+W^\ell[v\cdot\nabla_x,\, \pP]g+[\cL,\, W^\ell]g_2. \notag
\end{align*}
And then take the inner product with
$W^\ell g_2$ to get
\[
\frac{d}{dt}\|W^\ell g_2\|^2_{L^2(\RR^6)}+D_3\le H,
\]
where
\begin{align*}
 D_3&=\int_{\RR^3}||| (\iI-\pP)W^\ell g_2 |||^2dx
\\&\ge \frac 12|||  g_2 |||^2_{\cB^{0}_\ell(\RR^6)}-
C\|g_2\|_{L^2(\RR^3)},
\end{align*}
while
\begin{align*}
H=&H_1+H_2+H_3,
\\&H_1=(W^\ell\Gamma(g,g), W^\ell g_2)_{L^2(\RR^6)},
\\&H_2=(W^\ell[v\cdot\nabla_x,\, \pP]g,  W^\ell g_2)_{L^2(\RR^6)},
\\&H_3=([\cL,\, W^\ell]g_2, W^\ell g_2)_{L^2(\RR^6)}.
\end{align*}
\begin{align*}
H_1&=\sum_{i,j=2}(W^\ell \Gamma(g_i,\,g_j), W^\ell g_2)_{L^2(\RR^6)}
=\sum_{i,j=2}H_1^{(ij)}.
\end{align*}
Proceeding as in \eqref{G11},
\begin{align*}
|H^{(11)}|& \le
\|W^{2\ell}\Gamma(g_1,\,g_1)\|_{L^2(\RR^6)}    \| g_2\|_{L^2(\RR^6)}
\\&
\le \|\Big(\|W^{2\ell} \Gamma(g_1,\,g_1)\|_{L^2(\RR^3_v)}\Big)\|_{L^2(\RR^3_x)}
\| g_2\|_{L^2(\RR^6)}
\\& 
\sim \|(a,b,c)^2\|_{L^2(\RR^3_x)}\|W^{2\ell}
\Gamma(\psi_j,\psi_k)\|_{L^2(\RR^3_v)}
\| g_2\|_{L^2(\RR^6)},
\\ 
\|W^{2\ell}\Gamma&(\psi_j,\psi_k)\|_{L^2(\RR^3_v)}^2
=\int\Big(W^{2\ell}\int\int b(\cos\theta)\mu_*^{1/2}
\{ (\psi_j)_*' \psi_k'-(\psi_j)_*\psi_k\}dv_*d\sigma\Big)^2dv 
\\&\qquad 
=\int\mu W^{4\ell}\Big(\int\int b(\cos\theta)\mu_*
\{ (p_j)_*' p_k'-(p_j)_*p_k\}dv_*d\sigma\Big)^2dv<\infty.
\end{align*}
Then we have, by using Lemma \ref{abc2},
\begin{align*}\label{H111}
|H^{(11)}|\le \|&\nabla_x(a,b,c)\|_{L^2(\RR^3_x)}\|(a,b,c)\|_{H^{1}(\RR^3_x)}\| g_2\|_{L^2(\RR^6)}.
\end{align*}

On the other hand, we have
\begin{align*}
||| g_1|||_{\cB^{0}_\ell(\RR^6)}^2&=
\int_{\RR^3_x} |||W^\ell
 g_1(x,\, \cdot\,)|||^2dx
\le \|(a,b,c)\|^2_{L^2(\RR^3_x)}.
\end{align*}
This fact and  Proposition \ref{prop3.2.3} yield
 \begin{align*}
|H_1^{(12)}|\le &||g_1||_{H^N_\ell(\RR^6)}\,\,
||| g_2|||_{\cB^{N}_\ell(\RR^6)}\,\, |||W^\ell g_2|||_{\cB^0_0(\RR^6)}
\\&
 \leq  ||(a,b,c)||_{H^N(\RR^3_x)}\,\,
 ||| g_2|||_{\cB^{N}_\ell(\RR^6)}^2,
\\
|H_1^{(21)}|\le&||g_2||_{H^N_\ell(\RR^6)}\,\,
||| g_1|||_{\cB^{N}_\ell(\RR^6)}\,\,|||W^\ell g_2|||_{\cB^0_0(\RR^6)}
\\&
 \leq \|g_2\|_{H^N_\ell(\RR^6)} ||(a,b,c)||_{H^N(\RR^3_x)}\,\,
 ||| g_2|||_{\cB^0_\ell(\RR^6)},
 \\
 |H_1^{(22)}|\le&||g_2||_{H^N_\ell(\RR^6)}\,\,
||| g_2|||_{\cB^{N}_\ell(\RR^6)}\,\,
||| W^\ell g_2|||_{\cB^0_0(\RR^6)}
\\&
 \leq \|g_2\|_{H^N_\ell(\RR^6)}
 ||| g_2|||_{\cB^N_\ell(\RR^6)}^2.
 \end{align*}
And $H_2$ is evaluated as follows
\begin{align*}
|H_2|&\le|(W^{2\ell}[v\cdot\nabla_x,\, \pP]g,   g_2)_{L^2(\RR^6)}|\le
\|\nabla_xg\|_{L^2(\RR^6)}\|g_2\|_{L^2(\RR^6)}
\\&\le
\|\nabla_x(a,b,c)\|_{L^2(\RR^3)}^2+\|g_2\|_{H^{1}(\RR^3_{x};L^2(\RR^3_v))}^2.
\end{align*}

Finally, in view of Proposition \ref{prop3.2.4} ,
\begin{align*}
|H_3|&\le
\left|\Big([\cL_1,W^\ell \,] g_2,W^\ell
g_2)_{L^2(\RR^6)}\right|
\\&\hspace*{0.5cm}+
\left|\Big(W^\ell \,\cL_2( g_2),\,\,
W^\ell g_2\Big)_{L^2(\RR^6)}\right|+
\left|\Big( \cL_2(W^\ell  g_2),\,\,
W^\ell  g_2\Big)_{L^2(\RR^6)}\right|
\\&\le
 \|
g_2\|_{L^2_\ell(\RR^6)}|||
g_2|||_{\cB^0_\ell(\RR^6)}\,.
\end{align*}
Since it holds by interpolation inequality that
\begin{equation}\label{norm}
\|g_2\|_{L^2_\ell(\RR^6)}\le \eta
\|g_2\|_{L^2_{\ell+s} (\RR^6)}+C_\eta\|g_2\|_{L^2(\RR^6)}
\le \eta |||g_2|||_{\cB^0_\ell(\RR^6)}+C_\eta\|g_2\|_{L^2(\RR^6)},
\end{equation}
for any small enough $\eta>0$, we have established
\begin{lemm}
\begin{align}\label{microenergy3}
 \frac{d}{dt}\|&g_2\|^2_{L^2_\ell(\RR^6)}
+||| g_2 |||^2_{\cB^0_\ell(\RR^6)}
\\ \le&\notag
\mathcal{E}^{1/2}\mathcal{D}+||
g_2||_{L^2(\RR^6)}^2
+\|\nabla_x (a,b,c)\|_{L^2(\RR^3_x)}^2.
 \end{align}
\end{lemm}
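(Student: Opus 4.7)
The plan is to apply the operator $W^\ell(\iI-\pP)$ to the Boltzmann equation in \eqref{cauchy-problem} and test against $W^\ell g_2$ in $L^2(\RR^6)$, producing the identity
\[
\tfrac{1}{2}\tfrac{d}{dt}\|W^\ell g_2\|^2_{L^2(\RR^6)} + \Big(\cL(W^\ell g_2),\,W^\ell g_2\Big)_{L^2(\RR^6)} = H_1 + H_2 + H_3,
\]
where $H_1$, $H_2$, $H_3$ are the inner products of $W^\ell g_2$ against $W^\ell\Gamma(g,g)$, $W^\ell[v\cdot\nabla_x,\pP]g$, and $[\cL,W^\ell]g_2$, respectively; the transport term drops because $v$ and $W^\ell$ are $x$-independent. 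The first critical step is the coercive lower bound on the dissipation: since Lemma \ref{lemm2.2.3} applies only to inputs in $\mathcal{N}^\perp$, I decompose $W^\ell g_2 = \pP(W^\ell g_2) + (\iI - \pP)(W^\ell g_2)$; using Lemma \ref{lemm3.1.1} fibrewise in $x$ to bound $|||\pP(W^\ell g_2)|||^2$ pointwise by $\|g_2(x,\cdot)\|^2_{L^2_v}$ gives an overall lower bound $\tfrac{1}{2}|||g_2|||^2_{\cB^0_\ell(\RR^6)} - C\|g_2\|^2_{L^2(\RR^6)}$.

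Next, I split $H_1$ using $g = g_1 + g_2$ into four pieces $H_1^{(ij)}$. For the macroscopic-macroscopic piece $H_1^{(11)}$, since $\Gamma(\psi_j,\psi_k)$ is Maxwellian-weighted Schwartz for $\psi_j,\psi_k \in \mathcal{N}$, direct Cauchy-Schwarz combined with Lemma \ref{abc2} yields $|H_1^{(11)}| \lesssim \|\nabla_x(a,b,c)\|_{L^2}\|(a,b,c)\|_{H^1}\|g_2\|_{L^2}$, which is absorbable into $\mathcal{E}^{1/2}\mathcal{D}$. For the three mixed terms, I apply Proposition \ref{prop3.2.3} (the $\beta = 0$ case being covered by Proposition \ref{cor3.2.3}) together with the elementary bound $|||g_1|||_{\cB^{3}_\ell(\RR^6)} \lesssim \|(a,b,c)\|_{H^3(\RR^3_x)}$, yielding factors of the form $\|g\|_{H^3_\ell}|||g_2|||^2_{\cB^0_\ell}$ or $\|g_2\|_{H^3_\ell}\|(a,b,c)\|_{H^3}|||g_2|||_{\cB^0_\ell}$, all absorbable into $\mathcal{E}^{1/2}\mathcal{D}$ by Young's inequality and the trivial embedding $|||g_2|||_{\cB^0_\ell} \le |||g_2|||_{\cB^N_\ell}$.

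The commutator term $H_2$ reduces to an integral pairing $\nabla_x(a,b,c)$ against Maxwellian polynomials and $g_2$, giving $|H_2| \lesssim \|\nabla_x(a,b,c)\|^2_{L^2} + \|g_2\|^2_{L^2}$ by Cauchy-Schwarz. The term $H_3$ is handled by Proposition \ref{prop3.2.4}, using \eqref{3.2.7} for the $\cL_2$ contribution and \eqref{3.2.8+1} for the $\cL_1$ commutator at $|\beta| = 0$, to obtain $|H_3| \lesssim \|g_2\|_{L^2_\ell}|||g_2|||_{\cB^0_\ell}$; the interpolation \eqref{norm} and Young's inequality convert this to $\eta|||g_2|||^2_{\cB^0_\ell} + C_\eta\|g_2\|^2_{L^2}$, and choosing $\eta$ small enough to be absorbed into $\tfrac{1}{2}|||g_2|||^2_{\cB^0_\ell}$ completes the proof. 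The main obstacle is the coercivity step: the non-commutation of $W^\ell$ with $\pP$ is what forces a residual $\|g_2\|^2_{L^2}$ term on the right-hand side, and it is essential that this residual carry no velocity weight, for otherwise iterating with \eqref{microenergy2} would fail to close the macro-micro energy scheme.
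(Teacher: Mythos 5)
Your overall strategy is the same as the paper's: apply $W^\ell(\iI-\pP)$ to the equation, test against $W^\ell g_2$, bound the dissipation from below via Lemma \ref{lemm2.2.3} after peeling off $\pP(W^\ell g_2)$ with Lemma \ref{lemm3.1.1}, split $H_1$ into four pieces handled by Lemma \ref{abc2} (macro-macro) and Proposition \ref{prop3.2.3} (the rest), and control $H_3$ via Proposition \ref{prop3.2.4} and the interpolation \eqref{norm}. All of that matches the paper's Step 3.

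There is one genuine inaccuracy, in your treatment of $H_2$. You claim the commutator $[v\cdot\nabla_x,\pP]g$ ``reduces to an integral pairing of $\nabla_x(a,b,c)$ against Maxwellian polynomials and $g_2$,'' but this is not the whole story: writing $g=g_1+g_2$, one has
\[
[v\cdot\nabla_x,\pP]g=(\iI-\pP)\bigl(v\cdot\nabla_x g_1\bigr)-\pP\bigl(v\cdot\nabla_x g_2\bigr),
\]
and the second term produces $\nabla_x$ of $v$-moments of $g_2$, which does not reduce to $(a,b,c)$. Cauchy--Schwarz therefore gives $|H_2|\le\|\nabla_x g\|_{L^2(\RR^6)}\|g_2\|_{L^2(\RR^6)}$, and after Young's inequality one picks up $\|\nabla_x g_2\|^2_{L^2(\RR^6)}$ in addition to $\|\nabla_x(a,b,c)\|^2_{L^2(\RR^3_x)}+\|g_2\|^2_{L^2(\RR^6)}$. (Note that the $\pP(v\cdot\nabla_x g_2)$ part does \emph{not} vanish in the pairing against $W^{2\ell}g_2$, because the weight $W^{2\ell}$ destroys the orthogonality $\mathcal N\perp g_2$.) The paper itself records the estimate with $\|g_2\|^2_{H^1(\RR^3_x;L^2(\RR^3_v))}$ on the right, which is strictly larger than the $\|g_2\|^2_{L^2(\RR^6)}$ appearing in the statement \eqref{microenergy3}; the extra $\|\nabla_x g_2\|^2_{L^2}$ is harmless because it is dominated by the dissipation $|||g_2|||^2_{\cB^N_0(\RR^6)}$ appearing in \eqref{microenergy11} when the estimates are linearly combined in Section~\ref{section7}, but your argument as written does not justify dropping it, and the reasoning behind it (that the commutator only sees $\nabla_x(a,b,c)$) is incorrect.
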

\noindent
{\bf Step 4 .} Let
\[
\p^\beta=\p^\beta_{x,v}=\p^\alpha \p^\gamma,
\quad \p^\alpha=\p_x^\alpha, \quad
\p^\gamma=\p^\gamma_v,\qquad |\beta|=|\alpha|+|\gamma|\le N, \gamma \ne 0,  \ N\ge 3,
\]
and apply $W^\ell\p^\beta(\iI-\pP)$ to \eqref{g1-cauchy-problem} to have
\begin{align*}
 \p_t (W^\ell\p^\beta &g_2)+ v\cdot\nabla_x
(W^\ell\p^\beta g_2) + \cL (W^\ell\p^\beta g_2)
\\=&\notag
W^\ell\p^\beta \Gamma(g,\,g)+
[v\cdot\nabla_x, W^\ell\p^\beta]g_2
\\
&- W^\ell \p^\beta [\pP,\,\, v\cdot \nabla]g \notag\\
&+\notag
[\mathcal{L},\, W^\ell\p^\beta]g_2
+
W^\ell\p^\beta (\p_t+v\cdot\nabla_x )g_1.
\end{align*}
 And then take the $L^2(\RR^6_{x, v})$ inner product with $W^\ell\p^\beta g_2$ to
 get
\begin{equation*}\label{energy1+}
\frac 12 \frac{d}{dt}\|\p^\beta g_2\|^2_{L^2_\ell(\RR^6)}+D_4\le K.
\end{equation*}
Here $D_4$ is a dissipation rate given by
\begin{align*}
 D_4&=\int_{\RR^3}||| (\iI-\pP)W^\ell\p^\beta g_2 |||^2dx
\\&\ge \frac 12||| \p^\beta g_2 |||^2_{\cB^{0}_\ell(\RR^6)}-
C|| \p^\alpha g_2 ||^2_{L^2(\RR^6)}.
\end{align*}
where we used, with $\psi\in \mathcal{N}$ and
$\tilde{\psi}=(-1)^{|\gamma|}\p^\gamma(W^\ell\psi)$,
\[
|||\pP W^\ell\p^\beta g_2|||^2 =|||\p^\alpha (\psi, W^\ell\p^\gamma g_2)_{L^2(\RR^3_v)}\psi|||^2
=\|(\tilde{\psi},\p^\alpha g_2)_{L^2_{v}}\|_{L^2(\RR^3_x)}^2|||\psi|||^2
\le ||\p^\alpha g_2||_{L^2(\RR^6_{x, v})}^2.
\]
On the other hand, $K$ is given by
\begin{align*}
K=& (W^\ell\p^\beta \Gamma(g,\,g), W^\ell\p^\beta g_2)_{L^2(\RR^6)}
\\&+
([v\cdot\nabla_x,\, W^\ell\p^\beta]g_2, W^\ell\p^\beta g_2)_{L^2(\RR^6)}
- (W^\ell \p^\beta [\pP,\, v\cdot \nabla]g, W^\ell\p^\beta g_2)_{L^2(\RR^6)}
\\&+
([\mathcal{L},\, W^\ell\p^\beta]g_2, W^\ell\p^\beta g_2)_{L^2{(\RR^6)}}
\\&+
(W^\ell\p^\beta (\p_t+v\cdot\nabla_x )g_1, W^\ell\p^\beta g_2)_{L^2(\RR^6)}
\\=&
K_1+K_2+K_3+K_4+ K_5.
\end{align*}
\begin{lemm}\label{K1}
Let $N\ge 3$. Then $
|K_1|\le  \mathcal{E}^{1/2} \mathcal{D}\,.
$
\end{lemm}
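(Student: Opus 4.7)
The plan is to mirror the strategy used for the analogous terms $J$, $G_1$, and $H_1$ in Steps 1--3. I decompose via the macro--micro splitting $g=g_1+g_2=\pP g+(\iI-\pP)g$ to write
\[
K_1=\sum_{i,j=1}^{2}K_1^{(ij)},\qquad K_1^{(ij)}=\Big(W^{\ell}\p^{\beta}\Gamma(g_i,g_j),\ W^{\ell}\p^{\beta}g_2\Big)_{L^{2}(\RR^6)},
\]
and treat each of the four pieces separately. The three terms involving at least one microscopic factor will be handled via Proposition \ref{prop3.2.3}, while the purely macroscopic piece $K_1^{(11)}$ requires a direct Leibniz-type computation in order to avoid picking up zeroth-order $(a,b,c)$ contributions that do not appear in the dissipation rate $\mathcal{D}$.

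For $K_1^{(12)}$, $K_1^{(21)}$, and $K_1^{(22)}$, Proposition \ref{prop3.2.3} combined with the observation
\[
|||W^{\ell}\p^{\beta}g_2|||_{\cB^{0}_{0}(\RR^6)}\ \le\ C\,|||g_2|||_{\cB^{N}_{\ell}(\RR^6)}\ \le\ C\mathcal{D}^{1/2}
\]
(which follows from $|\beta|\le N$ and the very definition of the $\cB^{N}_{\ell}$-norm) yields
\[
|K_1^{(ij)}|\ \le\ C\,\|g_i\|_{H^{N}_{\ell}(\RR^6)}\,|||g_j|||_{\cB^{N}_{\ell}(\RR^6)}\,\mathcal{D}^{1/2}.
\]
The Schwartz structure of $\pP g$ gives $\|g_1\|_{H^{N}_{\ell}}+|||g_1|||_{\cB^{N}_{\ell}}\le C\|(a,b,c)\|_{H^{N}(\RR^3_x)}\le C\mathcal{E}^{1/2}$, and by definition $\|g_2\|_{H^{N}_{\ell}}\le C\mathcal{E}^{1/2}$, $|||g_2|||_{\cB^{N}_{\ell}}\le C\mathcal{D}^{1/2}$. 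Inserting these bounds produces, for each piece, either $C\mathcal{E}^{1/2}\mathcal{D}$ or $C\mathcal{E}\,\mathcal{D}^{1/2}$, and the latter is absorbed into the former via Young's inequality together with the a priori smallness $\mathcal{E}\le M_0$.

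For $K_1^{(11)}$ I proceed exactly as in the treatment of $J^{(11)}$ and $H_1^{(11)}$. Applying Leibniz to $W^{\ell}\p^{\beta}\Gamma(g_1,g_1)$ and using that $v$-derivatives of $\mu^{1/2}$-type factors remain Schwartz in $v$, every resulting summand has the form $\p^{\alpha_1}_x(a,b,c)\,\p^{\alpha_2}_x(a,b,c)$ multiplied by a $v$-factor $W^{\ell}\Gamma(\psi_j,\psi_k)$ with $\psi_j,\psi_k\in\mathcal{N}$ and $|\alpha_1|+|\alpha_2|\le N$. Since $\|W^{\ell}\Gamma(\psi_j,\psi_k)\|_{L^{2}(\RR^3_v)}$ is a finite constant, Cauchy--Schwarz in $\RR^3_x$ and Lemma \ref{abc2} give
\[
|K_1^{(11)}|\ \le\ C\,\|\nabla_x(a,b,c)\|_{H^{N-1}(\RR^3_x)}\|(a,b,c)\|_{H^{N-1}(\RR^3_x)}\,\|W^{\ell}\p^{\beta}g_2\|_{L^{2}(\RR^6)}\ \le\ C\mathcal{E}^{1/2}\mathcal{D},
\]
where the final step uses $\|\nabla_x(a,b,c)\|_{H^{N-1}}\le\mathcal{D}^{1/2}$, $\|(a,b,c)\|_{H^{N-1}}\le\mathcal{E}^{1/2}$, and $\|W^{\ell}\p^{\beta}g_2\|_{L^{2}}\le |||g_2|||_{\cB^{N}_{\ell}}\le\mathcal{D}^{1/2}$.

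The principal technical obstacle is the bookkeeping between the instant-energy norm $\|\cdot\|_{H^{N}_{\ell}}$, which contains zeroth-order $(a,b,c)$ contributions, and the dissipative non-isotropic norm $|||\cdot|||_{\cB^{N}_{\ell}}$, which does not. In particular, the crucial step is verifying the weighted embedding $|||W^{\ell}\p^{\beta}g_2|||_{\cB^{0}_{0}}\le C|||g_2|||_{\cB^{N}_{\ell}}$ that converts the test-function factor directly into the dissipation rate $\mathcal{D}^{1/2}$; this is precisely the role for which the non-isotropic norm was designed. Once this conversion is in place, the remaining estimates reduce to routine applications of Proposition \ref{prop3.2.3}, Young's inequality, and the a priori smallness $\mathcal{E}\le M_0$.
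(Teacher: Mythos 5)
Your decomposition $K_1=\sum_{i,j}K_1^{(ij)}$, the use of Proposition \ref{prop3.2.3} for the terms involving a microscopic factor, and the Leibniz plus Lemma \ref{abc2} treatment of $K_1^{(11)}$ all match the paper's proof. The embedding $|||W^\ell\p^\beta g_2|||_{\cB^0_0}\le|||g_2|||_{\cB^N_\ell}$ that converts the test-function factor into $\mathcal{D}^{1/2}$ is also exactly the one the paper relies on.

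The gap is in your handling of $K_1^{(21)}$. With the choices you explicitly list, namely $\|g_2\|_{H^N_\ell}\le C\mathcal{E}^{1/2}$ and $|||g_1|||_{\cB^N_\ell}\le C\mathcal{E}^{1/2}$, you obtain
\[
|K_1^{(21)}|\le C\,\mathcal{E}^{1/2}\cdot\mathcal{E}^{1/2}\cdot\mathcal{D}^{1/2}=C\,\mathcal{E}\,\mathcal{D}^{1/2},
\]
and you then assert this is absorbed into $\mathcal{E}^{1/2}\mathcal{D}$ by Young plus $\mathcal{E}\le M_0$. That absorption does not work: any Young splitting of $\mathcal{E}\mathcal{D}^{1/2}$ produces either a pure $\mathcal{E}$-power term with no factor of $\mathcal{D}$, or requires $\mathcal{E}\lesssim\mathcal{D}$, neither of which holds ($\mathcal{E}$ contains the zeroth-order $\|(a,b,c)\|_{L^2}^2$ piece that $\mathcal{D}$ does not control). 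In the final a priori estimate a residual $\mathcal{E}$-term without a $\mathcal{D}$-factor cannot be moved to the left-hand side, so the closure fails. The correct step — and the one implicit in the paper's proof — is to bound the \emph{first} factor $\|g_2\|_{H^N_\ell}$ by $C|||g_2|||_{\cB^N_\ell}\le C\mathcal{D}^{1/2}$ rather than by $C\mathcal{E}^{1/2}$. This follows from Lemma \ref{lemm2.2.4} (equivalently, the left inequality in (\ref{3.1.0+2})), since the non-isotropic norm dominates the weighted $L^2$ norm. You already invoke exactly this embedding for the last factor $\|W^\ell\p^\beta g_2\|_{L^2}$ in your $K_1^{(11)}$ estimate, so the fix is simply to apply it consistently; with it, $|K_1^{(21)}|\le C\mathcal{D}^{1/2}\cdot\mathcal{E}^{1/2}\cdot\mathcal{D}^{1/2}=C\mathcal{E}^{1/2}\mathcal{D}$ directly, and the absorption step becomes unnecessary.
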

\begin{proof} First, write
\begin{align*}
K_1&=\sum_{i,j=2}(W^\ell\p^\beta \Gamma(g_i,\,g_j), W^\ell\p^\beta g_2)_{L^2(\RR^6)}
=\sum_{i,j=2}K_1^{(ij)}.
\end{align*}
In view of Lemma \ref{abc2},
\begin{align*}
|K_1^{(11)}|&=\big |(W^\ell\p^\beta\Gamma(g_1,\,g_1), W^\ell\p^\beta g_2)_{L^2(\RR^6)}\big|
 \\&\le B_2\|\p^\alpha   (a,b,c)^2\|_{L^2(\RR^3_x)}\|W^\ell\p^\beta g_2\|_{L^2(\RR^6)}
 \\& \le  \|\nabla_x (a,b,c)\|_{H^{N-1}_x}\|(a,b,c)\|_{H^{N-1}_x}
 \|W^\ell\p^\beta g_2\|_{L^2(\RR^6)},\quad
 \end{align*}
 where
 \begin{align*}
 B_2^2 \sim &\|W^\ell\p^{\gamma}_v\Gamma(\psi_j,\psi_k)\|_{L^2(\RR^3_v)}^2
\\
&=\int\big(W^\ell\int\int b(\cos\theta)(\p^{\gamma_1}_v\mu)_*^{1/2}
\\&\qquad \times \big\{ (\p^{\gamma_2}_v\psi_j)_*' (\p^{\gamma_3}_v\psi_k)'
-(\p^{\gamma_2}_v\psi_j)_* (\p^{\gamma_3}_v\psi_k)\big\}dv_*d\sigma\big)^2dv
\\&
 =\int\mu W^{2\ell}\Big(\int\int b(\cos\theta)\mu_*q_*
\big\{ (q_j)_*' q_k'-(q_j)_*q_k\big\}dv_*d\sigma\Big)^2dv<\infty.
\end{align*}
Here, $q,q_j,q_k$ are polynomials of $v$.

On the other hand,
we have
\[
||| g_1|||_{\cB^{N}_\ell(\RR^6)}^2=
 \sum_{|\beta| \le N} \int_{\RR^3_x} |||W^\ell\,
\p^\beta g_1(x,\, \cdot\,)|||^2dx
\le \|(a,b,c)\|^2_{H^N(\RR^3_x)}.
\]
This point and  Proposition \ref{prop3.2.3}  yield
 \begin{align*}
|K_1^{(12)}|\le &||g_1||_{H^N_\ell(\RR^6)}\,\,
||| g_2|||_{\cB^{N}_\ell(\RR^6)}\,\, ||| W^\ell\p^\beta g_2|||_{\cB^0_0(\RR^6)}
\\&
 \leq  ||(a,b,c)||_{H^N(\RR^3_x)}\,\,
 ||| g_2|||_{\cB^{N}_\ell(\RR^6)}^2,
\\
|K_1^{(21)}|=&||g_2||_{H^N_\ell(\RR^6)}\,\,
||| g_1|||_{\cB^{N}_\ell(\RR^6)}\,\, ||| W^\ell\p^\beta g_2|||_{\cB^0_0(\RR^6)}
\\&
 \leq \|g_2\|_{H^N_\ell(\RR^6)} ||(a,b,c)||_{H^N(\RR^3_x)}\,\,
 ||| g_2|||_{\cB^{N}_\ell(\RR^6)},
 \\
 |K_1^{(22)}|=&||g_2||_{H^N_\ell(\RR^6)}\,\,
||| g_2|||_{\cB^{N}_\ell(\RR^6)}\,\, ||| W^\ell\p^\beta g_2|||_{\cB^0_0(\RR^6)}
\\&
 \leq \|g_2\|_{H^N_\ell(\RR^6)}
 ||| g_2|||_{\cB^{N}_\ell(\RR^6)}^2.
 \end{align*}
 Now the proof of the lemma is completed.
 \end{proof}

$K_2,K_4, K_5$ are estimated as follows. We have, for
$|\beta|=|\alpha|+|\gamma|\le N, \gamma\not=0$,
 \begin{align*}
 |K_2|=&\big|([v\cdot\nabla_x, W^\ell\p^\beta]g_2, W^\ell\p^\beta g_2)_{L^2(\RR^6)}
 \big|
 \\&
 \le \|W^\ell\p_x^{\alpha+1}\p_v^{\gamma-1}g_2\|_{L^2(\RR^6)}
\|W^\ell\p^\beta g_2\|_{L^2(\RR^6)}
\\
&\le  C_\eta\|W^\ell\p_x^{\alpha+1}\p_v^{\gamma-1}g_2\|_{L^2(\RR^6)}^2+\eta
\|W^\ell\p^\beta g_2\|_{L^2(\RR^6)}^2.
\end{align*}

Note that
\begin{align*}
|K_3|=&|(W^\ell \p^\beta [\pP,\, v\cdot \nabla]g,\,
W^\ell\p^\beta g_2)_{L^2(\RR^6)},W^\ell\p^\beta g_2)_{L^2(\RR^6)}|
\\&
\le |(\p^\gamma \Big\{W^{2\ell} \p^\beta [\pP,\, v\cdot \nabla]g\Big\},
\p^\alpha g_2)_{L^2(\RR^6)}|
\\&
\le\Big(
\|\nabla_x\p^\alpha (a,b,c)\|_{L^2(\RR^3_x)}+\|\nabla_x\p^\alpha g_2\|_{L^2(\RR^6_{x, v})}\Big)
\|\p^\alpha  g_2\|_{L^2(\RR^6_{x, v})}
\\&\le
\|\nabla_x (a,b,c)\|_{H^{N-1}(\RR^3_x)}^2
+\|g_2\|_{H^N(\RR^3_x, L^2(\RR^3_v))}^2.
\end{align*}

In view of Proposition \ref{prop3.2.4},
\begin{align*}
|K_4|=&\big |([\mathcal{L},\, W^\ell\p^\beta]g_2, W^\ell\p^\beta g_2)_{L^2(\RR^6)}\big |
\\&\le
\left|\Big([\cL_1,\, W^\ell \,\partial^\beta_{x, v}] g_2,W^\ell
\,\partial^\beta
g_2)_{L^2(\RR^6)}\right|
\\&\quad+
\left|\Big(W^\ell \,\partial^\beta_{x, v} \cL_2(g_2),\,\,
W^\ell\p^\beta g_2\Big)_{L^2(\RR^6)}\right|+
\left|\Big( \cL_2(W^\ell \,\partial^\beta_{x, v}g_2),\,\,
W^\ell\p^\beta g_2\Big)_{L^2(\RR^6)}\right|
\\&\le
\Big( ||
g_2||_{H^{|\beta|}_\ell(\RR^6)}+ |||
g_2|||_{\cB^{|\beta|-1}_\ell(\RR^6)}\, \Big)\,\, |||
W^\ell\p^\beta g_2|||_{\cB^0_0(\RR^6)}
\end{align*}
Hence
\begin{align*}
|K_4|&\le C_\eta \Big(||g_2||_{H^{|\beta|}_\ell(\RR^6)}^2+
|||g_2|||_{\cB^{|\beta|-1}_\ell(\RR^6)}^2\Big)+
\eta|||W^\ell\p^\beta g_2|||^2_{\cB^0_0(\RR^6)}
\end{align*}
Finally, recalling \eqref{ptabc},
\begin{align*}
|K_5|=&\big |(W^\ell\p^\beta (\p_t+v\cdot\nabla_x )g_1,W^\ell\p^\beta g_2)_{L^2(\RR^6)}\big|
\\&
\le |(\p^\gamma \Big\{W^{2\ell}\p^\beta (\p_t+v\cdot\nabla_x )g_1\Big\},
\p^\alpha g_2)_{L^2(\RR^6)}|
\\&
\le\|(\p^\alpha(\p_t  +\nabla_x)(a,b,c)\|_{L^2(\RR^3_x)}
\|\p^\alpha g_2\|_{L^2(\RR^6)},\quad (|\alpha|\le N-1,|\gamma|\ge 1)
\\&
\le\Big(
\|\nabla_x\p^\alpha (a,b,c)\|_{L^2(\RR^3_x)}+\|\nabla_x\p^\alpha g_2\|_{L^2(\RR^6_{x, v})}\Big)
\|\p^\alpha  g_2\|_{L^2(\RR^6_{x, v})}
\\&\le
\|\nabla_x (a,b,c)\|_{H^{N-1}_x}^2
+\|g_2\|_{H^N(\RR^3_x, L^2(\RR^3_v))}^2.
\end{align*}
Now using \eqref{norm} we conclude the
\begin{lemm}
Let $|\beta|=|\alpha+\gamma|\le N, |\alpha|\le N-1, |\gamma|\ge 1, N\ge 3$. Then,
\begin{align}\label{microenergy4}
 \frac{d}{dt}\|& \p^\beta g_2\|^2_{L^2_\ell(\RR^6)}+||| \p^\beta g_2 |||^2_{\cB^{0}_\ell(\RR^6)}
\\ \le&\notag
\mathcal{E}^{1/2}\mathcal{D}+||g_2||_{H^{|\beta|}_\ell(\RR^6)}^2
\\&+\notag
||| g_2|||_{\cB^{|\alpha|+|\gamma|-1}_\ell(\RR^6)}^2+\|\nabla_x (a,b,c)\|_{H^{N-1}(\RR^3_x)}^2
+\|g_2\|_{H^N(\RR^3_x, L^2(\RR^3_v))}^2.
 \end{align}
\end{lemm}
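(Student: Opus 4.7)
The plan is to produce the energy identity for $W^\ell\p^\beta g_2$ and then plug in the commutator and source estimates. Apply $W^\ell\p^\beta(\iI-\pP)$ to the Boltzmann perturbation equation, taking the $L^2(\RR^6)$ inner product with $W^\ell\p^\beta g_2$. The transport term $v\cdot\nabla_x(W^\ell\p^\beta g_2)$ drops out by integration by parts (since it produces a divergence), so the time derivative side yields $\tfrac12\tfrac{d}{dt}\|\p^\beta g_2\|^2_{L^2_\ell(\RR^6)}$, while the $\cL$ term, combined with Lemma \ref{lemm2.2.3} and the ``$\pP$ vs.~$\iI-\pP$'' decomposition of Lemma \ref{lemm3.1.1}, gives a coercive contribution $|||(\iI-\pP)W^\ell\p^\beta g_2|||^2_{\cB^0_0}$. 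This in turn dominates $\tfrac12|||\p^\beta g_2|||^2_{\cB^0_\ell(\RR^6)}$ up to a $\|\p^\alpha g_2\|^2_{L^2(\RR^6)}$ error, because of the estimate
\[
|||\pP\,W^\ell\p^\beta g_2|||^2 \leq \|\p^\alpha g_2\|^2_{L^2(\RR^6)},
\]
which uses $\gamma\neq 0$ to swap $\p^\gamma_v$ onto the Maxwellian factor in $\pP$, leaving only $\p^\alpha_x$ on $g_2$ (a term absorbable by $\|g_2\|^2_{H^{N}(\RR^3_x;L^2(\RR^3_v))}$).

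The right-hand side is exactly the sum $K = K_1+K_2+K_3+K_4+K_5$ already isolated in Step~4. For $K_1$ I would use the $g=g_1+g_2$ splitting: the $(1,1)$ piece reduces via Lemma \ref{abc2} and the explicit Maxwellian-weighted bound on $\|W^\ell\p^\gamma_v \Gamma(\psi_j,\psi_k)\|_{L^2_v}$ to a product $\|\nabla_x(a,b,c)\|\cdot\|(a,b,c)\|\cdot\|W^\ell\p^\beta g_2\|$, and the remaining three pieces are handled by Proposition \ref{prop3.2.3}, each giving a factor $\mathcal{E}^{1/2}\mathcal{D}$ after Cauchy--Schwarz. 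For $K_2$, the commutator $[v\cdot\nabla_x,W^\ell\p^\beta]$ with $\gamma\neq 0$ trades one $v$-derivative for one $x$-derivative, producing $\|W^\ell\p_x^{\alpha+1}\p_v^{\gamma-1}g_2\|_{L^2}$; since $|\alpha|+1\leq N$, this is $\leq\|g_2\|_{H^{|\beta|}_\ell}$. $K_3$ is controlled by moving $\p^\gamma_v$ onto the Maxwellian weights inside $\pP$ and using that $[\pP,v\cdot\nabla_x]g = (v\cdot\nabla_x)\pP g - \pP(v\cdot\nabla_x)g$ is a polynomial in $v$ times $\sqrt\mu$ with coefficients built from $\nabla_x(a,b,c)$ and $\nabla_x g_2$; $K_5$ is handled analogously using $\partial_t g_1$ expressed through \eqref{ptabc}. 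For $K_4$ apply directly Proposition \ref{prop3.2.4} (the version with loss of one differentiation order) to obtain $\bigl(\|g_2\|_{H^{|\beta|}_\ell}+|||g_2|||_{\cB^{|\beta|-1}_\ell}\bigr)\cdot|||W^\ell\p^\beta g_2|||_{\cB^0_0}$.

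After these substitutions, all terms with $|||\p^\beta g_2|||_{\cB^0_\ell}$ on the right carry an extractable factor that can be split via Cauchy--Schwarz with a small parameter $\eta>0$ and absorbed into the coercive left-hand side. The remaining lower-order contribution $\|g_2\|^2_{L^2_\ell(\RR^6)}$ produced by the $\cL$-coercivity error is tamed by the interpolation inequality \eqref{norm}, which trades $\|g_2\|_{L^2_\ell}$ for $\eta|||g_2|||_{\cB^0_\ell}+C_\eta\|g_2\|_{L^2}$; the first piece is absorbed, the second goes into $\|g_2\|^2_{H^N(\RR^3_x;L^2(\RR^3_v))}$ on the right side of \eqref{microenergy4}.

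The genuinely delicate step is the $K_2$ commutator, which is the reason for the technical restriction $|\alpha|\leq N-1$ together with $|\gamma|\geq 1$: we must not lose regularity beyond what the energy norm $\mathcal{E}$ allows, and the shifting of a $v$-derivative onto an $x$-derivative is what makes the balance work. Everything else is linear bookkeeping and repeated use of Cauchy--Schwarz with small-parameter absorption.
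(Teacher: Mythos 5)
Your proposal follows the paper's proof essentially line by line: the same energy identity obtained by testing with $W^\ell\partial^\beta g_2$, the same lower bound on the dissipation $D_4$ via the estimate $|||\pP\,W^\ell\partial^\beta g_2|||^2\le\|\partial^\alpha g_2\|^2_{L^2(\RR^6)}$ (which exploits $\gamma\neq 0$ exactly as you note), and the same split of the right-hand side into $K_1,\dots,K_5$ with $K_1$ handled through Lemma~\ref{abc2} and Proposition~\ref{prop3.2.3}, $K_2$ by the commutator that trades a $v$-derivative for an $x$-derivative, $K_3$ and $K_5$ via the explicit macroscopic structure and \eqref{ptabc}, $K_4$ via Proposition~\ref{prop3.2.4}, and the interpolation~\eqref{norm} to close the estimate. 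This is the paper's argument and it is correct.
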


\subsection{A Priori Estimate}
\setcounter{equation}{0}
We take the linear combination
\begin{center}
$\displaystyle
\sum_{|\alpha|\le N-1}C^{(1)}_\alpha$\eqref{pabc}$_\alpha$
+$\displaystyle
\sum_{|\alpha|\le N}C^{(2)}_\alpha$\eqref{microenergy11}$_\alpha$
+$\displaystyle
\sum_{1\le|\alpha|\le N}C^{(3)}_\alpha$\eqref{microenergy2}$_\alpha$
+$C^{(4)}$\eqref{microenergy3}+$\displaystyle
\sum_{|\beta|=|\alpha+\gamma|\le N, |\alpha|\le N-1,
|\gamma|\ge 1}C^{(3)}_{\alpha, \gamma}$\eqref{microenergy4}$_{\alpha, \gamma}.$
\end{center}
With a suitable choice of the coefficients
$C^{(1)}_\alpha, C^{(2)}_\alpha,
C^{(3)}_\alpha, C^{(4)}, C^{(5)}_{\alpha, \gamma}$, we get
\begin{align}\label{energyinq1}
\frac{d}{dt}\tilde{\mathcal{E}}+\tilde{\mathcal{D}}\le \mathcal{H},
\end{align}
where
\begin{align*}
\tilde{\mathcal{E}}=&
-\sum_{|\alpha|\le N-1}C^{(1)}_\alpha
\Big[(\p^\alpha r,\nabla_x\p^\alpha(a,-b,c))_{L^2(\RR^3_x)}
+(\p^\alpha b, \nabla_x \p^\alpha a)_{L^2(\RR^3_x)}\Big]
\\&+\notag
\sum_{|\alpha|\le N}C^{(2)}_\alpha\|\p^\alpha_x g\|^2_{L^2(\RR^6)}
+\sum_{1\le|\alpha|\le N}C^{(3)}_\alpha\|\p^\alpha_x g\|^2_{L^2_\ell(\RR^6)}
\\&+\notag
C^{(4)}\|g_2\|^2_{L^2_\ell(\RR^6)}
+\sum_{|\beta|=|\alpha+\gamma|\le N, |\alpha|\le N-1,
|\gamma|\ge 1}C^{(5)}_{\alpha, \gamma}\|W^\ell\p^\beta_{x,v} g_2\|^2_{L^2(\RR^6)},
\\
\tilde{\mathcal{D}}=&\sum_{|\alpha|\le N-1}C^{(1)}_\alpha\|\nabla_x \p^\alpha_x (a,b,c)\|_{L^2(\RR^3_x)}^2
+\sum_{|\alpha|\le N}C^{(2)}_\alpha|||\p^\alpha_x  g_2 |||_{\cB^{0}_0(\RR^6)}^2
\\&+\notag
\sum_{1\le |\alpha|\le N}C^{(3)}_\alpha|||\p^\alpha_x  g_2 |||_{\cB^{0}_\ell(\RR^6)}^2
+C^{(4)}|||g_2|||^2_{\cB^{0}_\ell(\RR^6)}
\\&+\sum_{|\beta|=|\alpha+\gamma|\le N, |\alpha|\le N-1,
|\gamma|\ge 1}C^{(3)}_{\alpha, \gamma}\notag
||| \p^\beta_{x,v} g_2 |||^2_{\cB^{0}_\ell(\RR^6)},
\\    \mathcal{H}=&\mathcal{D}_1\mathcal{E}_1
+\mathcal{D}_2\mathcal{E}_2+\mathcal{D}\mathcal{E}.
\end{align*}
Clearly, it holds that
\[
\mathcal{E}\sim \tilde{\mathcal{E}},
\quad
\mathcal{D}\sim \tilde{\mathcal{D}},
\]
and
\[
 \mathcal{H}
\le \mathcal{D}\mathcal{E}.
\]

Now \eqref{energyinq1} gives
\begin{equation*}\label{ineq1}
\mathcal{E}(t)+\Big[1-C\sup_{0\le \tau\le t}\mathcal{E}(\tau)\Big]
\int_0^t\mathcal{D}(\tau)d\tau\le C\mathcal{E}(0),
\end{equation*}
which leads to the closure of \`a priori estimate
and then completes the proof of Theorem \ref{aprioriestimate}.

Now, the proof of Theorem \ref{theo1}
can be completed by the usual continuation argument based on Theorem \ref{theo4.2.1} and Theorem \ref{aprioriestimate}.

\bigskip
\noindent
{\bf Acknowledgements :}
The last author's research was supported by the General Research
Fund of Hong Kong, CityU\#103109. Finally the authors would like to
thank the financial support of City University of Hong Kong, Kyoto
University and Wuhan University during each of their stays, mainly starting {}from 2006. These supports have enable the final conclusion through our previous papers.

\vskip0.5cm

\

\end{document}